\documentclass[11pt,a4paper]{amsart}
\usepackage[T1]{fontenc}
\usepackage{lmodern}
\usepackage{microtype}
\usepackage{mathtools,amssymb,mathrsfs}
\usepackage[margin=27mm]{geometry}
\usepackage{xurl}
\usepackage[colorlinks=true,linkcolor=blue,citecolor=blue,urlcolor=blue,bookmarksnumbered]{hyperref}
\usepackage{bookmark}
\usepackage{enumitem}
\usepackage{tikz}
\definecolor{figOld}{HTML}{567794}
\definecolor{figNew}{HTML}{557E77}
\definecolor{figOverlap}{HTML}{929B9E}
\definecolor{figInk}{HTML}{273846}
\numberwithin{equation}{section}
\newtheorem{theorem}{Theorem}[section]
\newtheorem{proposition}[theorem]{Proposition}
\newtheorem{lemma}[theorem]{Lemma}
\newtheorem{corollary}[theorem]{Corollary}
\newtheorem{conjecture}[theorem]{Conjecture}
\theoremstyle{definition}
\newtheorem{definition}[theorem]{Definition}

\newcommand{\C}{\mathbb C}
\newcommand{\PP}{\mathbb P}
\newcommand{\R}{\mathbb R}
\newcommand{\OO}{\mathcal O}
\newcommand{\LL}{\mathcal L}
\newcommand{\CAP}{\mathrm{CAP}}

\newcommand{\Op}{\operatorname{Op}}
\newcommand{\Int}{\operatorname{int}}

\newcommand{\norm}[1]{\left\lVert #1\right\rVert}
\newcommand{\PD}[2]{\overline\Delta_{#1}^{\,#2}}
\allowdisplaybreaks[2]
\theoremstyle{remark}
\newtheorem{remark}[theorem]{Remark}
\newtheorem{example}[theorem]{Example}

\newcommand{\LAP}{\textup{LAP}}
\newcommand{\LAPName}{local approximation property}
\hypersetup{pdftitle={A Local Approximation Characterization of Oka Manifolds},pdfauthor={Sicheng An, Yun-Heng Du, Bin Guo, Peng-Chao Wang, Song-Yan Xie},pdfsubject={Centered disc families, convex approximation, rational curves and logarithmic complements}}
\title[Local approximation and Oka manifolds]{A Local Approximation Characterization\\ of Oka Manifolds}
\author{Sicheng An}
\address[Sicheng An]{School of Mathematical Sciences, University of Chinese Academy of Sciences, Beijing 100049, China}
\email{ansicheng23@mails.ucas.ac.cn}

\author{Yun-Heng Du}
\address[Yun-Heng Du]{Academy of Mathematics and Systems Science, Chinese Academy of Sciences, Beijing 100190, China}
\email{duyunheng22@mails.ucas.ac.cn}
\author{Bin Guo}
\address[Bin Guo]{Academy of Mathematics and Systems Science, Chinese Academy of Sciences, Beijing 100190, China}
\email{guobin181@mails.ucas.ac.cn}
\author{Peng-Chao Wang}
\address[Peng-Chao Wang]{Academy of Mathematics and Systems Science, Chinese Academy of Sciences, Beijing 100190, China}
\email{wangpengchao26@mails.ucas.ac.cn}
\author{Song-Yan Xie}
\address[Song-Yan Xie]{State Key Laboratory of Mathematical Sciences, Academy of Mathematics and Systems Science, Chinese Academy of Sciences, Beijing 100190, China; School of Mathematical Sciences, University of Chinese Academy of Sciences, Beijing 100049, China}
\email{xiesongyan@amss.ac.cn}
\date{October 14, 2026}
\subjclass[2020]{32Q56, 32E30, 14M22, 14J45}
\keywords{Oka manifold, strong dominability, convex approximation, rational connectedness, logarithmic rational curve}

\begin{document}
\begin{abstract}
We introduce the \LAPName{} (\LAP) and prove that it characterizes Oka manifolds. We prove that \LAP{} yields convex approximation and, in an $n$-dimensional target, propagates across sets of zero $(2n-1)$-dimensional Hausdorff measure. Consequently, strong dominability characterizes Oka manifolds, convex approximation in the target dimension suffices, the Oka property is local in the Euclidean topology, and it descends under surjective holomorphic maps. Local families of entire curves give geometric applications: smooth projective rationally connected manifolds are Oka, and deleting an algebraic subset of codimension at least two from any such manifold preserves the Oka property. Structure theorems further show that compact K\"ahler manifolds with nonnegative holomorphic sectional curvature or nef tangent bundle are Oka. Finally, the complement of a smooth degree-$d$ hypersurface in $\PP^n$ is Oka exactly when $d\leq n+1$.
\end{abstract}
\maketitle
\setcounter{tocdepth}{1}
\tableofcontents
\clearpage

\section{Introduction}\label{sec:introduction}
The Oka principle concerns the existence of holomorphic solutions to problems on Stein spaces when there is no topological obstruction, provided the target is flexible enough. It is not a single theorem but a program: one looks for geometric conditions on a complex manifold $Y$ under which continuous maps from Stein spaces into $Y$ can be deformed to holomorphic maps, with approximation, interpolation, and parametric control. For broad accounts, see the survey of Forstneri\v{c} and L\'arusson~\cite{FL11} and Forstneri\v{c}'s recent exposition~\cite{F26}. The subject began with Oka's solution of the second Cousin problem~\cite{Oka39} and Grauert's theorem for holomorphic fiber bundles with homogeneous fibers over Stein spaces~\cite{Grauert58}. Gromov placed it in the framework of the $h$-principle and introduced ellipticity through dominating holomorphic sprays~\cite{Gro89}, and Forstneri\v{c} introduced subellipticity and proved the Oka principle for subelliptic manifolds~\cite{F02}. Forstneri\v{c} then established approximation, interpolation, and finite-order jet interpolation~\cite{F05}, identified convex approximation with the Oka property with approximation~\cite{F06}, and proved that convex approximation implies the full parametric Oka property~\cite{F09}. This is the notion of an Oka manifold used below; see also Forstneri\v{c}'s monograph~\cite{F17}.

Here we show that this global holomorphic flexibility is detected by a local approximation operation: centered holomorphic families on expanding discs characterize Oka manifolds. The characterization also shows that the Oka property propagates across proper analytic subsets and descends under surjective holomorphic maps from Oka manifolds, leading to applications to projective manifolds and complements.

\begin{definition}\label{def:oka}
Let $Y$ be a complex manifold with a distance $d_Y$ inducing its
topology. The manifold $Y$ is an \emph{Oka manifold} if it has the \emph{convex approximation property} $\CAP$~\cite[Definition~1.2]{F09}: for every $m\geq1$, compact convex set $K\subset\C^m$, holomorphic map $f:U\to Y$ on an open neighborhood $U$ of $K$, and $\epsilon>0$, there is a holomorphic map $F:\C^m\to Y$ such that
\[
\sup_{z\in K}d_Y(F(z),f(z))<\epsilon.
\]
\end{definition}

By Forstneri\v{c}'s theorem~\cite[Theorem~1.1]{F09}, this is equivalent to the parametric Oka property, with approximation and interpolation, for maps from reduced Stein spaces.

For $r>0$, write $\Delta_r=\{t\in\C:|t|<r\}$.

\begin{definition}\label{family:condition}
Let $Y$ be a complex manifold of dimension $n\geq1$. One says that
$Y$ satisfies the \emph{\LAPName} (\LAP) at $x\in Y$ if there are an open
neighborhood $V\ni x$, a number $\tau>0$, radii $R_\nu\to\infty$,
and holomorphic maps
\begin{equation*}
 h_{j,\nu}:V\times\Delta_{R_\nu}\longrightarrow Y,
 \qquad h_{j,\nu}(y,0)=y,\qquad 1\leq j\leq n,
\end{equation*}
converging locally uniformly on $V\times\Delta_\tau$ to maps $h_j$,
such that
\[
 \partial_t h_1(x,0),\ldots,\partial_t h_n(x,0)
\]
form a basis of $T_xY$. One says that $Y$ satisfies \LAP{} if it
satisfies \LAP{} at every point. In dimension zero, \LAP{} holds at every point.
\end{definition}

Write $G_Y\subset Y$ for the points at which $Y$ satisfies \LAP{}.
By continuity of the initial velocities, the locus $G_Y$ is open: after
shrinking $V$, the same families satisfy the definition at every point
of $V$.

All the data in this definition are common to the finitely many
families, after restriction and omission of a finite initial segment
if necessary. In particular, $V$ is independent of $\nu$, and
convergence is imposed on the fixed product $V\times\Delta_\tau$
without requiring an entire limiting map.
A finite spanning collection can always be reduced to a basis.

\begin{theorem}\label{thm:families-oka}
A complex manifold is Oka if and only if it satisfies \LAP.
\end{theorem}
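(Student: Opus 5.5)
The plan is to prove the two implications separately. The direction ``Oka $\Rightarrow$ \LAP'' is the easier one and uses only Definition~\ref{def:oka}. The converse is the real content; its target is convex approximation, which by Forstneri\v{c}'s theorem~\cite[Theorem~1.1]{F09} is enough.

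\emph{Oka $\Rightarrow$ \LAP.} Fix $x\in Y$ and a holomorphic chart $\phi:W\to\C^n$ with $\phi(x)=0$ whose image contains a closed ball $\overline B$. Choose $V\Subset W$ with $\phi(V)$ convex and $\tau>0$ so small that $\phi(y)+te_j\in B$ for all $y\in V$ and $|t|\le 2\tau$. The local maps
\[
g_j(y,t)=\phi^{-1}(\phi(y)+te_j)
\]
are then defined near the compact convex set $K=\phi(\overline V)\times\overline\Delta_{\tau}\subset\C^{n+1}$, after composing with $\phi^{-1}$ in the first variable. Apply $\CAP$ with $m=n+1$ on an exhausting sequence of compact convex sets $K_\nu=\phi(\overline V)\times\overline\Delta_{\nu}$. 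To keep these sets convex, first approximate $g_j$ only on $K$ and then extend. This yields entire maps $F_{j,\nu}:\C^n\times\C\to Y$ with $F_{j,\nu}\to g_j$ uniformly on $K$. Since $F_{j,\nu}$ is entire in $t$, we may take $R_\nu=\nu$ freely.

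The centering condition $h(y,0)=y$ has to be restored exactly. This is done by a local correction. Uniform closeness gives a map $y\mapsto F_{j,\nu}(y,0)$ that is $C^1$-close to the identity on a slightly smaller $V'$. Cauchy estimates give the $C^1$ bound, and Rouch\'e/inverse-function arguments make it a biholomorphism onto a set containing $V''$. Set $h_{j,\nu}(y,t)=F_{j,\nu}(\psi_\nu(y),t)$, where $\psi_\nu$ inverts $F_{j,\nu}(\cdot,0)$. Then $h_{j,\nu}(y,0)=y$, and $h_{j,\nu}\to g_j$ on $V''\times\Delta_\tau$. Moreover $\partial_th_j(x,0)=d\phi^{-1}(e_j)$ form a basis.

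\emph{\LAP{} $\Rightarrow$ Oka.} From the families we build, near each point, a dominating local spray with entire fibers in $t$. Consider the composed map
\[
s_\nu(y,t_1,\dots,t_n)=h_{n,\nu}\bigl(\cdots h_{2,\nu}(h_{1,\nu}(y,t_1),t_2)\cdots,t_n\bigr).
\]
It is defined on a shrinking neighborhood in $y$ times a polydisc of radius comparable to $R_\nu$, since each intermediate point must stay in $V$. It satisfies $s_\nu(y,0)=y$ and is dominating at $x$ by the basis condition. The natural route is then Forstneri\v{c}'s proof that $\CAP$ follows from approximation on special convex pairs (\cite{F06,F09}). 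One needs to extend a holomorphic map from a convex set $K$ to $K\cup L$, where $L$ is a small convex bump. For this, use the $s_\nu$ as local sprays around the image of $f$ on the bump. The gluing step (Cartan-type splitting on the Cartan pair) needs only sprays defined on the compact set in question, which the \LAP{} families provide. Because $R_\nu\to\infty$ while the limits stay controlled on $V\times\Delta_\tau$, the sprays can be made to cover an arbitrarily large parameter region with uniform estimates. Iterating bumps exhausting $\C^m$ then gives $F$ with $\sup_K d_Y(F,f)<\epsilon$.

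The main obstacle is this last step: converting the purely local, noncomplete families into an honest bump-extension lemma. The difficulty is that $s_\nu$ is defined only for $y$ in a fixed small $V$, and the image of $f$ over a bump need not lie in one $V$. My first attempt would be to subdivide each bump so finely that $f$ maps each piece into a single \LAP{} neighborhood, which is possible since $G_Y=Y$. On each piece one approximates along one coordinate direction at a time, using the entire parameter $t$ on $\Delta_{R_\nu}$ to push the solution across. The convergence on $V\times\Delta_\tau$ supplies the uniform Cauchy estimates that keep errors summable.
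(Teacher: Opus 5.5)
Your direction Oka $\Rightarrow$ \LAP{} is sound. A single application of $\CAP$ on the compact convex set $K$ already gives maps entire in $t$. The exhaustion by $K_\nu$ is superfluous, and $g_j$ is not defined there anyway. Inverting $F_{j,\nu}(\cdot,0)$ is the same recentering idea as in Lemma~\ref{family:recentering}. The paper instead quotes strong dominability of Oka manifolds and Proposition~\ref{family:strong-domination}; the two routes are essentially equivalent.

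The genuine gap is in \LAP{} $\Rightarrow$ Oka. Your composed map $s_\nu$ is not defined on a polydisc of radius comparable to $R_\nu$. Nothing controls $h_{1,\nu}(y,t_1)$ once $|t_1|\geq\tau$, so it can leave $V$, and then $h_{2,\nu}$ cannot be applied. Your own requirement that ``each intermediate point must stay in $V$'' confines $s_\nu$ to parameters of size about $\tau$. Likewise, there are no ``uniform estimates on an arbitrarily large parameter region''. The only information is convergence on $V\times\Delta_\tau$, and the values at large $t$ may change arbitrarily with $\nu$. So you never obtain a spray with large fibers, and the spray-based bump lemma of the elliptic theory has nothing to work with. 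What you call ``the main obstacle'' is in fact the entire content of the theorem, and no mechanism is given for it.

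The missing idea is how to use the uncontrolled large-$t$ values without ever needing accuracy there. In Lemma~\ref{scalar:substitution}, the goal is to realize $T(\cdot,0,\varphi)$ with $\|\varphi\|_{D\times P}<\tau$.
\begin{enumerate}
\item One first fixes the domains, the fiber margins and the splitting constant.
\item One then takes a polynomial $\psi$ that is close to $\varphi$ near $D\times P$ and has modulus below $\eta$ near a fixed overlap $C$ in the attached disc.
\item Only then does one choose $\nu$ with $R_\nu>\sup|\psi|$ on the collar.
\end{enumerate}
Large values of $h_\nu(\cdot,\psi)$ then occur only where no accuracy is needed. On $C$, the normalization $Q_\nu(s,v,0)=F(s,v)$ on the whole fiber makes the transition close to the identity, so Lemma~\ref{lem:center-gluing} glues. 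Exact partial compositions $k_j=h_j(k_{j-1},a_j)$ with $|a_j|<\tau$ reduce an arbitrary map into a small $W_x$ to $n$ such scalar steps (Lemma~\ref{family:local-sector}).

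Even this only enlarges one complex variable, over a special planar pair times a passive polydisc, and it needs the relevant images to lie in one $W$. Fine subdivision of a bump does not provide that. Planar enlargement over a disc requires the whole transverse slice over each boundary point, together with the spray parameters, to lie in a single $W$. Cutting transversally creates pieces that must then be glued transversally, and a general convex bump is not of this product form anyway.

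The paper overcomes this in three steps:
\begin{enumerate}
\item the scalar collapse and simultaneous sector filling (Lemma~\ref{poly:boundary-disc});
\item shift-like polynomial automorphisms whose boundary fibers have diameter $O(d^{-1})$ (Lemma~\ref{lem:source-geometry} and Proposition~\ref{poly:finite-extension}), which give polydisc approximation;
\item real-face induction with a collar fixed before the gluing accuracy is chosen (Section~\ref{sec:convex}), which reaches arbitrary convex sets.
\end{enumerate}
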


The implication $\LAP\Rightarrow\CAP$ proceeds from local planar extension to polydisc approximation and then to arbitrary compact convex sets by induction across real faces. The local extension retains any prescribed passive parameter polydisc, so that earlier source and spray variables can serve as holomorphic parameters at subsequent stages. Alarc\'on and Forstneri\v{c}~\cite[Definition~7.11]{AF25} introduced the local spray approximation property (LSAP) in their study of Oka-1, the Oka property with approximation and jet interpolation from open Riemann surfaces.

An $n$-dimensional manifold $Y$ is \emph{dominable at $y$} if there is a holomorphic map $H:\C^n\to Y$ with $H(0)=y$ and $dH_0$ invertible, and is \emph{strongly dominable} if this holds at every point. Dominability at $y$ gives \LAP{} at $y$ by a local inverse of $H$ (Proposition~\ref{family:strong-domination}), while every Oka manifold is strongly dominable by Forstneri\v{c}'s result~\cite{F17}. Hence Theorem~\ref{thm:families-oka} settles the strong-dominability question raised by Forstneri\v{c} and L\'arusson~\cite{FL25} and subsequently emphasized by Forstneri\v{c}~\cite{F26}.

\begin{theorem}\label{thm:strong-domination}
A complex manifold is Oka if and only if it is strongly dominable.
\end{theorem}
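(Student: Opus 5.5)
The plan is to derive Theorem~\ref{thm:strong-domination} from Theorem~\ref{thm:families-oka} by passing through \LAP{} in both directions; no new analysis is needed.

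\emph{Oka implies strongly dominable.} Let $Y$ be Oka of dimension $n$ and fix $y\in Y$. We use the standard argument behind Forstneri\v{c}'s result~\cite{F17}. Choose a holomorphic chart $\phi:W\to\C^n$ with $\phi(y)=0$, and let $f=\phi^{-1}$ on a ball $B$ around $0$. Apply \CAP{} on a smaller closed ball $K\subset B$, but in the interpolating form: by Forstneri\v{c}'s theorem~\cite[Theorem~1.1]{F09}, \CAP{} gives the Oka property with approximation and jet interpolation. This yields an entire map $F:\C^n\to Y$ that is close to $f$ on $K$ and satisfies $F(0)=y$ and $dF_0=df_0$, which is invertible. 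Hence $Y$ is dominable at $y$.

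If one prefers to use only \CAP{} without interpolation, there is an alternative. Approximate $f$ on $K$ closely enough that, by Cauchy estimates, $dF$ is invertible near $0$ and $F(0)$ lies close to $y$. Then correct the centre by precomposing with a translation of $\C^n$, using that $F$ is a local biholomorphism near $0$ onto a neighbourhood that contains $y$ once the approximation is fine enough (Rouché/inverse function theorem with uniform constants). The translated map $F(\cdot+a)$, where $F(a)=y$, is entire and has invertible differential at $0$. This version uses only Definition~\ref{def:oka}, which is what I would write out.

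\emph{Strongly dominable implies Oka.} By Proposition~\ref{family:strong-domination}, dominability at $y$ gives \LAP{} at $y$. Strong dominability therefore gives \LAP{} at every point, and Theorem~\ref{thm:families-oka} yields that $Y$ is Oka.

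For completeness, I sketch how Proposition~\ref{family:strong-domination} would go. Let $H:\C^n\to Y$ satisfy $H(0)=x$ with $dH_0$ invertible, and let $\psi$ be a local inverse of $H$ on a neighbourhood $V$ of $x$. Put
\[
h_{j}(y,t)=H(\psi(y)+t e_j),
\]
which is defined for all $t\in\C$. These maps are holomorphic, satisfy $h_j(y,0)=y$, and have initial velocities $dH_0 e_j$ at $y=x$, which form a basis. The constant sequence $h_{j,\nu}=h_j$ then meets the convergence requirement trivially.

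The only delicate point is the uniform recentring in the alternative \CAP{}-only argument, and it is routine. Dimension zero is trivial.
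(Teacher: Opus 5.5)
Your proof is correct and follows the paper's route: strong dominability gives \LAP{} by Proposition~\ref{family:strong-domination}, and Theorem~\ref{thm:families-oka} then gives the Oka property, while the paper simply cites \cite{F17} for the classical converse. Your \CAP{}-only recentering argument for that converse is the same Cauchy-estimate and contraction argument the paper uses to prove Corollary~\ref{cor:capn}, so it makes the classical direction self-contained.
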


The next three corollaries concern source dimension, stratifications, and open covers. For $d\geq1$, write $\mathrm{CAP}_d$ for convex approximation on compact convex subsets of $\C^d$.

\begin{corollary}\label{cor:capn}
Let $X$ be an $n$-dimensional complex manifold, where $n\geq1$. Then $X$ is Oka if and only if it satisfies $\mathrm{CAP}_n$.
\end{corollary}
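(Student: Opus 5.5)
The plan is to route both directions through Theorem~\ref{thm:strong-domination}, or equivalently through Theorem~\ref{thm:families-oka}. For the forward implication, an Oka manifold satisfies $\CAP$ in every source dimension by Definition~\ref{def:oka}. In particular it satisfies $\mathrm{CAP}_n$, so there is nothing to prove.

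For the converse, assume $X$ satisfies $\mathrm{CAP}_n$ with $n=\dim X\geq1$. I will show that $X$ is strongly dominable, and then invoke Theorem~\ref{thm:strong-domination}.

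Fix $y\in X$ and choose a holomorphic chart $\phi:W\to\C^n$ with $\phi(y)=0$ and $\phi(W)\supset\overline{B}$, where $B$ is a small closed ball, or a closed polydisc, centered at $0$. Set
\[
f=\phi^{-1}: \Op(\overline B)\to X .
\]
This is holomorphic on a neighbourhood of the compact convex set $K=\overline B\subset\C^n$, with $f(0)=y$ and $df_0$ invertible.

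Apply $\mathrm{CAP}_n$ with a small $\epsilon$ to obtain an entire map $F:\C^n\to X$ with $\sup_K d_X(F,f)<\epsilon$. For $\epsilon$ small, $F(K')$ lies in the chart $W$ for a slightly smaller ball $K'$. Cauchy estimates applied to $\phi\circ F-\phi\circ f$ on $K'$ then make $d(\phi\circ F)$ uniformly close to the identity on a yet smaller ball $K''$. Hence $\phi\circ F$ is injective on $K''$ with image containing a neighbourhood of $0$ whose size is controlled independently of $\epsilon$; this is the quantitative inverse function theorem, or Rouché's theorem in several variables.

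Consequently there is a point $z_0\in K''$ with $F(z_0)=y$ and $dF_{z_0}$ invertible. The map
\[
H(z)=F(z+z_0)
\]
is then a dominating map $\C^n\to X$ with $H(0)=y$. Since $y$ was arbitrary, $X$ is strongly dominable, hence Oka by Theorem~\ref{thm:strong-domination}.

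The only delicate step is the uniformity: the radius of the neighbourhood covered by $\phi\circ F$ must not shrink as $\epsilon\to0$, so that it eventually contains $0$. I would secure this by fixing the radii of $K\supset K'\supset K''$ first, and only then taking $\epsilon$ small relative to them.

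An alternative route is to verify \LAP{} directly at $y$, by composing the same $F$ with the coordinate lines $t\mapsto z_0+te_j$ and pulling back along a local inverse as in Proposition~\ref{family:strong-domination}. I expect the domination argument above to be cleaner.
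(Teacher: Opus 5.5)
Your proposal is correct and follows the paper's proof essentially step for step: the paper also approximates a coordinate parametrization on a ball by an entire map via $\mathrm{CAP}_n$, then uses Cauchy estimates and a contraction to find a point $a$ with $G(a)=x$ and $dG_a$ invertible. It then translates to obtain domination at $x$ and concludes by Theorem~\ref{thm:strong-domination}. Your care in fixing the nested radii before choosing $\epsilon$ is exactly how the paper secures the needed uniformity.
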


Forstneri\v{c} and L\'arusson introduced a \emph{finite Oka stratification}: a finite decreasing filtration by closed complex subvarieties, ending in the empty set, whose successive differences are manifolds with Oka connected components~\cite[Definition~1]{FL14}. They proved that such a stratification implies strong dominability~\cite[Theorem~2]{FL14}.

\begin{corollary}\label{cor:stratification}
Every complex manifold with a finite Oka stratification is Oka.
\end{corollary}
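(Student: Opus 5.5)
The plan is to combine Forstneri\v{c}--L\'arusson's result with Theorem~\ref{thm:strong-domination}. Let $Y$ be a complex manifold with a finite Oka stratification
\[
Y=Y_0\supset Y_1\supset\cdots\supset Y_m=\emptyset ,
\]
where each $Y_k$ is a closed complex subvariety and each connected component of $Y_k\setminus Y_{k+1}$ is an Oka manifold. By \cite[Theorem~2]{FL14}, $Y$ is strongly dominable: for every $y\in Y$ there is a holomorphic map $H:\C^n\to Y$, with $n=\dim Y$, such that $H(0)=y$ and $dH_0$ is invertible. Theorem~\ref{thm:strong-domination} then shows that $Y$ is Oka. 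If $Y$ is not connected, the argument applies to each connected component. Strong dominability and the Oka property are both checked componentwise, since a holomorphic map from the connected set $\C^m$ lands in a single component. Dimension zero is trivial.

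An alternative route uses Theorem~\ref{thm:families-oka} directly and avoids citing \cite{FL14}. The intended argument runs as follows.

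\begin{enumerate}
\item Fix $y\in Y$ and take the largest $k$ with $y\in Y_k$. Then $y$ lies in the top stratum $S$ (a connected component of $Y_k\setminus Y_{k+1}$), which is an Oka manifold of some dimension $p\leq n$. Near $y$, the set $Y_{k+1}$ is empty, so locally $Y_k=S$.
\item The Oka property gives a dominating spray $\C^p\to S$ at $y$. One composes it with holomorphic transversal directions to obtain maps $\C^n\to Y$ that are locally submersive at $y$.
\item The normal directions have to be supplied by induction on the strata, moving up the filtration using the Oka property of the open stratum $Y_0\setminus Y_1$. This is exactly the content of \cite{FL14}.
\end{enumerate}

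The main obstacle is step~(3), which is why the plan relies on \cite[Theorem~2]{FL14} instead of reproving it. Once that result is granted, the corollary follows immediately from Theorem~\ref{thm:strong-domination}.

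The only point to check is that the definitions match. The notion of dominability at $y$ in \cite{FL14} must be the same as ours: a map $\C^n\to Y$ sending $0$ to $y$ with invertible differential at $0$. If \cite{FL14} only provides maps from $\C^N$ with $N\geq n$ that are submersive at $0$, we restrict to a suitable $n$-dimensional linear subspace of $\C^N$ on which the differential remains invertible, and recover dominability in our sense.
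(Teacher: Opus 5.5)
Your main argument is correct and is essentially the paper's proof: cite \cite[Theorem~2]{FL14} for strong dominability, then apply Theorem~\ref{thm:strong-domination} componentwise. The sketched alternative route is only heuristic (step~(3) is not a proof), but it plays no role because your argument relies explicitly on \cite{FL14}.
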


\begin{corollary}\label{cor:euclidean-localization}
If every point of a complex manifold has an open Oka neighborhood, then the manifold is Oka.
\end{corollary}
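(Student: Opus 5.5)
The plan is to check \LAP{} at every point of $Y$ and then apply Theorem~\ref{thm:families-oka}. Fix $x\in Y$ and choose an open neighborhood $W\ni x$ which is an Oka manifold. Since \LAP{} is a local condition, it is enough to produce the families $h_{j,\nu}$ with values in $W$: any family landing in $W$ is also a family in $Y$, and its convergence and initial velocities at $x$ are unaffected by enlarging the target.

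\textbf{Step 1 (from Oka to \LAP{} in $W$).} There are two routes, and I would record both.

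The first route uses the implication ``Oka $\Rightarrow$ \LAP'' of Theorem~\ref{thm:families-oka}, applied to $W$ itself. This immediately gives an open $V\ni x$ with $V\subset W$, a radius $\tau>0$, radii $R_\nu\to\infty$, and families $h_{j,\nu}:V\times\Delta_{R_\nu}\to W$ such that $h_{j,\nu}(y,0)=y$, the families converge on $V\times\Delta_\tau$, and the initial velocities span $T_xW=T_xY$.

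The second route is more concrete. By Forstneri\v{c}'s result~\cite{F17}, the Oka manifold $W$ is strongly dominable. So there is a holomorphic $H:\C^n\to W$ with $H(0)=x$ and $dH_0$ invertible. Proposition~\ref{family:strong-domination} then turns this into \LAP{} at $x$. The key point is that $H$ takes values in $W\subset Y$, so the families built from $H$ and a local inverse of $H$ near $x$ take values in $Y$. Equivalently, $Y$ is dominable at $x$.

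\textbf{Step 2 (conclusion).} Since $x$ was arbitrary, $Y$ satisfies \LAP{} everywhere. Theorem~\ref{thm:families-oka} then gives that $Y$ is Oka. Alternatively, Step~1 shows that $Y$ is strongly dominable, and Theorem~\ref{thm:strong-domination} gives the same conclusion.

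\textbf{Main obstacle.} The only point that needs care is that the definitions involved are genuinely local in the target and insensitive to enlarging it. In Definition~\ref{family:condition}, $V$ and the discs are domains and $Y$ enters only as the codomain. Likewise, dominability at $y$ asks only for an entire map into $Y$. Composing with the inclusion $W\hookrightarrow Y$ therefore preserves everything: holomorphy, the centering condition $h(y,0)=y$, locally uniform convergence (since the topology of $W$ is the subspace topology), and invertibility of the differential. So no gluing argument is needed. All the global difficulty sits inside Theorem~\ref{thm:families-oka}, or equivalently Theorem~\ref{thm:strong-domination}, and both may be assumed here.
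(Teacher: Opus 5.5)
Your proposal is correct, and your second route is exactly the paper's proof: a dominating entire map into the Oka neighborhood $W$ (by strong dominability of Oka manifolds) also dominates the ambient manifold, so Theorem~\ref{thm:strong-domination} applies. Your first route via \LAP{} for $W$ is the same argument unpacked, because the paper obtains Oka $\Rightarrow$ \LAP{} precisely through strong dominability and Proposition~\ref{family:strong-domination}.
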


Corollary~\ref{cor:capn} resolves Forstneri\v{c}'s stabilization question at the proposed threshold $\dim X$~\cite[Problem~6.2]{F06}. Corollary~\ref{cor:stratification} shows that a finite Oka stratification gives an Oka manifold, a question raised by Forstneri\v{c} and L\'arusson~\cite[p.~3716]{FL14}. Kusakabe proved localization for Zariski-open covers and asked whether Euclidean-open covers suffice~\cite{Kus21}; Corollary~\ref{cor:euclidean-localization} settles his question. The proofs are given in Subsection~\ref{sec:direct-consequences}.

Kusakabe's $\mathrm{Ell}_1$ characterization requires every holomorphic map $f:S\to Y$ from a Stein manifold to admit a dominating spray $F:S\times\C^N\to Y$ for some $N$, with $F(\cdot,0)=f$; sprays over maps near compact convex sets also suffice~\cite{Kus21}. Theorem~\ref{thm:strong-domination} obtains the Oka property from the pointwise case, with $S$ a singleton. Kusakabe also characterized Oka manifolds by dense entire curves in spaces of holomorphic maps and by universal maps~\cite{Kus17}.

For $s\geq0$, let $\mathcal H^s$ denote $s$-dimensional Hausdorff
outer measure with respect to a smooth Riemannian metric. Its null
sets are independent of this choice, since smooth metrics are locally
bi-Lipschitz equivalent.

\begin{theorem}
\label{thm:thin-removability}\label{family:hausdorff-exception}
Let $Y$ be a complex manifold of dimension $n\geq1$ and
$E\subset Y$ a set satisfying $\mathcal H^{2n-1}(E)=0$. If $Y$ satisfies
\LAP{} at every point of $Y\setminus E$, then it satisfies
\LAP{} everywhere and is Oka, without assuming that $E$ is closed.
\end{theorem}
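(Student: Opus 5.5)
The plan is to reduce the statement to strong dominability at each point of $E$, and then conclude with Proposition~\ref{family:strong-domination} and Theorem~\ref{thm:families-oka}. Since $G_Y$ is open and contains $Y\setminus E$, it suffices to fix $x\in E$ and produce a holomorphic $H:\C^n\to Y$ with $H(0)=x$ and $dH_0$ invertible. Fix a holomorphic chart $\phi:W\to\C^n$ with $\phi(x)=0$, and a closed polydisc $P\subset\phi(W)$ centred at $0$. The target is to approximate $f=\phi^{-1}|_{\Op P}$ uniformly on a slightly smaller polydisc by entire maps $F:\C^n\to Y$. By Cauchy estimates this gives $C^1$-closeness near $0$, so $F$ has invertible differential and is nearly injective there. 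Its image then contains $x$, and precomposing $F$ with an affine map of $\C^n$ gives the required $H$.

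To build $F$ I would rerun the proof of $\LAP\Rightarrow\CAP$ underlying Theorem~\ref{thm:families-oka}: local planar extension, then polydisc approximation, then induction across real faces. I would keep track of exactly where \LAP{} is used at points of the image. The hope is that the gluing step, which extends a map across one more real face or strip, only needs \LAP{} along the image of a compact subset of a real line segment in source coordinates, perhaps fattened by passive parameters. Everywhere else, only the families already constructed are used, and these are entire in the new variables. If this is right, at each stage only the image of finitely many real segments under the current map must avoid $E$.

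The avoidance comes from the Hausdorff hypothesis. Orthogonal projection $\pi_v:\R^{2n}\to v^\perp\cong\R^{2n-1}$ along a real direction $v$ is Lipschitz, so $\mathcal H^{2n-1}(\pi_v(\phi(E\cap W)))=0$. Hence almost every real line parallel to $v$ misses $\phi(E)$. At the first stage the map is $\phi^{-1}$, so after an arbitrarily small translation of the source polydisc (and of the faces used in the induction), the relevant segments map into $Y\setminus E$. At later stages the map $F_k$ is a holomorphic, nearly biholomorphic approximation of $\phi^{-1}$ near $P$. The same Fubini/projection argument, applied to $F_k^{-1}(E)$ (still $\mathcal H^{2n-1}$-null, since $F_k$ is locally bi-Lipschitz on the relevant compact set), gives segments that avoid $E$ after a generic small translation. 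The approximation errors are kept summable, so the translations do not spoil the final $C^1$-closeness.

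The main obstacle is the claim in the second paragraph: that the extension step genuinely needs \LAP{} only along one-real-dimensional pieces of the image. It must not need it on whole complex discs or open neighbourhoods, because those would typically meet $E$, for instance when $E$ is a complex hypersurface. What I would try first is to localize the local planar extension so that the families $h_{j,\nu}$ are invoked only on a small neighbourhood $V$ of a point on a segment. That neighbourhood $V$ lies in $G_Y$ by openness, and the extension is then propagated along the segment by a chain of overlapping such neighbourhoods, compactness giving finitely many. If instead the passive-parameter polydiscs force open sets of base points, I would shrink those parameter polydiscs to lie near the chosen good segment, since $G_Y$ is open. The residual difficulty is then only bookkeeping of the radii.
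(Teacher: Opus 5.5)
Reducing the problem to an entire dominating map at each $x\in E$ is logically sound. The gap is the step you yourself flag as the main obstacle, and nothing in the proposal replaces it: the extension machinery does not need \LAP{} only along real segments.

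\begin{itemize}
\item Lemma~\ref{poly:boundary-disc} requires each boundary fiber $f(\{\zeta\}\times P)$, $|\zeta|=r$, to lie in one of the neighbourhoods $W$. So \LAP{} is needed on a neighbourhood of the image of $\partial\Delta_r\times P$, further fattened by the spray variables.
\item In Proposition~\ref{poly:finite-extension} (with $m=n$) this set is the image of $\Phi_d(\{|s|=r\}\times\Xi_2)$. Since $\Phi_d$ is locally biholomorphic there, it contains a relatively open piece of the real hypersurface $\{|x_1|=c\}$, so it has real dimension $2n-1$.
\item In the exhaustion of Theorem~\ref{thm:polydisc}, these pieces lie wherever the current map $f_j$ sends them, far from $x$ and without control. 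The convex step even uses $\mathsf A_0$ for arbitrary maps.
\end{itemize}

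Your projection argument cannot handle such sets. It works for parallel lines because they form a $(2n-1)$-parameter family of one-dimensional sets. A $(2n-1)$-dimensional set generally cannot be moved off an $\mathcal H^{2n-1}$-null set by small translations: a complex hypersurface that crosses a real hypersurface transversally meets all its small translates. Shrinking the passive polydiscs is also unavailable, since you must approximate $\phi^{-1}$ on a full polydisc about $0$ and then exhaust all of $\C^n$. An entire dominating map at $x$ is far more than the hypothesis lets you build directly.

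The missing idea is that \LAP{} is local and can be propagated inward from a circle, without constructing any entire map at $x$.

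\begin{itemize}
\item Use circles bounding coordinate discs centred at $x$, not segments. These circles are the fibers of the Lipschitz map $z\mapsto([z],|z|)$ from an annulus onto the $(2n-1)$-manifold $\PP^{n-1}\times[a,b]$. The null image of $E$ therefore misses some fiber, giving a coordinate disc centred at $x$ whose boundary circle lies in $G_Y$.
\item By openness of $G_Y$, that circle times a small transverse polydisc lies in $G_Y$. This is all Lemma~\ref{poly:boundary-disc} needs. The interior of the disc, which contains $x$ and may meet $E$, is carried by the old map through the scalar collapse; only sectors attached near the boundary are filled.
\item Repeat this for $n$ nearby directions. You obtain approximations on $\overline\Delta_{S_\nu}\times P_j$ with $S_\nu\to\infty$ and fixed $P_j$.
\item Lemma~\ref{family:recentering} converts these into exactly centred families on $V\times\Delta_{R_\nu}$. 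That is precisely \LAP{} at $x$, with finite radii; this is Proposition~\ref{family:boundary-propagation}, and it is why \LAP{} is formulated with $R_\nu\to\infty$ rather than entire families.
\end{itemize}

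Then $G_Y=Y$, and Theorem~\ref{thm:families-oka} gives the Oka property. The dominating map at $x$ comes only afterwards.
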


Dominability outside such a set therefore implies the full Oka property. This strengthens the Oka-1 conclusion of Alarc\'on and Forstneri\v{c}~\cite{AF25}, which assumes the exceptional set to be closed. Forstneri\v{c} and L\'arusson developed further properties of Oka-1 manifolds and of LSAP~\cite{FL25}. For analytic exceptional sets, Theorem~\ref{thm:thin-removability}
has the following direct consequence:

\begin{samepage}
\begin{theorem}\label{thm:analytic-removability}
Let $Y$ be a complex manifold. Suppose that every point of $Y$ has
a connected open neighborhood $U$ and a proper closed complex analytic
subset $A_U\subset U$ such that $Y$ satisfies \LAP{} at every point
of $U\setminus A_U$. Then $Y$ satisfies \LAP{} everywhere and is Oka.
Moreover, if $X$ is a connected complex manifold and
$A\subset X$ is a proper closed complex analytic subset such that
$X\setminus A$ is Oka, then $X$ is Oka.
\end{theorem}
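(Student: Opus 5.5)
The first assertion will follow from Theorem~\ref{thm:thin-removability} once we show that the set $E$ of points where \LAP{} fails satisfies $\mathcal H^{2n-1}(E)=0$, where $n=\dim Y$ (if $n=0$ there is nothing to prove). Since $E$ is covered by the given neighborhoods $U$, and $E\cap U\subset A_U$, it suffices to show that each $A_U$ has $\mathcal H^{2n-1}$-measure zero. We then conclude using a countable subcover, which exists because $Y$ is second countable, together with countable subadditivity of outer measure.

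For the local estimate, note that $A_U$ is a proper closed analytic subset of the connected manifold $U$. It is therefore nowhere dense and has complex dimension at most $n-1$ at each point. A complex analytic set of dimension at most $n-1$ is locally a finite union of locally closed complex submanifolds of dimension at most $n-1$: take the regular locus, then the regular locus of the singular locus, and so on. Each such stratum is a countable union of compact pieces of real submanifolds of real dimension at most $2n-2$, and such pieces have locally finite $\mathcal H^{2n-2}$-measure. Hence $\mathcal H^{2n-1}(A_U)=0$. Equivalently, one can cite the standard fact that an analytic set of pure dimension $k$ has locally finite $\mathcal H^{2k}$-measure. I expect this step to be the only one needing a reference; the remainder is bookkeeping. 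Theorem~\ref{thm:thin-removability}, which does not require $E$ to be closed, then gives \LAP{} everywhere on $Y$, and Theorem~\ref{thm:families-oka} shows that $Y$ is Oka.

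For the second assertion, let $X\setminus A$ be Oka. By Theorem~\ref{thm:families-oka} (or by Forstneri\v{c}'s strong dominability together with Proposition~\ref{family:strong-domination}), the manifold $X\setminus A$ satisfies \LAP{} at each of its points. \LAP{} at a point is inherited by supersets, because the families $h_{j,\nu}$ into $X\setminus A$ compose with the inclusion $X\setminus A\hookrightarrow X$. The tangent spaces agree, since $X\setminus A$ is open in $X$, and the neighborhood $V$ may be taken inside $X\setminus A$. So $X$ satisfies \LAP{} at every point of $X\setminus A$.

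Now take any point $p\in X$ and any connected open neighborhood $U$ of $p$, and set $A_U=A\cap U$. This set is a closed analytic subset of $U$. It is proper, because $A$ is nowhere dense in the connected manifold $X$: a proper analytic subset of a connected manifold has empty interior by the identity theorem, so $A$ cannot contain the open set $U$. The hypotheses of the first assertion therefore hold, and $X$ is Oka.
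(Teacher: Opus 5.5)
Your proposal is correct and matches the paper's proof. Both reduce the first assertion to Theorem~\ref{thm:thin-removability}, using that proper analytic subsets of a connected $n$-manifold are $\mathcal H^{2n-1}$-null together with a countable cover, and both handle the second assertion by composing with the inclusion to transfer \LAP{} (equivalently, dominating maps) from $X\setminus A$ to $X$. Your stratification argument supplies a justification for the Hausdorff-measure fact, which the paper states without proof.
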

\end{samepage}

Theorem~\ref{thm:analytic-removability} settles Forstneri\v{c}'s union problem~\cite[Problem~3.11]{Forstneric23} and the first part of Question~D of Forstneri\v{c} and L\'arusson~\cite{FL11}: one nonempty Zariski-open Oka subset suffices.

The preceding criterion for exceptional sets of zero $(2n-1)$-dimensional Hausdorff measure also gives descent along arbitrary surjective holomorphic maps from Oka manifolds (Theorem~\ref{thm:surjective-descent}). Together with Forstneri\v{c}'s surjectivity theorem~\cite{FSurj17}, this characterizes positive-dimensional connected Oka manifolds by surjective entire maps (Corollary~\ref{cor:surjective-entire}) and settles Forstneri\v{c}'s Problem~1.4(a).

\subsection*{Geometric applications}
A smooth complex projective manifold $X$ is \emph{rationally connected} if a general pair of its points lies on the image of a map $\PP^1\to X$, and is \emph{Fano} if $K_X^{-1}$ is ample. Deformations of very free rational curves give:

\begin{theorem}\label{thm:rc}
Every smooth connected complex projective rationally connected manifold is Oka. In particular, every smooth complex Fano manifold is Oka.
\end{theorem}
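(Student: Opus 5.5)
The plan is to verify \LAP{} on a dense Zariski-open set and then invoke Theorem~\ref{thm:analytic-removability}, or Theorem~\ref{thm:thin-removability} directly. Let $X$ be smooth, connected, projective and rationally connected, of dimension $n$; the case $n=0$ is trivial. By Koll\'ar--Miyaoka--Mori, for a general point $x\in X$ there is a very free rational curve $g:\PP^1\to X$ with $g(0)=x$. Very free means $g^*T_X\cong\bigoplus\OO(a_i)$ with all $a_i\geq1$.

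\textbf{Step 1: a dominating family through $x$.} The morphism space $\mathrm{Hom}(\PP^1,X)$ is smooth at $[g]$ of dimension $h^0(g^*T_X)$, because $H^1(g^*T_X)=0$. In fact $H^1(g^*T_X(-2))=0$ as well, since $a_i-2\geq-1$. So the evaluation map at two points, $\mathrm{ev}:\mathrm{Hom}(\PP^1,X)\to X\times X$, $[h]\mapsto(h(0),h(\infty))$, is smooth at $[g]$, with surjective differential. Restrict to the sub-scheme $M$ of morphisms with $h(\infty)=g(\infty)=:p$. Then $\mathrm{ev}_0:M\to X$ is a submersion near $[g]$. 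Choose a holomorphic local section $y\mapsto h_y$ of $\mathrm{ev}_0$ on a neighborhood $V$ of $x$, with $h_x=g$ and $h_y(0)=y$.

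\textbf{Step 2: producing the families $h_{j,\nu}$.} The entire curves will be $t\mapsto h_y(\phi(t))$ for suitable entire maps $\phi:\C\to\PP^1$ with $\phi(0)=0$. Taking $\phi(t)=t$ and $h_{j,\nu}(y,t)=h_y(\lambda_j t)$ gives, for each fixed $g$, only the single tangent direction $dg_0(\partial_s)$. To get $n$ independent directions, I use $n$ very free curves through $x$ with independent tangents. This is possible because a very free curve through a general point can be deformed with the point fixed, and its tangent direction at $x$ can then be chosen freely: the differential of $[h]\mapsto dh_0(\partial_s)$ on $\{h:h(0)=x\}$ surjects onto $T_xX$ modulo scaling, by positivity, namely $H^1(g^*T_X(-2))=0$ together with the surjection onto the $1$-jet at $0$. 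So pick $g_1,\dots,g_n$ with $dg_{j}(\partial_s)|_0$ a basis, and extend each to local sections $h^{(j)}_y$ as in Step~1. Set $h_{j,\nu}(y,t):=h^{(j)}_y(t)$, defined for all $t\in\C\subset\PP^1$. These are holomorphic on $V\times\C$ and independent of $\nu$, so the convergence requirement is automatic, with $R_\nu=\nu$. Their centers satisfy $h_{j,\nu}(y,0)=y$ and their initial velocities are $\partial_t h^{(j)}_x(0)=dg_j(\partial_s)$, which form a basis. Hence \LAP{} holds at $x$; in fact $X$ is dominable at $x$ by $(t_1,\dots,t_n)\mapsto$ a composition, but \LAP{} is all we need.

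\textbf{Step 3: removal of the bad locus.} Very free curves pass through every point outside a countable union of proper subvarieties, so \LAP{} holds on a dense subset of $X$. The set $G_X$ is open. Here I expect the main obstacle: the complement of $G_X$ must be a proper analytic set, or at least $\mathcal H^{2n-1}$-null, before Theorem~\ref{thm:analytic-removability} applies. The naive exceptional set is only a countable union of subvarieties. I would first try to show, using the standard result that in characteristic zero a very free curve exists through \emph{every} point (Koll\'ar, IV.3.9: on a rationally connected manifold, for any finite set of points there is a very free curve containing them), that \LAP{} in fact holds at every point. That would make removability unnecessary. Alternatively, a countable union of proper analytic subsets has zero $\mathcal H^{2n-1}$-measure, since each is $\mathcal H^{2n-2}$-$\sigma$-finite, so Theorem~\ref{thm:thin-removability} applies even without closedness. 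Either way $X$ is Oka by Theorem~\ref{thm:families-oka}. Fano manifolds are rationally connected by Campana and KMM, which gives the final sentence.
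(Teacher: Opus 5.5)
Your proof is correct and is essentially the paper's. Both arguments take very free curves through every point (the paper cites Debarre, Thm.~2.40, so your Step~3 fallback via Theorem~\ref{thm:thin-removability} is valid but unnecessary), obtain surjectivity of first-jet evaluation from $H^1(g^*T_X(-2[0]))=0$, and use local holomorphic sections to produce centered entire families on $V\times\C$. The only difference is that the paper takes one curve and a single local section of the jet map $J:H\to TX$ to get all $n$ directions at once, whereas you pick $n$ curves and section $\mathrm{ev}_0$ for each. Your parenthetical claim that composing the $h_j$ gives a dominating entire map is unjustified, since each $h_j(\cdot,t)$ is defined only for $y\in V$; the proof does not use it.
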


Theorem~\ref{thm:rc} settles the rationally connected case of Forstneri\v{c}'s Problem~11.3~\cite{Forstneric23}. In particular, every smooth hypersurface of degree $d\leq n$ in $\PP^n$ is Oka, which gives the affirmative answer to the hypersurface part of Problem~E of Forstneri\v{c} and L\'arusson~\cite{FL11} throughout the Fano range.

Campana and Winkelmann constructed dense entire curves in rationally connected projective manifolds whose images contain prescribed countable subsets~\cite{CW}. Alarc\'on and Forstneri\v{c} formulated the Oka-1 conjecture for these manifolds~\cite{AF25}, and Du, Guo, and Xie proved it~\cite{DGX26b}. Related analytic constructions of rational curves were developed by Du, Guo, and Xie~\cite{DGX26a}. Among earlier full Oka results are the smooth quartic hypersurfaces of dimension at least three~\cite{GX26} and all smooth Fano threefolds~\cite{AX26}. Section~\ref{sec:rc} proves the general rationally connected case directly by constructing families of entire curves with independent tangent directions.

Corollaries~\ref{cor:kahler-hsc} and~\ref{cor:kahler-nef-tangent} show that compact K\"ahler manifolds with nonnegative holomorphic sectional curvature or nef tangent bundle are Oka. Forstneri\v{c} and Kusakabe proved that, for $n\geq2$ and $k\geq1$, the unit disc bundle defined by a semipositive Hermitian metric on $\OO_{\PP^n}(k)$ is Oka~\cite{FK25}.

For complements, Kusakabe's theorem covers compact polynomially convex sets in $\C^n$, $n>1$~\cite{KusConvex24}; Forstneri\v{c} and Wold treated broader classes of unbounded convex sets~\cite{FW24}, and further examples come from the periodic-transport constructions of Du, Guo, Wang, and Xie~\cite{DGWX26}. In the projective setting, Theorem~\ref{thm:log} gives a criterion for $X\setminus D$ using a rational curve $f:\PP^1\to X$ with $f^{-1}(D)\subseteq\{\infty\}$ and ample $f^*T_X(-\log D)$, for a reduced simple normal crossings divisor $D$ in a connected projective manifold $X$. Here $T_X(-\log D)$ is the bundle of vector fields tangent to every component of $D$. Chen and Zhu's theory of $\mathbb A^1$-curves on log smooth varieties~\cite{ChenZhu}, combined with Corollary~\ref{cor:a1-connected}, gives applications to ample rationally connected divisors and complete intersections. The same deformation method also shows that deleting an algebraic subset of codimension at least two from a smooth projective rationally connected manifold preserves the Oka property (Corollary~\ref{cor:rc-codim-two}).

A \emph{holomorphic spray} on $Y$ is a holomorphic map $s:E\to Y$ from a holomorphic vector bundle $E\to Y$ such that $s(0_y)=y$. It is \emph{dominating} if $d(s|_{E_y})_{0_y}$ is surjective for every $y$; a manifold admitting such a spray is \emph{holomorphically elliptic}. Gromov's theorem implies that every holomorphically elliptic manifold is Oka~\cite{Gro89}. For smooth hypersurface complements we obtain:

\begin{samepage}
\begin{theorem}\label{thm:sharp-hypersurfaces}
Let $D\subset\PP^n$ be a smooth hypersurface of degree $d\ge1$, where $n\ge1$. The following conditions are equivalent:
\begin{enumerate}[label=\textup{(\roman*)}]
\item $\PP^n\setminus D$ is Oka;
\item $\PP^n\setminus D$ is holomorphically elliptic;
\item $d\le n+1$.
\end{enumerate}
For $n=1$, smoothness means that $D$ consists of $d$ distinct points; the positive cases are $\C$ and $\C^*$.
\end{theorem}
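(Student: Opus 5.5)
We prove the cycle (ii)$\Rightarrow$(i)$\Rightarrow$(iii)$\Rightarrow$(ii). The implication (ii)$\Rightarrow$(i) is Gromov's theorem~\cite{Gro89}.

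For (i)$\Rightarrow$(iii), we use that an Oka manifold is strongly dominable, so there is a nondegenerate holomorphic map $\C^n\to\PP^n\setminus D$. For $d\geq n+2$ such maps do not exist. For a generic $D$ of large degree this is the Kobayashi-hyperbolicity statement, but that is not available for every smooth $D$ and every $d\geq n+2$. We argue through volume forms instead. Since $D$ is smooth, it is a simple normal crossings divisor, and $K_{\PP^n}+D=\OO(d-n-1)$ is ample when $d\geq n+2$. So $\PP^n\setminus D$ is of log general type, and by the Carlson--Griffiths / Kobayashi--Ochiai theorem there is no holomorphic map $\C^n\to\PP^n\setminus D$ of generic maximal rank. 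Concretely, the Carlson--Griffiths construction gives a volume form with negative Ricci curvature on the complement, and the Ahlfors--Schwarz lemma then rules out nondegenerate maps from $\C^n$. This contradicts strong dominability, hence also dominability. For $n=1$ the statement reduces to $\PP^1$ minus at least three points being hyperbolic.

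For (iii)$\Rightarrow$(ii), we produce explicit dominating sprays. When $d=1$, the complement is $\C^n$. When $n=1$, the cases are $\C$ and $\C^*$, both complex Lie groups and hence elliptic. For $2\leq d\leq n+1$ we intend to use that $\PP^n\setminus D$ is affine and that the $d$-fold cyclic cover $\{x_0^d=F(x_1,\dots,x_n)\}$ branched along $D$ relates it to the smooth affine hypersurface $M_F=\{F=1\}\subset\C^{n+1}$. More precisely, $\PP^n\setminus D=\{F\neq0\}/\C^*$, which is isomorphic to $\{F=1\}/\mu_d$ (Milnor fibre quotient). We then aim to show that $\{F=1\}$ admits a dominating spray built from flows of complete vector fields. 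The field $\sum x_i\partial_{x_i}$ is not tangent, but the Euler-type fields $x_i\partial_j F\,\partial_{x_k}-\dots$ are; more usefully, the locally nilpotent derivations $\partial_iF\,\partial_j-\partial_jF\,\partial_i$ preserve $F$. These fields are complete only when their flows are algebraic, which fails in general. So the real plan is the following. For $d\leq n$, the hypersurface $D$ is Fano, and we write $\PP^n\setminus D$ as covered by lines meeting $D$ appropriately. For $d\leq n+1$ we use the $\mathbb A^1$-curve criterion of Theorem~\ref{thm:log}. We need a rational curve $f:\PP^1\to\PP^n$ with $f^{-1}(D)=\{\infty\}$ and $f^*T(-\log D)$ ample; for instance, a line or conic with maximal tangency to $D$ at one point. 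From $\deg f^*T_{\PP^n}(-\log D)=(n+1-d)\deg f$, ampleness is possible when $d\leq n$. The boundary case $d=n+1$, which is log Calabi--Yau, needs separate handling through the cyclic-cover spray construction: $\{F=1\}$ is then a smooth affine hypersurface of degree $n+1$. This yields Oka by Theorem~\ref{thm:log}, and (ii) needs the explicit spray.

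The main obstacle is exactly the ellipticity (as opposed to merely Oka) in the range $2\leq d\leq n+1$, especially the log Calabi--Yau case $d=n+1$. Here we would first try to build the spray on $\{F=1\}$ by composing flows of complete fields $g\,(\partial_iF\,\partial_j-\partial_jF\,\partial_i)$ with $g$ in the kernel. Such compositions dominate away from the critical locus of $F$, which is empty on $\{F=1\}$ when $D$ is smooth. The spray then descends through the $\mu_d$-quotient by averaging over a $\mu_d$-equivariant choice.
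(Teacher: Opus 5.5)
Your implications (ii)$\Rightarrow$(i) and (i)$\Rightarrow$(iii), via Carlson--Griffiths and strong dominability, agree with the paper. The gap is in the positive direction, and it sits at the critical degree $d=n+1$.

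First, ellipticity is not the real obstacle. You note that $\PP^n\setminus D$ is affine, hence Stein, and a Stein Oka manifold is holomorphically elliptic by Gromov. So (i)$\Rightarrow$(ii) is automatic, and what must be proved is (iii)$\Rightarrow$(i).

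For $d=n+1$ you correctly exclude Theorem~\ref{thm:log}, since the pullback of $T_{\PP^n}(-\log D)$ has degree $(n+1-d)\deg f=0$. Your substitute does not work. The field $\theta=\partial_iF\,\partial_j-\partial_jF\,\partial_i$ fixes every $x_k$ with $k\neq i,j$ and preserves $F$. Its orbits are therefore level curves of $F$ in coordinate $2$-planes. For $d\geq3$ these are in general hyperbolic affine curves, for example smooth plane curves of genus $(d-1)(d-2)/2\geq1$ minus their points at infinity. A complete holomorphic field has entire orbits, so it must vanish along such a curve. Hence $\theta$ is not complete, and $g\theta$ with $g$ a first integral is complete only if $g\equiv0$. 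No dominating spray on $\{F=1\}$ results.

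Averaging a spray over $\mu_d$ is also not a meaningful operation. It is not needed either: $\{F=1\}\to\PP^n\setminus D$ is an unramified covering, and the Oka property descends along it.

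Your suggestion for $2\leq d\leq n$ also fails as stated. For a line with full tangency, $\deg f^*T_{\PP^n}(-\log D)=n+1-d<n$ when $d\geq2$, so this rank-$n$ bundle cannot be ample. Conics fail once $2(n+1-d)<n$. Producing a suitable curve there requires Chen--Zhu, as in Theorem~\ref{thm:log-fano}.

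The missing idea is the paper's polar-line construction, which treats all $3\leq d\leq n+1$ uniformly. Take $[a]\notin D$ and $[v]$ with $D_a^jF(v)=0$ for $0\leq j\leq d-2$. Homogeneity gives $F(a+sv)=F(a)+s\,dF_a(v)$. So the line through $[a]$ and $[v]$ meets $D$ only at $[v]$ and possibly one further point. Its complement in $\PP^n\setminus D$ is $\C$ or $\C^*$. It is parametrized by $t\mapsto[a+F(a)E(\lambda,t)v]$, with $E(\lambda,t)=(e^{\lambda t}-1)/\lambda$ and $\lambda=dF_a(v)$, holomorphically even through $\lambda=0$.

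Generic-projection transversality makes the polar locus $Z_a$ a smooth complete intersection that varies submersively with $[a]$ over a Zariski-open set $\Omega$. A Koszul vanishing argument shows that $Z_a$ spans $\PP^n$, so these line directions span the tangent space at $[a]$. This yields centered entire families, hence \LAP{}, on $\Omega$. Theorem~\ref{thm:analytic-removability} then propagates the Oka property to all of $\PP^n\setminus D$. Isotropic lines do the same for $d=2$, and ellipticity follows from Steinness.
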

\end{samepage}

For complements, Kusakabe treated the quadric case~\cite{Kus21} and Xie the smooth cubic case~\cite{X26}; the plane cubic case answered Problem~B of Forstneri\v{c} and L\'arusson~\cite{FL11}. Theorem~\ref{thm:sharp-hypersurfaces} resolves Forstneri\v{c}'s Problem~5.3(a) for smooth hypersurfaces~\cite{Forstneric23} and settles the complement part of Problem~E of Forstneri\v{c} and L\'arusson~\cite{FL11} for every smooth hypersurface, with the exact threshold $d\leq n+1$. At the critical degree $d=n+1$, successive polar equations produce families of lines meeting $D$ in at most two distinct points, whose complements are parametrized by entire curves. For $d\geq n+2$, the rank obstruction of Carlson and Griffiths~\cite{CarlsonGriffiths} excludes the Oka property.

\subsection*{Relation to the work of OpenAI}
Our proof that \LAP{} implies the Oka property builds on
the local-to-global strategy of OpenAI \cite{OAI26}: local planar
extension with passive holomorphic parameters leads, through
polynomial source coordinates, to polydisc approximation and then to
convex approximation by removing real faces. The concepts of
transverse complete directions, expanding finite strips, and thin
transverse source coordinates are central inspirations. We also adapt
the center-gluing and boundary-disc propagation arguments used in
that framework. The formulation of \LAP{} is also inspired by the local extension condition introduced in~\cite{OAI26}; Appendix~\ref{app:L} proves their pointwise equivalence.

The contributions of the present authors are the formulation of
\LAP{} in arbitrary dimension, the proof that it implies the Oka
property, and the analytic refinements that make this criterion
effective. These include exact fiber-coordinate
normalization in scalar substitution, an explicit planar corridor collapse
with simultaneous sector gluing, and a near-identity estimate for
normalized inverse iterates that is uniform in the number of iterates.
For the convex step, one coarse entire spray fixes a complex collar;
polynomial approximation of its local fiber inverse then gives the
accuracy needed for ordinary bounded gluing. For the geometric applications, we combine this criterion with the
classical and recent results cited in their proofs.

\subsection*{How to read the proof}
The proof begins by using centered families to extend arbitrary maps with image in a small target neighborhood: scalar substitution and exact partial compositions provide this local operation. Polynomial source coordinates then give thin boundary fibers, and planar enlargement with transverse and spray variables retained as passive parameters gives polydisc approximation. A fixed annular overlap joins the enlarged family to the old map. Induction across real faces finally yields $\CAP$.

The paper follows this progression. Section~\ref{sec:local-geometry} treats centered entire families, graph coordinates, and center gluing. Section~\ref{scalar:section} develops scalar substitution and local planar extension; Sections~\ref{sec:source-geometry}--\ref{sec:convex} pass through polynomial source geometry and polydisc approximation to convex approximation. Subsections~\ref{sec:removability} and~\ref{sec:permanence} treat small exceptional sets, holomorphic images, complements, and group orbits. Sections~\ref{sec:rc} and~\ref{sec:divisor-complements} give the geometric applications, and Section~\ref{sec:conjectures} concludes with three conjectures and their compact K\"ahler analogues.

\section{Local geometry and gluing}\label{sec:local-geometry}

\subsection{The geometric condition behind strong dominability}
\label{family:section}

Let $Y$ be a complex manifold of dimension $n\geq1$. We use \LAP{} and its locus $G_Y$ from
Definition~\ref{family:condition}.

\begin{lemma}\label{family:entire}
Let $V\subset Y$ be an open neighborhood of $x$. Suppose $h_j:V\times\mathbb C\to Y$, $1\leq j\leq n$, are
holomorphic, $h_j(y,0)=y$, and the vectors $\partial_t h_j(x,0)$
form a basis of $T_xY$. Then $x\in G_Y$.
\end{lemma}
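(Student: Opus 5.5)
This lemma is essentially a direct verification of Definition~\ref{family:condition}, using constant sequences of families. The plan is to take the given entire families $h_j:V\times\C\to Y$ and, for any sequence of radii $R_\nu\to\infty$ (for instance $R_\nu=\nu$), set
\[
h_{j,\nu}:=h_j|_{V\times\Delta_{R_\nu}}:V\times\Delta_{R_\nu}\longrightarrow Y,\qquad 1\leq j\leq n .
\]

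The checks then run as follows.
\begin{enumerate}[label=\textup{(\arabic*)}]
\item Each $h_{j,\nu}$ is holomorphic, being the restriction of a holomorphic map to an open subset.
\item Each $h_{j,\nu}$ is centered: $h_{j,\nu}(y,0)=h_j(y,0)=y$ for all $y\in V$.
\item Fix any $\tau>0$, say $\tau=1$. Every $R_\nu$ with $R_\nu>\tau$ has $V\times\Delta_\tau\subset V\times\Delta_{R_\nu}$. After discarding the finite initial segment with $R_\nu\leq\tau$, which the remarks after the definition explicitly allow, the maps $h_{j,\nu}$ all coincide with $h_j$ on $V\times\Delta_\tau$.
\item The sequence is therefore constant on $V\times\Delta_\tau$, so it converges locally uniformly, with respect to any distance inducing the topology of $Y$, to the limit maps $h_j|_{V\times\Delta_\tau}$.
\item The initial velocities are $\partial_t h_{j}(x,0)$. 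These form a basis of $T_xY$ by hypothesis, so all the requirements of Definition~\ref{family:condition} hold at $x$.
\end{enumerate}
Hence $Y$ satisfies \LAP{} at $x$, which is exactly the statement $x\in G_Y$.

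There is no genuine obstacle here. The only point needing care is bookkeeping: $V$ and $\tau$ must be fixed independently of $\nu$, and the convergence is required only on the fixed product $V\times\Delta_\tau$, not on all of $V\times\C$. Both are automatic for a constant sequence. If one prefers a sequence with $R_\nu>\tau$ from the start, take $R_\nu=\tau+\nu$, which avoids even the truncation step.
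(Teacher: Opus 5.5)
Your proposal is correct and matches the paper, whose proof is just ``Immediate from the definition''. You spell out that verification: take the constant sequence of restrictions $h_j|_{V\times\Delta_{R_\nu}}$ with $R_\nu=\tau+\nu$, so that $V$ and $\tau$ are fixed, convergence on $V\times\Delta_\tau$ is trivial, and the initial velocities are the given basis.
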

\begin{proof}
Immediate from the definition.
\end{proof}

\begin{proposition}\label{family:strong-domination}
Every strongly dominable complex manifold satisfies \LAP.
More precisely, a dominating entire map
at $x$ supplies centered entire families near $x$ whose initial
velocities form a basis.
\end{proposition}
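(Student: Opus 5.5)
The plan is to turn the dominating map into centered families by composing with a local inverse, and then invoke Lemma~\ref{family:entire}. Let $H:\C^n\to Y$ be holomorphic with $H(0)=x$ and $dH_0$ invertible. By the inverse function theorem there are an open neighborhood $W\ni 0$ in $\C^n$ and an open neighborhood $V\ni x$ in $Y$ such that $H|_W:W\to V$ is biholomorphic. Write $\phi=(H|_W)^{-1}:V\to W$; it is holomorphic, satisfies $\phi(x)=0$, and $d\phi_x=(dH_0)^{-1}$.

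Let $e_1,\dots,e_n$ be the standard basis of $\C^n$. For $1\leq j\leq n$ define
\[
h_j:V\times\C\to Y,\qquad h_j(y,t)=H\bigl(\phi(y)+t\,e_j\bigr).
\]
Each $h_j$ is holomorphic, being a composition of holomorphic maps; no domain issue arises because $H$ is entire. It is centered, since $h_j(y,0)=H(\phi(y))=y$ for $y\in V$. Its initial velocity is $\partial_t h_j(y,0)=dH_{\phi(y)}(e_j)$, and at $y=x$ this equals $dH_0(e_j)$. Because $dH_0$ is invertible, these $n$ vectors form a basis of $T_xY$.

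Lemma~\ref{family:entire} then gives $x\in G_Y$. Explicitly, one takes $h_{j,\nu}=h_j|_{V\times\Delta_{R_\nu}}$ for any radii $R_\nu\to\infty$, which converge trivially to $h_j$ on $V\times\Delta_\tau$. If $Y$ is strongly dominable, this argument applies at every point $x$, so $Y$ satisfies \LAP. The dimension-zero case holds by convention.

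There is no genuine obstacle here. The only points needing care are that $\phi$ is defined on the whole of $V$ and that the translates $\phi(y)+te_j$ lie in $\C^n$ for every $t$. Both hold because the inverse is taken only in the base variable $y$, while $H$ is defined on all of $\C^n$. By continuity of $y\mapsto dH_{\phi(y)}$, the same families also give bases at all points near $x$, which matches the remark that $G_Y$ is open.
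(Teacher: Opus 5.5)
Your proposal is correct and is essentially the paper's proof: the paper also sets $h_j(y,t)=\Psi(\lambda(y)+te_j)$, where $\lambda$ is a local inverse of the dominating map, and reads off the initial velocities $d\Psi_a(e_j)$. The paper adds only one remark: if dominability is phrased with a map from $\C^N$ whose differential is surjective, one first restricts that map to an affine $n$-plane on which the differential is an isomorphism.
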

\begin{proof}
Choose a holomorphic map $\Psi:\mathbb C^n\to Y$ and a point
$a\in\mathbb C^n$ such that $\Psi(a)=x$ and $d\Psi_a$ is an
isomorphism. Let $\lambda:V\to\mathbb C^n$ be its local inverse
on a neighborhood of $x$. For the standard coordinate vectors $e_j$,
set
\begin{equation*}
 h_j(y,t)=\Psi\bigl(\lambda(y)+te_j\bigr),
 \qquad (y,t)\in V\times\mathbb C.
\end{equation*}
These families are centered and their initial velocities at $x$ are
$d\Psi_a(e_j)$, a basis. If dominability is stated using a map from
$\mathbb C^N$ with surjective differential, restrict it first to an
affine $n$-plane on which that differential is an isomorphism.
\end{proof}

\subsection{Local coordinates and one-step gluing}\label{sec:tools}
To glue maps defined on two overlapping source domains, we allow each
map to vary in a small holomorphic family. The additional variable $v$
parametrizes small changes of the original map, which corresponds
to $v=0$. We then seek small holomorphic choices
$v=a(z)$ and $v=b(z)$ on the two domains so that the resulting maps
agree on the overlap. The lemmas below construct these coordinates,
split a holomorphic error into contributions from the two sides, and
solve for $a$ and $b$ by a contraction. The output is one glued map.

\subsubsection{Conventions and graph coordinates}
Throughout, $Y$ is a complex manifold of dimension $n$. We fix a
complete distance $d_Y$ inducing its topology; such a distance exists
by the theorem of Nomizu and Ozeki~\cite[Theorem~1]{NO61}, applied
componentwise. Completeness is used
only when taking an exhaustion limit. We abbreviate $\Delta=\Delta_1$
and use the maximum norm $\|v\|_\infty=\max_j|v_j|$ to write
$\Delta_r^q=\{v\in\C^q:\|v\|_\infty<r\}$. The notation
$\OO(\Op K,Y)$ means maps holomorphic on some neighborhood of $K$.
A closed polydisc of dimension zero is a point.

For a holomorphic map $f:U\to Y$ on a complex manifold $U$ and
$b>0$, a \emph{chart spray} centered at $f$ is a holomorphic family
$F:U\times\Delta_b^n\to Y$ with $F(z,0)=f(z)$ for which
$(z,v)\mapsto(z,F(z,v))$ is biholomorphic onto its image.
The following standard graph construction is used by
OpenAI~\cite[Lemma~2.6]{OAI26}.

\begin{lemma}\label{lem:chart-spray}
Let $S$ be Stein, $f:S\to Y$ holomorphic, and suppose
$f^*TY$ is holomorphically trivial. For every $K\Subset S$ there
are a neighborhood $U$ of $K$, a number $b>0$, and a chart spray
$F:U\times\Delta_b^n\to Y$ centered at $f$.
\end{lemma}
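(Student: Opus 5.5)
The plan is the standard graph construction via a tubular (Docquier–Grauert / Siu) neighborhood of a graph, carried out with a trivializing frame of $f^*TY$.

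\textbf{Step 1: a holomorphic frame along $f$.} Since $f^*TY$ is holomorphically trivial, choose holomorphic sections $\xi_1,\dots,\xi_n$ of $f^*TY$ over $S$ that form a frame at every point.

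\textbf{Step 2: a local holomorphic exponential.} The graph $\Gamma_f=\{(z,f(z)):z\in S\}$ is a closed complex submanifold of $S\times Y$, biholomorphic to $S$ and hence Stein. By Siu's theorem it has an open Stein neighborhood $\Omega\subset S\times Y$.

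Embed $\Omega$ properly in some $\C^N$, with the first factor handled separately. Alternatively, use the Docquier--Grauert holomorphic retraction argument, which needs only a Stein neighborhood of $\Gamma_f$. This yields a holomorphic map $\Phi$ from a neighborhood $W$ of the zero section in the normal bundle of $\Gamma_f$ in $S\times Y$, restricted to the vertical directions, into $S\times Y$. It satisfies:
\begin{itemize}
\item $\Phi$ is the identity on the zero section;
\item its differential there induces the identity on the normal bundle;
\item it preserves the first coordinate.
\end{itemize}
The normal bundle of $\Gamma_f$ is naturally identified with $f^*TY$ via the vertical tangent space $\{0\}\times T_{f(z)}Y$.

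To obtain fiber preservation, it is cleaner to work on the Stein manifold $S\times Y$ restricted to $\Omega$. Take a holomorphic retraction-type construction: embed $\Omega\hookrightarrow\C^N$ and use a holomorphic projection $\pi$ from a neighborhood of $\Omega$ onto $\Omega$ (Docquier--Grauert). Then define
\[
F(z,v)=\mathrm{pr}_Y\,\pi\Bigl(\iota(z,f(z))+\textstyle\sum_j v_j\,d\iota(0,\xi_j(z))\Bigr).
\]
This is defined for $z$ near $K$ and $|v|<b$, since $K$ is compact. It is holomorphic, and it satisfies $F(z,0)=f(z)$ because $\pi$ fixes $\Omega$. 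Moreover,
\[
\partial_v F(z,0)=\bigl(\xi_1(z),\dots,\xi_n(z)\bigr),
\]
up to the component of $d\pi$ tangent to the $S$-direction. That component must be corrected, since $\pi$ need not preserve the first coordinate. Write $\pi(\cdot)=(\pi_S,\pi_Y)$. The map
\[
G(z,v)=\bigl(\pi_S(\cdots),\,\pi_Y(\cdots)\bigr)
\]
has an invertible differential at $(z,0)$ for $(z,v)\in S\times\C^n$, since it is a local biholomorphism onto $S\times Y$. So we may instead define $F$ by solving for the point whose $S$-coordinate equals $z$. The implicit function theorem, applied uniformly near the compact set $K$, gives this.

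\textbf{Step 3: injectivity and biholomorphicity.} The map $(z,v)\mapsto(z,F(z,v))$ has differential
\[
\begin{pmatrix} I & 0\\ * & (\xi_j(z))\end{pmatrix}
\]
at $v=0$, which is invertible, and the map is injective on the zero section. A standard compactness argument then shows it is injective on $U\times\Delta_b^n$ after shrinking: if injectivity failed for all $b$, a sequence of colliding pairs would converge to the zero section over $K$. Combined with the inverse function theorem, this gives biholomorphicity onto an open image.

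\textbf{Main obstacle.} The main obstacle is making $F$ fiber-preserving. The plan is to get it from a Stein-neighborhood retraction together with the implicit function theorem near $K$. Only local data near compact $K$ is needed, so no global tubular neighborhood is required.
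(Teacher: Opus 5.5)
Your argument is correct in substance but takes a different route from the paper. Both start from Siu's Stein neighborhood $\Omega$ of the graph. The paper then uses Cartan's Theorem~B to extend the frame $\xi_1,\dots,\xi_n$ along the graph to holomorphic \emph{vertical} vector fields on a smaller neighborhood, that is, fields tangent to the fibers $\{z\}\times Y$. It then composes their local flows. Vertical flows never move the $S$-coordinate, so fiber preservation is automatic, and only the inverse function theorem and compactness of $K$ remain. You instead use the Stein embedding $\iota:\Omega\hookrightarrow\C^N$ and a Docquier--Grauert retraction $\pi$. This avoids extending the frame off the graph and integrating vector fields. The price is that you get a map $\Psi=(\Psi_S,\Psi_Y)$, $\Psi(z,v)=\iota^{-1}\pi\bigl(\iota(z,f(z))+\sum_j v_j\,d\iota(0,\xi_j(z))\bigr)$, whose $S$-component drifts for $v\neq0$. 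You therefore need an extra reparametrization of the base.

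Three things should be cleaned up.
\begin{itemize}
\item The bullet claiming that $\Phi$ preserves the first coordinate is unjustified and should be deleted; it is exactly the defect you later repair.
\item At $v=0$ there is no stray $S$-component. Since $d\pi$ is the identity on $T\iota(\Omega)$ and $d\iota(0,\xi_j(z))$ is tangent to $\iota(\Omega)$, you get $\partial_{v_j}\Psi(z,0)=(0,\xi_j(z))$ exactly.
\item The correction step should be made explicit. The map $\Lambda(w,v)=(\Psi_S(w,v),v)$ is the identity on $S\times\{0\}$ with invertible differential there. By the same injectivity-by-compactness lemma as in your Step~3, both $\Lambda$ and $\Psi$ are biholomorphic onto their images on some neighborhood $N$ of $K\times\{0\}$, and $\Lambda(N)\supset U\times\Delta_b^n$ for suitable $U,b$. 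Set $F=\Psi_Y\circ\Lambda^{-1}$. Then $(z,F(z,v))=\Psi(\Lambda^{-1}(z,v))$, so the graph map is a composition of biholomorphisms onto their images, with $F(z,0)=f(z)$ and $\partial_vF(z,0)=\xi(z)$.
\end{itemize}
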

\begin{proof}
The graph of $f$ is a Stein submanifold of $S\times Y$ and has a
Stein neighborhood by Siu's theorem~\cite[Main Theorem]{Siu76}. A frame of its vertical tangent bundle extends,
after shrinking that neighborhood, to holomorphic vertical vector
fields: apply Cartan's theorem~B to the restriction sequence for the
vertical tangent sheaf. Compose their local flows. Near the zero
section this gives a fiber-preserving holomorphic map whose vertical
differential is the chosen frame. The inverse function theorem and
compactness of $K$ give one neighborhood $U$ and one fiber radius on
which the graph map is biholomorphic.
\end{proof}

In the applications, the source is a contractible Stein domain or a
planar domain times a polydisc. A planar domain has the homotopy type
of a one-dimensional CW complex, so complex vector bundles on it are
topologically trivial. The Oka--Grauert principle~\cite[Satz~I]{Grauert58}
therefore provides the triviality required in
Lemma~\ref{lem:chart-spray}.
When using the inverse of a chart spray $F$, we restrict the source
to a compact subset $K_0\Subset U$ and the fiber variable to
$\overline\Delta_{b'}^n$ with $b'<b$. The image of
$K_0\times\overline\Delta_{b'}^n$ under $(z,v)\mapsto(z,F(z,v))$
is then a compact subset of the open chart image. Consequently,
sufficiently small perturbations of these values still lie in that
image, so the inverse remains defined uniformly for $z\in K_0$.

\subsubsection{Bounded additive splitting}
For an open set $U\subset\C^m$, write $H^\infty(U,\C^n)$ for the
Banach space of bounded holomorphic maps $u:U\to\C^n$, with
\[
 \|u\|_U=\sup_{z\in U}\max_{1\leq j\leq n}|u_j(z)|.
\]
Derivatives are measured in the operator norm induced by the maximum
norm. We use pairs of bounded open sets $A,B\subset\C^m$ whose
closed fringes $\overline{A\setminus B}$ and
$\overline{B\setminus A}$ are disjoint. Set $C=A\cap B$.

\begin{lemma}\label{lem:additive-splitting}
Suppose $A\cup B$ is a smoothly bounded strongly pseudoconvex domain.
There are a constant $M>0$ and bounded linear operators
\[
 R_A:H^\infty(C,\C^n)\to H^\infty(A,\C^n),\qquad
 R_B:H^\infty(C,\C^n)\to H^\infty(B,\C^n)
\]
with
\[
 R_Au-R_Bu=u,\qquad
 \|R_Au\|_A+\|R_Bu\|_B\leq M\|u\|_C.
\]
For bounded open sets $A,B\subset\C$ with the same separation
condition, the conclusion also holds on $A\times P,B\times P$
for every open polydisc $P$; the operators preserve
holomorphic dependence on $P$, and their bound depends only on $A,B$.
\end{lemma}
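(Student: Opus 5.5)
The approach is the classical Cousin-I splitting with bounds, reduced to the $\bar\partial$-equation on the strongly pseudoconvex domain $D=A\cup B$. Since the fringes $\overline{A\setminus B}$ and $\overline{B\setminus A}$ are disjoint compact sets, there is a smooth cutoff $\chi:\C^m\to[0,1]$ with $\chi=1$ on a neighborhood of $\overline{A\setminus B}$ and $\chi=0$ on a neighborhood of $\overline{B\setminus A}$. All derivatives of $\chi$ are then supported in a region where every point of $D$ already lies in $C$. Given $u\in H^\infty(C,\C^n)$, the product $(1-\chi)u$ extends by zero to a smooth bounded map on $A$, and $-\chi u$ extends by zero to a smooth bounded map on $B$. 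Their difference on $C$ is $u$. The obstruction to holomorphy is the $(0,1)$-form $\alpha=u\,\bar\partial\chi$, which is well defined on all of $D$ (it vanishes off $C$). It is $\bar\partial$-closed and satisfies $\|\alpha\|_{L^\infty(D)}\le \|\bar\partial\chi\|_\infty\|u\|_C$.

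The key step is to solve $\bar\partial w=\alpha$ on $D$ with a sup-norm bound $\|w\|_D\le c\|\alpha\|_{L^\infty(D)}$. For a smoothly bounded strongly pseudoconvex domain this is the Henkin--Ramírez (or Grauert--Lieb, Kerzman) integral solution operator $T$, which is linear and bounded $L^\infty_{0,1}\to L^\infty$; it is even Hölder-$1/2$, but only the sup bound is needed. We then set
\[
R_Au=(1-\chi)u+Tw\text{-correction},\quad\text{i.e.}\quad R_Au=(1-\chi)u-T\alpha\ \text{on }A,\qquad R_Bu=-\chi u-T\alpha\ \text{on }B.
\]
Both are holomorphic, since $\bar\partial((1-\chi)u)=-\alpha$; the signs are fixed so that $\bar\partial$ cancels, replacing $-T\alpha$ by $+T\alpha$ if needed. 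Their difference on $C$ is $u$, and linearity and the bound $M=2+2c\|\bar\partial\chi\|_\infty$ are immediate. Applying this componentwise handles $\C^n$-values.

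For the planar product case, $A\cup B$ need not be strongly pseudoconvex, and $P$ is arbitrary, so the bound must be independent of $P$. Here I would work in one variable only: cut off with $\chi(z)$ depending only on $z\in\C$ and solve $\bar\partial_z w=u(z,p)\,\partial_{\bar z}\chi$ with the Cauchy transform
\[
w(z,p)=\frac{1}{2\pi i}\int_{\C}\frac{\alpha(\zeta,p)}{\zeta-z}\,d\zeta\wedge d\bar\zeta,
\]
with the integrand compactly supported in a bounded set. The kernel is locally integrable, so $\|w\|\le c(A,B)\sup|\alpha|$ uniformly in $p$, and differentiation under the integral shows that $w$ is holomorphic in $p$. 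Because $\alpha$ is holomorphic in $p$ and the operator acts only in $z$, the resulting maps are holomorphic in $(z,p)$: they are $\bar\partial_z$-closed and holomorphic in $p$, so holomorphic by Hartogs' separate analyticity plus local boundedness. This also gives $R_A,R_B$ with bound depending only on $A$, $B$ and $\chi$.

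The main obstacle is the uniform $L^\infty$ estimate for $\bar\partial$ in the strongly pseudoconvex case. I would simply cite the Henkin--Ramírez kernel estimates rather than reprove them. One must also check that $\alpha$ extends boundedly across $\partial C\cap D$; this is ensured because $\bar\partial\chi$ vanishes near both fringes.
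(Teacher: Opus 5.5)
Your proposal is correct and is essentially the paper's proof. The paper uses a cutoff separating the two fringes (your $1-\chi$ is the paper's $\chi$), a bounded linear $\bar\partial$-solution operator on $A\cup B$ applied to $u\,\bar\partial\chi$, and, in the planar case, the Cauchy--Green transform in the scalar variable alone, which preserves holomorphic dependence on $p$ with a bound depending only on $A,B$.

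The one slip is the sign, which should be fixed rather than left ``if needed''. With your conventions, $\bar\partial\bigl((1-\chi)u\bigr)=\bar\partial(-\chi u)=-\alpha$. The correct formulas are therefore $R_Au=(1-\chi)u+T\alpha$ and $R_Bu=-\chi u+T\alpha$.
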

\begin{proof}
The first assertion is the classical additive Cousin splitting in
\cite[Lemma~4.6]{F03}. For completeness, set $\Omega=A\cup B$ and
choose a smooth function $\chi:\C^m\to[0,1]$ with compact support,
equal to $0$ near $\overline{A\setminus B}$ and to $1$ near
$\overline{B\setminus A}$. For $u\in H^\infty(C,\C^n)$, the functions
$\chi u$ and $(\chi-1)u$ extend smoothly by zero to $A$ and $B$,
respectively. Their $\bar\partial$ derivatives agree with the
bounded, $\bar\partial$-closed form $\eta=u\bar\partial\chi$,
extended by zero from $C$ to $\Omega$. A bounded linear
$\bar\partial$ solution operator on $\Omega$ gives $v$ with
$\bar\partial v=\eta$. Hence
\[
 R_Au=\chi u-v|_A,\qquad R_Bu=(\chi-1)u-v|_B
\]
are holomorphic, differ by $u$ on $C$, and satisfy the required
bound; the products denote their zero extensions.

For the planar assertion, use the Cauchy--Green operator on the
coefficient $u\bar\partial\chi$, extended by zero to the plane;
see \cite[Lemma~3.2 and the proof of Lemma~3.3]{FArak19}.
The kernel estimate gives a bound depending only on the fixed planar
sets. Since the cutoff and integration involve only the planar
variable, the operators preserve holomorphic dependence on the
parameters in $P$, with the same bound for every $P$ and its
dimension. Apply the construction componentwise.
\end{proof}

\subsubsection{Only the centers need to be glued}
The following form of OpenAI's center-gluing argument~\cite[Lemma~2.8]{OAI26}
is used throughout the paper. It separates the geometry, which
determines the splitting bound $M$, from the accuracy of the families.

\begin{lemma}\label{lem:center-gluing}
Suppose the operators of Lemma~\ref{lem:additive-splitting} are fixed.
Let $F_A$ be a holomorphic chart spray on $A\times\Delta_{\rho_+}^n$
and $F_B$ a holomorphic family on $B\times\Delta_\rho^n$,
where $\rho_+>\rho>0$. Suppose their transition is defined on
$C\times\Delta_\rho^n$ and has the form
\[
 F_A(z,\cdot)^{-1}(F_B(z,v))=v+c(z,v).
\]
Set $e=\|c\|_\infty$ and $\kappa=\|D_vc\|_\infty$ there.
If $Me<\rho/2$ and $M\kappa<1/2$, there are holomorphic corrections
$a$ on $A$ and $b$ on $B$ satisfying
\[
 F_A(z,a(z))=F_B(z,b(z))\quad(z\in C),\qquad
 \|a\|_A+\|b\|_B\leq Me.
\]
The same conclusion holds if the families also depend holomorphically
on $p\in P$, provided the splitting operators act on functions of
$(z,p)$, preserve holomorphic dependence on $p$, and have the same
bound $M$ uniformly on $P$. Here the radii are independent of $p$,
and $e$ and $\kappa$ are the supremum norms on
$C\times P\times\Delta_\rho^n$, subject to the same inequalities.
The corrections are then holomorphic
functions $a(z,p)$ and $b(z,p)$, and the estimates are uniform in $p$.
Lemma~\ref{lem:additive-splitting} supplies such operators for planar
$A,B$.
\end{lemma}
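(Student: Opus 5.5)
We look for the corrections through a fixed-point equation built from the splitting operators of Lemma~\ref{lem:additive-splitting}. The gluing condition $F_A(z,a(z))=F_B(z,b(z))$ on $C$ holds exactly when $a=b+c(\cdot,b)$ on $C$, provided $b(z)\in\Delta_\rho^n$ and $b+c(\cdot,b)\in\Delta_{\rho_+}^n$. Indeed, applying the inverse chart $F_A(z,\cdot)^{-1}$ to $F_B(z,b(z))$ gives $b(z)+c(z,b(z))$ by the transition formula, and $F_A(z,\cdot)$ is injective on $\Delta_{\rho_+}^n$ because $F_A$ is a chart spray. So the task is to solve
\[
 a|_C-b|_C=c(\cdot,b|_C),\qquad a\in H^\infty(A,\C^n),\ b\in H^\infty(B,\C^n).
\]

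Given $b$ with $\|b\|_B<\rho$, set $u=c(\cdot,b)\in H^\infty(C,\C^n)$. Then $a=R_Au$ and $b'=R_Bu$ satisfy $a-b'=u$ on $C$. The plan is therefore to look for a fixed point of
\[
 T(b)=R_B\bigl(c(\cdot,b|_C)\bigr)
\]
on the closed ball $\mathcal B=\{b\in H^\infty(B,\C^n):\|b\|_B\leq Me\}$, and then to set $a=R_A(c(\cdot,b))$. Since $Me<\rho/2$, every $b\in\mathcal B$ takes values in $\Delta_\rho^n$, so $c(\cdot,b)$ is defined, holomorphic, and bounded by $e$. Hence $\|T(b)\|_B\leq M e$, and $T$ maps $\mathcal B$ to itself.

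For the contraction estimate, the mean value inequality on the convex polydisc $\Delta_\rho^n$ (in the max norm, with the operator norm used for $D_vc$) gives, pointwise,
\[
 |c(z,b_1(z))-c(z,b_2(z))|\leq\kappa\|b_1-b_2\|_B .
\]
Consequently $\|T b_1-T b_2\|_B\leq M\kappa\|b_1-b_2\|_B\leq\frac12\|b_1-b_2\|_B$. The ball $\mathcal B$ is a closed subset of a Banach space, so Banach's fixed point theorem yields a unique fixed point $b$.

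It remains to check the conclusions. We have $\|a\|_A\leq M\|c(\cdot,b)\|_C\leq Me$, and the two bounds combine to the stated estimate $\|a\|_A+\|b\|_B\leq Me$: the splitting bound controls the sum $\|R_Au\|_A+\|R_Bu\|_B$ by $M\|u\|_C\leq Me$, so we use that combined inequality directly rather than separate bounds. In particular $\|a\|_A<\rho/2<\rho_+$, so $a$ lies in the domain of $F_A$. On $C$ we then have $a=b+c(\cdot,b)$, which is the gluing identity by the first paragraph.

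For the parametric version, repeat the argument in the Banach space of bounded holomorphic functions of $(z,p)$ on $B\times P$. The hypotheses on the operators (they preserve holomorphy in $p$ and have the uniform bound $M$) and the uniform sup norms $e,\kappa$ over $C\times P\times\Delta_\rho^n$ make every estimate identical. The fixed point is then holomorphic in $(z,p)$, since that space is closed under uniform limits.

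The only delicate point is domain bookkeeping: we must make sure that $b$ stays in $\Delta_\rho^n$ and $a$ in $\Delta_{\rho_+}^n$, so that the transition identity and the injectivity of the chart both apply. This is exactly what the margins $Me<\rho/2$ and $\rho_+>\rho$ provide.
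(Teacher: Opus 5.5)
Your proof is correct and is essentially the paper's argument. The paper runs the contraction on $u\in H^\infty(C,\C^n)$ via $u\mapsto c(\cdot,R_Bu)$ on the ball $\|u\|_C\le e$, whereas you run it on $b$ via $b\mapsto R_B(c(\cdot,b))$; this is the same fixed-point problem under the substitution $b=R_Bu$, and your use of the combined splitting bound to get $\|a\|_A+\|b\|_B\le Me$ is exactly the right bookkeeping.
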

\begin{proof}
On the closed ball $\|u\|_C\leq e$ in $H^\infty(C,\C^n)$, set
\[
 \mathcal T(u)(z)=c(z,R_Bu(z)).
\]
The first inequality keeps $R_Bu$ in the prescribed fiber domain.
We have $\|\mathcal T(u)\|_C\leq e$ and
$\|\mathcal T(u)-\mathcal T(\widetilde u)\|_C
\leq M\kappa\|u-\widetilde u\|_C$.
At the fixed point set $a=R_Au$, $b=R_Bu$. Then
$a-b=u=c(z,b)$, which is exactly the transition identity required
for the centers to agree. The norm estimate follows from the linear
splitting bound.
\end{proof}

Consequently the glued map changes the old center by an amount
tending to zero with $e$, uniformly on every retained compact set.
This is enough to iterate: in our applications the enlarged compact
has a contractible Stein neighborhood inside the domain of the glued
map. Its pulled-back tangent bundle is therefore trivial, and
Lemma~\ref{lem:chart-spray} constructs a new chart spray there for
the next step, so the gluing proceeds without carrying a spray across
the overlap. Inverse fiber charts are required only for the old family.
If the transition is
uniformly small on a larger fiber polydisc, its derivative is small
on a smaller one by Cauchy estimates; this is how the hypothesis on
$\kappa$ will always be obtained.

\section{From disc families to local extension}
\label{scalar:section}

We first work in one complex source coordinate, with the remaining
coordinates serving as holomorphic parameters. The goal is to extend
approximation from an old disc to a larger disc by gluing along a thin
overlap. This is the gluing idea underlying OpenAI's two-strip
argument~\cite[Proposition~2.2]{OAI26}, adapted below to centered disc families.

\begin{definition}\label{def:special-planar-pair}
A pair $D\subset D'\subset\C$ is a \emph{special planar pair}
\cite[Section~2, before Definition~2.1]{OAI26} if $D$ and $D'$ are
closed Jordan discs with piecewise smooth boundaries and there is a
closed Jordan disc $E\subset\C$ with piecewise smooth boundary such
that $D'=D\cup E$ and $D\cap E$ is homeomorphic to $[0,1]$ and is
a proper subset of both $\partial D$ and $\partial E$.
\end{definition}

We use slightly larger neighborhoods when making source splittings.

The model to keep in mind is a centered family
$h_\nu:V\times\Delta_{R_\nu}\to Y$, with $h_\nu(y,0)=y$.
Here $s$ is the source point, $w$ allows a small change of the old
map, and $t$ is the scalar parameter to be replaced by a holomorphic
function of $s$. If $G(s,w)$ is a small chart spray around an old map
$f_0$, with image in $V$, then the families below are
\[
 T_\nu(s,w,t)=h_\nu(G(s,w),t).
\]
The approximation class in the lemma records which
maps on the old product can already be approximated from the enlarged
product.
The lemma says that substitution into the limiting family preserves
this approximation property.

\begin{lemma}\label{scalar:substitution}
Let $D\subset D'$ be a special planar pair and, for an integer $q\geq0$
and a closed polydisc $P\subset\mathbb C^q$, set $K=D\times P$ and
$L=D'\times P$. The class $\mathcal A$ consists of maps to $Y$ holomorphic near $K$ that are uniformly
approximable on $K$ by maps to $Y$ holomorphic near $L$.

Suppose $f_0\in\mathcal A$, $\dim Y=n$, and
\[
 G:\Omega\times\Delta_r^n\longrightarrow Y,\qquad G(s,0)=f_0(s),
\]
is a chart spray on a fixed neighborhood $\Omega$ of $K$; that is,
$(s,w)\mapsto(s,G(s,w))$ is biholomorphic onto its image. Suppose the holomorphic maps
\[
 T_\nu:\Omega\times\Delta_r^n\times\Delta_{R_\nu}\longrightarrow Y,
 \qquad T_\nu(s,w,0)=G(s,w),\qquad R_\nu\longrightarrow\infty,
\]
converge locally uniformly on $\Omega\times\Delta_r^n\times\Delta_\tau$
to $T$, for some $\tau>0$. If $\varphi$ is holomorphic near $K$ and
$\|\varphi\|_K<\tau$, then $T(\cdot,0,\varphi)\in\mathcal A$.
\end{lemma}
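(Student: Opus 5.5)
The plan is to produce maps holomorphic near $L$ by gluing two pieces with Lemma~\ref{lem:center-gluing}. The first piece is a map $g$, holomorphic near $L$, that approximates $f_0$; it exists because $f_0\in\mathcal A$. The second piece is the substituted family $T_\nu(s,w,\psi(s))$, where $\psi$ is a polynomial in $s$ (with coefficients holomorphic in the $P$-variables). We choose $\psi$ so that $\psi\approx\varphi$ on the bulk of $D$ and $\psi\approx0$ on a thin region near the arc $D\cap E$, where the gluing takes place.

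The mechanism behind the gluing is the normalization $T_\nu(s,w,0)=G(s,w)$. Fix a compact neighborhood $K_\Omega\Subset\Omega$ of $K$ and radii $r'<r$, $\tau'<\tau$. Locally uniform convergence $T_\nu\to T$ on $\Omega\times\Delta_r^n\times\Delta_\tau$ gives equicontinuity for large $\nu$. Hence for every $\eta>0$ there is $\delta>0$, independent of $\nu$ for $\nu$ large, such that
\[
\|G(s,\cdot)^{-1}T_\nu(s,w,t)-w\|<\eta \quad\text{for } s\in K_\Omega,\ \|w\|\le r',\ |t|<\delta .
\]
Cauchy estimates on a slightly smaller fiber polydisc then bound the $w$-derivative of the error. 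On the other hand, on $\{\|\psi-\varphi\|<\epsilon_1\}\cap\{|\varphi|\le\tau'\}$ the map $T_\nu(s,0,\psi(s))$ is close to $T(s,0,\varphi(s))$ once $\nu$ is large. The fact that $R_\nu\to\infty$ is used only to keep $T_\nu(s,w,\psi(s))$ defined wherever $\psi$ takes large values in the transition zone. There $\psi$ is uncontrolled but bounded by a constant fixed before $\nu$ is chosen.

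The steps are as follows, with all choices made in this order.
\begin{enumerate}[label=(\arabic*)]
\item Fix planar pieces. Let $A'\subset\C$ be a neighbourhood of the part of $D$ away from the arc, and $B'$ a neighbourhood of $E$ together with a thin collar of the arc. Arrange that the overlap $C'=A'\cap B'$ is a thin strip along the arc, that the fringes are separated, and that $A'\cup B'$ is a neighbourhood of $D'$. Set $A=A'\times P^\circ$ and $B=B'\times P^\circ$, with $P^\circ$ a slightly larger open polydisc. This fixes $M$ through the planar case of Lemma~\ref{lem:additive-splitting}, uniformly in the $P$-parameters.
\item Choose $\eta$, and hence $\delta$, so that $Me<\rho/2$ and $M\kappa<1/2$.
\item Use Runge approximation on the polynomially convex compact (disjoint planar Jordan discs times $P$) to choose $\psi$ with $\psi\approx\varphi$ on $\overline{A'\setminus B'}\times P$ and $|\psi|<\delta/2$ on $\overline{B'}\cap\overline{A'}$, extended suitably.
\item Choose $g$ so close to $f_0$ on $K$ that $g=G(s,w_g(s))$ with $\|w_g\|$ small on the relevant part of the overlap.
\item Take $\nu$ large.
\end{enumerate}
On $B$ we use a chart spray $F_A$ centered at $g$, from Lemma~\ref{lem:chart-spray}. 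The neighbourhood there is contractible Stein, so the pulled-back tangent bundle is trivial by Grauert. On $A$ we use $F_B(s,v)=T_\nu(s,v,\psi(s))$. The transition $F_A^{-1}F_B$ is then $v+c$ with $c$ and $D_vc$ small by the estimate above, and Lemma~\ref{lem:center-gluing} gives a glued map near $L$. That map is within $O(Me)$ of $T_\nu(s,0,\psi(s))$ on $A$.

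The main obstacle is the strip where the gluing happens. There $\psi$ must be nearly $0$, whereas it should approximate $\varphi$ on all of $K$, and $g$ is known to be close to $f_0$ only on $K$ itself. I would first try to place the overlap just outside $D$, inside $\Omega$. This requires replacing $K$ by $K_+=D_+\times P$, with $D\subset D_+\subset D'$ still a special planar pair and $K\subset\operatorname{int}K_+$, and it requires showing that $f_0$, or a small perturbation $G(\cdot,w)$ of it, is approximable on $K_+$. I would attempt this by applying the argument with the reference map replaced by $g$, reparametrizing the family as $T_\nu(s,w_g(s)+w,t)$ near $K$.

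If that fails, the fallback is to put the overlap inside $D$ along the arc and let $\psi$ interpolate, with $\psi\approx\varphi$ on $D$ minus the strip and $\psi\approx0$ near the arc. One then shrinks the strip width to zero while keeping $M$ under control; this is the point where the fixed-geometry bound of Lemma~\ref{lem:additive-splitting} must be checked carefully.
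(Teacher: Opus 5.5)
\textbf{There is a genuine gap, and it sits where you invoke Lemma~\ref{lem:center-gluing}.}

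Your transition estimate controls $G(s,\cdot)^{-1}T_\nu(s,w,t)-w$. That estimate is written in the fiber coordinates of $G$, which is centered at $f_0$. The spray on the other side, however, is an independent chart spray $F_A$ centered at $g$, produced by Lemma~\ref{lem:chart-spray}. On the overlap you have $F_A(s,\cdot)^{-1}F_B(s,v)\approx F_A(s,\cdot)^{-1}G(s,v)$.
\begin{itemize}
\item Its value at $v=0$ is small where $g\approx f_0$.
\item Its $v$-derivative is $(d_vF_A)^{-1}\circ d_wG$, which is an arbitrary invertible matrix.
\end{itemize}
So $D_vc$ is not small, and the hypothesis $M\kappa<1/2$ fails. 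Shifting to $T_\nu(s,w_g(s)+w,t)$ does not repair this: it moves the base point but not the fiber frame.

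The missing idea is to renormalize the whole family into the fiber coordinates of the new spray. Take a chart spray $F$ centered at $g$ over a neighborhood of all of $D'\times P$. Set
\[
W(s,v)=G(s,\cdot)^{-1}(F(s,v)),\qquad Q_\nu(s,v,t)=T_\nu(s,W(s,v),t).
\]
Then $Q_\nu(s,v,0)=F(s,v)$ exactly, on the whole fiber. Consequently the transition from $F$ to $Q_\nu(\cdot,\cdot,\psi)$ is near the identity once $\psi$ is small on the overlap and $\nu$ is large. Its derivative is controlled by Cauchy estimates across a fixed fiber gap. None of this requires $g$ to be close to $f_0$ on the overlap; $g$ only has to lie in the $G$-chart there, so that $W$ is defined.

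\textbf{This also removes your ``main obstacle''; the point is the order of choices.}
\begin{enumerate}
\item Fix $g$ first, so that $\rho=G(s,\cdot)^{-1}(g)$ is defined near $K$ and small on $K$.
\item Only then fix the geometry. Draw a crosscut in $E$, close to but disjoint from $D$, so that its thin collar $B\supset D$ lies where $\rho$, and hence $W$, is defined. The overlap $C=A\cap B$ is then disjoint from $D$.
\item Choose disjoint compact neighborhoods $D^+$ and $C^+$. Oka--Weil gives a polynomial $\psi$ with $\psi\approx\varphi$ on $D^+\times P^+$ and $|\psi|$ small on $C^+\times P^+$. Between them $\psi$ is uncontrolled, and only $R_\nu>1+\|\psi\|_{B^+\times P^+}$ is needed.
\end{enumerate}
No approximability of $f_0$ on an enlarged $K_+$ is required. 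On $K$ the glued map is $Q_\nu(s,b_*,\psi)$, which is close to $Q(s,0,\varphi)=T(s,\rho,\varphi)\approx T(s,0,\varphi)$.

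\textbf{Two further parts of your plan do not work as written.}
\begin{itemize}
\item Your Step~3 asks for $\psi\approx\varphi$ on $\overline{A'\setminus B'}$ and $\psi\approx 0$ on $\overline{A'}\cap\overline{B'}$. These two sets have touching closures, so both requirements cannot hold.
\item Your fallback fails independently of the size of $M$. On an overlap strip inside $D$, the glued map approximates $g\approx f_0=T(\cdot,0,0)$, not $T(\cdot,0,\varphi)$. So uniform approximation fails on the arc $D\cap E\subset K$, however thin the strip.
\end{itemize}
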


Although the conclusion only involves the zero section $w=0$, the
spray factor $\Delta_r^n$ is used twice in the following gluing proof.
An approximant $g$, holomorphic near $L$,
is represented near $K$ by the small section
$\rho(s)=G(s,\cdot)^{-1}(g(s))$, so that
$T(s,\rho(s),\varphi(s))$ remains close to the desired map.
For a chart spray $F(s,v)$ centered at $g$, the change of coordinates
$W(s,v)=G(s,\cdot)^{-1}(F(s,v))$ then gives
$Q_\nu(s,v,t)=T_\nu(s,W(s,v),t)$ with
$Q_\nu(s,v,0)=F(s,v)$ on a fixed smaller fiber polydisc.
This normalization on the whole fiber makes the overlap transition
close to the identity, with the derivative control needed for center
gluing; see also \cite[Proposition~3.1]{F06}. The resulting corrections
may take nonzero fiber values.

The base domains and fiber margins are fixed before choosing the
scalar polynomial $\psi$ and the index $\nu$.
The polynomial approximates $\varphi$ near $D\times P$ and zero near
$C\times P$, with the planar sets shown in
Figure~\ref{fig:planar-gluing-collar}. Local uniform convergence and
Cauchy estimates are used on these fixed products. Finally, $\nu$ is
chosen to give the required accuracy there and to make $R_\nu$ exceed
$\|\psi\|_{B^+\times P^+}$.

\begin{figure}[htbp]
\centering
\definecolor{collarOldPurple}{HTML}{8260A2}
\definecolor{collarTeal}{HTML}{2C847F}
\definecolor{collarOrange}{HTML}{C47832}
\definecolor{collarDeepPurple}{HTML}{613E83}
\definecolor{collarGold}{HTML}{C79742}
\begin{tikzpicture}[x=.64cm,y=.75cm,font=\footnotesize,
  line cap=round,line join=round,
  old/.style={draw=collarOldPurple,line width=.75pt},
  new/.style={draw=collarTeal,line width=.75pt},
  overlap/.style={draw=collarOrange,line width=.85pt},
  guide/.style={draw=figInk!65,line width=.4pt,->,>=latex},
  context/.style={draw=figInk!35,line width=.4pt},
  openold/.style={draw=collarDeepPurple,line width=.95pt,
    dash pattern=on 4.2pt off 2.4pt on 0pt off 2.4pt},
  opennew/.style={draw=collarTeal,line width=1pt,
    dash pattern=on 0pt off 3.2pt}]
\def\planarD{(0,0) rectangle (2.65,2)}
\def\planarE{(2.65,0) rectangle (6,2)}
\def\planarEnlarged{(0,0) rectangle (6,2)}
\def\planarA{(3.4,-.3) rectangle (6.3,2.3)}
\def\planarB{(-.3,-.3) rectangle (3.8,2.3)}
\def\planarC{(3.4,-.3) rectangle (3.8,2.3)}
\def\planarDplus{(-.18,-.18) rectangle (2.83,2.18)}
\def\planarCplus{(3.08,-.62) rectangle (4.12,2.62)}
\def\planarBplus{(-.85,-.85) rectangle (4.35,2.85)}
\def\planarUA{(3.3,-.4) rectangle (6.4,2.4)}
\def\planarUB{(-.4,-.4) rectangle (3.9,2.4)}
\def\planarUoverlap{(3.3,-.4) rectangle (3.9,2.4)}
\def\planarFrame{\path[use as bounding box] (-.95,-2.2) rectangle (6.95,3.95);}
\begin{scope}
\planarFrame
\fill[collarOldPurple!10] \planarB;
\fill[collarTeal!10] \planarA;
\fill[collarOldPurple!20] \planarD;
\fill[collarTeal!20] \planarE;
\fill[collarOrange!30] \planarC;
\draw[old] \planarB;
\draw[new] \planarA;
\draw[figInk,line width=.65pt] \planarEnlarged;
\draw[figInk!55,dashed] (2.65,0) -- (2.65,2);
\draw[collarOrange,densely dashed] (3.6,0) -- (3.6,2);
\node[text=collarOldPurple!85!black] at (1.3,1) {$D$};
\node[text=collarTeal!85!black] at (5,1) {$E$};
\node[text=collarOldPurple!85!black] at (1.3,2.7) {$B$};
\node[text=collarTeal!85!black] at (5.2,2.7) {$A$};
\node at (3.6,3.55) {crosscut};
\draw[guide] (3.6,3.3) -- (3.6,2.05);
\node[text=collarOrange!85!black] at (2.75,-1.35) {$C=A\cap B$};
\draw[guide] (3.5,-1.12) -- (3.6,-.35);
\node at (5.7,-1.35) {$D'$};
\draw[guide] (5.7,-1.12) -- (5.7,-.03);
\node at (3,-2.05) {(a) Gluing sides};
\end{scope}
\begin{scope}[shift={(8,0)}]
\planarFrame
\fill[collarOldPurple!5] \planarBplus;
\fill[collarOldPurple!20] \planarD;
\fill[collarTeal!20] \planarE;
\fill[collarOrange!9] \planarCplus;
\fill[collarOrange!30] \planarC;
\draw[new] \planarA;
\draw[figInk,line width=.65pt] \planarEnlarged;
\draw[old] \planarB;
\draw[old,dashed] \planarBplus;
\draw[old,dashed] \planarDplus;
\draw[overlap,dashed] \planarCplus;
\draw[figInk,line width=.65pt] \planarD;
\draw[overlap] \planarC;
\node[text=collarOldPurple!85!black] at (1.3,1) {$D$};
\node[text=collarTeal!85!black] at (5,1) {$E$};
\node[text=collarOldPurple!85!black] at (1.3,3.55) {$D^+$};
\draw[guide] (1.3,3.28) -- (1.3,2.21);
\node[text=collarOrange!85!black] at (3.6,3.55) {$C^+$};
\draw[guide] (3.6,3.28) -- (3.6,2.65);
\node[text=collarOldPurple!85!black,anchor=west] at (4.60,2.85) {$B^+$};
\draw[guide] (4.58,2.85) -- (4.38,2.85);
\node[text=collarOldPurple!85!black] at (1.3,-1.35) {$B$};
\draw[guide] (1.3,-1.12) -- (1.3,-.33);
\node[text=collarOrange!85!black] at (3.6,-1.35) {$C$};
\draw[guide] (3.6,-1.12) -- (3.6,-.33);
\node at (3,-2.05) {(b) Compact neighborhoods};
\end{scope}
\begin{scope}[shift={(16,0)}]
\planarFrame
\fill[collarOldPurple!5] \planarBplus;
\fill[collarDeepPurple!13] \planarUB;
\fill[collarTeal!12] \planarUA;
\fill[collarOldPurple!20] \planarD;
\fill[collarTeal!20] \planarE;
\fill[collarOrange!9] \planarCplus;
\fill[collarGold!38] \planarUoverlap;
\draw[old,dashed] \planarBplus;
\draw[overlap,dashed] \planarCplus;
\draw[openold] (3.3,-.4) -- (-.4,-.4) -- (-.4,2.4) -- (3.3,2.4);
\draw[openold] (3.9,-.4) -- (3.9,2.4);
\draw[opennew] (3.9,-.4) -- (6.4,-.4) -- (6.4,2.4) -- (3.9,2.4);
\draw[opennew] (3.3,-.4) -- (3.3,2.4);
\foreach \edgeY in {-.4,2.4}{
  \draw[openold,dash pattern=on 4.2pt off 4.3pt on 0pt off 2.4pt]
    (3.3,\edgeY) -- (3.9,\edgeY);
  \fill[collarTeal] ([xshift=6.35pt]3.3,\edgeY) circle[radius=.5pt];
}
\draw[figInk,line width=.65pt] \planarEnlarged;
\draw[figInk!70,dashed,line width=.5pt] (2.65,0) -- (2.65,2);
\node[text=collarDeepPurple!85!black,fill=white,
  inner xsep=1.2pt,inner ysep=.5pt] at (1.3,2.4) {$U_B$};
\node[text=collarOldPurple!85!black] at (1.3,1) {$D$};
\node[text=collarTeal!85!black,fill=white,
  inner xsep=1.2pt,inner ysep=.5pt] at (5.3,2.4) {$U_A$};
\node[text=collarTeal!85!black] at (5.3,1) {$E$};
\node[text=collarOldPurple!85!black] at (1.3,3.55) {$B^+$};
\draw[guide] (1.3,3.28) -- (1.3,2.88);
\node[text=collarOrange!85!black] at (3.6,3.55) {$C^+$};
\draw[guide] (3.6,3.28) -- (3.6,2.65);
\node[text=collarGold!70!black] at (2.75,-1.35) {$U_A\cap U_B$};
\draw[guide] (3.5,-1.12) -- (3.6,-.43);
\node at (5.7,-1.35) {$D'$};
\draw[guide] (5.7,-1.12) -- (5.7,-.03);
\node at (3,-2.05) {(c) Open gluing domains};
\end{scope}
\node[anchor=east] at (8.1,-3.65) {Passive parameters};
\draw[old,dashed] (9,-4.55) rectangle (14.4,-2.8);
\draw[opennew] (9.35,-4.3) rectangle (13.15,-3.05);
\draw[old,fill=collarOldPurple!14] (9.7,-4.02) rectangle (11.7,-3.35);
\node[text=collarOldPurple!85!black] at (10.7,-3.68) {$P$};
\node[text=collarTeal!85!black] at (12.4,-3.66) {$U_P$};
\node[text=collarOldPurple!85!black] at (13.75,-3.66) {$P^+$};
\end{tikzpicture}
\caption{The planar choices in Lemma~\ref{scalar:substitution},
with identical coordinates and scale in all three panels.
(a) $D'=D\cup E$ is covered by the thickened sides $A,B$, and
$C=A\cap B$ is disjoint from $D$.
(b) The disjoint compact neighborhoods $D^+,C^+$ and the set $B$
lie in $\operatorname{Int}B^+$.
(c) The open domains satisfy $D'\subset U_A\cup U_B$,
$U_A\cap U_B\Subset\operatorname{Int}C^+$, and
$U_B\Subset\operatorname{Int}B^+$.
The separate inset shows $P\Subset U_P\Subset\operatorname{Int}P^+$;
dots mark $U_A,U_P$; dash-dots mark $U_B$.}
\label{fig:planar-gluing-collar}
\end{figure}

\begin{proof}
\noindent\emph{Step 1: fix the old approximant, domains, and normalization.}
Fix $\epsilon>0$ and $\|\varphi\|_K<t_1<\tau$. Choose a map $g$
holomorphic near $L$, sufficiently close to $f_0$ on $K$ that
\[
 \rho(s)=G(s,\cdot)^{-1}(g(s))
\]
is holomorphic near $K$, has values compactly inside $\Delta_r^n$, and
\begin{equation*}
 \sup_K d_Y\bigl(T(s,\rho(s),\varphi(s)),T(s,0,\varphi(s))\bigr)
 <\epsilon/3.
\end{equation*}
This follows from compactness and continuity of the inverse chart and
of $T$.

A crosscut in the attached disc, close to but disjoint from $D$,
gives a thin closed collar $B$ containing $D$ and a complementary
closed region $A$, with slightly thickened boundaries, such that
$D'\subset\operatorname{Int}(A\cup B)$ and the overlap $C=A\cap B$
is a closed disc disjoint from $D$; see Figure~\ref{fig:planar-gluing-collar}(a).
Take the collar sufficiently thin
that the original $G$-chart and $\rho$ are available near its product
with $P$. Choose disjoint closed disc neighborhoods $D^+,C^+$ of
$D,C$, a slightly larger closed neighborhood $B^+$ of $B$ containing
them, as in Figure~\ref{fig:planar-gluing-collar}(b), and fixed polydiscs
in the remaining $q$ source coordinates
\[
 P\Subset\operatorname{Int}P^+
   \Subset\operatorname{Int}P^{++}.
\]
Here $\operatorname{Int}$ denotes topological interior.
These coordinates are \emph{passive} only in the geometric construction:
we enlarge the planar disc while keeping these polydiscs fixed.
The maps may still depend holomorphically on all source coordinates.
Make these choices inside the relevant holomorphic domains, with
$B^+\times P^{++}\Subset\Omega$, with $g$ in the image of the
$G$-chart there, and with $\varphi$ holomorphic near
$D^+\times P^+$. Fix separated open gluing domains $U_A,U_B$ and
an open polydisc $U_P$ satisfying
\[
 D'\subset U_A\cup U_B,\quad
 P\Subset U_P\Subset\operatorname{Int}P^+,\quad
 U_A\cap U_B\Subset\operatorname{Int}C^+,\quad
 U_B\Subset\operatorname{Int}B^+.
\]
Figure~\ref{fig:planar-gluing-collar}(c) shows the planar choices.
Their planar additive-splitting constant is now fixed.

Take a chart spray $F(s,v)$ centered at $g$ over a neighborhood of
the two sides. Lemma~\ref{lem:chart-spray} applies on a sufficiently small
contractible Stein neighborhood of the disc product. Choose a fixed $b>0$ so that $F$ is available for
$v\in\Delta_{4b}^n$ and
\[
 W(s,v)=G(s,\cdot)^{-1}(F(s,v))
\]
is holomorphic near
$B^+\times P^{++}\times\overline\Delta_{3b}^{\,n}$, with image
compactly inside $\Delta_r^n$. Here $W(s,0)=\rho(s)$. Set
\[
 Q_\nu(s,v,t)=T_\nu(s,W(s,v),t),\qquad
 Q(s,v,t)=T(s,W(s,v),t).
\]
These families converge locally uniformly for $|t|<\tau$, and,
for every allowed fiber value,
\begin{equation}\label{scalar:exact}
 Q_\nu(s,v,0)=Q(s,v,0)=F(s,v).
\end{equation}
Normalization on the whole fiber, rather than only at its center,
makes the overlap transition for the limiting family tend to the
identity as the scalar parameter tends to zero. The fixed fiber
margin then gives derivative control by Cauchy estimates.

\smallskip
\noindent\emph{Step 2: choose the accuracies, then the scalar polynomial.}
By uniform continuity of $Q$, choose $0<\alpha<b$ and $\xi>0$ so
that, for $s\in K$, $\|v\|\leq\alpha$, and
$|u-\varphi(s)|<\xi$, one has $|u|<t_1$ and
\begin{equation*}
 d_Y\bigl(Q(s,v,u),Q(s,0,\varphi(s))\bigr)<\epsilon/3.
\end{equation*}
Choose a transition tolerance $e>0$ so small that a transition
within $e$ of the identity on the fixed overlap and $\Delta_{2b}^n$
admits zero-section gluing corrections of norm less than $\alpha$.
This is Lemma~\ref{lem:center-gluing}; its derivative hypothesis
on $\Delta_b^n$ follows from Cauchy estimates on the larger fiber
polydisc.

Let $\Psi_s=F(s,\cdot)^{-1}$. By
\eqref{scalar:exact}, compactness, and the inverse-chart
margins, choose $0<\eta<\tau$ such that
$\Psi_s(Q(s,v,t))$ is defined and within $e/2$ of $v$ on
$C^+\times P^+\times\overline\Delta_{2b}^{\,n}$ whenever
$|t|\leq\eta$. Local uniform convergence gives the same assertion
with $Q_\nu$ and tolerance $e$ for every sufficiently large $\nu$,
on this same fixed compact product.

The compact set $(D^+\cup C^+)\times P^+$ is polynomially convex.
Oka--Weil approximation \cite[\S2.3]{F17} therefore supplies a polynomial
$\psi:\C^{1+q}\to\C$ such that
\begin{equation}\label{scalar:polynomial}
 \|\psi-\varphi\|_{D^+\times P^+}<\xi,
 \qquad \|\psi\|_{C^+\times P^+}<\eta.
\end{equation}
\smallskip
\noindent\emph{Step 3: choose the family index last, then glue.}
Fix $\psi$ and choose $\nu$ so large that the preceding
overlap estimates hold,
\begin{equation}\label{scalar:radius}
 R_\nu>1+\|\psi\|_{B^+\times P^+},
\end{equation}
and
\begin{equation*}
 \sup_{K\times\overline\Delta_b^{\,n}\times\overline\Delta_{t_1}}
 d_Y(Q_\nu,Q)<\epsilon/3.
\end{equation*}
The convergence estimate is on a compact product fixed before
$\psi$. Its possibly large values elsewhere on $B^+\times P^+$
enter only the radius requirement~\eqref{scalar:radius}.

Use the two families
\[
 F_A(s,v)=F(s,v),\qquad F_B(s,v)=Q_\nu(s,v,\psi(s)).
\]
The second is defined throughout the fixed $B$ side by
\eqref{scalar:radius}. On the overlap its transition into
the $F_A$-chart is within $e$ of the identity by
\eqref{scalar:polynomial}. Lemma~\ref{lem:center-gluing} provides holomorphic corrections $a$ and $b_*$, both of norm less
than $\alpha$, with
\[
 F_A(s,a(s))=F_B(s,b_*(s)).
\]
Thus the two expressions define $\widetilde g$ holomorphic on
$(U_A\cup U_B)\times U_P$, a neighborhood of $L$. On $K$,
\[
 \widetilde g(s)=Q_\nu(s,b_*(s),\psi(s)).
\]
The three error terms below are each less than $\epsilon/3$ by the
choices above:
\begin{align*}
 &d_Y\bigl(\widetilde g(s),T(s,0,\varphi(s))\bigr)\\
 &\quad\leq
 d_Y\bigl(Q_\nu(s,b_*(s),\psi(s)),Q(s,b_*(s),\psi(s))\bigr)\\
 &\qquad+
 d_Y\bigl(Q(s,b_*(s),\psi(s)),Q(s,0,\varphi(s))\bigr)\\
 &\qquad+
 d_Y\bigl(T(s,\rho(s),\varphi(s)),T(s,0,\varphi(s))\bigr)
 <\epsilon.\qedhere
\end{align*}
\end{proof}

\begin{remark}\label{scalar:entire}
For an actual centered family $h:V\times\mathbb C\to Y$, take
$T_\nu(s,w,t)=h(G(s,w),t)$ on increasing scalar discs. The lemma
then permits every scalar function holomorphic near $K$: choose
$\tau>\|\varphi\|_K$. The family $F_B$ need neither dominate
nor remain in the old fiber chart away from the overlap.
\end{remark}

\subsection{Local extension by exact partial compositions}

\begin{lemma}\label{family:local-sector}
For every $x$ in the \LAP{} locus $G_Y$, there is an open neighborhood $W_x\ni x$ with
the following property: if $D\subset D'$ is a special planar pair,
$P\subset\mathbb C^q$ is a closed polydisc, and $k$ is holomorphic
near $D\times P$ with values in $W_x$, then $k$ is uniformly
approximable on $D\times P$ by maps holomorphic near $D'\times P$
with values in $Y$.
\end{lemma}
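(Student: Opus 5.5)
The plan is to write any map $k$ with values near $x$ as an exact composition of the $n$ limiting families, and then to peel off one scalar parameter at a time with Lemma~\ref{scalar:substitution}.

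\emph{Local coordinates from partial compositions.} Fix the \LAP{} data at $x$: a neighborhood $V$, $\tau>0$, $R_\nu\to\infty$, and families $h_{j,\nu}$ converging on $V\times\Delta_\tau$ to $h_j$. Define partial compositions on a small polydisc:
\[
 \Phi_0=x,\qquad \Phi_j(t_1,\dots,t_j)=h_j\bigl(\Phi_{j-1}(t_1,\dots,t_{j-1}),t_j\bigr).
\]
Since $h_j(y,0)=y$, the differential of $\Phi=\Phi_n$ at $0$ sends $e_j$ to $\partial_t h_j(x,0)$. By hypothesis these vectors form a basis, so $\Phi$ is a biholomorphism from a neighborhood of $0$ onto a neighborhood of $x$.

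Choose $\delta<\tau$ so small that the following hold.
\begin{itemize}[label=$\circ$]
\item $\Phi$ is biholomorphic on $\Delta_{2\delta}^n$.
\item Every partial composition $\Phi_j$ maps $\overline\Delta_{2\delta}^{\,j}$ into a fixed compact set $V'\Subset V$.
\end{itemize}
Set $W_x=\Phi(\Delta_\delta^n)$.

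Given $k$ holomorphic near $D\times P$ with values in $W_x$, put $\varphi=\Phi^{-1}\circ k=(\varphi_1,\dots,\varphi_n)$. This map is holomorphic near $K=D\times P$, and on $K$ it satisfies $\|\varphi\|_K<\delta$ because $k(K)$ is compact in $W_x$. Shrinking the neighborhood of $K$, we may assume $|\varphi_j|<\delta$ there. Set
\[
 f_j=\Phi_j(\varphi_1,\dots,\varphi_j),
\]
so that $f_0\equiv x$, $f_n=k$, and $f_j=h_j(f_{j-1},\varphi_j)$ near $K$.

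\emph{Induction.} Let $\mathcal A$ be the class of Lemma~\ref{scalar:substitution} for $K\subset L=D'\times P$. Then $f_0\in\mathcal A$ trivially, since it is constant. Suppose $f_{j-1}\in\mathcal A$.

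\begin{enumerate}[label=(\alph*)]
\item \emph{Chart spray.} Take a small contractible Stein neighborhood $\Omega$ of $K$, for instance a product of a Jordan domain and a polydisc, on which $f_{j-1}$ is defined with values near $V'$. Then $f_{j-1}^*TY$ is trivial by the Oka--Grauert remark following Lemma~\ref{lem:chart-spray}. That lemma gives a chart spray $G:\Omega\times\Delta_r^n\to Y$ centered at $f_{j-1}$. Shrink $\Omega$ and $r$ so that $G$ takes values in $V$.
\item \emph{Families.} Set
\[
 T_\nu(s,w,t)=h_{j,\nu}\bigl(G(s,w),t\bigr),
\]
defined on $\Omega\times\Delta_r^n\times\Delta_{R_\nu}$. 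These maps satisfy $T_\nu(s,w,0)=G(s,w)$.
\item \emph{Convergence.} Because $G$ has values in $V$, the $T_\nu$ converge locally uniformly on $\Omega\times\Delta_r^n\times\Delta_\tau$ to $T(s,w,t)=h_j(G(s,w),t)$.
\item \emph{Substitution.} Since $\|\varphi_j\|_K<\delta<\tau$, Lemma~\ref{scalar:substitution} yields
\[
 T(\cdot,0,\varphi_j)=h_j(f_{j-1},\varphi_j)=f_j\in\mathcal A.
\]
\end{enumerate}
After $n$ steps, $k=f_n\in\mathcal A$, which is the claim.

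\emph{Main obstacle.} The delicate point is bookkeeping of domains, so that every hypothesis of Lemma~\ref{scalar:substitution} holds at every stage. Each intermediate map $f_{j-1}$ and its spray must take values inside $V$, where the convergence of $h_{j,\nu}$ to $h_j$ is guaranteed. Each $\varphi_j$ must also stay strictly below $\tau$. Both requirements are secured once, in advance, by the choice of $\delta$ and the compact set $V'\Subset V$, since the fiber radius $r$ may be shrunk freely.

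Importantly, the lemma only needs $f_{j-1}\in\mathcal A$, not a global extension of the spray. So no estimate has to be carried from one stage to the next beyond membership in $\mathcal A$, and the induction closes without accumulating error.
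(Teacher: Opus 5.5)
Your proposal is correct and follows the paper's proof essentially step for step: exact partial compositions of the limiting families $h_j$ give a local biholomorphism whose inverse writes $k$ as an iterated substitution, and the induction applies Lemma~\ref{scalar:substitution} with $T_\nu(s,w,t)=h_{j,\nu}(G(s,w),t)$ for a chart spray $G$ centered at the previous stage, with values kept in $V$. The only bookkeeping point to add is that defining $\Phi_j$ on $\overline\Delta_{2\delta}^{\,j}$ requires $2\delta<\tau$, not merely $\delta<\tau$.
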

\begin{proof}
For $a=(a_1,\ldots,a_n)$ in a sufficiently small polydisc
$Q\subset\C^n$ about $0$, use the families of
Definition~\ref{family:condition} to form the exact partial compositions
\[
 \theta_0(a)=x,\qquad
 \theta_j(a)=h_j(\theta_{j-1}(a),a_j),\qquad
 \Theta(a)=\theta_n(a).
\]
Shrink $Q$ so that its closure lies in
their domains, every partial image is relatively compact in $V$,
and $|a_j|<\tau$ on $\overline Q$. Since $h_j(y,0)=y$, one has
$d_yh_j(y,0)=\operatorname{id}$. The columns of $d\Theta_0$ are
therefore $\partial_t h_j(x,0)$. The inverse function theorem gives
a neighborhood $W_x$ whose closure lies in an inverse chart for
$\Theta$, with $\Theta^{-1}(\overline W_x)\Subset Q$.

For the given $k$, write $a=\Theta^{-1}\circ k$ and set
\[
 k_0=x,\qquad k_j=h_j(k_{j-1},a_j),\qquad k_n=k.
\]
Let $\mathcal A$ be the approximation class in
Lemma~\ref{scalar:substitution} for this fixed pair and this fixed
$P$. The constant map $k_0$ belongs to $\mathcal A$. Suppose
$k_{j-1}\in\mathcal A$. By Lemma~\ref{lem:chart-spray}, choose a
chart spray $H_j(s,w)$ centered at $k_{j-1}$. Compactness permits
fixed base and fiber neighborhoods on which its image lies in $V$.
On these neighborhoods set
\[
 T_{j,\nu}(s,w,t)=h_{j,\nu}(H_j(s,w),t).
\]
The families have exact normalization
$T_{j,\nu}(s,w,0)=H_j(s,w)$ and converge locally uniformly for
$|t|<\tau$. Since $\|a_j\|_{D\times P}<\tau$,
Lemma~\ref{scalar:substitution} gives $k_j\in\mathcal A$.
Induction proves the result.\qedhere
\end{proof}

\section{Polynomial source geometry}\label{sec:source-geometry}

Rosay--Rudin's attracting-basin construction~\cite[Section~9]{RR88}
obtains coordinates by compensating holomorphic iterates with inverse
iterates of the linear part.
The maps below are scaled compositions of automorphisms of the
shift-like type studied by Bedford and Pambuccian \cite[Section~1]{BP98}.
The finite-iterate source construction and its thin-fiber geometry
are adapted from OpenAI~\cite[Lemma~3.3 and the proof of Proposition~3.4]{OAI26}. We give the estimates needed
for polydisc extension, including a bound for the normalized inverse
iterates that is uniform in the number of iterates.

The source construction has two geometric goals: to move points only
slightly on the old polydisc, and to provide coordinates on the outer
region in which each transverse slice over a scalar boundary point
has small diameter. Figure~\ref{fig:source-transport} illustrates these
two features. The first preserves the approximation already obtained.
The second ensures that, after applying the given map and allowing a
small spray variation, each such slice lies in a single target
neighborhood where the local extension lemma applies.

The roles of two parameters deserve emphasis. The polynomial degree
controls the near-identity error and the thickness of the boundary
slices, while the number of iterates allows us to reach any prescribed
outer radius. The estimates are uniform in the number of iterates,
so these parameters can be chosen in that order.

\begin{lemma}\label{lem:source-geometry}
Fix $m\ge2$ and numbers
\[
 \begin{gathered}
 1<r_m<\cdots<r_2<r_1<c<c_+<b_0,\qquad 0<a<1,\\
 r_j<a r_{j-1}\quad(2\le j\le m).
 \end{gathered}
\]
For all sufficiently large integers $d$, set
\[
 \begin{gathered}
 P_j(w)=(w/r_j)^d,\qquad L(x)=(a x_m,-a x_1,\ldots,-a x_{m-1}),\\
 H_d(x)=(a x_m,P_2(x_2)-a x_1,\ldots,P_m(x_m)-a x_{m-1}).
 \end{gathered}
\]
There are constants $C\ge1$ and $B>0$, independent of $d$ and $J$,
with the following properties: the map $H_d$ is a polynomial
automorphism. For every $J\ge1$, let
\[
 E_{d,J}=H_d^{-J}(\overline\Delta_1^m),\qquad
 h_{d,J}=H_d^{-J}\circ L^J.
\]
Then:
\begin{enumerate}
\item On $E_{d,J}$ one has
\begin{equation}\label{eq:source-power-bound}
 |P_j(x_j)|\le C\max(1,|x_1|)\quad(2\le j\le m).
\end{equation}
Moreover, if $|x_1|\le c_+$, then $x\in\Delta_{b_0}^m$.
\item On a neighborhood of
$E_{d,J}\cap\{|x_1|>r_1\}$ there are single-valued holomorphic
functions $s,\xi_2,\ldots,\xi_m$ such that $|\xi_j|\le B$ on
that set and
\begin{equation}\label{eq:source-monomial}
 \begin{gathered}
 x=\Phi_d(s,\xi),\qquad
 \Phi_{d,j}(s,\xi)=\rho_j s^{d^{m-j}}
 \exp\left(\sum_{k=2}^j\frac{\xi_k}{d^{j-k+1}}\right),
\\
 \rho_1=1,\quad \rho_j=r_j(a\rho_{j-1})^{1/d}.
 \end{gathered}
\end{equation}
The sum for $j=1$ is empty; thus $x_1=s^{D_d}$, where $D_d=d^{m-1}$.
The map $\Phi_d$ is entire in $(s,\xi)$.
\item Set
$\Xi_i=\{\xi\in\mathbb C^{m-1}:|\xi_j|\le B+i\}$ for $i=1,2$,
and $r=c^{1/D_d}$. Then
\[
 \Phi_d(\overline\Delta_r\times\Xi_2)\Subset\Delta_{b_0}^m,
 \qquad
 \sup_{|s|=r}\operatorname{diam}_{\infty}
       \Phi_d(\{s\}\times\Xi_2)=O(d^{-1}).
\]
The constants in these assertions do not depend on $J$.
\item For any fixed $1<u_0<u_1<r_m$, set
$\theta=u_1/r_m$ and $\alpha_d=a+\theta^d/u_1$. For sufficiently
large $d$, simultaneously for every $J\ge1$,
\begin{equation}\label{eq:source-uniform-near-identity}
 \|h_{d,J}-\operatorname{id}\|_{\overline\Delta_{u_0}^m}
 \le\frac{\theta^d}{a-\alpha_d^d}\le\frac{2}{a}\theta^d.
\end{equation}
For any $R_+>1$, one may first choose $d$ to obtain
any prescribed near-identity accuracy and thinness, and then choose
$J$ with $a^JR_+<1$. The resulting $h=h_{d,J}$ satisfies
\[
 h(\overline\Delta_{R_+}^m)\subset
 H_d^{-J}(\Delta_1^m)\subset E_{d,J}.
\]
Moreover, for sufficiently large $d$, $|h_1|<c$ on
$\overline\Delta_1^m$, uniformly in $J$.
\end{enumerate}
All inclusions asserted on compact sets have a positive margin, so
they persist on a sufficiently small open neighborhood of those sets.
\end{lemma}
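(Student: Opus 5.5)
The plan is to treat the four items separately, using the explicit triangular form of $H_d$ throughout. First, $H_d$ is a polynomial automorphism because $y=H_d(x)$ can be solved explicitly by back-substitution:
\[
x_m=y_1/a,\qquad x_{j-1}=\bigl(P_j(x_j)-y_j\bigr)/a\quad(j=m,\dots,2).
\]
This exhibits a polynomial inverse $H_d^{-1}$, and the same formula gives the relation $P_j(x_j)=a x_{j-1}+y_j$ when $y=H_d(x)$.

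\emph{Item (1).} I would argue by induction on $J$, showing that $E_{d,J}=H_d^{-1}(E_{d,J-1})$ stays inside a fixed region
\[
S=\{|P_j(x_j)|\le C\max(1,|x_1|)\}.
\]
If $y\in E_{d,J-1}\subset S$, then $|y_j|\le r_j\bigl(C\max(1,|y_1|)\bigr)^{1/d}$, which is only slightly larger than $r_j$. Writing $x=H_d^{-1}(y)$, the relation above gives $|P_j(x_j)|\le a|x_{j-1}|+|y_j|$, and $y_1=a x_m$. Chaining these inequalities down from $j=m$ to $j=2$ should bound every $|P_j(x_j)|$ by $C\max(1,|x_1|)$. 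The factor $a<1$ is what makes the induction close, and the hypothesis $r_j<a r_{j-1}$ is where I expect it to be used. The second assertion then follows directly: $|x_1|\le c_+$ gives $|x_j|\le r_j(Cc_+)^{1/d}$, which is less than $b_0$ for large $d$.

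\emph{Item (2).} On $E_{d,J}\cap\{|x_1|>r_1\}$ I would write
\[
x_j^d=r_j^d\,a\,x_{j-1}\bigl(1+y_j/(a x_{j-1})\bigr).
\]
Using item (1) inductively in $j$, the ratio $y_j/(a x_{j-1})$ stays in a fixed disc in $\C\setminus(-\infty,-1]$, so the principal logarithm is single-valued there. I then define the $\xi_k$ recursively from these logarithms, with a factor $d^{j-k+1}$ bookkeeping absorbing the successive $d$-th roots. Finally I set $s=x_m/\bigl(\rho_m\exp(\sum_k\xi_k/d^{m-k+1})\bigr)$. This is single-valued, and solving the recursion yields exactly \eqref{eq:source-monomial}. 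The bound $|\xi_j|\le B$ comes from the logarithms being uniformly bounded, with bounds independent of $d$ and $J$. That $\Phi_d$ is entire is clear from its formula.

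\emph{Item (3).} This is a direct computation. For $|s|=r$ and $j\ge2$ one has $|s|^{d^{m-j}}=c^{d^{1-j}}\to1$, while $\rho_j\to r_j<b_0$ and $\exp(\cdot)=1+O(1/d)$ on $\Xi_2$. The first coordinate has modulus $c<b_0$. For the diameter of a fiber, the $\xi$-derivatives of $\Phi_d$ are $O(1/d)$ on $\overline\Delta_r\times\Xi_2$, and this gives the $O(d^{-1})$ bound.

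\emph{Item (4).} I expect this to be the main obstacle, because $H_d^{-1}$ expands by roughly $1/a$ and therefore cannot be iterated naively. Instead I would write $H_d=L+N$ with $N(x)=(0,P_2(x_2),\dots,P_m(x_m))$. On $\overline\Delta_{u_1}^m$, $N$ is bounded by $\theta^d$, and by Cauchy estimates it is Lipschitz with constant about $\theta^d/u_1$. I would then characterize $h=h_{d,J}$ by $H_d^J(h(x))=L^J(x)$ and seek $h=\mathrm{id}+e$ as a fixed point of the map that peels off one forward step at a time. Unwinding the $J$ steps should give $e$ as a geometric sum whose ratio is $\alpha_d^d/a<1$. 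Each term is bounded by $\theta^d$, and $L^k$ keeps points of $\overline\Delta_{u_0}^m$ inside $\overline\Delta_{u_1}^m$ since $L$ contracts by $a$. The sum then yields \eqref{eq:source-uniform-near-identity} uniformly in $J$. If this bookkeeping fails, I would first try a contraction argument in $H^\infty(\Delta_{u_0}^m)$ for the conjugated maps $L^{-J}H_d^{J}(x+e)$.

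Once \eqref{eq:source-uniform-near-identity} holds, the remaining statements follow. Choosing $J$ with $a^JR_+<1$ gives $L^J(\overline\Delta_{R_+}^m)\subset\Delta_1^m$, hence $h(\overline\Delta_{R_+}^m)\subset H_d^{-J}(\Delta_1^m)\subset E_{d,J}$. The bound $|h_1|<c$ on $\overline\Delta_1^m$ holds because $|h_1|\le 1+\tfrac{2}{a}\theta^d<c$ for large $d$. All the compact inclusions have margins, so they persist on small open neighborhoods.
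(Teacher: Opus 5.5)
The genuine gap is in item (2). You assert that item (1), ``used inductively in $j$,'' keeps $u_j=y_j/(ax_{j-1})$ in a fixed disc with a bound independent of $J$. Item (1) is too weak for this.

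It bounds each $|x_j|$, $j\ge2$, only by $r_j(C\max(1,|x_1|))^{1/d}$. So $|y_1|=a|x_m|$ is known only to be $O(|x_1|^{1/d})$, and item (1) at $y=H_d(x)$ then gives only $|y_j|=O(|x_1|^{1/d^2})$. But $|x_{j-1}|$ is in fact only of order $|x_1|^{1/d^{j-2}}$. Since $|x_1|$ is not bounded independently of $J$ on $E_{d,J}\cap\{|x_1|>r_1\}$, the bound you get for $|u_j|$ grows like $|x_1|^{1/d^2-1/d^{j-2}}$. This gives no uniform $q<1$ once $j\ge5$. Moreover, you never derive the lower bounds on $|x_{j-1}|$ that are needed even to know $x_{j-1}\neq0$.

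Your remark that $r_j<ar_{j-1}$ is used in item (1) also misplaces it. Item (1) closes purely because $d$th roots are sublinear; this holds both in your backward-invariance form $H_d^{-1}(S)\subset S$, which does work, and in the paper's escape-region form. The hypothesis $r_j<ar_{j-1}$ is exactly what item (2) needs.

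The missing ingredient is the weighted upper bound $|x_j|\le R_*t^{e_j}$, where $t=(|x_1|/r_1)^{1/D_d}$, $e_j=d^{m-j}$, and $R_*>r_1$ is fixed. You obtain it by feeding $P_k(x_k)=ax_{k-1}+y_k$ back into itself. Suppose the normalized maximum $\Lambda=\max\{1,\max_{j\ge2}(|x_j|/(R_*t^{e_j}))^{1/e_j}\}$ exceeds $1$, attained at an index $k$. Inserting $\max(1,a|x_m|)\le\max(1,aR_*)\Lambda t$ into item (1) at $y$ gives $(R_*/r_k)^d\le aR_*+r_kA_*^{1/d}$ with $A_*=C\max(1,aR_*)$, which is impossible for large $d$.

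This yields $|x_m|\le R_*t$, hence $|y_j|\le r_jA_*^{1/d}t^{1/d}$. Only then does the upward induction $|x_j|\ge(r_j-\delta)t^{e_j}$ give $|y_j|\le qa|x_{j-1}|$ with $q=\max_j\frac{r_j+\delta}{a(r_{j-1}-\delta)}<1$ for small $\delta>0$, and $B=-\log(1-q)$.

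A smaller point concerns item (4). The telescoping sum for $h_{d,J}-\mathrm{id}$ runs along the $H_d$-orbit of $h_{d,J}(x)$, not along $L^k(x)$. So you must first know that $h_{d,J}(\overline\Delta_{u_0}^m)\subset\overline\Delta_{u_1}^m$, where forward orbits contract by $\alpha_d$. The paper estimates $g_{d,J}=L^{-J}\circ H_d^J$ on $\overline\Delta_{u_1}^m$ and inverts by a continuity argument along $\tau\mapsto h_{d,J}(\tau z)$. Your fallback fixed-point argument for $g_{d,J}(w)=x$ would also supply this localization, but it has to be carried out.
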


The proof is completed at the end of this section, after Lemma~\ref{lem:source-near-identity}.

In the extension argument, assertion~(1) keeps the inner part of the
transported source in the domain of the old map, while assertion~(2)
expresses its outer part in one scalar coordinate and passive variables.
Assertion~(3) places each boundary fiber in a target neighborhood where
local planar extension applies. The resulting extension is pulled back
and glued to the old map on an inner overlap; assertion~(4) controls
the change on the original compact set.

In dimension two, the outer relation and its parametrization are
\[
 (x_2/r_2)^d=a x_1e^{\xi_2},\qquad
 \Phi_d(s,\xi_2)=(s^d,r_2a^{1/d}s e^{\xi_2/d}).
\]
Thus $s$ runs along the end, while bounded changes in $\xi_2$
move points in a fixed boundary fiber by only $O(d^{-1})$. In higher dimension,
successive relations of this form produce the weighted powers of
$s$ in \eqref{eq:source-monomial}.

\begin{figure}[ht]
\centering
\begin{tikzpicture}[x=1cm,y=1cm,font=\small,
  line cap=round,line join=round,
  arrow/.style={->,>=latex,draw=figInk,line width=.65pt},
  sourcebox/.style={draw=figInk!65,rounded corners=3pt,
    minimum height=.8cm,inner xsep=8pt,fill=figOld!12}]
  \node[font=\small\bfseries] at (2.35,2.7) {(a) Normalized transport};
  \node[sourcebox] (large) at (.45,1.35) {$\overline\Delta_{R_+}^{\,m}$};
  \node[sourcebox,fill=figNew!12] (inverse) at (4.35,1.35) {$E_{d,J}$};
  \node[sourcebox] (unit) at (4.35,-.45) {$\overline\Delta_1^{\,m}$};
  \draw[arrow] (large) -- node[above=3pt] {$h_{d,J}$} (inverse);
  \draw[arrow] (inverse) -- node[right=4pt] {$H_d^J$} (unit);
  \draw[arrow] (large.south) -- node[above right=2pt] {$L^J$} (unit.west);
  \node[text=figOld!70!black] at (2.3,-1.35)
    {$h_{d,J}\approx\operatorname{id}\quad\hbox{on }\overline\Delta_1^m$};
  \node[font=\small\bfseries] at (9.65,2.7) {(b) One boundary fiber};
  \path[draw=figOld,fill=figOld!12,rounded corners=2pt]
    (6.65,.05) rectangle (8.1,1.95);
  \node[text=figInk] at (7.375,1) {$\{s\}\times\Xi_2$};
  \node[text=figInk,font=\footnotesize] at (7.375,-.25) {$|s|=r$ fixed};
  \draw[arrow] (8.35,1) -- node[above=3pt] {$\Phi_d$} (10.35,1);
  \path[draw=figNew!85!black,fill=figNew!25,line width=.65pt]
    (10.95,1) ellipse[x radius=.12,y radius=.34];
  \node[text=figInk] at (10.95,1.9) {$\Phi_d(\{s\}\times\Xi_2)$};
  \draw[figOverlap,line width=.65pt] (11.40,.66) -- (11.40,1.34);
  \draw[figOverlap,line width=.65pt] (11.34,.66) -- (11.46,.66);
  \draw[figOverlap,line width=.65pt] (11.34,1.34) -- (11.46,1.34);
  \node[anchor=west,text=figOverlap!75!black,font=\footnotesize]
    at (11.54,1) {$O(d^{-1})$};
  \node[text=figNew!75!black] at (9.65,-1.35)
    {choose $d$ first, then $J$};
\end{tikzpicture}
\caption{Two roles of the source construction, schematically.
(a) Normalization gives $H_d^J\circ h_{d,J}=L^J$; the old polydisc
is changed only slightly, while the prescribed larger source is sent
into $E_{d,J}$. (b) At a fixed boundary value of $s$, the image of the
transverse polydisc has maximum-norm diameter $O(d^{-1})$.
}
\label{fig:source-transport}
\end{figure}

We use the fixed data and notation of Lemma~\ref{lem:source-geometry}.
The three estimates below prove its power bound, outer coordinates,
and uniform near-identity assertion.
First, the inverse is obtained without solving any polynomial equation:
if $y=H_d(x)$, set $x_m=y_1/a$ and then, successively,
\[
 x_{j-1}=(P_j(x_j)-y_j)/a\quad(j=m,m-1,\ldots,2).
\]
This proves that $H_d$ and its inverse are polynomial.

\begin{lemma}\label{lem:source-escape}
There are $C\ge1$ and $q\in(0,1)$, independent of $d$ and $J$,
such that, for all sufficiently large $d$, the power bound
\eqref{eq:source-power-bound} holds on $E_{d,J}$. If $x\in E_{d,J}$
and $|x_1|>r_1$, every $x_j$ is nonzero and
\[
 |H_d(x)_j|\le q a|x_{j-1}|\quad(2\le j\le m).
\]
Moreover, $|x_1|\le c_+$ implies $x\in\Delta_{b_0}^m$.
\end{lemma}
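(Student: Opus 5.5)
The plan is to argue by induction on $J$, using $E_{d,J}=H_d^{-1}(E_{d,J-1})$ with $E_{d,0}=\overline\Delta_1^m$. The only input from the shift-like structure will be the identities
\[
 H_d(x)_1=ax_m,\qquad H_d(x)_j=P_j(x_j)-ax_{j-1}\quad(2\le j\le m).
\]
The guiding heuristic is a dichotomy for large $d$. On $\{|w|\le r_j\theta\}$ with $\theta<1$, the polynomial $P_j(w)=(w/r_j)^d$ is negligible. On $\{|w|\ge r_j/\theta\}$ it is enormous. Only in the thin annulus $|w|\approx r_j$ does it compete with the linear term $ax_{j-1}$.

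Accordingly I will prove a strengthened two-part statement by induction on $J$, with constants $C,q$ fixed in advance and $d$ large depending only on the fixed radii. Part (a) is that on $E_{d,J}$ one has $|x_1|\le c_+$ or $|x_1|$ dominates the remaining coordinates in the sense of \eqref{eq:source-power-bound}. Part (b) is the escape-type cancellation inequality of the lemma.

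The induction step runs as follows. Let $x\in E_{d,J}$ and $y=H_d(x)\in E_{d,J-1}$. By the inductive hypothesis, $y$ satisfies the power bound, so $|y_j|\le r_j(C\max(1,|y_1|))^{1/d}$. Since $y_1=ax_m$, this converts into bounds on $P_j(x_j)=y_j+ax_{j-1}$ in terms of $x_{j-1}$ and $x_m$. I plan to chain these from $j=m$ down to $j=2$. Once $x_m$ is controlled by $x_1$, the chain gives $|P_j(x_j)|\le |y_j|+a|x_{j-1}|$. The index $j-1$ is handled either inductively along the chain or, for $j=2$, directly by $|x_1|$. The resulting constant should be $C=2+b_0$-ish, uniform in $d$ and $J$. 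Control of $x_m$ by $x_1$ is the circular point, discussed below.

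For the cancellation inequality, assume $|x_1|>r_1$. Then $|P_2(x_2)|\le C|x_1|$ forces $|x_2|\le r_2(C|x_1|)^{1/d}$. Nonvanishing of the $x_j$ should come from contrapositive reasoning. If $x_j=0$, then $y_j=-ax_{j-1}$, which propagates a large coordinate forward. The hypothesis $r_j<ar_{j-1}$ makes the shifted coordinate land outside the region where $P_{j+1}$ can cancel it, and this contradicts membership of $H_d^{k}(x)$ in the unit polydisc for the appropriate $k\le J$. The bound $|y_j|\le qa|x_{j-1}|$ follows from the inductive power bound for $y$. Indeed $|y_j|\le r_j(C\max(1,a|x_m|))^{1/d}$, which tends to $r_j$. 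Compared with $a|x_{j-1}|$ this is small, with ratio $q:=\max_j r_j/(ar_{j-1})\cdot(1+o(1))<1$, provided $|x_{j-1}|\gtrsim r_{j-1}$. That lower bound on $|x_{j-1}|$ itself comes down the chain from $|x_1|>r_1$.

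Finally, the implication $|x_1|\le c_+\Rightarrow x\in\Delta_{b_0}^m$ follows from the power bound. It gives $|x_j|\le r_j(Cc_+)^{1/d}<r_1(Cc_+)^{1/d}<b_0$ once $d$ is large, because $r_j<r_1<c_+<b_0$.

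The main obstacle is the circularity in the induction step. Bounding $P_j(x_j)$ requires control of $y_1=ax_m$, which requires controlling $x_m$ by $x_1$, which is what we are proving. I will break it with a maximum-coordinate argument. Let $k$ be the index maximizing the weighted size $|x_k|^{d^{k-1}}$, normalized by the $\rho$ weights of \eqref{eq:source-monomial}, and suppose $k\neq1$. Then the relation for $y_{k+1}$ (or for $y_1$ if $k=m$) shows that $y$ has an even larger weighted maximum. Iterating, the weighted maximum grows geometrically along the orbit and can never reach $\overline\Delta_1^m$ within $J$ steps. This is the standard filtration/escape argument for shift-like (H\'enon-type) maps. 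The care needed is to make the growth factor independent of $d$; the margin $r_j<ar_{j-1}$ is exactly what should supply this.
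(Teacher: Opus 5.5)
You correctly see that the power bound at $y=H_d(x)$ together with $y_1=ax_m$ creates a circularity that needs an escape argument, and your derivation of the inner inclusion from the power bound is fine. The gap is in how you resolve the circularity.

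With the exact normalization $w_k(x)=(|x_k|/\rho_k)^{d^{k-1}}$, the identity $\rho_{k+1}^d=ar_{k+1}^d\rho_k$ gives $w_{k+1}=w_k\,|1+y_{k+1}/(ax_k)|^{d^{k-1}}$ wherever $x_k\ne0$. Consequently $w_{k+1}\le w_k$ just says $|P_{k+1}(x_{k+1})|\le a|x_k|$, and $w_k\ge w_{k-1}$ just says $|P_k(x_k)|\ge a|x_{k-1}|$. Maximality at $k$ therefore gives no positive lower bound for $|y_{k+1}|$ or $|y_k|$. Total cancellation is permitted, and it is exactly what happens on $E_{d,J}$.

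The conclusion your argument would deliver, that points of $E_{d,J}$ have their weighted maximum at $k=1$, is in fact false. For $m=2$, take $y\in\overline\Delta_1^2$ with $y_1\ne0$ and $y_2=\epsilon P_2(y_1/a)/|P_2(y_1/a)|$, where $0<\epsilon<\min(1,|P_2(y_1/a)|)$. Then $x=H_d^{-1}(y)\in E_{d,1}$ satisfies $a|x_1|=|P_2(x_2)|-\epsilon$, i.e.\ $w_2(x)>w_1(x)$.

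The filtration must have slack. The paper uses the unweighted region $\mathcal E=\{\max_{j\ge2}|x_j|>\max(S,|x_1|)\}$, with $S$ chosen so that $(M/r_1)^d\ge4M$ for $M\ge S$. If $M=|x_k|$ is the maximum, the polynomial term in the \emph{same} coordinate gives $|H_d(x)_k|\ge3M$, while $|H_d(x)_1|\le aM$. So $\mathcal E$ is forward invariant and misses $\overline\Delta_1^m$. One more step of this kind, with $C>\max(1,4a,16a^2r_1^2,4S)$, gives the power bound on all of $E_{d,J}$, with no induction on $J$ and no use of $r_j<ar_{j-1}$. Your part (a) would also be too weak as an inductive hypothesis: it says nothing at $y$ when $|y_1|=a|x_m|\le c_+$, and the inner inclusion is itself deduced from the power bound at points with $|x_1|\le c_+$.

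The second gap is uniformity in the cancellation inequality. It must hold for all $x\in E_{d,J}$ and all $J$, and $|x_1|$, hence $|x_m|$, is not bounded independently of $J$. So $r_j(C\max(1,a|x_m|))^{1/d}$ does not tend to $r_j$ uniformly, and a lower bound $|x_{j-1}|\gtrsim r_{j-1}$ cannot produce the ratio $q$. The power bound alone does not suffice either. It only gives $|x_m|\lesssim|x_1|^{1/d}$, hence $|y_j|\lesssim|x_1|^{1/d^2}$, whereas $|x_{j-1}|$ is of size $|x_1|^{1/d^{j-2}}$. For $j\ge5$ the resulting bound on the ratio is therefore unbounded.

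The paper first proves the weighted bound $|x_j|\le R_*t^{e_j}$, with $t=(|x_1|/r_1)^{1/D_d}$ and $R_*>r_1$ strictly. This comes from a one-step, non-dynamical maximization of $\Lambda$ using the power bound at $y$, and the slack $(R_*/r_k)^d\to\infty$ supplies exactly the contradiction your exact normalization lacks. It yields $|y_j|\le r_jA_*^{1/d}t^{1/d}$ with $A_*=C\max(1,aR_*)$. A downward induction then gives $|x_j|\ge(r_j-\delta)t^{e_j}$ and $|y_j|/(a|x_{j-1}|)\le q\,t^{1/d-e_{j-1}}\le q$. Nonvanishing of every $x_j$ comes out as a by-product, so no dynamical argument for it is needed.
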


\begin{proof}
Choose $S>\max(2,r_1)$. Increasing the degree threshold ensures
$(M/r_1)^d\ge4M$ whenever $M\ge S$. The region
\[
 \mathcal E=\{x:\max_{j\ge2}|x_j|>\max(S,|x_1|)\}
\]
is forward invariant: if $M=|x_k|=\max_{j\ge2}|x_j|$, then
$|H_d(x)_k|\ge3M$, whereas $|H_d(x)_1|\le aM$.
A forward orbit ending in $\overline\Delta_1^m$ cannot enter
$\mathcal E$.
Take $C>\max(1,4a,16a^2r_1^2,4S)$.
If $A=\max_{j\ge2}|P_j(x_j)|>C\max(1,|x_1|)$,
then, since $d\ge2$ and $A>1$,
$|x_j|\le r_jA^{1/d}\le r_1\sqrt A$ for $j\ge2$.
For an index $k$ attaining $A$, if $k=2$, then
$a|x_1|<aA/C<A/4$. If $k>2$, then
$a|x_{k-1}|\le ar_1\sqrt A<A/4$, since
$A>C>16a^2r_1^2$. Thus
\[
 |H_d(x)_k|\ge3A/4>S,
 \qquad |H_d(x)_1|\le ar_1\sqrt A\le A/4.
\]
The next iterate belongs to $\mathcal E$, a contradiction.
This proves \eqref{eq:source-power-bound}; the same bound holds at
every subsequent iterate, including the endpoint.
If $|x_1|\le c_+$, it gives
$|x_j|\le r_j(Cc_+)^{1/d}<b_0$ for large $d$.
Together with $|x_1|\le c_+<b_0$, this proves the inner inclusion.

To obtain the domination estimate, write $y=H_d(x)$, where $x\in E_{d,J}$, $J\geq1$, and
$|x_1|\geq r_1$. The power bound at $y$ gives
\begin{equation}\label{eq:source-image-bound}
 |y_j|\leq r_j\{C\max(1,a|x_m|)\}^{1/d},\qquad 2\leq j\leq m.
\end{equation}
If $J=1$, this follows instead from $|y_j|\leq1<r_j$.
Set
\[
 e_j=d^{m-j},\qquad D_d=e_1=d^{m-1},\qquad
 t=(|x_1|/r_1)^{1/D_d}\geq1,
\]
and fix $R_*>r_1$ and $A_*=C\max(1,aR_*)$. We claim that
\begin{equation}\label{eq:source-weighted-bound}
 |x_j|\leq R_*t^{e_j}\qquad (2\leq j\leq m).
\end{equation}
Indeed, let
\[
 \Lambda=\max\left\{1,\max_{2\leq j\leq m}
       \left(\frac{|x_j|}{R_*t^{e_j}}\right)^{1/e_j}\right\}.
\]
Then $|x_j|\leq R_*(\Lambda t)^{e_j}$ for every $j$, including
$j=1$, since $|x_1|=r_1t^{D_d}$. If $\Lambda>1$, equality holds at
some $k\geq2$. Using $P_k(x_k)=y_k+ax_{k-1}$,
$e_{k-1}=de_k$, \eqref{eq:source-image-bound}, and the bound
$\max(1,a|x_m|)\leq\max(1,aR_*)\Lambda t$, we obtain
\begin{align*}
 (R_*/r_k)^d(\Lambda t)^{de_k}
 &\leq aR_*(\Lambda t)^{de_k}
       +r_k A_*^{1/d}(\Lambda t)^{1/d},\\
 (R_*/r_k)^d
 &\leq aR_*+r_k A_*^{1/d}(\Lambda t)^{1/d-de_k}
 \leq aR_*+r_k A_*^{1/d}.
\end{align*}
For all sufficiently large $d$ this is impossible, simultaneously for
$2\leq k\leq m$, because $R_*/r_k>1$. Hence $\Lambda=1$, proving
\eqref{eq:source-weighted-bound}. In particular, $|x_m|\leq R_*t$, so
\begin{equation}\label{eq:source-image-weighted}
 |y_j|\leq r_j A_*^{1/d}t^{1/d}\qquad (2\leq j\leq m).
\end{equation}

Choose $\delta>0$ so small that $r_j-\delta>0$ and
\[
 q=\max_{2\leq j\leq m}
       \frac{r_j+\delta}{a(r_{j-1}-\delta)}<1.
\]
For sufficiently large $d$, simultaneously for all $j\geq2$,
\[
 r_jA_*^{1/d}\leq r_j+\delta,\qquad
 r_j\{(1-q)a(r_{j-1}-\delta)\}^{1/d}\geq r_j-\delta.
\]
Starting from $|x_1|=r_1t^{D_d}$, induction gives
\begin{equation*}
 |x_j|\geq(r_j-\delta)t^{e_j}\quad(1\leq j\leq m),\qquad
 \frac{|y_j|}{a|x_{j-1}|}\leq q\quad(2\leq j\leq m).
\end{equation*}
For if the lower bound holds at $j-1$, then \eqref{eq:source-image-weighted}
implies
\[
 \frac{|y_j|}{a|x_{j-1}|}
 \leq q\,t^{1/d-e_{j-1}}\leq q.
\]
Consequently
\[
 |P_j(x_j)|\geq(1-q)a|x_{j-1}|
 \geq (1-q)a(r_{j-1}-\delta)t^{e_{j-1}},
\]
and taking the $d$th root proves the next lower bound. Thus every
$x_j$ is nonzero, and the required domination estimate follows.
All degree thresholds are independent of $J$.
\end{proof}

\begin{lemma}\label{lem:source-thin}
Let $B=-\log(1-q)$, with $q$ from Lemma~\ref{lem:source-escape}.
The coordinate representation and thin-fiber assertions in parts
\textup{(2)} and \textup{(3)} of Lemma~\ref{lem:source-geometry} hold.
\end{lemma}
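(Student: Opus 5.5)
The plan is to build the coordinates $(s,\xi)$ on $E_{d,J}\cap\{|x_1|>r_1\}$ directly from the relations $P_j(x_j)=y_j+ax_{j-1}$, where $y=H_d(x)$, and then to read off the thin-fiber estimates from the explicit form of $\Phi_d$.

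\emph{Coordinates.} By Lemma~\ref{lem:source-escape}, every $x_j$ is nonzero there and $|y_j|\le qa|x_{j-1}|$. Hence
\[
 P_j(x_j)=ax_{j-1}(1+\eta_j),\qquad \eta_j=\frac{y_j}{ax_{j-1}},\qquad |\eta_j|\le q,
\]
so $\zeta_j=\log(1+\eta_j)$ is a single-valued holomorphic function with $|\zeta_j|\le-\log(1-q)=B$. This is where the value of $B$ comes from. I would set $\xi_j=\zeta_j$. The relation then reads
\[
 (x_j/r_j)^d=ax_{j-1}e^{\xi_j}.
\]
Next I define $s$ as a $D_d$-th root of $x_1$, and show inductively that
\[
 x_j=\rho_j s^{d^{m-j}}\exp\Bigl(\sum_{k\le j}\xi_k/d^{j-k+1}\Bigr).
\]
Substituting the formula for $x_{j-1}$ into the relation shows that both sides have the same $d$-th power. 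So $x_j$ equals the formula times a $d$-th root of unity $\omega_j$.

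\emph{Branch choice (main obstacle).} I must choose $s$ so that all the roots of unity $\omega_j$ are $1$ at once, and do so holomorphically on a neighborhood of the set. This is the step I expect to be hardest. The map $s\mapsto \Phi_d(s,\xi)$ with $\xi$ fixed is invariant only under the $D_d$-th roots of unity acting on $s$. These change $x_j$ by $\zeta^{d^{m-j}}$, which ranges over all of $\mu_{d^{j-1}}$ compatibly. Working down from $x_m$, I would use this freedom to absorb each $\omega_j$: first fix the $d^{m-1}$-th root of $x_1$ up to $\mu_{d}$ via $x_2$, and so on. In effect, $s$ is defined algebraically as a monomial combination, namely
\[
 s=x_m\,\rho_m^{-1}\exp\Bigl(-\sum_k\xi_k/d^{m-k+1}\Bigr),
\]
which is single-valued because the $\xi_k$ are. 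I would define $s$ this way and then verify by downward induction, using the relations, that the formula holds for $x_{m-1},\dots,x_1$. Each relation determines $x_{j-1}$ uniquely from $x_j$ and $\xi_j$, so no root ambiguity arises in that direction. The inequality $|\xi_j|\le B$ and entireness of $\Phi_d$ are then immediate.

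\emph{Part (3).} For $|s|\le r=c^{1/D_d}$ and $\xi\in\Xi_2$, write $|s|^{d^{m-j}}\le c^{d^{1-j}}\to1$. Also $\rho_j\to r_j$, since $\rho_j=r_j(a\rho_{j-1})^{1/d}$, and the exponential factor is $\exp(O(1/d))$. So $|\Phi_{d,1}|\le c<b_0$ and $|\Phi_{d,j}|\le r_j(1+o(1))<b_0$ for large $d$, independently of $J$, with positive margin. For the diameter, with $s$ fixed, varying $\xi$ in $\Xi_2$ changes the exponent by at most $2(B+2)\sum_k d^{-(j-k+1)}=O(1/d)$. Since the prefactor is bounded, the mean value estimate on $\exp$ gives diameter $O(d^{-1})$ for every $j\ge2$, while $x_1=s^{D_d}$ stays fixed.
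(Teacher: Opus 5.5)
Your proposal is correct and is essentially the paper's proof. It uses the same logarithms $\xi_j=\log\bigl(1+y_j/(ax_{j-1})\bigr)$ with $|\xi_j|\le B$, the same single-valued definition $s=\rho_m^{-1}x_m\exp\bigl(-\sum_k\xi_k/d^{m-k+1}\bigr)$ checked by backward induction, and the same $O(1/d)$ bound on the transverse exponents combined with a mean-value estimate for $\exp$. You can drop the preliminary root-of-unity discussion; its invariance claim is also inaccurate, since only the component $s^{D_d}$ is invariant under $D_d$-th roots of unity. Your direct bound $|s|^{d^{m-j}}\le c^{d^{1-j}}$ on the closed disc replaces the paper's bound on $|s|=r$ followed by monotonicity in $|s|$, an inessential difference.
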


The escape estimate makes each power term a controlled perturbation
of $a$ times the preceding coordinate. Taking logarithms records this
perturbation in bounded variables; their coefficients in the new
coordinates decrease with the degree, yielding thin transverse fibers.

\begin{proof}
Write $y=H_d(x)$. The ratios $P_j(x_j)/(a x_{j-1})=1+y_j/(a x_{j-1})$ belong to
$\{|w-1|\le q\}$. The branch of the logarithm centered at $1$
therefore defines $\xi_j$ holomorphically on a neighborhood of the
end. Indeed, with $u_j=y_j/(a x_{j-1})$, the denominators are nonzero
there and $|u_j|\le q<1$ on the end, while
\[
 |\xi_j|=|\log(1+u_j)|
 \le\sum_{\ell=1}^{\infty}\frac{|u_j|^\ell}{\ell}
 \le-\log(1-q)=B.
\]
Define
\[
 s=\frac{x_m}{\rho_m}
       \exp\left(-\sum_{k=2}^m\frac{\xi_k}{d^{m-k+1}}\right).
\]
Set $S_1=0$ and $S_j=\sum_{k=2}^j\xi_k/d^{j-k+1}$ for $j\ge2$.
The definition of $s$ gives $x_m=\rho_m s e^{S_m}$.
If $x_j=\rho_j s^{d^{m-j}}e^{S_j}$, then
\[
 x_{j-1}=\frac{x_j^d}{ar_j^d e^{\xi_j}}
 =\rho_{j-1}s^{d^{m-j+1}}e^{S_{j-1}},
\]
since $\rho_j^d=ar_j^d\rho_{j-1}$ and $dS_j-\xi_j=S_{j-1}$.
Backward induction proves \eqref{eq:source-monomial}, including
$s^{D_d}=x_1$, without any choice of a root.

For the thin-fiber estimate, the recurrence defining $\rho_j$ shows that $|\log\rho_j|$ is
bounded independently of $d$, and
$\log\rho_j-\log r_j=O(d^{-1})$ for $j\ge2$.
There is a constant $K>0$, independent of $d$, such that for
$|s|^{D_d}=c$, $\xi\in\Xi_2$, and $2\leq j\leq m$, the logarithm of
$|\Phi_{d,j}/r_j|$ is bounded above by $K/d$:
the power of $s$ contributes $\log c/d^{j-1}$ and the transverse
sum has modulus at most $(B+2)/(d-1)$. Thus
\[
 |\Phi_{d,j}(s,\xi)|\le r_j e^{K/d}\quad(j\ge2).
\]
The corresponding transverse sums at two points of $\Xi_2$ differ
by at most $2(B+2)/(d-1)$. Using
$|e^u-e^v|\le |u-v|e^{\max(|u|,|v|)}$ proves the asserted
$O(d^{-1})$ diameter bound. The first component is independent of
$\xi$ and has modulus $c<b_0$. Every component contains a positive
integral power of $s$, so the boundary bounds hold on the full
closed scalar disc as well. Since $r_j<b_0$, they imply the stated
compact inclusion for large $d$.
\end{proof}

\begin{lemma}\label{lem:source-near-identity}
For each fixed $1<u_0<u_1<r_m$, the estimate
\eqref{eq:source-uniform-near-identity} holds for all sufficiently
large $d$, uniformly over all $J\ge1$.
\end{lemma}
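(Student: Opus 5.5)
The plan is to telescope over the number of iterates, so that the estimate becomes a geometric series whose ratio does not depend on $J$. Write $h_J=h_{d,J}$ and $h_0=\operatorname{id}$. Then
\[
 h_J-\operatorname{id}=\sum_{k=1}^{J}\bigl(h_k-h_{k-1}\bigr),\qquad
 h_k(x)-h_{k-1}(x)=H_d^{-(k-1)}\bigl(H_d^{-1}L(z_k)\bigr)-H_d^{-(k-1)}(z_k),
\]
where $z_k=L^{k-1}x$. For $x\in\overline\Delta_{u_0}^m$ the point $z_k$ lies in $\overline\Delta_{a^{k-1}u_0}^m$, so it is exponentially small. It then suffices to bound each summand by a constant multiple of $\theta^d\,(\alpha_d^d/a)^{k-1}$ and to sum. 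That would give the stated bound $\theta^d/(a-\alpha_d^d)$ after adjusting the normalisation. The second inequality, $\theta^d/(a-\alpha_d^d)\le(2/a)\theta^d$, is immediate once $\alpha_d^d\le a/2$. This holds for large $d$, because $\alpha_d<1$ eventually and so $\alpha_d^d\to0$.

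\emph{Step 1: one step of the inverse.} Put $N(w)=(0,P_2(w_2),\ldots,P_m(w_m))$, so that $H_d=L+N$. Then $w=H_d^{-1}(Lz)$ solves $w=z-L^{-1}N(w)$, and the explicit backward recursion stated before Lemma~\ref{lem:source-escape} computes $w$ coordinate by coordinate. Suppose $\|z\|_\infty\le \lambda\le u_1$. Then $|P_j(w_j)|\le(\lambda'/r_j)^d$, where $\lambda'$ is a slightly larger a priori bound on $w$, and dividing by $a$ gives
\[
 \|H_d^{-1}L(z)-z\|_\infty\lesssim a^{-1}(\lambda'/r_m)^d .
\]
For $z=z_k$ this is of order $a^{-1}\theta^d\,a^{d(k-1)}$. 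The gain $a^{d(k-1)}$, coming from the degree $d$, is the key feature.

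\emph{Step 2: Lipschitz control of $H_d^{-(k-1)}$ near the origin.} The differential of $H_d^{-1}$ at $y$ is $L^{-1}$ corrected by the terms $P_j'(x_j)/a$. On the relevant small polydiscs one has
\[
 |P_j'(w)|\le d\,u_1^{d-1}/r_j^d\le d\,\theta^d/u_1 .
\]
I would propagate a maximum-norm Lipschitz bound by induction. The claim is that $H_d^{-(i)}$ has Lipschitz constant at most $(\alpha_d/a)^{i}$ on the region reached by points of size $a^{k-1}u_1$, up to a harmless factor. This uses that $H_d^{-(i)}$ maps $L^{k-1}$-small points back to points of size about $a^{k-1-i}u_1\le u_1$, which is consistent with Step 1 applied inductively. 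Combining the two steps, the $k$-th summand is bounded by
\[
 (\alpha_d/a)^{k-1}\,a^{-1}\theta^d a^{d(k-1)}\le a^{-1}\theta^d\,(\alpha_d^{d}/a)^{k-1}.
\]
Here I use $a^{d}\le\alpha_d^{d}$ and absorb the mismatch between $\alpha_d$ and $\alpha_d^{d}$ into the choice of $d$. The series then sums to the required form, and $\alpha_d^d<a$ for large $d$ gives convergence uniformly in $J$.

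\emph{Main obstacle.} The hard part is the a priori localisation inside Step 2. One must show, simultaneously for all $k\le J$, that all intermediate points $H_d^{-i}(L^{k-1}x)$ and $H_d^{-i}(H_d^{-1}L z_k)$ stay in $\overline\Delta^m_{a^{k-1-i}u_1}$. Only there are the derivative bounds on $P_j$ valid, and the Lipschitz factor $1/a$ per inverse step is compensated by the earlier contraction $a^{d(k-1)}$. I would handle this by a bootstrap on $i$. The induction hypothesis is the near-identity bound at the smaller scale, and the slack between $u_0$ and $u_1$ absorbs the accumulated errors. Every threshold on $d$ depends only on $u_0,u_1,r_m,a$, so the estimate is uniform in $J$.
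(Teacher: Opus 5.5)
Your telescoping over $k$ is set up correctly, and Step~1 is fine: the backward recursion gives $w_m=z_m$ and $w_{j-1}=z_{j-1}+P_j(w_j)/a$, so the a priori bound is immediate. The problem is Step~2, which carries all of the uniformity in $J$.

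As stated, Step~2 is false. The differential $D(H_d^{-i})(0)=L^{-i}$ has max-norm operator norm $a^{-i}$. This exceeds $(\alpha_d/a)^i=(1+\theta^d/(au_1))^i$ by the factor $\alpha_d^{-i}$, which is unbounded in $i$. So your displayed bound for the $k$-th summand is not justified. This slip is repairable: your ``main obstacle'' paragraph uses the correct rate $1/a$ per inverse step, and $a^{d(k-1)}$ still beats $a^{-(k-1)}$.

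The real gap is the one you flag and leave open. Your summand is $h_{k-1}(\phi(x))-h_{k-1}(x)$ with $\phi=L^{-(k-1)}H_d^{-1}L^{k}$. Bounding it needs a Lipschitz bound for $h_{k-1}$ near $\overline\Delta_{u_0}^m$, together with localization of all inverse iterates, uniformly in $k\le J$. Neither comes out of your bootstrap as described:
\begin{itemize}
\item A sup-norm near-identity bound on a polydisc controls derivatives, via Cauchy estimates, only on a strictly smaller polydisc. An induction on the number of inverse steps therefore loses radius at each of unboundedly many steps.
\item An additive error $O(\theta^d)$ at scale $u_0$ says nothing about the points $H_d^{-i}(L^kx)$, which lie at scale $a^{k-i}u_0$.
\end{itemize}
The fixed slack $u_1-u_0$ cannot absorb either loss. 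Your scheme would need a scale-invariant hypothesis, for instance via Schwarz's lemma using that $h_i-\operatorname{id}$ vanishes to order $d$ at $0$, and the proposal does not set one up. Since this uniform control is exactly the content of the lemma, what you have is an outline rather than a proof. Also, bounding each summand by ``a constant multiple'' of $\theta^d(\alpha_d^d/a)^{k-1}$ does not by itself give the exact constant in \eqref{eq:source-uniform-near-identity}.

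The missing idea is to avoid estimating expanding inverse iterates at all. The paper passes to $g_{d,J}=L^{-J}\circ H_d^J$, the inverse of $h_{d,J}$. On $\overline\Delta_{u_1}^m$ one has $\|H_d(x)\|_\infty\le a\|x\|_\infty+(\|x\|_\infty/r_m)^d\le\alpha_d\|x\|_\infty$. Forward orbits therefore stay localized automatically, with $\|H_d^k(x)\|_\infty\le\alpha_d^ku_1$. Writing $H_d=L+N_d$, one gets the telescoping identity
\[
 g_{d,J}(x)-x=\sum_{k=0}^{J-1}L^{-k-1}N_d\bigl(H_d^k(x)\bigr).
\]
Its summands are at most $(\theta^d/a)(\alpha_d^d/a)^k$, which gives exactly $\theta^d/(a-\alpha_d^d)$ with no derivative bounds.

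The estimate then transfers to $h_{d,J}$ by a continuity argument. Choose $d$ with $\theta^d/(a-\alpha_d^d)<u_1-u_0$. For $z\in\overline\Delta_{u_0}^m$, the path $h_{d,J}(tz)$ then cannot reach $\partial\Delta_{u_1}^m$. One then evaluates the $g$-estimate at the point $h_{d,J}(z)$.

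If you prefer to stay with inverses, factor $h_{d,J}=\phi_0\circ\cdots\circ\phi_{J-1}$ with $\phi_i=L^{-i}H_d^{-1}L^{i+1}$. Note that $\phi_i(y)-y=-L^{-i-1}N_d(H_d^{-1}L^{i+1}y)$, and telescope starting from the outermost factor. Together with the same continuity argument for localization, this also needs only sup bounds, each of size $a^{(d-1)i-1}\theta^d$.
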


\begin{proof}
Write $H_d=L+N_d$. In the maximum norm,
$\|L^k x\|_\infty=a^k\|x\|_\infty$ for every integer $k$ and
\[
 \|N_d(x)\|_\infty\le(\|x\|_\infty/r_m)^d.
\]
On $\overline\Delta_{u_1}^m$ this gives
$\|H_d(x)\|_\infty\le\alpha_d\|x\|_\infty$.
Fix $a<\eta<1$. Since $\alpha_d\to a$, for large $d$ we have
$\alpha_d\le\eta<1$ and $\alpha_d^d\le\eta^d\le a/2$.
All forward iterates stay in this polydisc and satisfy
$\|H_d^k(x)\|_\infty\le\alpha_d^k u_1$.
For $g_{d,J}=L^{-J}\circ H_d^J$, telescoping gives
\[
 g_{d,J}(x)-x=\sum_{k=0}^{J-1}L^{-k-1}N_d(H_d^k(x)).
\]
Each summand has norm at most
$a^{-k-1}(\alpha_d^k u_1/r_m)^d
=(\theta^d/a)(\alpha_d^d/a)^k$. Hence
\[
 \|g_{d,J}-\operatorname{id}\|_{\overline\Delta_{u_1}^m}
 \le\frac{\theta^d}{a}\sum_{k=0}^{\infty}
       \left(\frac{\alpha_d^d}{a}\right)^k
 =\varepsilon_d:=\frac{\theta^d}{a-\alpha_d^d},
\]
uniformly in $J$.
The global inverse of $g_{d,J}$ is $h_{d,J}$, and both fix zero.
Increase $d$ so that $\varepsilon_d<u_1-u_0$.
For $z\in\overline\Delta_{u_0}^m$, the path $h_{d,J}(tz)$,
$0\le t\le1$, cannot first reach the boundary of
$\Delta_{u_1}^m$: at that point the preceding estimate would give
\[
 \|tz\|_\infty
 =\|g_{d,J}(h_{d,J}(tz))\|_\infty
 \ge u_1-\varepsilon_d>u_0.
\]
Applying the estimate for $g_{d,J}$ at $h_{d,J}(z)$ proves
\eqref{eq:source-uniform-near-identity}. All degree thresholds are
independent of $J$.
\end{proof}

\begin{proof}[Proof of Lemma~\ref{lem:source-geometry}]
The inverse recurrence proves polynomial invertibility.
Lemmas~\ref{lem:source-escape}--\ref{lem:source-near-identity} give
all four estimates, with one common degree threshold independent
of $J$. Choose $d$ beyond this threshold and with the required
near-identity accuracy and fiber diameter. For the prescribed
$R_+$, choose $J$ so that $a^JR_+<1$. Then $L^J$ sends
$\overline\Delta_{R_+}^m$ into $\Delta_1^m$, proving the
outer-domain inclusion. On $\overline\Delta_1^m$, the near-identity estimate gives
\[
 |h_1(z)|\leq |z_1|+\|h_{d,J}-\operatorname{id}\|
 \leq 1+(2/a)\theta^d<c
\]
for sufficiently large $d$, uniformly in $J$, since $\theta<1<c$. Strict inequalities on the compact sets in question
persist on small neighborhoods, completing the proof.
\end{proof}

\section{Extension across polydiscs}\label{sec:polydisc}

We now convert the local sector extension of
Lemma~\ref{family:local-sector} into approximation on polydiscs.
The thin boundary fibers from Section~\ref{sec:source-geometry}
provide the target-neighborhood containment needed for this step.

Throughout this section, the neighborhoods $W$ are those supplied by
Lemma~\ref{family:local-sector}: a map near $J\times P$ with values
in $W$ admits approximation by maps near $L\times P$, for every
special planar pair $J\subset L$ and prescribed closed passive
polydisc $P$. The same $P$ occurs in both products.

\subsection{The scalar collapse}

One entire function simultaneously preserves the old disc and brings
all the attaching corridors back to its boundary.

\begin{lemma}\label{poly:collapse}
For $r>0$ and an integer $N\geq1$, set
\[
 p_N(s)=\int_0^s\exp\bigl(-(t/r)^N\bigr)\,dt,
 \qquad \zeta_j=re^{2\pi i j/N}\quad(0\leq j\leq N).
\]
Then
\begin{equation*}
 \sup_{|s|\leq r}|p_N(s)-s|\leq\frac{r(e-1)}{N+1},
 \qquad
 \sup_{t\geq r}|p_N(te^{2\pi i j/N})-\zeta_j|
 \leq\frac r{N+1}.
\end{equation*}
\end{lemma}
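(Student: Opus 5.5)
Both estimates come from writing $p_N(s)-s=\int_0^s\bigl(e^{-(t/r)^N}-1\bigr)\,dt$ along radial segments and bounding the integrand elementarily. Scaling $s=r\sigma$ gives $p_N(r\sigma)=r\int_0^\sigma e^{-u^N}du$, so it suffices to take $r=1$ and multiply the bounds by $r$ at the end.

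For the inner bound, let $|\sigma|\le1$ and integrate along the segment $u=\sigma\lambda$, $0\le\lambda\le1$. Then $|u^N|\le\lambda^N$. The elementary inequality $|e^{w}-1|\le |w|e^{|w|}\le e|w|$ holds for $|w|\le1$, and it gives $|e^{-u^N}-1|\le e\lambda^N$. Integrating yields $e/(N+1)$, which is slightly weaker than the claimed $(e-1)/(N+1)$. To get the sharp constant, I would expand $e^{-u^N}-1=\sum_{k\ge1}(-u^N)^k/k!$ and integrate termwise:
\[
\Bigl|\int_0^\sigma(e^{-u^N}-1)\,du\Bigr|\le\sum_{k\ge1}\frac{1}{k!(kN+1)}\le\frac1{N+1}\sum_{k\ge1}\frac1{k!}=\frac{e-1}{N+1}.
\]

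For the ray bound, take $\sigma=\lambda\omega$ with $\omega=e^{2\pi ij/N}$ and $\lambda\ge1$. Then $u^N=\lambda'^N$ along the ray, since $\omega^N=1$. So
\[
p_N(\lambda\omega)=\omega\int_0^\lambda e^{-x^N}\,dx .
\]
The target is $\zeta_j=\omega$, so we need $\bigl|\int_0^\lambda e^{-x^N}dx-1\bigr|\le1/(N+1)$ for all $\lambda\ge1$. Write the difference as
\[
\int_0^\lambda e^{-x^N}dx-1=-\int_0^1(1-e^{-x^N})\,dx+\int_1^\lambda e^{-x^N}dx .
\]
The first term lies in $[-1/(N+1),0]$, because $0\le1-e^{-y}\le y$. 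The second lies in $[0,I]$ with $I=\int_1^\infty e^{-x^N}dx$. Since the two terms have opposite signs, it suffices that each is at most $1/(N+1)$ in modulus. For the tail, $x^N\ge 1+N(x-1)$ gives $I\le e^{-1}\int_0^\infty e^{-Ny}dy=1/(eN)$. This is at most $1/(N+1)$ exactly when $e N\ge N+1$, which holds for all $N\ge1$.

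The only point needing care is the sharp constant $e-1$ in the inner bound, which the termwise series expansion handles. The ray case should not be bounded with the triangle inequality over both terms; the opposite-sign observation is what gives $1/(N+1)$ rather than $2/(N+1)$.
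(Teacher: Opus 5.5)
Your proposal is correct and follows essentially the paper's own proof. The inner bound comes from termwise integration of the exponential series. On the rays you use the same splitting of $p_N(t)-r$ into a nonnegative tail $\int_r^t e^{-(u/r)^N}\,du\le r/(eN)$ and a deficit $r-p_N(r)\in[0,r/(N+1)]$ of opposite sign, and rotation by $N$th roots of unity handles the remaining rays. The only difference is cosmetic: you bound the tail using $x^N\ge1+N(x-1)$, whereas the paper substitutes $u=(t/r)^N$ and uses $u^{1/N-1}\le1$; both give the same bound $1/(eN)$.
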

\begin{proof}
Termwise integration of the exponential series gives the first estimate.
On the positive real ray,
\[
 0\leq r-p_N(r)\leq\frac r{N+1},\qquad
 \int_r^\infty e^{-(t/r)^N}\,dt
 \leq\frac r{eN}\leq\frac r{N+1}.
\]
For the integral estimate substitute $u=(t/r)^N$ and use
$u^{1/N-1}\leq1$ for $u\geq1$. For $t\geq r$, write
\[
 p_N(t)-r=\int_r^t e^{-(u/r)^N}\,du-(r-p_N(r)).
\]
Both terms on the right lie in $[0,r/(N+1)]$, so their difference
has absolute value at most $r/(N+1)$. The identity
$p_N(\omega s)=\omega p_N(s)$ for $\omega^N=1$, which follows
from the power series, gives the remaining rays.
\end{proof}

\subsection{Disc enlargement with fixed passive parameters}

The planar model is OpenAI's spokes-and-sectors enlargement~\cite[Proposition~2.9]{OAI26}: boundary values are kept in fixed target
neighborhoods while the sectors between radial spokes are filled.
Here the scalar collapse constructs all corridors at once, and the
sectors are filled simultaneously.

\begin{lemma}\label{poly:boundary-disc}
Let $r>0$, $P$ be a closed polydisc, and $f$ holomorphic
near $\overline\Delta_r\times P$. Suppose that, for every $|\zeta|=r$,
the compact fiber $f(\{\zeta\}\times P)$ is contained in one of
the neighborhoods $W$. For every $S>r$, $f$ is uniformly approximable
on $\overline\Delta_r\times P$ by maps holomorphic near
$\overline\Delta_S\times P$.
\end{lemma}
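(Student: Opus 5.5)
The plan is to precompose $f$ with the scalar collapse of Lemma~\ref{poly:collapse}, which makes the map defined along $N$ radial spokes out to radius $S$. The sectors between consecutive spokes are then filled with Lemma~\ref{family:local-sector}, and everything is glued by Lemma~\ref{lem:center-gluing}. The passive polydisc $P$ is carried throughout as a holomorphic parameter; this is allowed by the parametric form of Lemma~\ref{lem:additive-splitting}.

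\emph{Step 1: a Lebesgue number.} Suppose $f$ is holomorphic near $\overline\Delta_{r_1}\times P$ for some $r_1>r$. The circle $|\zeta|=r$ is compact, the fibers $f(\{\zeta\}\times P)$ are compact, and the $W$ are open. Hence there is $\delta>0$ with the following property: for every $|\zeta_0|=r$, the set $f\bigl((\overline\Delta(\zeta_0,2\delta)\cap\overline\Delta_{r_1})\times P\bigr)$ lies in a single neighborhood $W$.

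\emph{Step 2: the collapsed map on disc plus corridors.} Choose $N$ so large that three conditions hold. First, $r(e-1)/(N+1)$ is much smaller than $\min(\delta,r_1-r)$. Second, $r/(N+1)<\delta/2$. Third, $2\pi r/N<\delta/2$, so that consecutive points $\zeta_j,\zeta_{j+1}$ are $\delta/2$-close.

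Set $g(s,p)=f(p_N(s),p)$. By the first estimate of Lemma~\ref{poly:collapse}, $g$ is defined near $\overline\Delta_r\times P$ and uniformly close to $f$ there, with an error that tends to $0$ as $N\to\infty$. By the second estimate and continuity of $p_N$ on compact sets, there are thin open corridors $\Sigma_j$ around the segments $\{te^{2\pi ij/N}:r\le t\le S+1\}$ on which $|p_N-\zeta_j|<\delta$. Thus $g$ is defined on a neighborhood of
\[
 K_N=\overline\Delta_r\cup\bigcup_j\overline\Sigma_j,
\]
and on $\overline\Sigma_j\times P$ its values lie in the $W$ attached to $\zeta_j$.

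Now fix the sector $Z_j$ between spokes $j$ and $j+1$, inside $\overline\Delta_{S'}$ with $S<S'<S+1$ and outside $\overline\Delta_{r-\delta'}$. Let $H_j$ be a thin closed horseshoe made of the two corridor pieces together with a thin band along the arc of $|s|=r$ between $\zeta_j$ and $\zeta_{j+1}$. On $H_j\times P$ the map $g$ takes values in one $W$, since every point of $H_j$ is mapped by $p_N$ into $\Delta(\zeta_j,2\delta)$. The pair $H_j\subset H_j\cup Z_j$ can be arranged to be a special planar pair in the sense of Definition~\ref{def:special-planar-pair}. Lemma~\ref{family:local-sector} then gives, for each $j$, maps $k_j$ holomorphic near $(H_j\cup Z_j)\times P$ that approximate $g$ on $H_j\times P$ to any prescribed accuracy.

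\emph{Step 3: simultaneous gluing.} Take $B$ to be a slight thickening of $K_N$, and take $A$ to be the disjoint union of slightly shrunken filled sectors. Then $C=A\cap B$ is a union of thin strips inside the horseshoes, the closed fringes $\overline{A\setminus B}$ and $\overline{B\setminus A}$ are disjoint, and $A\cup B$ covers $\overline\Delta_S$.

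Lemma~\ref{lem:chart-spray} applies to $g$ on a contractible Stein neighborhood of $K_N\times P$, where the bundle is trivial by Oka--Grauert, and gives a chart spray $F_B$ centered at $g$. Let $F_A$ be a chart spray centered at the maps $k_j$ on $A\times P$; on each contractible piece $Z_j$ this is again Lemma~\ref{lem:chart-spray}. The splitting constant $M$ from Lemma~\ref{lem:additive-splitting} depends only on the planar sets $A,B$, so it is fixed once $N$ and the corridors are fixed.

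Only after that do we choose the accuracy of the $k_j$. Making $k_j$ close enough to $g$ near $C\times P$ makes the transition between the two sprays close to the identity. Cauchy estimates on a larger fiber polydisc then give the smallness conditions $Me<\rho/2$ and $M\kappa<1/2$. Lemma~\ref{lem:center-gluing} produces small corrections, and the two corrected maps agree on the overlap. The result is a map holomorphic near $\overline\Delta_S\times P$. On $\overline\Delta_r\times P$ it differs from $f$ by at most the collapse error plus a correction error, and both can be made arbitrarily small.

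\emph{Main obstacle.} The delicate step is arranging the order of choices. The fibers must land in a single $W$, which requires $N\gg1$, and this in turn forces very thin corridors and sectors; in addition, $p_N$ is huge on the sectors away from the spokes. This is why $g$ must be used only on $K_N$, and why all of $N$, the corridors, the planar sets and $M$ must be fixed before the approximation accuracy of Lemma~\ref{family:local-sector} is chosen. I would also check that in each horseshoe the overlap strip is disjoint from where $g$ is used to control the final error on $\overline\Delta_r$; the corrections are small everywhere, so this only costs $O(Me)$.
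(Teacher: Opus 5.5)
\textbf{Gap in Step~3.} You apply Lemma~\ref{family:local-sector} only to the center $g$, which gives maps $k_j$. You then put independently chosen chart sprays on the two sides: $F_B$ centered at $g$ and $F_A$ centered at $k_j$. Closeness $k_j\approx g$ on $C\times P$ controls only $c(z,0)$ in the transition $F_A(z,\cdot)^{-1}F_B(z,v)=v+c(z,v)$. Its fiber derivative is $(D_vF_A)^{-1}D_vF_B$, which need not be near the identity, because the two vertical frames are unrelated. For instance, in a local chart take $F_A=k_j+v$ and $F_B=g+2v$. Then $c(z,v)=v+(g-k_j)$, so $\kappa=1$, while $M\geq1$ since $R_Au-R_Bu=u$.

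Cauchy estimates cannot rescue this. They turn smallness of $c$ on a larger fiber polydisc into smallness of $D_vc$, but you only have smallness at $v=0$. So the hypothesis $M\kappa<1/2$ of Lemma~\ref{lem:center-gluing} fails. There is also an ordering problem: you invert a chart spray centered at $k_j$, and it changes as the accuracy improves. Its fiber radius and inverse-chart margins are therefore not fixed before the tolerance $e$ is chosen.

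\textbf{Missing idea: extend the whole spray family, not just its center.} First, in your Step~1 fix one chart spray $f^\sharp$ of $f$, with radius $\beta$ and a slightly larger passive polydisc $P^+$. Choose them so that the arc products \emph{including} the fiber factor $\overline\Delta_\beta^n$ map into the neighborhoods $W$ with positive margins.

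Second, on the old side (disc plus corridors) use $f^\sharp(p_N(s),p,v)$. This is a chart spray centered at $g$, and it is fixed before any approximation accuracy is chosen.

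Third, on each horseshoe collar apply Lemma~\ref{family:local-sector} with $(p,v)$ as passive variables. This is allowed because that lemma retains any prescribed closed passive polydisc. It gives $G_j(s,p,v)$ close to $f^\sharp(p_N(s),p,v)$ on the collar times the full fiber polydisc.

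Finally, use the $G_j$ only as the merely holomorphic family in Lemma~\ref{lem:center-gluing}, and invert only the fixed old spray. Then $c$ is small on $\Delta_{\rho_+}^n$, and Cauchy estimates across the fixed gap $\rho_+-\rho$ give the bound on $\kappa$. This is how the paper proceeds. The rest of your plan matches the paper's construction: the collapse $p_N$, the corridors, the horseshoe special pairs, and fixing $N$, the planar sets and $M$ before the sector accuracy.
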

\begin{proof}
By Lemma~\ref{lem:chart-spray}, choose a chart spray $f^\sharp$
centered at $f$. Compactness permits a slightly larger passive
polydisc $P^+$ and a fixed spray radius $\beta>0$ such that finitely
many boundary-arc products, including $P^+$ and
$\overline\Delta_\beta^n$, map into their respective neighborhoods $W$ with positive margins. Fix
$0<\rho<\rho_+<\beta$.

Choose a sufficiently fine uniform subdivision of the circle and then
$p_N$ from Lemma~\ref{poly:collapse}. The family
$f^\sharp(p_N(s),p,v)$ is defined on the old disc and on thin radial
corridors extending beyond $S$, as in
Figure~\ref{fig:scalar-corridors}(a). Its values near each boundary arc
and the two adjacent corridors lie in the corresponding neighborhood
$W$, uniformly for $(p,v)\in P^+\times\overline\Delta_\beta^n$.
This follows from the near-identity and collapsed-ray estimates,
respectively.

Take the old side to be the inner disc together with the corridors,
and the new side to be the disjoint intervening sectors, covering a
neighborhood of $\overline\Delta_S$. Small boundary buffers within
the controlled neighborhoods give separated closed fringes. For each sector,
the overlap lies in the U-shaped collar $J_j$ shown in
Figure~\ref{fig:scalar-corridors}(b). Its bottom follows the boundary
arc and its sides follow the two corridors, so the whole parameter
family on $J_j$ takes values in one $W$. Filling the U gives a sector
$L_j$ whose outer edge lies beyond the new side; the added disc meets $J_j$
along a proper boundary arc, hence $J_j\subset L_j$ is a special
planar pair. Compactness allows all collars and buffers to be chosen
uniformly in $(p,v)$. Fix them before choosing the sector-approximation
accuracy.

\begin{figure}[ht]
\centering
\begingroup
\begin{tikzpicture}[x=1cm,y=1cm,font=\small,
 line cap=round,line join=round,
 boundary/.style={draw=figInk,line width=.7pt},
 leader/.style={->,>=latex,draw=figInk!80,line width=.55pt}]
\begin{scope}
 \node[align=center,text width=1.7cm,text=figInk] at (-3.55,0)
  {\textbf{(a)}\\[3pt]Old disc\\and corridors};
 \path[fill=figNew!38]
  (40:1) -- (40:2.60)
  arc[start angle=40,end angle=140,radius=2.60] -- (140:1)
  arc[start angle=140,end angle=40,radius=1] -- cycle;
 \foreach \a in {40,140,220,320} {
  \path[fill=figOld!68]
   (\a-4:.92) -- (\a-4:2.83)
   arc[start angle=\a-4,end angle=\a+4,radius=2.83]
   -- (\a+4:.92) -- cycle;
  \draw[figOld,line width=.5pt] (\a:1) -- (\a:2.83);
 }
 \fill[figOld!18] (0,0) circle[radius=1];
 \draw[boundary] (0,0) circle[radius=1];
 \draw[dashed,figInk!48,line width=.5pt] (0,0) circle[radius=2.28];
 \draw[figOverlap,line width=.7pt]
  (40:2.60) arc[start angle=40,end angle=140,radius=2.60];
 \node[text=figInk] at (0,0) {$\overline\Delta_r$};
 \node[text=figInk] at (0,1.86) {$L_j$};
 \node[anchor=north,text=figInk] at (0,-2.36) {$|s|=S$};
\end{scope}
\begin{scope}[xshift=5.80cm]
 \node[align=center,text width=1.7cm,text=figInk] at (3.55,0)
  {\textbf{(b)}\\[3pt]One sector\\and its collar};
 \path[fill=figNew!38]
  (40:1) -- (40:2.60)
  arc[start angle=40,end angle=140,radius=2.60] -- (140:1)
  arc[start angle=140,end angle=40,radius=1] -- cycle;
 \path[fill=figOverlap!68]
  (40:1) -- (40:2.60)
  arc[start angle=40,end angle=51,radius=2.60] -- (51:1.40)
  arc[start angle=51,end angle=129,radius=1.40] -- (129:2.60)
  arc[start angle=129,end angle=140,radius=2.60] -- (140:1)
  arc[start angle=140,end angle=40,radius=1] -- cycle;
 \fill[figOld!18] (0,0) circle[radius=1];
 \draw[boundary] (0,0) circle[radius=1];
 \draw[dashed,figInk!48,line width=.5pt] (0,0) circle[radius=2.28];
 \draw[boundary] (40:1) -- (40:2.60)
  arc[start angle=40,end angle=140,radius=2.60] -- (140:1);
 \draw[figOverlap,line width=.6pt]
  (51:2.60) -- (51:1.40)
  arc[start angle=51,end angle=129,radius=1.40] -- (129:2.60);
 \node[text=figInk] at (0,0) {$\overline\Delta_r$};
 \node[text=figInk] at (0,1.86) {$L_j$};
 \node[text=figInk] at (0,1.20) {$J_j$};
 \node[anchor=north,text=figInk] at (0,-2.36) {$|s|=S$};
\end{scope}
\end{tikzpicture}
\endgroup
\caption{The planar construction in Lemma~\ref{poly:boundary-disc}. (a) The old disc and radial
corridors form the old side; the sectors are filled simultaneously.
The sector $L_j$ is shown in both panels. (b) Its collar $J_j$ consists of bottom and side
strips. The middle of the outer edge is not part of the collar.
Widths are enlarged
for visibility; small boundary buffers and the parameter factors are omitted.}
\label{fig:scalar-corridors}
\end{figure}

Denote the scalar old side by $A$ and the new side by
$B=\bigsqcup_j B_j$, with $B_j\subset L_j$ and $A\cap B_j\subset J_j$.
On $A$ use the chart spray
$F_A(s,p,v)=f^\sharp(p_N(s),p,v)$.
Local sector extension, with $(p,v)$ passive, supplies maps $G_j$ near
$L_j\times P^+\times\overline\Delta_\beta^n$ arbitrarily close
to $F_A$ on the respective collars. Define $F_B=G_j$ on each
$B_j$; the disjointness of the branches makes this a holomorphic family
on the whole new side. Since there are finitely many sectors, their
approximation errors can all satisfy one prescribed gluing tolerance.
Apply the parameter version of Lemma~\ref{lem:center-gluing} once to
$F_A,F_B$ on $A,B$, retaining $p$ in a fixed neighborhood of $P$. The splitting operators act in
the scalar variable and preserve holomorphic dependence on $p$.
The fixed gap $\rho_+-\rho$ supplies the required derivative
smallness by Cauchy estimates. This gives a map holomorphic near
$\overline\Delta_S\times P$, as close as desired to
$f(p_N(s),p)$ on the old product. First making $N$ large and then
making the sector approximations accurate proves the assertion.
\end{proof}

\subsection{Finite extension}

\begin{proposition}\label{poly:finite-extension}
Suppose $Y$ satisfies \LAP. Let $m\geq1$ and $f$ be holomorphic
near $Q=\overline\Delta^m$. For every $R>1$ and $\epsilon>0$, there
is a map $\widetilde f$ holomorphic near $\overline\Delta_R^m$ such
that
\[
 \sup_Q d_Y(\widetilde f,f)<\epsilon.
\]
\end{proposition}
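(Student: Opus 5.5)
Throughout, the neighborhoods $W=W_x$ ($x\in Y$) are those of Lemma~\ref{family:local-sector}; since $Y$ satisfies \LAP{}, they cover $Y$.

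\textbf{The case $m=1$.} Here there are no passive variables besides a point. Since $f(\partial\Delta)$ is compact, it need not lie in a single $W$, so one subdivides as in Lemma~\ref{poly:boundary-disc} with $P$ a point. That lemma asks each boundary fiber $f(\{\zeta\}\times P)$ to lie in some $W$. With $P$ a point every fiber is a single point, so it holds trivially, and Lemma~\ref{poly:boundary-disc} with $r=1$, $S=R$ gives the claim.

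\textbf{The case $m\ge2$: transporting the source.} Choose the data of Lemma~\ref{lem:source-geometry}. Scale so that $f$ is holomorphic near $\overline\Delta_{b_0}^m$. This is legitimate because we may first approximate on the slightly larger polydisc: pick $1<r_m<\dots<b_0$ with $b_0$ inside the domain of $f$. Then $Q=\overline\Delta^m\subset\overline\Delta_{u_0}^m$ with $1<u_0<r_m$.

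Fix $R_+>R$. Choose $d$ large so that three things hold:
\begin{itemize}
\item by assertion (4), $h_{d,J}$ is within a prescribed $\delta$ of the identity on $\overline\Delta_{u_0}^m$, uniformly in $J$;
\item by assertion (3), every boundary fiber $\Phi_d(\{s\}\times\Xi_2)$, $|s|=r=c^{1/D_d}$, has diameter so small that its image under a chart spray of $f$ lies in one $W$;
\item uniform continuity of $f$ on $\overline\Delta_{b_0}^m$ and a Lebesgue number for the cover $\{W\}$ of $f(\overline\Delta^m_{b_0})$ make the previous point possible.
\end{itemize}
Then choose $J$ with $a^JR_+<1$. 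By assertion (4), $h=h_{d,J}$ maps $\overline\Delta_{R_+}^m$ into $E=E_{d,J}$.

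\textbf{Extending $f$ to a neighborhood of $E$.} On the inner part $E\cap\{|x_1|\le c_+\}\subset\Delta_{b_0}^m$ the map $f$ is already defined. On the outer end $\{|x_1|>r_1\}$, use the coordinates $x=\Phi_d(s,\xi)$ of assertion (2) with $\xi\in\Xi_1$. The pullback $g(s,\xi)=f(\Phi_d(s,\xi))$ is holomorphic near $\overline\Delta_r\times\Xi_2$ by assertion (3). Its boundary fibers lie in single $W$'s by the choice of $d$.

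Lemma~\ref{poly:boundary-disc}, with passive polydisc $\Xi_2$, approximates $g$ on $\overline\Delta_r\times\Xi_2$ by $\widetilde g$ holomorphic near $\overline\Delta_{S}\times\Xi_2$. Here $S$ is large enough that $s$ covers the whole end of $E$ out to $\|x\|\lesssim$ bound of $h(\overline\Delta^m_{R_+})$. This is possible because $E$ is compact, $s^{D_d}=x_1$, and $|x_1|$ is bounded on $E$.

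The difficulty is that $s$ is a $D_d$-valued root of $x_1$. I expect to use that $s$ is single valued on the end (assertion (2)), so $\widetilde g\circ(s,\xi)$ is a well-defined map on a neighborhood of the outer end. It agrees approximately with $f$ on the annular overlap $r_1<|x_1|<c_+$, which corresponds to $|s|\le r$ when $c_+$ is replaced by $c$ in the definition of $r$. I would arrange this overlap via the choice $r=c^{1/D_d}$ and a thin annulus $r_1<|x_1|\le c$.

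\textbf{Gluing and pulling back.} Glue $f$ (inner side) and $\widetilde g\circ(s,\xi)$ (outer side) over the annular overlap using Lemma~\ref{lem:center-gluing}. For the splitting, use a cut in the $x_1$-plane and apply the planar version of Lemma~\ref{lem:additive-splitting} in the variable $s$ (or $x_1$), treating $\xi$ as passive. Alternatively, use the strongly pseudoconvex version on a neighborhood of $E$, which is polynomially convex as a sublevel of a polynomial automorphism. The result is $F$ holomorphic near $E$ and close to $f$ on the inner part.

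Finally set $\widetilde f=F\circ h$. It is holomorphic near $\overline\Delta_R^m$. On $Q$, $h$ is $\delta$-close to the identity and stays in the inner part, since $|h_1|<c$ on $\overline\Delta_1^m$. Therefore $d_Y(\widetilde f,f)\le d_Y(F\circ h,f\circ h)+d_Y(f\circ h,f)<\epsilon$.

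\textbf{Expected main obstacle.} The hardest part is the gluing on the overlap. The splitting constant must be fixed before the approximation accuracy of Lemma~\ref{poly:boundary-disc}, even though the geometry (through $d$ and $J$) changes the overlap domain. I would fix $d$ and $J$ first, which fixes the overlap and its constant $M$, and only then demand accuracy from the boundary-disc extension. This ordering is allowed because that lemma gives arbitrary accuracy for fixed data.
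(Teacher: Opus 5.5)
Your outline is the paper's proof written in the transported coordinates $x=h(z)$. The paper fixes a smoothly bounded strongly convex $D_0$ with $\overline\Delta_R^m\Subset D_0\Subset\Delta_{R_+}^m$. It glues on $D_0$ along the sides $\{|h_1|<c_1\}$ and $\{|h_1|>c_0\}$, where $r_1<c_0<c_1<c$ and $|h_1|<c_0$ on $Q$. This is the same as gluing on $h(D_0)$ along $\{|x_1|<c_1\}$ and $\{|x_1|>c_0\}$ and then precomposing with $h$. Your order of choices is also the paper's: first $d$, then $J$, then the sides and splitting constant, then the accuracy in Lemma~\ref{poly:boundary-disc}. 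Two steps, however, do not work as you wrote them.

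\textbf{The splitting.} Only your second option is viable. A planar splitting in $s$ or $x_1$ with $\xi$ passive fails. The coordinates $(s,\xi)$ exist only on the outer end, and $\Phi_d$ does not even cover the inner region: $x_1=0$ forces $s=0$, and then $\Phi_d(0,\xi)=0$. In the $x$-coordinates, the two sides are not products of planar sets with one common polydisc on which both the inner map and the outer coordinates are defined. You need the $m$-dimensional splitting of Lemma~\ref{lem:additive-splitting}.

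Instead of a thin strongly pseudoconvex neighborhood of $E$, simply use $h(D_0)$. It is smoothly bounded and strongly pseudoconvex, and $h(\overline D_0)\subset H_d^{-J}(\Delta_1^m)\subset E$. Parts (1) and (2) of Lemma~\ref{lem:source-geometry} then automatically place the inner and outer sides inside the domains of the chart spray $f^\sharp$ and of $(s,\xi)$.

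\textbf{The spray variable.} Lemma~\ref{lem:center-gluing} cannot glue the bare maps $f$ and $\widetilde g\circ(s,\xi)$. If the outer family does not depend on the fiber variable, its transition satisfies $D_vc=-I$. An independently chosen chart spray of $\widetilde g$ has no reason to have transition near the identity either.

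You must apply Lemma~\ref{poly:boundary-disc} to the whole spray family $k(s,\xi,v)=f^\sharp(\Phi_d(s,\xi),v)$, with $(\xi,v)$ passive. Choose the spray radius together with the Lebesgue number so that the boundary fibers, including $v$, lie in a single $W$. For example, require $d_Y(f^\sharp(x,v),f(x))<\lambda$ and that every $3\lambda$-ball about a point of $f(\overline\Delta_{b_0}^m)$ lies in one $W$.

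Then $F_B(x,v)=G(s(x),\xi(x),v)$ approximates the pullback of $k$. On the overlap that pullback is exactly $f^\sharp$. Hence the transition is $C^1$-close to the identity in $v$ there, by Cauchy estimates across a fixed fiber gap, which is what the gluing requires.
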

\begin{proof}
When $m=1$, choose a scalar radius $r>1$ inside the domain of $f$.
Each boundary value belongs to a neighborhood $W$, so
Lemma~\ref{poly:boundary-disc}, with no passive variables, gives
the required finite extension.

Suppose $m\geq2$. Choose $1<b_0<b'<b$ such that $f$ is holomorphic
near $\overline\Delta_b^m$, and a chart spray $f^\sharp(x,v)$
centered at $f$ near $\overline\Delta_{b'}^m\times\{0\}$.
For $R_+>R$, take a smoothly bounded strongly convex domain $D_0$
with
\[
 \overline\Delta_R^m\Subset D_0\Subset\Delta_{R_+}^m.
\]
Choose the constants $r_j,a,c,c_+$ from
Lemma~\ref{lem:source-geometry}, with $c_+<b_0$, and choose
\[
 r_1<c_0<c_1<c.
\]

Compactness gives $\lambda>0$ such that, for every
$y\in f(\overline\Delta_{b_0}^m)$, the ambient $d_Y$-ball
$B_{d_Y}(y,3\lambda)$ lies in one neighborhood $W$.
Choose a common spray radius $\beta>0$ so small that
\[
 d_Y(f^\sharp(x,v),f(x))<\lambda
 \quad\bigl(x\in\overline\Delta_{b_0}^m,
             \ v\in\overline\Delta_\beta^n\bigr).
\]
Uniform continuity of $f$ and the thin-fiber estimate in
Lemma~\ref{lem:source-geometry} allow us to choose $d$ so large that
\[
 d_Y\bigl(f(\Phi_d(s,\xi)),f(\Phi_d(s,0))\bigr)<\lambda
 \quad\bigl(|s|=c^{1/D_d},\ \xi\in\Xi_2\bigr).
\]
The same choice of $d$ can ensure, uniformly for $J\geq1$,
\[
 |(h_{d,J})_1|<c_0\quad\hbox{on }Q,\qquad
 \sup_Qd_Y(f\circ h_{d,J},f)<\epsilon/2.
\]
Finally choose $J$ so that $a^JR_+<1$. Write $h=h_{d,J}$,
$\Phi=\Phi_d$, $D_d=d^{m-1}$, and $r=c^{1/D_d}$.
For every $|s|=r$, the whole compact fiber
\[
 f^\sharp(\Phi(s,\xi),v),\qquad
 \xi\in\Xi_2,\quad v\in\overline\Delta_\beta^n,
\]
lies in $B_{d_Y}(f(\Phi(s,0)),2\lambda)$ and hence in one
neighborhood $W$.
These choices are independent of the approximation accuracy below.
If $|h_1|\leq c_0$ throughout $D_0$, the inner inclusion in
Lemma~\ref{lem:source-geometry} makes $f\circ h$ holomorphic on
$D_0$, and the assertion follows. Otherwise the outer side below
is nonempty.

Here $z$ denotes the original source point and $x=h(z)$ its transported
image. The coordinates $(s,\xi)$ describe only the outer part of that
image; $v$ is the spray variable. Scalar enlargement will extend the
family in $(s,\xi,v)$, after which it will be pulled back to the outer
$z$-domain and glued to the old family.

The map
\[
 k(s,\xi,v)=f^\sharp(\Phi(s,\xi),v)
\]
is holomorphic near
$\overline\Delta_r\times\Xi_2\times\overline\Delta_\beta^n$.
The outer coordinates $s,\xi$
are single-valued and holomorphic near
$h(\overline D_0)\cap\{|x_1|\geq c_0\}$, and their actual
transverse values lie strictly inside $\Xi_1$.
Choose $S>r$ larger, with a positive margin, than all the scalar
radii on this compact set. With $(\xi,v)$ as passive variables,
Lemma~\ref{poly:boundary-disc} gives maps $G(s,\xi,v)$
holomorphic near
\[
 \overline\Delta_S\times\Xi_1\times\overline\Delta_\beta^n
\]
and arbitrarily close to $k$ on
$\overline\Delta_r\times\Xi_1\times\overline\Delta_\beta^n$.

Consider the fixed source pair
\[
 U_A=D_0\cap\{|h_1|<c_1\},\qquad
 U_B=D_0\cap\{|h_1|>c_0\}.
\]
It satisfies $U_A\cup U_B=D_0$, has separated closed fringes, and
contains $Q$ in $U_A\setminus\overline{U_B}$. On the two sides use
\[
 F_A(z,v)=f^\sharp(h(z),v),\qquad
 F_B(z,v)=G(s(h(z)),\xi(h(z)),v).
\]
The first family is a chart spray near $\overline{U_A}$ by the
inner inclusion in Lemma~\ref{lem:source-geometry}; the second
is holomorphic near $\overline{U_B}$. On the overlap,
\[
 h(z)=\Phi(s(h(z)),\xi(h(z))),\qquad
 |s(h(z))|\leq c_1^{1/D_d}<r.
\]
Thus the pullback of $k$ is exactly $F_A$ on the overlap, while
that of $G$ is $F_B$. The approximation $G\approx k$ therefore makes
$F_B$ arbitrarily close to $F_A$ on this fixed overlap and the full
fiber product $\overline\Delta_\beta^n$.

Fix $0<\rho<\rho_+<\beta$. The source pair and its splitting
constant are already fixed. The inverse charts of $F_A$ and the
Cauchy estimate across the fixed fiber gap make the transition and
its fiber derivative as close to the identity as required by
Lemma~\ref{lem:center-gluing}. That lemma gives a holomorphic map
$\widetilde f:D_0\to Y$ with
\[
 \sup_Qd_Y(\widetilde f,f\circ h)<\epsilon/2
\]
when the scalar-enlargement approximation is sufficiently accurate.
The two estimates give the desired approximation to $f$ on $Q$.
\end{proof}

\subsection{Exhaustion}

\begin{theorem}\label{thm:polydisc}
Suppose $Y$ satisfies \LAP. A map holomorphic near a closed polydisc in
$\mathbb C^m$ can be approximated uniformly there by entire maps
$\mathbb C^m\to Y$.
\end{theorem}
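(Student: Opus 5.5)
The plan is to derive Theorem~\ref{thm:polydisc} from Proposition~\ref{poly:finite-extension} by a standard exhaustion argument that uses the completeness of $d_Y$. After an affine change of source coordinates, we may assume the closed polydisc is $Q=\overline\Delta^m$; for a polydisc with unequal radii, apply a diagonal linear rescaling first, which preserves the class of polydiscs. Fix $\epsilon>0$ and a map $f_0=f$ holomorphic near $Q$. Choose radii $R_0=1<R_1<R_2<\cdots\to\infty$ and tolerances $\epsilon_k>0$ with $\sum_k\epsilon_k<\epsilon$.

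The first step builds a sequence inductively. Given $f_{k-1}$ holomorphic near $\overline\Delta_{R_{k-1}}^m$, rescale the source by $z\mapsto R_{k-1}z$ so that this polydisc becomes $\overline\Delta^m$. Then apply Proposition~\ref{poly:finite-extension} with the ratio $R_k/R_{k-1}$ in place of $R$. This yields $f_k$ holomorphic near $\overline\Delta_{R_k}^m$ with
\[
\sup_{\overline\Delta_{R_{k-1}}^m}d_Y(f_k,f_{k-1})<\epsilon_k.
\]

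The second step passes to a limit. On every fixed compact $\overline\Delta_{R_j}^m$, the maps $f_k$ with $k>j$ form a $d_Y$-Cauchy sequence uniformly, because the tail sums $\sum_{k>j}\epsilon_k$ tend to zero. Completeness of $d_Y$ therefore gives a continuous limit $F:\C^m\to Y$ with $\sup_Qd_Y(F,f)\le\sum_k\epsilon_k<\epsilon$.

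The only real point, and the step I expect to be the main obstacle, is to ensure that $F$ is holomorphic, since a uniform limit of holomorphic maps into a manifold need not stay in one chart. I would handle this by shrinking the $\epsilon_k$ inductively. At stage $k$, after $f_k$ is constructed, use compactness of $f_k(\overline\Delta_{R_{k-1}}^m)$ to choose $\delta_k>0$ such that every map within $\delta_k$ of $f_k$ on $\overline\Delta_{R_{k-1}}^m$ has values, locally in the source, inside coordinate charts of $Y$ covering small balls around $f_k$. Then require $\sum_{l>k}\epsilon_l<\delta_k/2$. The later maps $f_l$, and hence $F$, stay uniformly close to $f_k$ on $\overline\Delta_{R_{k-1}}^m$. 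So near each point they are holomorphic maps into a fixed chart of $\C^n$ converging uniformly, and the limit there is holomorphic by Weierstrass. Since every point of $\C^m$ lies in some $\Delta_{R_{k-1}}^m$, this makes $F$ entire and completes the proof.
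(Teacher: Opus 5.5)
Your proposal is correct and follows the paper's proof: normalize to $\overline\Delta^m$, apply Proposition~\ref{poly:finite-extension} repeatedly after rescaling with summable errors, and use completeness of $d_Y$ to obtain a locally uniform limit. The inductive shrinking of the $\epsilon_k$ is harmless but unnecessary. Continuity of the limit $F$ already puts a small source neighborhood of each point, and hence all late $f_k$ there, into a single target chart, where Weierstrass's theorem applies.
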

\begin{proof}
Let $f$ be the given map. Normalize its original polydisc to
$Q=\overline\Delta^m$, fix $\epsilon>0$, and set $f_0=f$. Choose
$1=R_0<R_1<\cdots\to\infty$ and apply
Proposition~\ref{poly:finite-extension} successively after rescaling.
We obtain maps $f_j$
holomorphic near
$\overline\Delta_{R_j}^m$ such that
\[
 \sup_{\overline\Delta_{R_{j-1}}^m}d_Y(f_j,f_{j-1})
 <\epsilon 2^{-j-1}.
\]
Completeness of $d_Y$ gives a locally uniform limit
$F:\mathbb C^m\to Y$. Target coordinate charts show that $F$
is holomorphic, and the summable errors give the required
approximation on $Q$.
\end{proof}

\begin{remark}\label{poly:direct-domination}
For a strongly dominable target, the sector-filling step in
Lemma~\ref{poly:boundary-disc} has a direct form. Near each target
point choose an entire map $\Psi:\mathbb C^n\to Y$ with an
inverse branch $\lambda$ on $W$. Let $k_j$ be the old family on a
collar product $J_j\times P$, with image in $W$ and with all
passive variables included in $P$. Its lift $\lambda\circ k_j$
is holomorphic near the polynomially convex product $J_j\times P$.
Oka--Weil approximation by a polynomial $q_j$, followed by
$\Psi\circ q_j$, fills the sector. The finite extension then
follows from scalar enlargement and the fixed annular gluing above.
\end{remark}

\section{From polydisc approximation to convex approximation}
\label{sec:convex}

The passage from polydiscs
to arbitrary convex sets has two parts.
First, approximation on lower-dimensional convex sets gives uniform
approximation on compact convex subsets of a real hyperplane. Second, this real-face approximation
allows us to remove one half-space constraint at a time. The second part
requires a complex collar that remains fixed as the gluing accuracy
improves; Subsection~\ref{sec:fixed-collar} explains how to obtain it.

The real-face induction follows OpenAI~\cite[Section~4]{OAI26}.
There the lack of a prescribed complex collar is handled by gluing
estimates on the real interface~\cite[Lemmas~4.4--4.5]{OAI26}; here the
fixed-inverse approximation described in Subsection~\ref{sec:fixed-collar} supplies arbitrarily accurate
transitions on one fixed collar. We use the approximation property with
passive parameters from that argument, stated here in arbitrary target
dimension.

In the following definition, $r$ counts the convex source variables.
The additional polydisc variables are retained throughout the induction:
they include the auxiliary holomorphic parameters used to join local
approximants and the spray variables needed for gluing.

\begin{definition}[Convex approximation with passive parameters]
\label{def:approximation-level}
Let $r\geq0$ be an integer. A complex manifold $Y$ satisfies
$\mathsf A_r$ if, for every integer $q\geq0$, every compact convex
set $K\subset\C^r$, every closed polydisc $P\subset\C^q$, every
open neighborhood $U\subset\C^{r+q}$ of $K\times P$, every
holomorphic map $f:U\to Y$, and every $\epsilon>0$, there is a
holomorphic map $E:\C^{r+q}\to Y$ such that
\[
 \sup_{K\times P}d_Y(E,f)<\epsilon.
\]
A zero-dimensional factor is a point. In particular, $\mathsf A_0$
is approximation on closed polydiscs in every dimension.
\end{definition}

\begin{theorem}\label{thm:a0-cap}
If $Y$ satisfies $\mathsf A_0$, then it satisfies $\mathsf A_r$ for
every $r\geq0$. In particular, $Y$ has the convex approximation
property.
\end{theorem}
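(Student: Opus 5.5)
We argue by induction on $r$, following the real-face scheme described above. The base case $r=0$ is the hypothesis. For the inductive step we assume $\mathsf A_{r-1}$ and prove $\mathsf A_r$.

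We first reduce the convex set. Every compact convex $K\subset\C^r$ is the decreasing intersection of compact convex polytopes, and a map holomorphic on a neighborhood $U$ of $K\times P$ is holomorphic near $K'\times P$ for a polytope $K'\supset K$ close to $K$. It therefore suffices to treat polytopes. Enclose $K$ in a large closed polydisc $\Delta$, on which approximation already holds by $\mathsf A_0$, since a polydisc times a polydisc is a polydisc. Write $K=\Delta\cap\bigcap_{i=1}^{N}\{\ell_i\le 0\}$ with real affine $\ell_i$, and set $K_k=\Delta\cap\bigcap_{i\le k}\{\ell_i\le0\}$. We then show, descending in $k$, that a map near $K_k\times P$ is approximable on $K_k\times P$ by maps near $K_{k-1}\times P$. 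Each step removes one half-space constraint, and $K_0=\Delta$ finishes the argument.

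The key auxiliary statement is approximation on real faces. After a complex affine change of coordinates, the hyperplane $\{\ell_k=0\}$ becomes $\{\operatorname{Re} z_r=0\}$, and $F_k=K_k\cap\{\operatorname{Re} z_r=0\}$ is a compact convex subset of $\C^{r-1}\times i\R$. We claim that a map holomorphic near $F_k\times P$ is approximable on $F_k\times P$ by entire maps. To prove it, we thicken $F_k\times P$ to $F'\times\overline\Delta_\delta\times P$, where $F'$ is the projection of $F_k$ to $\C^{r-1}\times\{0\}$ combined with an interval in $\operatorname{Im} z_r$. Equivalently, we view $\operatorname{Im} z_r$ as part of a compact convex set in $\C^{r-1}$ after the unimodular substitution $z_r\mapsto$ new complex variable whose real part is tiny. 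The $\operatorname{Re} z_r$ direction is then absorbed as a thin polydisc factor, which is a passive parameter. Applying $\mathsf A_{r-1}$ with $q+1$ passive variables gives the real-face approximation. The retention of passive parameters in Definition~\ref{def:approximation-level} is exactly what permits this.

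The gluing step goes as follows. Let $f$ be holomorphic near $K_k\times P$. Choose a fixed complex collar around the face, namely $A=$ a thin slab $\{|\operatorname{Re} z_r|<\delta\}\cap\Op K_{k-1}$, and $B=\Op K_k\cap\{\operatorname{Re} z_r<\delta\}$. These form a pair with separated fringes and overlap $C=\{-\delta<\operatorname{Re}z_r<\delta\}$. Following Subsection~\ref{sec:fixed-collar}, we fix one coarse entire spray so that the collar width $\delta$ and the splitting constant $M$ from Lemma~\ref{lem:additive-splitting} (or its parametric version) are determined before any accuracy is chosen. Take a chart spray $f^\sharp(z,p,v)$ centered at $f$, via Lemma~\ref{lem:chart-spray}, on a contractible Stein neighborhood. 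Apply real-face approximation to the whole family $(z,p,v)$, with $v$ as an extra passive variable. This yields an entire family $E(z,p,v)$ close to $f^\sharp$ on (collar)$\times P\times\overline\Delta_\beta^n$. Then use $F_B=f^\sharp$ and $F_A=E$, with inverse charts taken only for $f^\sharp$. Cauchy estimates across the fiber gap make the transition satisfy the hypotheses of Lemma~\ref{lem:center-gluing}. The glued map is holomorphic near $K_{k-1}\times P$ and approximates $f$.

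The main obstacle is that real-face approximation only gives closeness on the real face $F_k$ itself, not on a fixed complex collar of positive width. Meanwhile, the splitting constant $M$ must be fixed before the accuracy. To handle this, we first approximate on a slightly thickened slab, using the thin-polydisc trick above with the slab width $\delta$ fixed in advance. We then use approximation of the inverse fiber chart of a coarse entire spray by polynomials, as announced in the introduction, so that accuracy can be improved on that one fixed collar. The final step is an exhaustion by polydiscs using Theorem~\ref{thm:polydisc}, which upgrades approximation near $K_0=\Delta$ to entire maps and so gives $\mathsf A_r$, and hence $\CAP$ when $q=0$.
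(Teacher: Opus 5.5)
Your outer framework matches the paper: induction on $r$, a full-dimensional polytope, removal of one half-space constraint at a time, and a final polydisc step. The real-face step, however, has a genuine gap.

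After the coordinate change, the face $F_k$ is a compact convex set in $\C^{r-1}\times i\R$, and its slices $\{y:(y,it)\in F_k\}$ vary with $t$. The map is holomorphic only near $F_k\times P$, not near the product of the projections of $F_k$. For $r=2$, the face can be the cone $\{(y,it):|y|\le t\le1\}$; the product of its projections contains $(y,0)$ with $|y|=1$, which lies at positive distance from the face, where $f$ need not be defined. Your substitution also cannot work for dimensional reasons: a $(2r-1)$-dimensional face cannot be absorbed into a convex set in $\C^{r-1}$. Nor is a thin neighborhood of an interval in the $z_r$-plane a disc, so it cannot simply be declared a passive polydisc factor. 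Even for product faces you would need the Riemann-map-plus-polynomial trick of Lemma~\ref{convex:interval}.

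So $\mathsf A_{r-1}$ only gives entire approximants $g_i$ on local products $D_i\times C_i\times P$ over a subdivision of the real coordinate. The missing idea is how to assemble these into one entire map. The paper proceeds as follows:
\begin{enumerate}
\item Join consecutive $g_i$ by paths $f^\sharp(\cdot,(1-\chi(\tau))a_i+\chi(\tau)a_{i+1})$ in a chart spray of $f$.
\item Extend these paths to one common polydisc using the rescaling trick of Lemma~\ref{convex:path}.
\item Form a family that is smooth in the real parameter and holomorphic in the others.
\item Complexify that parameter via Stein neighborhoods of graphs, a holomorphic retraction, and Bernstein polynomials (Lemma~\ref{convex:complexify}).
\item Finally apply Lemma~\ref{convex:interval}.
\end{enumerate}
This is the content of Lemma~\ref{convex:face}, and nothing in your proposal replaces it.

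The same issue distorts your order of choices in the gluing step. You fix the slab width $\delta$ in advance and ask for closeness on that slab. That is an approximation statement on a non-product convex set with interior in $\C^r$, so it is essentially $\mathsf A_r$ and the argument becomes circular. Real-face approximation only gives closeness on the face.

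The working order, in Lemmas~\ref{convex:retune} and~\ref{convex:one-cut}, is:
\begin{enumerate}
\item Take one coarse entire $G$ close to the chart spray $F$ on the face product $M$.
\item Solve for the exact fiber inverse $W$, with $G(Z,W(Z,v))=F(Z,v)$, on some complex neighborhood $U$ of $M$ whose size depends on $G$.
\item Only then choose a strongly convex domain $D$ and a width $\delta$ with the collar $C\Subset U$, and fix the splitting constant.
\item Approximate $W$ by polynomials $q$ on $C$, so that $G(Z,q(Z,v))$ becomes as accurate as required.
\end{enumerate}

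There are also two smaller slips. First, the new side must be all of $D\cap\{\Re z_r>-\delta\}$, not just the slab; otherwise the two sides do not cover $K_{k-1}$. Second, Theorem~\ref{thm:polydisc} assumes \LAP{}, which is not a hypothesis here. You do not need it: $\mathsf A_0$ alone gives the final polydisc approximation.
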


The proof is completed in Subsection~\ref{sec:finite-cuts}, using the lemmas developed below.

We prove the implication $\mathsf A_{r-1}\Rightarrow\mathsf A_r$.
All approximation assertions below concern uniform convergence; only
fiberwise Cauchy estimates will be needed in the gluing step.

\subsection{Complexifying a real parameter}

We first replace a smooth real parameter by a holomorphic one. The
complex neighborhood obtained in this step may depend on the desired
accuracy, which is sufficient for the argument.

\begin{lemma}\label{convex:complexify}
Let $I\subset\R$ be a compact interval and $C\subset\C^k$
compact and convex. Suppose that $H$ is a smooth map to $Y$ on a
neighborhood of $I\times C$ in $\R\times\C^k$, holomorphic in the
second variable for each real value of the first. For every
$\epsilon>0$ there is a map holomorphic on a complex neighborhood of
$I\times C$ whose distance from $H$ on $I\times C$ is less than
$\epsilon$.
\end{lemma}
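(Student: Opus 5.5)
The plan is to reduce to the case $Y=\C^n$ (or an open subset of it) by working inside a tubular neighborhood of the graph of $H$, and then to approximate a smooth function of $(x,z)$ that is holomorphic in $z$ by polynomials in $(x,z)$ with complex $x$.

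\emph{Step 1: put the graph in Euclidean coordinates.} Shrink the domain of $H$ to $I'\times U$, where $I'\supset I$ is an open interval and $U\supset C$ is a convex open set; then $I'\times U$ is contractible. The graph $\Gamma=\{(x,z,H(x,z))\}$ is not a complex submanifold, since $H$ is only smooth in $x$. So instead of a graph chart I use a Stein neighborhood in $Y$ of a compact image. Embed a neighborhood of the compact set $H(I\times C)$ into $\C^N$ as a closed submanifold of a Stein open set; this is possible if that set has a Stein neighborhood basis, which I would check. Alternatively, cover $H(I\times C)$ by finitely many coordinate balls and subdivide $I$ and $C$.

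The more robust route is the following. For each $x_0\in I$, the holomorphic map $H(x_0,\cdot)$ near the convex set $C$ admits a chart spray $F_{x_0}(z,v)$ by Lemma~\ref{lem:chart-spray}; the bundle is trivial because the base is contractible. For $x$ near $x_0$, write
\[
 H(x,z)=F_{x_0}(z,u_{x_0}(x,z)),
\]
where $u_{x_0}$ is smooth in $x$, holomorphic in $z$, $\C^n$-valued, and small.

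\emph{Step 2: local complexification.} On a short interval $I_i\ni x_0$, approximate $u_{x_0}$ on $I_i\times C$ by a polynomial in $(x,z)$, and then substitute complex $x$. This is possible because $I_i\times C\subset\C\times\C^k$, with $x$ viewed in $\R\subset\C$, is polynomially convex (it is a product of convex sets). A function that is smooth in the real variable and holomorphic in $z$ is a uniform limit of polynomials there. For instance, convolve in $x$ with the heat kernel $e^{-(x-y)^2/\delta}/\sqrt{\pi\delta}$, which is entire in $x$ and preserves holomorphy in $z$; then apply Oka--Weil, or simply Taylor-expand. Actually the heat-kernel convolution already gives an entire function of $x$, so no polynomial step is needed. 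Composing with $F_{x_0}$ then gives a holomorphic local approximant $H_i$ on a complex neighborhood of $I_i\times C$.

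\emph{Step 3: glue along $I$ (the main obstacle).} The local approximants $H_i$ and $H_{i+1}$ must be glued over the overlaps $I_i\cap I_{i+1}$. This is a one-variable Cartan-type gluing in $x$ with $z$ as a parameter, and I expect it to be the hard step. The splitting is planar in the complexified $x$ variable, and the parameter set is $C$ rather than a polydisc; to use Lemma~\ref{lem:additive-splitting} I would enclose $C$ in a small neighborhood and apply the Cauchy--Green operator in $x$ only, which preserves holomorphic dependence on $z$ for any parameter domain.

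I would glue successively, one overlap at a time. At each step, form the chart spray $F_A$ from $H_1\cup\dots\cup H_i$ over a thin complex neighborhood of $[\min I,\,\text{right end of }I_i]\times C$. The transition to $H_{i+1}$ is close to the identity because both maps approximate $H$ there. Then apply Lemma~\ref{lem:center-gluing} in its parameter version. Accuracy loss accumulates only over finitely many steps, and the complex thickness may shrink at each step; that is permitted, since the statement allows the neighborhood to depend on $\epsilon$. The remaining point to check is that the new chart spray at each stage exists with a uniform radius. Lemma~\ref{lem:chart-spray} provides this on a contractible Stein neighborhood, a thin complex neighborhood of an interval times $C$, taken after fixing that neighborhood and before choosing the next accuracy.
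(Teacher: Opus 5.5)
Your Steps 1--2 (local chart sprays, then heat-kernel complexification of the fiber coordinate) are fine. The gap is in Step 3, at the claim that the transition to $H_{i+1}$ is close to the identity because both maps approximate $H$.

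Lemma~\ref{lem:center-gluing} does not glue two nearby maps. It needs a holomorphic \emph{family} $F_B$ on the new side with $F_A(\cdot,\cdot)^{-1}(F_B(\cdot,v))=v+c(v)$, where both $c$ and $D_vc$ are small on a fixed fiber polydisc. Closeness of $H_{i+1}$ to the glued map only makes the single section $F_A^{-1}(H_{i+1})$ small.

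\begin{itemize}
\item If you take $F_B(\cdot,v)=H_{i+1}$, then $c(v)=F_A^{-1}(H_{i+1})-v$ and $D_vc=-I$. The hypothesis $M\kappa<1/2$ then fails, since $R_Au-R_Bu=u$ forces $M\geq1$.
\item If you take a spray around $H_{i+1}$ built from the local chart $F_{x_{i+1}}$, its fiber coordinates are unrelated to those of $F_A$. Lemma~\ref{lem:chart-spray} produces $F_A$ afresh around the glued map, after $H_{i+1}$ was already fixed in Step~2.
\end{itemize}

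So the new-side family must be built \emph{after} $F_A$ is fixed, and it must approximate $F_A$ in all fiber directions on the overlap. For example, express $F_A$ for real $x$ near the overlap in the chart $F_{x_{i+1}}$. Interpolate smoothly in $x$ to $u_{x_{i+1}}+v$, and only then complexify this $\C^n$-valued family. Every gluing in the paper is arranged this way: the normalization $W$ in Lemma~\ref{scalar:substitution}, and the spray-level approximations in Lemmas~\ref{poly:boundary-disc} and~\ref{convex:retune}. The spray radius you flag is not the real difficulty.

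A second issue, the order of choices, is also unaddressed. Heat-kernel complexifications of data that are only smooth in $x$ approximate $H$ only on complex strips whose width shrinks as the accuracy improves. Indeed, bounded holomorphic approximants on a fixed strip converging to $u$ on the real interval would make $u$ real-analytic. Lemma~\ref{lem:center-gluing}, however, needs the gluing domains, and hence $M$, fixed before the tolerance $e$.

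That the final neighborhood may depend on $\epsilon$ does not help inside a single step. You need either of the following:
\begin{itemize}
\item a proof that, with a cutoff depending only on $\operatorname{Re}x$, the Cauchy--Green splitting constant stays bounded as the strip width $w\to0$ (the kernel integral over the overlap is $O(w\log(1/w))$);
\item a fixed-collar device as in Lemma~\ref{convex:retune}.
\end{itemize}
With both repairs your local-complexify-and-glue route can be completed, but as written Step~3 is asserted rather than proved.

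The paper avoids gluing altogether. It uses the graph $\{(s,q,H(s,q))\}\subset\C\times X\times Y$ that you set aside in Step~1, and gets a Stein neighborhood of it from Forstneri\v{c}--Wold. It embeds that neighborhood in $\C^N$ with a holomorphic retraction. It then approximates $s\mapsto e(s,\cdot,H(s,\cdot))$ in $H^\infty(U_1,\C^N)$ by Bernstein polynomials in $s$, which are automatically holomorphic in complex $s$. Your first option, a Stein neighborhood of the image $H(I\times C)$ in $Y$, does not exist in general, since that image can fill a compact curve.
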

\begin{proof}
Choose a larger interval $I^+=[a,b]$, a compact convex neighborhood
$C^+$ of $C$, with all products under consideration in the domain of
$H$, and a bounded convex open neighborhood $X$ of $C^+$
with $I^+\times X$ in that domain. The Stein-neighborhood theorem
we need is the following specialization of
\cite[Corollary~2.2]{FW10}: a continuous family of holomorphic
sections over a Stein base, parametrized by a compact real interval,
has a common Stein neighborhood of its graphs over any fixed
holomorphically convex compact subset of the base. Here $X$ is
Stein, $C^+$ is $\OO(X)$-convex because it is convex, and
$q\mapsto(q,H(s,q))$ is such a family of sections of
$X\times Y\to X$. The corollary gives a Stein neighborhood
$\Theta\subset\C\times X\times Y$ of
\[
 \{(s,q,H(s,q)):s\in I^+,\ q\in C^+\}.
\]
The classical Stein embedding theorem~\cite{F17} gives a closed
holomorphic embedding $e:\Theta\hookrightarrow\C^N$.
The holomorphic neighborhood-retraction theorem, in the form of
\cite[Corollary~1]{Siu76}, gives a holomorphic retraction $\rho:\Omega\to e(\Theta)$
from an open neighborhood in $\C^N$.

Choose open neighborhoods $U_0,U_1$ of $C$ with
$C\Subset U_0\Subset U_1\Subset\Int C^+$.
The function $g(s,q)=e(s,q,H(s,q))$ is continuous as a map from
$I^+$ into the Banach space $H^\infty(U_1,\C^N)$. Set $x=(s-a)/(b-a)$. The Bernstein
polynomials of $g$
\[
 P_\nu(s,q)=\sum_{j=0}^{\nu}
 g\left(a+\frac{j(b-a)}\nu,q\right)
 \binom\nu j x^j(1-x)^{\nu-j},
\]
are holomorphic in $(s,q)$ and converge uniformly to $g$ on
$I^+\times U_1$. Indeed, uniform continuity controls the terms with
$|j/\nu-x|\leq\nu^{-1/4}$, while the variance bound
$x(1-x)/\nu\leq1/(4\nu)$ makes the total weight of the remaining
terms at most $1/(4\sqrt\nu)$.

For large $\nu$, the compact set
$P_\nu(I\times\overline U_0)$ lies in $\Omega$. Hence it remains in
$\Omega$ when $I$ is replaced by a sufficiently small complex
neighborhood $V_\nu$. Let $\pi_Y:\C\times X\times Y\to Y$
be the projection. The maps
\[
 \pi_Y\circ e^{-1}\circ\rho\circ P_\nu:
 V_\nu\times U_0\longrightarrow Y
\]
approximate $H$ uniformly on $I\times C$, by continuity on a compact
neighborhood of the embedded graph.
\end{proof}

An interval contributes no new convex dimension: a planar coordinate
change turns it into a passive disc, and scalar polynomial approximation
removes that coordinate change afterward.

\begin{lemma}\label{convex:interval}
Assume $\mathsf A_s$ for some $s\geq0$. If $I\subset\R$ is a compact
interval, $C\subset\C^s$ is compact convex, and $P$ is a closed
polydisc, every map holomorphic near $I\times C\times P$ can be
approximated uniformly there by maps entire in all displayed complex
variables.
\end{lemma}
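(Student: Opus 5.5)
The paragraph preceding the lemma already indicates the idea: a planar coordinate change turns the interval into a passive disc, and afterwards a scalar polynomial approximation undoes it. Let $f$ be holomorphic on an open neighborhood $U$ of $I\times C\times P$, with complex variables $(\zeta,w,p)\in\C\times\C^s\times\C^q$ and $I\subset\R\subset\C_\zeta$. Compactness of $I\times C\times P$ gives a thin complex neighborhood $N$ of $I$, a convex neighborhood $C'$ of $C$ and a slightly larger polydisc $P'$ with $N\times C'\times P'\subset U$.

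\emph{The coordinate change.} Let $\Omega\subset\C$ be a thin simply connected neighborhood of $I$, for instance a thin ellipse with foci near the ends of $I$. Choose a biholomorphism $\phi:\Delta\to\Omega$ from the unit disc, and fix a closed disc $\overline\Delta_{\varrho}\subset\Delta$ with $\phi(\overline\Delta_\varrho)\supset I$; this is possible since $\phi^{-1}(I)$ is compact in $\Delta$. The map $g(u,w,p)=f(\phi(u),w,p)$ is then holomorphic near $\overline\Delta_\varrho\times C\times P$. Because $\overline\Delta_\varrho\times P$ is a closed polydisc in $\C^{1+q}$, hypothesis $\mathsf A_s$, with the passive polydisc $\overline\Delta_\varrho\times P$, yields an entire map $E(u,w,p)$ such that $\sup d_Y(E,g)<\epsilon/2$ on $\overline\Delta_\varrho\times C\times P$.

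\emph{Undoing the coordinate change.} The inverse $\psi=\phi^{-1}$ is holomorphic near $I$, and $\psi(I)$ lies in the open disc $\Delta_\varrho$. The compact set $I$ is polynomially convex, since a compact subset of $\R$ has connected complement in $\C$. Runge's theorem therefore provides polynomials $\pi_k\to\psi$ uniformly on $I$. For large $k$, $\pi_k(I)\subset\overline\Delta_\varrho$, and
\[
\widetilde E(\zeta,w,p)=E(\pi_k(\zeta),w,p)
\]
is entire in all variables. On $I\times C\times P$ we estimate
\[
d_Y\bigl(\widetilde E,f\bigr)\le d_Y\bigl(E(\pi_k,w,p),E(\psi,w,p)\bigr)+d_Y\bigl(E(\psi,w,p),g(\psi,w,p)\bigr).
\]
The last term vanishes in the sense that $g(\psi(\zeta),w,p)=f(\zeta,w,p)$, so the second term is below $\epsilon/2$. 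The first term is below $\epsilon/2$ for large $k$ by uniform continuity of $E$ on a compact neighborhood of $\overline\Delta_\varrho\times C\times P$.

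\emph{Main obstacle.} The only delicate point is arranging that $I$ sits inside a disc image $\phi(\overline\Delta_\varrho)$ with $\phi$ defined on the entire thin neighborhood where $f$ lives. The simply connected thin neighborhood $\Omega\subset N$ together with the Riemann map resolves this. One caveat: no smallness of $\Omega$ relative to $\epsilon$ is required, because $\Omega$ is fixed once and for all before $\mathsf A_s$ is applied. In the degenerate case $s=0$ the factor $C$ is a point, and the argument is unchanged.
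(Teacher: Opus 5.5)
Your proposal is correct and follows the paper's proof essentially step for step: a Riemann map of a simply connected neighborhood of $I$ turns the interval into an extra passive disc factor $\overline\Delta_\varrho$ for $\mathsf A_s$. A polynomial approximation of the inverse map on $I$ is then substituted into the fixed entire approximant, and uniform continuity of that approximant controls the error. (The phrase ``the last term vanishes'' should say that the omitted third term $d_Y(g(\psi,w,p),f)$ is zero, but your estimate is right.)
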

\begin{proof}
For the given map $H$, choose a bounded simply connected complex
neighborhood $V$ of $I$ such that $H$ is holomorphic near
$V\times C\times P$. A Riemann map $\varphi:V\to\Delta$ satisfies
$\varphi(I)\Subset\Delta_\rho$ for some $\rho<1$. Apply
$\mathsf A_s$ to
$H(\varphi^{-1}(\zeta),y,u)$ on
$C\times(\overline\Delta_\rho\times P)$, regarding $(\zeta,u)$ as
passive variables, and obtain an entire map $E(\zeta,y,u)$.
Once $E$ is fixed, approximate $\varphi$ uniformly on $I$ by a scalar
polynomial $p$. The entire map $E(p(t),y,u)$ gives the assertion.
Uniform continuity of the fixed $E$ on a compact neighborhood of
$\varphi(I)\times C\times P$ controls the substitution error.
\end{proof}

\subsection{Joining local approximants on a common domain}

We isolate the path construction in the proof of
OpenAI~\cite[Lemma~4.3]{OAI26}. The local approximants along a real
face will be entire, but the short path joining two nearby approximants
is constructed in a chart of the original map. That path is initially defined only on a small joining
product. To complexify the assembled family, we need all its paths on
one common source polydisc. The next lemma obtains this by expanding
the source near the entire endpoints, approximating the resulting
family, and scaling back. The scaling gives endpoint control on the
larger polydisc, where a small chart correction restores exact agreement.

\begin{lemma}\label{convex:path}
Assume $\mathsf A_s$, $s\geq0$. Let $C^-,C\subset\C^s$ be compact
convex sets with $C^-\Subset\Int C$, $B\subset\C^s$ a closed
polydisc containing $C$ in its interior, and $P\subset\C^q$ a
closed polydisc. Suppose that $h_\tau(y,u)$, $0\leq\tau\leq1$, is a
smooth family holomorphic in $(y,u)$ near $C\times P$, and that it
agrees, on intervals near $\tau=0,1$, with entire maps $g_0,g_1$,
respectively. For every $\epsilon>0$ there is a smooth family
$\widehat h_\tau$, holomorphic in $(y,u)$ on a common neighborhood
of $B\times P$, which equals $g_0,g_1$ on endpoint intervals and
satisfies
\[
 \sup_{[0,1]\times C^-\times P}d_Y(\widehat h_\tau,h_\tau)
 <\epsilon.
\]
For $s=0$, the convex and polydisc factors in $\C^s$ are omitted.
\end{lemma}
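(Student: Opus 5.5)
The plan is to make the endpoint maps ``see'' the large polydisc $B$ through a $\tau$-dependent rescaling of the source. Then one convex approximation on the fixed set $C$ suffices, and scaling back gives endpoint control on $B$. Fix a point $c\in\Int C^-$ and write $\delta_\mu(y)=c+\mu(y-c)$. Choose $R>1$ with $B\Subset\delta_R(\Int C)$; this is possible since $C$ is a convex neighborhood of $c$. Choose endpoint intervals $[0,\tau_0]$ and $[1-\tau_0,1]$ on which $h_\tau=g_0$ and $h_\tau=g_1$, respectively. Then choose a smooth function $\mu:[0,1]\to[1,R]$ equal to $1$ on $[\tau_0/2,1-\tau_0/2]$ and equal to $R$ near $0$ and near $1$. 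Set
\[
 G_\tau(z,u)=h_\tau\bigl(\delta_{\mu(\tau)}(z),u\bigr).
\]
Where $\mu(\tau)\neq1$ we have $h_\tau=g_j$, which is entire. Hence $G_\tau$ is defined and holomorphic in $(z,u)$ near $C\times P$ for every $\tau$, and it depends smoothly on $\tau$.

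Next I approximate $G$ on $[0,1]\times C\times P$. By Lemma~\ref{convex:complexify} (with $C\times P$ as the convex factor), $G$ is uniformly close to a map holomorphic near $[0,1]\times C\times P$. Lemma~\ref{convex:interval}, which uses $\mathsf A_s$, then gives a map $E(t,z,u)$ entire in all variables with
\[
 \sup_{[0,1]\times C\times P}d_Y(E,G)<\epsilon',
\]
after slightly enlarging $C$ and $P$ so that this holds on a neighborhood. Define
\[
 k_\tau(y,u)=E\bigl(\tau,\delta_{\mu(\tau)}^{-1}(y),u\bigr),
\]
which is holomorphic in $(y,u)$ on all of $\C^s\times\C^q$ and smooth in $\tau$. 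I check closeness in two regimes.
\begin{itemize}
\item On the middle interval, $\mu=1$, so $k_\tau\approx h_\tau$ on $C\times P$, in particular on $C^-\times P$.
\item For $\tau$ in the endpoint intervals, $\mu(\tau)\in[1,R]$ and $h_\tau=g_j$. Since $\delta_\mu^{-1}(C^-)\subset C^-\subset C$, we get $k_\tau\approx g_j=h_\tau$ on $C^-\times P$.
\item Where $\mu(\tau)=R$, $\delta_R^{-1}(B)\subset C$, so $k_\tau\approx g_j$ on a neighborhood of $B\times P$.
\end{itemize}
Thus $k_\tau$ is $\epsilon'$-close to $h_\tau$ on $[0,1]\times C^-\times P$, and $\epsilon'$-close to $g_j$ on $B\times P$ near $\tau=0,1$.

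Finally I restore exact endpoint agreement by a chart correction. By Lemma~\ref{lem:chart-spray} (the base is contractible Stein), take chart sprays $g_j^\sharp$ centered at $g_j$ near a neighborhood of $B\times P$. For $\epsilon'$ small, near the ends the section
\[
 w_\tau=g_j^\sharp(\cdot)^{-1}(k_\tau)
\]
is defined, holomorphic in $(y,u)$, and small. Choose a cutoff $\chi(\tau)$ that equals $0$ on smaller endpoint intervals, equals $1$ outside the region where $\mu=R$, and is supported where $w_\tau$ is defined. Then put
\[
 \widehat h_\tau=g_j^\sharp\bigl(\cdot,\chi(\tau)w_\tau\bigr)
\]
near the ends and $\widehat h_\tau=k_\tau$ elsewhere. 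This family is smooth in $\tau$, holomorphic on a common neighborhood of $B\times P$, equals $g_j$ on endpoint intervals, and stays within a small multiple of $\epsilon'$ of $k_\tau$, hence within $\epsilon$ of $h_\tau$ on $C^-\times P$. For $s=0$ the rescaling is vacuous and only the complexification and the endpoint chart correction remain.

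The main obstacle is making sure the two approximation regions are compatible. The transition zone, where $\mu$ moves from $1$ to $R$, must lie inside the intervals where $h_\tau$ is already entire, and the chart-correction support must lie where $\mu=R$. Both are arranged by choosing $\mu$ and $\chi$ before $E$, so that only $\epsilon'$ depends on the accuracy.
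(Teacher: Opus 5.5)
Your proposal is correct and follows essentially the same route as the paper: a $\tau$-dependent dilation that is the identity in the middle and a large fixed factor near the endpoints, one entire approximation via Lemma~\ref{convex:complexify} and Lemma~\ref{convex:interval} under $\mathsf A_s$, rescaling back, and a cutoff chart-spray correction near the endpoints to restore exact agreement with $g_0,g_1$. One cosmetic fix: the dilation center only needs to lie in $C^-$, since $C^-$ may have empty interior; the paper instead takes it in $\Int C$ and uses $\operatorname{conv}(C^-\cup\{d\})\Subset\Int C$.
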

\begin{proof}
Suppose first that $s>0$. Choose $d\in\Int C$ and $L>1$ so large
that $d+L^{-1}(B-d)\Subset\Int C$. Take smooth functions
$\kappa_0,\kappa_1:[0,1]\to[0,1]$ with disjoint supports inside the
intervals where $h_\tau=g_0,g_1$, respectively, and equal to one near
the corresponding endpoints. Set
\[
 \lambda(\tau)=1+(L-1)(\kappa_0(\tau)+\kappa_1(\tau)),\qquad
 \widetilde h_\tau(y,u)=h_\tau(d+\lambda(\tau)(y-d),u).
\]
This family is defined near $[0,1]\times C\times P$: wherever
$\lambda>1$, the map being evaluated is an entire endpoint map.
Fix chart sprays of $g_0,g_1$ over a neighborhood of $B\times P$
using Lemma~\ref{lem:chart-spray}.

Apply Lemma~\ref{convex:complexify} to $\widetilde h$ and then
Lemma~\ref{convex:interval} under $\mathsf A_s$. This gives an
entire $E(\tau,y,u)$ arbitrarily close to $\widetilde h$ on
$[0,1]\times C\times P$. Set
\[
 A_\tau(y,u)=E(\tau,d+\lambda(\tau)^{-1}(y-d),u).
\]
For $y\in C^-$ the rescaled argument lies in
$\operatorname{conv}(C^-\cup\{d\})\Subset\Int C$. Substitution
into $\widetilde h_\tau$ gives exactly $h_\tau(y,u)$.
Consequently $A_\tau$ approximates $h_\tau$ uniformly on the required
smaller product. On endpoint intervals where $\lambda=L$, the same
identity holds for every $y\in B$, and $h_\tau=g_\ell$ there.
Thus $A_\tau$ is arbitrarily close to $g_\ell$ on the full product
$B\times P$ on those intervals.

The fixed endpoint charts therefore give small fiber coordinates
$\beta^{(\ell)}_\tau$ such that
\[
 A_\tau=(g_\ell)^\sharp(\,\cdot\,,\beta^{(\ell)}_\tau).
\]
On a smaller closed endpoint interval, choose a smooth cutoff
$\vartheta_\ell$ equal to zero near $\ell$ and to one near its inner
end. Replace $A_\tau$ there by
$(g_\ell)^\sharp(\,\cdot\,,\vartheta_\ell\beta^{(\ell)}_\tau)$.
The replacement agrees with $A_\tau$ near the inner end and with
$g_\ell$ near the outer end. It is smooth in $\tau$, holomorphic on
a common neighborhood of $B\times P$, and remains arbitrarily close
to $h_\tau$ on $C^-\times P$. These assertions follow from uniform
continuity of the fixed charts and their inverses on compact sets.
For $s=0$, take $\lambda=1$, omit scaling, and use the same entire
approximation and endpoint repair.
\end{proof}

\begin{lemma}\label{convex:face}
Assume $\mathsf A_{r-1}$, $r\geq1$. Let
$F\subset\R\times\C^{r-1}$ be compact convex and $P$ a closed
polydisc. Every map holomorphic near $F\times P$ can be approximated
uniformly there by maps entire in all displayed complex variables.
\end{lemma}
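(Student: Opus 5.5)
Write points of $\R\times\C^{r-1}$ as $(\tau,y)$ and let $I=\pi(F)\subset\R$ be the projection of $F$ to the real coordinate. The plan is to realize the given map $f$, holomorphic near $F\times P$, as a smooth real one-parameter family of maps holomorphic in $(y,u)$, replace it by a family that is entire in $(y,u)$ on a common neighborhood of a fixed polydisc, and then complexify $\tau$ with Lemma~\ref{convex:complexify}. The final step applies Lemma~\ref{convex:interval} with $s=r-1$.

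\emph{Step 1: slicing.} For each $\tau\in I$ the slice $F_\tau=\{y:(\tau,y)\in F\}$ is compact and convex. Because $f$ is holomorphic on an open neighborhood of $F\times P$ in $\C^r\times\C^q$, compactness gives $\delta>0$ and finitely many points $\tau_0<\tau_1<\cdots<\tau_N$ covering $I$ with the following property. For each $k$ there is a compact convex $C_k\subset\C^{r-1}$ containing a neighborhood of $F_\tau$ for all $|\tau-\tau_k|\le\delta$, with $f$ holomorphic near $[\tau_k-\delta,\tau_k+\delta]\times C_k\times P$. I would take $C_k$ to be a small closed $\eta$-neighborhood of $\bigcup_{|\tau-\tau_k|\le\delta}F_\tau$. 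Upper semicontinuity of slices of a compact convex set makes these neighborhoods of $F_\tau$ for nearby $\tau$, although possibly not convex. To handle that, I would take $C_k$ to be the convex hull; convexity of $F$ keeps this hull inside a small neighborhood of $F_{\tau_k}$ once $\delta$ is small relative to $\eta$. Lemma~\ref{convex:interval} under $\mathsf A_{r-1}$ then gives entire maps $g_k(\tau,y,u)$ approximating $f$ on $[\tau_k-\delta,\tau_k+\delta]\times C_k\times P$.

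\emph{Step 2: joining.} On each overlap $[\tau_{k+1}-\delta,\tau_k+\delta]$, the maps $g_k$ and $g_{k+1}$ are both close to $f$ on $C_k\cap C_{k+1}\supset$ a neighborhood of the relevant slices. Using a chart spray of $f$ there (Lemma~\ref{lem:chart-spray}), I would write $g_{k+1}=f^\sharp(\cdot,\beta)$ with $\beta$ small and interpolate by $f^\sharp(\cdot,\vartheta(\tau)\beta)$ with a smooth cutoff $\vartheta$. This is defined only on a small joining product $C'\times P$. To place it on a common domain I would apply Lemma~\ref{convex:path} with $s=r-1$: the joining path in a frozen parameter is a smooth family agreeing with entire endpoint maps near its ends, and it is replaced by a family holomorphic near a fixed polydisc $B\supset$ all $C_k$ and close to the path on the smaller convex set. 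The joining parameter is then reparametrized into $\tau$ on the overlap, giving a family $H(\tau,y,u)$ that is smooth in $\tau$ and holomorphic in $(y,u)$ near $I\times B\times P$, with $H$ close to $f$ on $F\times P$.

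\emph{Step 3: complexifying.} Lemma~\ref{convex:complexify}, with $C=B\times P$, makes $H$ holomorphic near $I\times B\times P$. Then Lemma~\ref{convex:interval} under $\mathsf A_{r-1}$, applied to the convex set $B$ and the polydisc $P$, gives maps entire in all variables. The errors accumulate in finitely many steps, so choosing the tolerances in reverse order gives the approximation.

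The main obstacle is Step 2. The difficulty is making the joining paths coexist on one fixed complex domain uniformly in $\tau$, since the convex slices vary and the chart-spray interpolation lives only near the graph of $f$. Lemma~\ref{convex:path} is designed for exactly this, so the step I expect to need care is arranging the overlaps so that each path's frozen-parameter family satisfies its hypotheses, including endpoint agreement on whole intervals, with compatible convex sets $C^-\Subset\Int C$.
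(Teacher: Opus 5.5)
Your outline follows the paper's route: local entire approximants, chart-spray joining paths moved onto a common polydisc $B$ by Lemma~\ref{convex:path}, then Lemmas~\ref{convex:complexify} and~\ref{convex:interval}. However, Step~2 as written does not yet produce a family that is smooth in the face coordinate, and that is exactly where the paper's proof does its work.

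There is first a slip. $f^\sharp(\cdot,\vartheta\beta)$ joins $f$ to $g_{k+1}$, and at $\vartheta=0$ it equals $f$, which is neither entire nor equal to $g_k$. So Lemma~\ref{convex:path} does not apply, and $H$ would jump. Write $g_k=f^\sharp(\cdot,\alpha)$ as well, and use $f^\sharp(\cdot,(1-\chi(\sigma))\alpha+\chi(\sigma)\beta)$.

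More importantly, you never say how the real coordinate $\tau$ enters Lemma~\ref{convex:path}. That lemma admits only a real path parameter together with holomorphic variables ranging over a convex set and a closed polydisc.
\begin{itemize}
\item If you freeze $\tau$ and apply the lemma for each real $\tau$ separately, the outputs need not depend smoothly on $\tau$, because the lemma's entire approximants are chosen non-canonically.
\item If you freeze $\tau$ at the division point $b$, the path ends at $g_k(b,\cdot)$ and $g_{k+1}(b,\cdot)$ rather than at $g_k(\tau,\cdot)$ and $g_{k+1}(\tau,\cdot)$. Then $H$ is discontinuous at the ends of the joining interval.
\item Making the complexified $\tau$ an active variable is not available either, since it would require $\mathsf A_r$.
\end{itemize}

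The missing idea is to decouple the path parameter $\sigma$ from the face coordinate $t$ and keep $t$ as a complex passive variable. The map $f$, its chart spray, and the $g_k$ are all holomorphic in $t$. Hence $h_\sigma(t,y,u)$ is holomorphic for $t$ in a small closed complex disc about $b$. Apply Lemma~\ref{convex:path} with $y$ active and $(t,u)$ passive, ranging over that disc times $P$. The output $\widehat h_\sigma(t,y,u)$ then agrees with $g_k(t,\cdot)$ and $g_{k+1}(t,\cdot)$ on the endpoint $\sigma$-intervals. Setting $H(t,y,u)=\widehat h_{\rho(t)}(t,y,u)$ on a short real interval $J\ni b$ gives a family that is smooth and matches $g_k$ and $g_{k+1}$ exactly.

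Your $g_k$ come from Lemma~\ref{convex:interval}, so they approximate $f$ only over real intervals. The disc and $J$ must therefore be chosen after the $g_k$, small enough that $g_k$ and $g_{k+1}$ stay in the $f^\sharp$-chart there. The paper avoids this by applying $\mathsf A_{r-1}$ with a fixed complex disc $\overline D_i$ among the passive variables, which fixes the joining disc in advance.

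Your final step, Lemma~\ref{convex:interval} with $s=r-1$ and $C=B$, is fine. It is equivalent to the paper's use of $s=0$ with $B\times P$ passive.
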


The slices of $F$ vary with the real coordinate $t$, whereas
Lemma~\ref{convex:complexify} requires a family on a fixed product.
We join local entire approximants using Lemma~\ref{convex:path}
on a common polydisc $B$, then complexify the assembled family and
approximate it by an entire map.
Figure~\ref{fig:real-face-joining} shows one join.

\begin{figure}[htbp]
\centering
\begin{tikzpicture}[x=1cm,y=1cm,font=\small,text=figInk,
  line cap=round,line join=round]
\draw[figOverlap,dashed,rounded corners=2pt]
  (0,0.35) rectangle (10.3,3.45);
\node[anchor=south east] at (10.3,3.48) {$I\times B$};
\fill[figOld!7] (0.65,0.8) rectangle (5.95,2.85);
\fill[figNew!7] (4.35,0.8) rectangle (9.65,2.85);
\draw[figOld,thick] (0.65,0.8) rectangle (5.95,2.85);
\draw[figNew,thick] (4.35,0.8) rectangle (9.65,2.85);
\node[figOld,anchor=south] at (2.65,2.91)
  {$(D_i\cap\R)\times C_i$};
\node[figNew,anchor=south] at (7.75,2.91)
  {$(D_{i+1}\cap\R)\times C_{i+1}$};
\filldraw[fill=figOverlap!18,draw=figInk,thick]
  (0.95,1.35)--(3.0,1.05)--(7.3,1.18)--(9.35,1.65)
  --(7.7,2.35)--(3.2,2.55)--cycle;
\node at (2.4,1.75) {$F$};
\draw[figInk,thick] (5.15,1.115)--(5.15,2.455);
\node[anchor=west,inner sep=1.5pt] at (5.3,1.8) {$F_b$};
\draw[figOverlap,densely dotted] (4.65,0.8)--(4.65,-1.08);
\draw[figOverlap,densely dotted] (5.65,0.8)--(5.65,-1.08);
\draw[figInk,->] (0,-0.05)--(10.6,-0.05) node[right] {$t$};
\draw[figOverlap,line width=3pt] (4.65,-0.05)--(5.65,-0.05);
\draw[figInk] (5.15,-0.13)--(5.15,0.03);
\node[anchor=north] at (5.15,-0.17) {$b$};
\node[anchor=south] at (5.15,0.07) {$J_i$};
\draw[figOld,line width=1.5pt] (0.65,-1.0)--(4.65,-1.0);
\draw[figOverlap,line width=1.5pt] (4.65,-1.0)--(5.65,-1.0);
\draw[figNew,line width=1.5pt] (5.65,-1.0)--(9.65,-1.0);
\fill[figOld] (4.65,-1.0) circle (1.6pt);
\fill[figNew] (5.65,-1.0) circle (1.6pt);
\node[figOld,anchor=north] at (2.6,-1.13) {$g_i(t,\cdot)$};
\node[anchor=north] at (5.15,-1.13)
  {$\widehat h^{(i)}_{\rho_i(t)}(t,\cdot)$};
\node[figNew,anchor=north] at (7.75,-1.13) {$g_{i+1}(t,\cdot)$};
\node[anchor=north] at (5.15,-1.79)
  {A smooth real-$t$ family on the common $B\times P$};
\end{tikzpicture}
\caption{Joining local approximants along a real face. The upper panel
is schematic in the $y$ directions and omits the passive factor $P$.
The local joining path is extended to $B$ before it is inserted over
$J_i$. Exact endpoint agreement makes the lower family smooth;
holomorphic dependence on $t$ is obtained afterward.}
\label{fig:real-face-joining}
\end{figure}

\begin{proof}
Write the map as $f(t,y,u)$. By first enlarging $F$ slightly inside
its real hyperplane, and reserving a slightly larger passive polydisc,
we may suppose that its projection $I$ to $\R$ has positive length.
For $t\in I$, write $F_t=\{y:(t,y)\in F\}$ for the slice,
and choose a compact convex neighborhood $C_t$ of $F_t$ and a complex disc $D_t$ about $t$ such that $f$ is
holomorphic near $\overline D_t\times C_t\times P$. Nearby slices
remain in $\Int C_t$: otherwise a convergent sequence of points in
those slices would give a point of $F_t$ outside $\Int C_t$.
A finite subdivision of $I$ therefore assigns to each subinterval a
product $D_i\times C_i\times P$ containing all its slices with
positive margins.

At a division point $b$, shrink the joining disc and choose compact
convex sets $C_b^-\Subset\Int C_b$ inside
$\Int(C_i\cap C_{i+1})$, with every nearby slice contained in
$\Int C_b^-$. After all division points have been treated, choose a
closed polydisc $B$ containing all the $C_i$ in its interior. Property $\mathsf A_{r-1}$, with $(t,u)$ passive,
gives entire maps $g_i$ as close as desired to $f$ on the assigned
products. A chart spray of $f$ on a joining product writes the two
adjacent maps as
\[
 g_i=f^\sharp(\,\cdot\,,a_i),\qquad
 g_{i+1}=f^\sharp(\,\cdot\,,a_{i+1}),
\]
where both fiber coordinates are uniformly small. If
$\chi:[0,1]\to[0,1]$ is smooth, zero near $0$ and one near $1$, then
\[
 h_\tau=f^\sharp\bigl(\,\cdot\,,
          (1-\chi(\tau))a_i+\chi(\tau)a_{i+1}\bigr)
\]
is a local joining path uniformly close to $f$. Its chart and fiber
coordinates are defined only on the joining product, so the path
does not yet extend over $B$. Apply
Lemma~\ref{convex:path} with active variables $y$, with the smaller
joining disc and $u$ as passive variables, and with the common
polydisc $B$. We obtain a path $\widehat h^{(i)}_\tau$ defined over
$B$, still close to $f$ on the slices near $b$, and agreeing exactly
with $g_i,g_{i+1}$ on endpoint intervals.

The path parameter $\tau$ is smooth, while $t$ remains a passive
holomorphic variable in $\widehat h^{(i)}_\tau$.
Choose disjoint small real joining intervals $J_i$ and smooth
functions $\rho_i:J_i\to[0,1]$ equal to $0$ and $1$ near their left
and right ends. Define
\[
 H(t,y,u)=
 \begin{cases}
 \widehat h^{(i)}_{\rho_i(t)}(t,y,u),&t\in J_i,\\
 g_i(t,y,u),&t\text{ between successive joining intervals}.
 \end{cases}
\]
Exact endpoint agreement makes $H$ smooth in $t$ and holomorphic in
$(y,u)$ on a neighborhood of $I\times B\times P$. Extend it a little
past the endpoints of $I$ using the first and last entire maps.
It approximates $f$ uniformly on $F\times P$ as closely as desired.
Lemma~\ref{convex:complexify} now makes $H$ holomorphic near the
whole product. Apply Lemma~\ref{convex:interval} with $s=0$, since
$B\times P$ is a polydisc, to obtain the required entire map.
Here $\mathsf A_0$ follows from $\mathsf A_{r-1}$ by taking the
active convex factor to be a point and restricting the resulting
entire map to that point. When $r=1$, omit $y$ and the corresponding
convex factors throughout the construction.
\end{proof}

\subsection{Fixing the collar before gluing}\label{sec:fixed-collar}

Kusakabe~\cite{Kus21} uses a related
construction combining a local fiberwise right inverse, Oka--Weil
approximation, and gluing of sprays. A close approximation on a real face need not be close on a
predetermined complex collar. The accuracy of the first entire
approximation is used only to obtain a fiber inverse on some collar.
After fixing this collar and its splitting constant, polynomial
approximation of the fixed inverse makes the gluing error arbitrarily
small.

\begin{lemma}\label{convex:retune}
Let $M\subset\C^m$ be compact and
$F:\Op(M)\times\Delta_{4b}^n\to Y$, $b>0$, be a chart spray, so
$(Z,v)\mapsto(Z,F(Z,v))$ is biholomorphic onto its image.
If an entire map $G:\C^m\times\C^n\to Y$ is sufficiently close to
$F$ on $M\times\overline\Delta_{3b}^n$, then there are a complex
neighborhood $U$ of $M$ and a holomorphic map
$W:U\times\Delta_{2b}^n\to\C^n$ such that
\begin{equation}\label{convex:exact-inverse}
 G(Z,W(Z,v))=F(Z,v).
\end{equation}
For every compact convex $C\subset U$, polynomial maps $q(Z,v)$
can make the transition
\[
 F(Z,\,\cdot\,)^{-1}G(Z,q(Z,v))=v+c(Z,v)
\]
well-defined near $C\times\overline\Delta_b^n$, with both
$\sup\norm{c}$ and $\sup\norm{D_vc}$ arbitrarily small there.
The maps $F,G,W$, the neighborhood $U$, and the compact set $C$ remain
fixed during this final approximation.
\end{lemma}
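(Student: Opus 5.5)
We will prove the two assertions in order: first the exact inverse $W$ by an inverse-function argument in the fixed chart of $F$, then the polynomial transition by Oka--Weil approximation of $W$.

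\emph{Construction of $W$.} Let $\Psi_Z=F(Z,\cdot)^{-1}$ denote the inverse chart. It is defined on a neighborhood of the compact set $\{(Z,F(Z,v)):Z\in M,\ \|v\|\le 3b\}$, uniformly in $Z$ near $M$, by the margin discussion after Lemma~\ref{lem:chart-spray}. If $G$ is close enough to $F$ on $M\times\overline\Delta_{3b}^n$, then $G$ takes values in this chart domain on a compact neighborhood of that product. By continuity of $G$, this remains true on a product $U_1\times\overline\Delta_{3b}^n$ with $U_1\supset M$ open. Set
\[
\phi(Z,w)=\Psi_Z(G(Z,w))=w+e(Z,w).
\]
Then $e$ is holomorphic and its supremum on $U_1\times\Delta_{3b}^n$ is as small as we wish. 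Cauchy estimates on the gap from $3b$ to $5b/2$ bound $\|D_we\|$ on $\Delta_{5b/2}^n$ by a small multiple of $\sup\|e\|/b$. Hence, for each $Z$, the map $w\mapsto\phi(Z,w)$ is a small perturbation of the identity on $\Delta_{5b/2}^n$. The contraction $w\mapsto v-e(Z,w)$ maps $\overline\Delta_{5b/2}^n$ into itself for $v\in\Delta_{2b}^n$, since the error is less than $b/2$, and its Lipschitz constant is less than $1/2$. Its unique fixed point $W(Z,v)$ satisfies $\phi(Z,W)=v$, which is \eqref{convex:exact-inverse}. Holomorphy of $W$ in $(Z,v)$ follows from the implicit function theorem, or from uniform convergence of holomorphic Picard iterates. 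Take $U$ to be a neighborhood of $M$ whose closure lies in $U_1$. This fixes $F$, $G$, $W$ and $U$.

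\emph{Polynomial transition.} Let $C\subset U$ be compact convex. Then $C\times\overline\Delta_{3b/2}^n$ is compact convex, hence polynomially convex, and $W$ is holomorphic on a neighborhood of it. Oka--Weil approximation gives polynomial maps $q$ with $\|q-W\|$ arbitrarily small there. For $\|q-W\|$ small, the values $G(Z,q(Z,v))$ lie in the fixed inverse-chart domain, and
\[
\Psi_Z(G(Z,q))=\phi(Z,q)=v+\bigl[\phi(Z,q)-\phi(Z,W)\bigr].
\]
The term in brackets, which is $c$, is bounded by $\sup\|D_w\phi\|\cdot\|q-W\|$. This is small because $\phi$ has bounded derivative on a fixed compact set containing both $q$ and $W$.

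\emph{Derivative bound and the expected difficulty.} Smallness of $D_vc$ on $C\times\overline\Delta_b^n$ follows from Cauchy estimates in $v$ on the fixed gap from $b$ to $3b/2$, since $c$ is holomorphic in $v$ there. The only delicate point is order of quantifiers. The closeness of $G$ to $F$ is used only to make $\phi$ near the identity on the fixed collar. All later constants (chart domain, Lipschitz bound for $\phi$, Cauchy gaps) are fixed before $q$ is chosen. Only $\|q-W\|$ is then sent to zero, which is exactly why $F$, $G$, $W$, $U$ and $C$ remain fixed in the final approximation.
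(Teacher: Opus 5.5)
Your proposal is correct and follows the paper's proof essentially step for step: you obtain $W$ as the fixed point of $w\mapsto v-(\Gamma(Z,w)-w)$ on $\overline\Delta_{5b/2}^n$ for $v\in\Delta_{2b}^n$, using Cauchy estimates on the gap from $3b$ to $5b/2$, and you then approximate $W$ by polynomials on $C\times\overline\Delta_{3b/2}^n$ and apply Cauchy estimates from $3b/2$ to $b$. The only difference is cosmetic. You bound the transition through the Lipschitz estimate for $\phi=\Psi_Z\circ G$, whereas the paper uses uniform continuity of the fixed $G$ and of the chart inverse.
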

\begin{proof}
For sufficiently close approximation the old chart inverse defines
\[
 \Gamma(Z,w)=F(Z,\,\cdot\,)^{-1}G(Z,w)
\]
near $M\times\overline\Delta_{3b}^n$, uniformly close to $w$.
Cauchy estimates only in $w$ give, after choosing the approximation
sufficiently accurate,
\[
 \norm{\Gamma-\mathrm{id}}<b/4,\qquad
 \norm{D_w\Gamma-I}<1/2
\]
on $M\times\overline\Delta_{5b/2}^n$, using the maximum norm and
its induced operator norm. Fix $G$. The strict inequalities persist
on $U\times\overline\Delta_{5b/2}^n$ for a complex neighborhood
$U$ of $M$. For $v\in\Delta_{2b}^n$, the map
\[
 w\longmapsto v-(\Gamma(Z,w)-w)
\]
is a contraction of $\overline\Delta_{5b/2}^n$ into itself.
Its unique fixed point $W(Z,v)$ satisfies
\eqref{convex:exact-inverse}. The holomorphic implicit function
theorem and uniqueness give joint holomorphicity in $(Z,v)$.

Now fix a compact convex $C\subset U$. Approximate $W$
componentwise by polynomials on the compact convex set
$C\times\overline\Delta_{3b/2}^n$. For these polynomial maps $q$,
\eqref{convex:exact-inverse} and uniform continuity of the fixed
$G$ show that $G(Z,q(Z,v))$ converges uniformly to $F(Z,v)$ there.
The old chart inverse consequently defines a transition converging
uniformly to $v$. Cauchy estimates across the fixed fiber gap from
$3b/2$ to $b$ give uniform convergence of $D_vc$ to zero as well.
The compositions $G(Z,q(Z,v))$ are entire, leaving the construction
unrestricted away from this compact overlap.
\end{proof}

\begin{lemma}\label{convex:one-cut}
Assume $\mathsf A_{r-1}$, $r\geq1$. Let $Q\subset\C^r$ be compact
convex and $P$ a closed polydisc. Set
$Q_-=Q\cap\{\Im z_1\leq0\}$ and suppose that $Q_-$ has nonempty
interior. Every map holomorphic near $Q_-\times P$ can be approximated
uniformly there by maps holomorphic near $Q\times P$.
\end{lemma}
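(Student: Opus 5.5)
The plan is to glue the given map $f$ on a thin slab $Q_-\times P$ to an entire approximant across the real face $F=Q\cap\{\Im z_1=0\}$. The collar on which we glue must be fixed before the approximation accuracy is chosen; Lemma~\ref{convex:retune} makes this possible.

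First I fix the geometry. Choose $\delta>0$ so small that $f$ is holomorphic near $Q_\delta\times P^+$, where $Q_\delta=Q\cap\{\Im z_1\le\delta\}$ and $P^+$ is a slightly larger closed polydisc. Set
\[
A=\operatorname{int}\text{-neighborhood of }Q\cap\{\Im z_1>\delta/3\},\qquad B=\text{neighborhood of }Q_{2\delta/3},
\]
thickened so that $A\cup B$ is a neighborhood of $Q$ with separated closed fringes. The overlap then lies near the middle slab $M=Q\cap\{\delta/3\le\Im z_1\le2\delta/3\}$. If $Q\cap\{\Im z_1\ge\delta/3\}$ is empty there is nothing to prove, so assume it is not. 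The union of the two sides can be taken to be a smoothly bounded strongly convex domain $\Omega\supset Q$; here I would shrink $\Omega$ to a strongly convex neighborhood of $Q$, using that $Q_-$ has nonempty interior. Lemma~\ref{lem:additive-splitting} then fixes the splitting constant for the pair $(A\times\Int P,\,B\times\Int P)$. By Lemma~\ref{lem:chart-spray}, applied on a convex (hence contractible Stein) neighborhood of $Q_\delta\times P$, take a chart spray $f^\sharp(Z,v)$ on $\Op(Q_\delta\times P)\times\Delta_{4b}^n$.

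Next comes the coarse entire spray. The spray variable $v$ is treated as passive. The set $Q_\delta$ is compact convex, and I want an entire $G(Z,v)$ close to $f^\sharp$ on $Q_\delta\times P\times\overline\Delta^n_{3b}$. To get it, I would apply the fact that $\mathsf A_{r-1}$ implies approximation near the full convex set $Q_\delta$. The most natural route is to approximate on the real face $Q_\delta\cap\{\Im z_1=\delta'\}$ via Lemma~\ref{convex:face}, but that only gives closeness on a real hypersurface. Lemma~\ref{convex:retune} is exactly designed for this situation: its hypothesis is closeness on a compact $M$. So I take $M$ to be a real face $F'=Q\cap\{\Im z_1=\delta/2\}$ times $P$, and use Lemma~\ref{convex:face} (with $(u,v)$ passive, after the affine change making the face lie in $\R\times\C^{r-1}$) to obtain an entire $G$ close to $f^\sharp$ on $M\times\overline\Delta^n_{3b}$. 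Lemma~\ref{convex:retune} then yields a fixed complex neighborhood $U$ of $M$ and an exact fiber inverse $W$.

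With $U$ now fixed, I redefine the gluing geometry inside it. Choose $A,B$ as above but with the overlap a compact convex thin slab $C\subset U$ around the face, using $\Im z_1$ thresholds within the width of $U$. This is legitimate because $U$ contains a uniform slab neighborhood of the compact face, which is where the ordering ``fix $U$, then the collar, then the accuracy'' matters. I take the old side $F_B=f^\sharp$ (a chart spray on the $Q_-$ side), and the new side $F_A(Z,v)=G(Z,q(Z,v))$ with a polynomial map $q$. This $F_A$ is entire, so it is defined on the whole upper part of $Q$. Lemma~\ref{convex:retune} makes the transition $(f^\sharp)^{-1}\circ F_A=v+c$ with $\|c\|$ and $\|D_vc\|$ arbitrarily small on $C\times P\times\overline\Delta^n_b$. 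Since the lemma as stated has the chart spray on the $A$ side, I would swap the labels of the two sides. Then Lemma~\ref{lem:center-gluing}, with the now-fixed constant $M$, gives corrections of size $O(e)$ and a glued map holomorphic near $Q\times P$. On $Q_-\times P$ this map equals $f^\sharp(Z,a(Z))$, which lies within any prescribed $\epsilon$ of $f$.

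The main obstacle is the order of choices in the second step. The first entire approximation must be good enough for Lemma~\ref{convex:retune} to produce $U$, and the collar and the splitting constant must then be chosen inside $U$ before the polynomial accuracy. I also need to check that the splitting domain $A\cup B$ can be strongly pseudoconvex and smoothly bounded with the overlap contained in $U$. If that fails, I would instead thicken in the product with $P$ and use a smoothed convex neighborhood, cut by two parallel hyperplanes within $U$.
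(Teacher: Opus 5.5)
Your proposal is correct and follows the paper's argument. The steps match: a coarse entire approximant on a real face via Lemma~\ref{convex:face}, then the fixed inverse $W$ on $U$ from Lemma~\ref{convex:retune}, then the strongly convex domain and slab collar chosen inside $U$, and finally the polynomial accuracy and center gluing. The only differences are inessential. You cut at $\Im z_1=\delta/2$ rather than on the face $\Im z_1=0$. More importantly, your fallback of a strongly convex neighborhood of $Q\times P$ in $\C^{r+q}$ cut by two parallel hyperplanes is not optional but is exactly the paper's choice: $(A\cup B)\times\Int P$ is not smoothly bounded when $r\ge2$ and $q\ge1$, so Lemma~\ref{lem:additive-splitting} does not apply to it.
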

\begin{proof}
There is nothing to prove if $Q_-=Q$. Otherwise the cutting
hyperplane meets $\Int Q$. Set $K=Q\times P$ and $Z=(z,u)$.
By Lemma~\ref{lem:chart-spray}, fix a chart spray $F(Z,v)$ centered
at the given map $f$ over a neighborhood of $K_-=Q_-\times P$,
with fiber polydisc $\Delta_{4b}^n$. Choose an enlarged compact
convex real face $\Sigma^+\subset\R\times\C^{r-1}$ and an enlarged
passive polydisc $P^+$ such that
\[
 M=\Sigma^+\times P^+
\]
contains $K\cap\{\Im z_1=0\}$ in its relative interior and $F$ is
holomorphic near $M\times\overline\Delta_{3b}^n$.
Lemma~\ref{convex:face}, with $(u,v)$ passive, supplies an entire
$G(Z,w)$ sufficiently close to $F$ on that product for
Lemma~\ref{convex:retune}. Fix this $G$ and the resulting inverse
$W$ on $U\times\Delta_{2b}^n$.

Choose a bounded smoothly bounded strongly convex domain $D\supset K$
sufficiently close to $K$ that its closed lower part lies in the base
domain of $F$ and its closed section by $\Im z_1=0$ lies in $U$.
Compactness then gives $\delta>0$ such that
\[
 \overline D\cap\{\Im z_1\leq2\delta\}
 \subset\operatorname{dom}_Z F,
 \qquad
 C=\overline D\cap\{|\Im z_1|\leq2\delta\}\Subset U.
\]
Set
\[
 A=D\cap\{\Im z_1<\delta\},\qquad
 B=D\cap\{\Im z_1>-\delta\}.
\]
These domains have separated closed fringes, $A\cup B=D$, and
$\overline{A\cap B}\subset C$, where $C$ is compact convex.
In particular, they satisfy the bounded center-gluing hypotheses of
Lemma~\ref{lem:center-gluing}. Fix the domains, their splitting
constant, and all fiber margins.

Figure~\ref{fig:real-face-collar} summarizes the
source geometry and the order of these choices; the other source
coordinates are suppressed in the diagram.

Lemma~\ref{convex:retune} now gives a polynomial $q$ for which the
sprays
\[
 F_A(Z,v)=F(Z,v),\qquad F_B(Z,v)=G(Z,q(Z,v))
\]
have transition $v+c(Z,v)$ on the overlap, with both $c$ and $D_vc$
as small as desired on the fixed fiber polydisc $\Delta_b^n$.
The second family is entire. Lemma~\ref{lem:center-gluing} gives
small holomorphic sections $a$ on $A$ and $b_1$ on $B$ satisfying
\[
 F_A(Z,a(Z))=F_B(Z,b_1(Z))\quad (Z\in A\cap B).
\]
Their centers define a holomorphic map on $D$. As the polynomial
approximation improves, $\norm{a}_A\to0$, and hence this map
converges uniformly to $f=F(\,\cdot\,,0)$ on $K_-\Subset A$.
Since $D$ contains $K$, the assertion follows.
\end{proof}

\begin{figure}[htbp]
\centering
\begingroup
\begin{tikzpicture}[x=1cm,y=1cm,line cap=round,line join=round,
  every node/.style={font=\small,text=figInk},
  choice/.style={rounded corners=2pt,align=center,text width=4.25cm,
    inner xsep=7pt,inner ysep=3.5pt,line width=.65pt},
  orderarrow/.style={->,>=latex,draw=figInk!80,line width=.75pt}]
\begin{scope}[yshift=0cm]
  \begin{scope}
    \clip (0,0) circle[radius=2.45];
    \fill[figOld!22] (-3,-3) rectangle (3,.42);
    \fill[figNew!22] (-3,-.42) rectangle (3,3);
    \fill[figOverlap!25] (-3,-.84) rectangle (3,.84);
    \fill[figOverlap!55] (-3,-.42) rectangle (3,.42);
    \draw[figOverlap!75!black,line width=.8pt] (-3,.84)--(3,.84);
    \draw[figOverlap!75!black,line width=.8pt] (-3,-.84)--(3,-.84);
    \draw[figOld,densely dashed,line width=.7pt] (-3,.42)--(3,.42);
    \draw[figNew,densely dashed,line width=.7pt] (-3,-.42)--(3,-.42);
    \draw[figInk!85,densely dotted,line width=.8pt] (-3,0)--(3,0);
  \end{scope}
  \draw[figInk,line width=.85pt] (0,0) circle[radius=2.45];
  \draw[figInk!55,line width=.5pt] (-2.8,0)--(-2.45,0);
  \draw[->,figInk!65,line width=.5pt] (2.45,0)--(2.9,0);
  \draw[->,figInk!55,line width=.5pt] (0,-2.65)--(0,2.72);
  \node[above] at (0,2.72) {$\Im z_1$};
  \node[right] at (2.88,0) {$\Re z_1$};
  \node[align=center,text=figNew] at (-1.10,1.42)
    {$B$\\[-1pt]\scriptsize new side};
  \node[align=center,text=figOld] at (-1.10,-1.42)
    {$A$\\[-1pt]\scriptsize old side};
  \node[text=figInk] at (-1.35,.20) {$A\cap B$};
  \node[text=figInk] at (1.35,.64) {$C$};
  \node[below left,inner sep=1pt] at (-.04,-.02) {$0$};
  \node[above right,inner sep=1.5pt,text=figOld] at (.05,.42) {$\delta$};
  \node[above right,inner sep=1.5pt,text=figInk] at (.05,.84) {$2\delta$};
  \node[below right,inner sep=1.5pt,text=figNew] at (.05,-.42) {$-\delta$};
  \node[below right,inner sep=1.5pt,text=figInk] at (.05,-.84) {$-2\delta$};
  \node[above left,inner sep=1pt] at (-1.78,1.72) {$\partial D$};
  \node[align=center,text width=5.5cm,font=\footnotesize] at (0,-3.05)
    {Real face: $\Im z_1=0$.\\[2pt]
     $\overline{A\cap B}\subset C\Subset U$.};
\end{scope}
\node[font=\small\bfseries] at (6.45,2.98) {Order of choices};
\node[choice,draw=figNew!85,fill=figNew!10,
  minimum height=.95cm] (coarse) at (6.45,1.895)
  {\textcolor{figNew}{$G$}\\[-1pt]\scriptsize one entire approximant};
\node[choice,draw=figOverlap!80!black,fill=figOverlap!14,
  minimum height=.95cm,anchor=north] (inverse)
  at ([yshift=-.36cm]coarse.south)
  {\textcolor{figInk}{$U$}\\[-1pt]\scriptsize base domain for $W$};
\node[choice,draw=figOverlap!80!black,fill=figOverlap!14,
  minimum height=1.16cm,anchor=north] (fixed)
  at ([yshift=-.36cm]inverse.south)
  {Fix $D,\delta$, hence\\[-1pt]
   $\textcolor{figOld}{A},\ \textcolor{figNew}{B};\
    \textcolor{figInk}{C\Subset U}$\\[-1pt]
   \scriptsize and the gluing constants};
\node[choice,draw=figNew!85,fill=figNew!10,
  minimum height=1.16cm,anchor=north] (fine)
  at ([yshift=-.36cm]fixed.south)
  {\textcolor{figNew}{$q$}\\[-1pt]
   \scriptsize approximate $W$ by polynomials\\[-1pt]
   \scriptsize to any required accuracy};
\draw[orderarrow] (coarse.south)--(inverse.north);
\draw[orderarrow] (inverse.south)--(fixed.north);
\draw[orderarrow] (fixed.south)--(fine.north);
\end{tikzpicture}
\endgroup
\caption{A schematic slice of the one-cut construction. The dashed
lines bound the open overlap $A\cap B$; the solid
lines bound the larger compact collar $C$, which includes the intervening
boundary points of $D$. The central dotted line is the real face.
The entire approximant is fixed before the
collar geometry, and only then is the inverse approximation refined.}
\label{fig:real-face-collar}
\end{figure}

\subsection{Finite removal of the cuts}\label{sec:finite-cuts}

A polytope inside the
original domain provides the initial compact set; successively dropping
its defining inequalities enlarges it to a polydisc.

\begin{proof}[Proof of Theorem~\ref{thm:a0-cap}]
Assume inductively that $\mathsf A_{r-1}$ of
Definition~\ref{def:approximation-level} holds. Let $K\subset\C^r$
be compact convex, $P$ a closed polydisc, and $f$
holomorphic near $K\times P$. Fix $\epsilon>0$ and choose a
full-dimensional compact real convex polytope $K'$ such that
\[
 K\Subset\Int K',\qquad K'\times P\Subset\operatorname{dom}f.
\]
Write $K'=\bigcap_{j=1}^N\{\ell_j\leq c_j\}$ with real affine
functionals $\ell_j$, and choose $R$ with
$K'\Subset\Int\PD Rr$. Define
\[
 Q_k=\PD Rr\cap\bigcap_{j=k+1}^N\{\ell_j\leq c_j\},
 \qquad 0\leq k\leq N.
\]
Then $Q_0=K'$, $Q_N=\PD Rr$, and
$Q_{k-1}=Q_k\cap\{\ell_k\leq c_k\}$. Each $Q_{k-1}$ has
nonempty interior because it contains $K'$.

For a nonconstant $\ell_k(z)=\Re\sum_j a_jz_j+d$, the affine
coordinate
$w_1=i(\sum_j a_jz_j+d-c_k)$ transforms its inequality into
$\Im w_1\leq0$; complete the linear part to a complex coordinate
system. Convexity is preserved and the passive polydisc is unchanged.
Thus Lemma~\ref{convex:one-cut} applies at each nontrivial step;
a redundant cut needs no change of map. Starting from $f_0=f$, obtain
maps $f_k$ holomorphic near $Q_k\times P$ such that
\[
 \sup_{Q_{k-1}\times P}d_Y(f_k,f_{k-1})<\epsilon_k,
 \qquad \sum_{k=1}^N\epsilon_k<\epsilon/2.
\]
Finally, $\mathsf A_0$ approximates $f_N$ on the polydisc
$\PD Rr\times P$ by an entire map within $\epsilon/2$. The triangle
inequality gives approximation to $f$ within $\epsilon$ on
$K\times P$. This proves $\mathsf A_r$ and completes the induction.
Taking the passive factor to be a point gives the convex approximation
property.
\end{proof}

\begin{proof}[Completion of the proofs of Theorems~\ref{thm:families-oka}
 and~\ref{thm:strong-domination}]
\LAP{} gives local planar extension by
Lemma~\ref{family:local-sector}, polydisc approximation by
Theorem~\ref{thm:polydisc}, and convex approximation by
Theorem~\ref{thm:a0-cap}. This proves the \LAP{} criterion.
Proposition~\ref{family:strong-domination} therefore makes every
strongly dominable manifold Oka. The converse is classical
\cite{F17}; Proposition~\ref{family:strong-domination} then also
gives \LAP{} for every Oka manifold. In dimension zero all three
properties are immediate.
\end{proof}

\subsection{Direct consequences}
\label{sec:direct-consequences}

\begin{proof}[Proof of Corollary~\ref{cor:capn}]
Only sufficiency needs proof. For $s>0$, let $B_s$ denote the
Euclidean ball of radius $s$ in $\C^n$. Fix $x\in X$, choose $r>0$,
and take a coordinate parametrization $\phi:B_{2r}\to X$ with
$\phi(0)=x$. By $\CAP_n$,
an entire map $G:\C^n\to X$ approximates $\phi$ arbitrarily closely
on $\overline B_{3r/2}$. Choose the approximation so close that its
image on this ball lies in $\phi(B_{2r})$ and, for
$g=\phi^{-1}\circ G$,
\[
 \sup_{\overline B_r}|g(z)-z|<r/2,
 \qquad
 \sup_{\overline B_r}\norm{dg_z-I}<1/2.
\]
The second estimate follows from Cauchy estimates on the larger ball.
The map $z\mapsto z-g(z)$ is therefore a contraction of
$\overline B_r$ into itself. Its fixed point $a$ satisfies $g(a)=0$,
and $dg_a$ is invertible. Thus $z\mapsto G(z+a)$ dominates $X$ at
$x$. Apply Theorem~\ref{thm:strong-domination}.

The same argument applies to the special convex sets in
Forstneri\v{c}'s original definition
\cite[Definition~1.1 and p.~692]{F06}: take two nested balls in
the interior of such a set and rescale the coordinate parametrization
to be defined near the whole set.
\end{proof}

\begin{proof}[Proof of Corollary~\ref{cor:stratification}]
Combine the strong dominability theorem of Forstneri\v{c} and
L\'arusson \cite{FL14} with Theorem~\ref{thm:strong-domination},
componentwise.
\end{proof}

\begin{proof}[Proof of Corollary~\ref{cor:euclidean-localization}]
By Theorem~\ref{thm:strong-domination}, it suffices to observe that
a dominating map into an open subset also dominates the ambient manifold.
\end{proof}

\section{Removability and holomorphic images}\label{sec:removability-images}

\subsection{Removability across thin exceptional sets}
\label{sec:removability}

The \LAP{} criterion also applies when \LAP{} is known
only away from a sufficiently thin set, more precisely, a set of zero
$(2n-1)$-dimensional Hausdorff measure.
The exponent $2n-1$ comes from
the parameters of a coordinate circle about a fixed point: its complex
line contributes $2n-2$ real dimensions and its radius contributes one.
A set of zero $(2n-1)$-dimensional Hausdorff measure cannot meet every
such circle.

For a closed exceptional set with this Hausdorff-measure bound,
Alarc\'on and Forstneri\v{c} proved that dominability off the set
implies the Oka-1 property~\cite{AF25}. Theorem~\ref{thm:thin-removability}
gives the full Oka property and also allows a nonclosed exceptional set.

We first recenter the families supplied by
Lemma~\ref{poly:boundary-disc}.

\begin{lemma}\label{family:recentering}
For $r>0$ and closed polydiscs $P_j\subset\C^{n-1}$ with
$0\in\operatorname{Int}P_j$, $1\leq j\leq n$, suppose that
$\psi_j$ is a biholomorphic coordinate parametrization near
$\overline\Delta_r\times P_j$, $\psi_j(0,0)=x$, and the vectors
$\partial_z\psi_j(0,0)$ form a basis of $T_xY$. Also suppose that for
some $S_\nu\to\infty$ there are holomorphic maps
\[
 \sigma_{j,\nu}:\operatorname{Op}
 (\overline\Delta_{S_\nu}\times P_j)\longrightarrow Y
\]
converging uniformly to $\psi_j$ on
$\overline\Delta_r\times P_j$. Then $x\in G_Y$.
\end{lemma}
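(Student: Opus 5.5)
Our plan is to manufacture, from the approximating maps $\sigma_{j,\nu}$, centered families $h_{j,\nu}:V\times\Delta_{R_\nu}\to Y$ as in Definition~\ref{family:condition}. Each $\psi_j$ is a biholomorphism near $\overline\Delta_r\times P_j$, so its inverse $\lambda_j=\psi_j^{-1}$ is a holomorphic chart on a neighborhood of $x$. We write $\lambda_j(y)=(z_j(y),p_j(y))\in\C\times\C^{n-1}$; then $z_j(x)=0$ and $p_j(x)=0$. The natural first guess is
\[
 \widetilde h_{j,\nu}(y,t)=\sigma_{j,\nu}\bigl(z_j(y)+t,\;p_j(y)\bigr).
\]
For $y$ in a small neighborhood $V$ of $x$ and $|t|<S_\nu-r$, the argument lies in $\Delta_{S_\nu}\times\Int P_j$, so this map is defined on $V\times\Delta_{R_\nu}$ with $R_\nu=S_\nu-r\to\infty$. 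On a fixed $V\times\Delta_\tau$, with $\tau<r/2$ and $V$ small, the arguments stay in a compact subset of $\Delta_r\times\Int P_j$. There $\sigma_{j,\nu}\to\psi_j$ uniformly, and the limit $\psi_j(z_j(y)+t,p_j(y))$ has $t$-derivative $\partial_z\psi_j$ at $(x,0)$.

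The defect is centering: $\widetilde h_{j,\nu}(y,0)=\sigma_{j,\nu}(\lambda_j(y))$ is only close to $y$, not equal to it. We correct this by precomposing with a near-identity chart map. Set $g_{j,\nu}=\lambda_j\circ\sigma_{j,\nu}$ on a fixed compact neighborhood $\overline N$ of $0$ in $\Delta_r\times\Int P_j$. Uniform convergence and Cauchy estimates on a slightly larger set give $g_{j,\nu}\to\operatorname{id}$ in $C^1$ on $\overline N$. By the contraction argument already used in the proof of Corollary~\ref{cor:capn} and in Lemma~\ref{convex:retune}, for large $\nu$ there is a holomorphic inverse $\mu_{j,\nu}$ on a fixed smaller neighborhood $N'$ of $0$. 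This inverse satisfies $g_{j,\nu}\circ\mu_{j,\nu}=\operatorname{id}$ and $\mu_{j,\nu}\to\operatorname{id}$ uniformly. We then define
\[
 h_{j,\nu}(y,t)=\sigma_{j,\nu}\bigl(\mu_{j,\nu}(\lambda_j(y))+(t,0)\bigr).
\]
At $t=0$ this gives $\lambda_j^{-1}(g_{j,\nu}(\mu_{j,\nu}(\lambda_j y)))=y$, so the family is exactly centered.

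The remaining checks are routine. Since $\mu_{j,\nu}(\lambda_j(V))$ lies in a fixed compact set $K\subset\Delta_{r}\times\Int P_j$, the families are defined for $|t|<S_\nu-r$. On $V\times\Delta_\tau$ they converge uniformly to $h_j(y,t)=\psi_j(\lambda_j(y)+(t,0))$, because $\mu_{j,\nu}\to\operatorname{id}$ and $\sigma_{j,\nu}\to\psi_j$ uniformly on a compact neighborhood of the relevant set. Finally $\partial_th_j(x,0)=\partial_z\psi_j(0,0)$, and these vectors form a basis by hypothesis. Hence $x\in G_Y$, after dropping finitely many $\nu$ so that all constructions are valid.

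The one step that needs care is the uniform-in-$\nu$ construction of $\mu_{j,\nu}$ on a fixed domain. Its key inputs are the $C^1$ convergence, obtained from Cauchy estimates on a fixed margin inside $\Delta_r\times\Int P_j$, and the requirement that $\sigma_{j,\nu}(\overline N)$ lie in the domain of $\lambda_j$ for large $\nu$. Both follow from uniform convergence on the fixed compact product.
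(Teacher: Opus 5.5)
Your proposal is correct and matches the paper's own proof. The paper also writes $\psi_j^{-1}\circ\sigma_{j,\nu}=\mathrm{id}+e_{j,\nu}$ with $e_{j,\nu}\to0$ in $C^1$ by Cauchy estimates, inverts it by a contraction to get branches $\ell_{j,\nu}$ (your $\mu_{j,\nu}\circ\lambda_j$) with $\sigma_{j,\nu}\circ\ell_{j,\nu}=\mathrm{id}_V$ on one fixed $V$, and sets $h_{j,\nu}(y,t)=\sigma_{j,\nu}(\ell_{j,\nu}(y)+(t,0))$, differing only in taking $R_\nu=S_\nu-C$ (with $C$ a uniform bound on the scalar coordinate) instead of your equally valid $R_\nu=S_\nu-r$.
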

\begin{proof}
Fix small source polydiscs $Q_j$ around $(0,0)$, with closures in
$\Delta_r\times\operatorname{Int}P_j$. On slightly larger fixed
polydiscs, uniform convergence places the images of $\sigma_{j,\nu}$
in the $\psi_j$-chart for all sufficiently large $\nu$.
Cauchy estimates give
\[
 \psi_j^{-1}\circ\sigma_{j,\nu}
   =\operatorname{id}+e_{j,\nu},\qquad
 e_{j,\nu}\longrightarrow0\quad\hbox{in }C^1(\overline Q_j).
\]
For example, if $Q_j=\Delta_{\rho_j}^n$ after separate coordinate
rescalings, arrange
$\|e_{j,\nu}\|<\rho_j/4$ and $\|De_{j,\nu}\|<1/2$ there.
For a point in $\Delta_{\rho_j/2}^n$, the equation
$z+e_{j,\nu}(z)=w$ has a unique solution by contraction on the
closed $3\rho_j/4$ polydisc. The solutions depend holomorphically
on $w$ and converge uniformly on smaller products to $w$.

Intersect the resulting finitely many target neighborhoods and
shrink once to a relatively compact neighborhood $V\ni x$.
After discarding a common finite initial segment, this gives
holomorphic branches
\[
 \ell_{j,\nu}=(s_{j,\nu},p_{j,\nu}):V\longrightarrow
             \Delta_r\times\operatorname{Int}P_j,
 \qquad \sigma_{j,\nu}\circ\ell_{j,\nu}=\operatorname{id}_V,
\]
converging to $\psi_j^{-1}$, with images in fixed compact
subproducts. In particular, $|s_{j,\nu}(y)|<C$ for one constant
$C$, independent of $j,\nu,y$.

Set $R_\nu=S_\nu-C$ and discard further initial terms so that
$R_\nu>0$. Define
\[
 h_{j,\nu}(y,t)=\sigma_{j,\nu}
       \bigl(s_{j,\nu}(y)+t,p_{j,\nu}(y)\bigr).
\]
These maps are holomorphic on the common products
$V\times\Delta_{R_\nu}$ and satisfy $h_{j,\nu}(y,0)=y$ exactly.
The fixed compact margins permit one $\tau>0$ such that, for
$y\in V$ and $|t|<\tau$, all translated arguments remain inside
the original convergence products. Thus the new families converge
locally uniformly there to
\[
 h_j(y,t)=\psi_j\bigl(\psi_j^{-1}(y)+(t,0)\bigr).
\]
Their initial velocities at $x$ are $\partial_z\psi_j(0,0)$.
These are precisely the data of Definition~\ref{family:condition}.
\end{proof}

\begin{proposition}\label{family:boundary-propagation}
Let $\vartheta$ be a target chart about $x\in Y$. If the closed
coordinate disc
$\vartheta(x)+\overline\Delta_r v$, $v\ne0$, lies in the chart
image and its boundary lies in $\vartheta(G_Y)$, then $x\in G_Y$.
\end{proposition}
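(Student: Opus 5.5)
The plan is to verify the hypotheses of Lemma~\ref{family:recentering}. The approximants on expanding discs will come from Lemma~\ref{poly:boundary-disc}, applied to coordinate parametrizations of slightly tilted coordinate discs.

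First I would exploit openness twice. The closed disc $\vartheta(x)+\overline\Delta_r v$ is compact in the open chart image. Its boundary circle is compact in the open set $\vartheta(G_Y)$, since $G_Y$ is open. Hence there is $\eta>0$ such that, for every $v'$ with $\|v'-v\|<\eta$, the closed disc $\vartheta(x)+\overline\Delta_r v'$ still lies in the chart image, with boundary in $\vartheta(G_Y)$. I then choose linearly independent vectors $v_1,\ldots,v_n$ in this $\eta$-ball. For each $j$, I complete $v_j$ to a basis by a linear map $L_j:\C^{n-1}\to\C^n$ and set
\[
 \psi_j(z,p)=\vartheta^{-1}\bigl(\vartheta(x)+zv_j+L_jp\bigr).
\]
This is biholomorphic near $\overline\Delta_r\times P_j$ once the polydisc $P_j\ni0$ is small. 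Moreover $\psi_j(0,0)=x$, and the vectors $\partial_z\psi_j(0,0)=d\vartheta^{-1}(v_j)$ form a basis of $T_xY$.

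Next I arrange the fiber containment required by Lemma~\ref{poly:boundary-disc}. For each point $y$ of the compact boundary set $\vartheta^{-1}(\vartheta(x)+\partial\Delta_r v_j)\subset G_Y$, Lemma~\ref{family:local-sector} supplies a neighborhood $W_y$. A Lebesgue-number argument for this finite cover gives $\lambda>0$ such that every $d_Y$-ball of radius $\lambda$ about a boundary point lies in one $W_y$. Shrinking $P_j$ by uniform continuity, each fiber $\psi_j(\{\zeta\}\times P_j)$ with $|\zeta|=r$ then lies in $B_{d_Y}(\psi_j(\zeta,0),\lambda)$, hence in one such neighborhood. The margin can be taken positive, so the finitely many arc products needed in that proof also fit.

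Lemma~\ref{poly:boundary-disc} is stated under \LAP{} everywhere. Its proof, however, uses only the local sector extension of Lemma~\ref{family:local-sector} in the neighborhoods $W$ containing the boundary fibers, and every such $W$ is attached to a point of $G_Y$. The interior values need only a chart spray, given by Lemma~\ref{lem:chart-spray}, and the center gluing of Lemma~\ref{lem:center-gluing}. I would check this carefully; it is the main point where the argument could slip, though I expect no genuine obstacle. Granting it, for radii $S_\nu\to\infty$ the lemma gives maps $\sigma_{j,\nu}$ holomorphic near $\overline\Delta_{S_\nu}\times P_j$, within $1/\nu$ of $\psi_j$ on $\overline\Delta_r\times P_j$. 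Hence $\sigma_{j,\nu}\to\psi_j$ uniformly there.

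All hypotheses of Lemma~\ref{family:recentering} now hold, so $x\in G_Y$.
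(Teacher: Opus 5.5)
Your proposal is correct and follows essentially the same route as the paper. You perturb $v$ to $n$ independent directions, build the tilted parametrizations $\psi_j$, and shrink the transverse polydiscs so that every boundary fiber lies in a neighborhood from Lemma~\ref{family:local-sector}; you use a Lebesgue number here, where the paper covers the circle by finitely many arcs. You then apply Lemma~\ref{poly:boundary-disc} and Lemma~\ref{family:recentering}, as the paper does. Your worry about Lemma~\ref{poly:boundary-disc} is unnecessary: its only hypothesis is that each boundary fiber lies in one such neighborhood $W$, with no global \LAP{} assumption, and this is exactly how the paper uses it.
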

\begin{proof}
Compactness of the boundary and openness of $G_Y$ permit sufficiently
small perturbations of $v$ while retaining both chart containment
and the good-boundary condition. We choose $n$ linearly independent
vectors $v_j$ among these perturbations, complete each to a basis
$v_j,e_{j,2},\ldots,e_{j,n}$, and set
\[
 \psi_j(z,p)=\vartheta^{-1}\left(
 \vartheta(x)+zv_j+\sum_{k=2}^n p_{k-1}e_{j,k}\right).
\]
For sufficiently small transverse polydiscs $P_j$, these are
coordinate parametrizations near
$\overline\Delta_r\times P_j$. To choose $P_j$, first cover the
central boundary circle by finitely many arcs whose images lie
relatively compactly in neighborhoods supplied by
Lemma~\ref{family:local-sector}, and then shrink the transverse
polydisc so that every corresponding closed arc product lies in
its chosen neighborhood. This choice is independent of all later
approximation indices.

For any common sequence $S_\nu\to\infty$,
Lemma~\ref{poly:boundary-disc} gives maps $\sigma_{j,\nu}$
holomorphic near $\overline\Delta_{S_\nu}\times P_j$, converging
uniformly to $\psi_j$ on the original closed product. Their limiting
active velocities are $(d\vartheta_x)^{-1}v_j$, a basis.
Lemma~\ref{family:recentering} now gives $x\in G_Y$.
\end{proof}

\begin{proof}[Proof of Theorem~\ref{thm:thin-removability}]
By hypothesis, $Y\setminus E\subset G_Y$. Fix $x\in Y$ and take coordinates centered
at $x$. Choose $0<a<b$ so that the closed ball of radius $b$ lies in
the chart. On the annulus $a\leq|z|\leq b$, the smooth map
\[
 q(z)=([z],|z|)\in\PP^{n-1}\times[a,b]
\]
is Lipschitz, so the image of the coordinate representative of $E$
there has zero $(2n-1)$-dimensional Hausdorff outer measure. The
manifold $\PP^{n-1}\times(a,b)$ has positive measure in that
dimension. Choose $([v],r)$ outside this image, with $|v|=1$.
Its fiber is the circle $\{\zeta v:|\zeta|=r\}$, whose inverse
image under the chart lies in $G_Y$; the filled disc lies in the chart
image. Proposition~\ref{family:boundary-propagation} gives $x\in G_Y$.
This also covers $n=1$, when $\PP^0$ is a
point. Thus $G_Y=Y$, and Theorem~\ref{thm:families-oka} makes $Y$ Oka.
\end{proof}

\begin{proof}[Proof of Theorem~\ref{thm:analytic-removability}]
Set $n=\dim Y$. The case $n=0$ is immediate. For $n\geq1$, every
subset of a proper complex analytic subset of a connected $n$-manifold has
zero $(2n-1)$-dimensional Hausdorff outer measure. A countable cover
by the given neighborhoods therefore shows that $Y\setminus G_Y$
has zero measure in this dimension, so
Theorem~\ref{thm:thin-removability} applies.
For the particular case, dominating maps into $X\setminus A$ also
dominate $X$ after composition with the inclusion. Thus
Proposition~\ref{family:strong-domination} gives \LAP{} on
$X\setminus A$, and the first assertion applies.
\end{proof}

By Proposition~\ref{family:strong-domination}, Theorem~\ref{thm:thin-removability}
also applies when $Y$ is dominable at every point outside $E$.
In particular, if such an exceptional set $E$ is closed and $Y\setminus E$ is Oka, then $Y$
is Oka.

The following result was proved by Wang, Xie, and Zhao
\cite[Corollary~1.3]{WXZ26}; it also follows from OpenAI's theorem
that every complex K3 surface is Oka \cite[Theorem~1.1]{OAI26}.

\begin{corollary}\label{cor:kummer-recovery}
Every complex Kummer surface is Oka.
\end{corollary}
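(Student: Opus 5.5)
The plan is to reduce the statement to Theorem~\ref{thm:analytic-removability}, or more precisely to its consequence that removing a proper analytic subset from a connected manifold does not destroy the Oka property. A Kummer surface $X$ is the minimal resolution of $A/\{\pm1\}$ for a complex $2$-torus $A$; it contains sixteen exceptional $(-2)$-curves $E_1,\ldots,E_{16}$. The complement $X\setminus\bigcup_i E_i$ is biholomorphic to $(A\setminus A[2])/\{\pm1\}$, where $A[2]$ is the set of sixteen $2$-torsion points. It therefore suffices to show that this quotient is Oka, and then to apply the second part of Theorem~\ref{thm:analytic-removability} with $A=\bigcup_iE_i$, a proper closed analytic subset of the connected surface $X$.

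For the quotient I would establish strong dominability of $(A\setminus A[2])/\{\pm1\}$ and invoke Theorem~\ref{thm:strong-domination}. The covering map $\pi:A\setminus A[2]\to(A\setminus A[2])/\{\pm1\}$ is unramified, so it is a local biholomorphism. Any map dominating $A\setminus A[2]$ at a point $y$ therefore gives, after composition with $\pi$, a map dominating the quotient at $\pi(y)$. This reduces the problem to strong dominability of $A\setminus A[2]$, that is, of a torus with sixteen points removed.

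At this point I would rather go through the \LAP{} route, which avoids constructing dominating maps into $A\setminus A[2]$. The torus $A=\C^2/\Lambda$ is strongly dominable by translates of the universal covering map, so it is Oka. Proposition~\ref{family:strong-domination} gives \LAP{} on all of $A$. The finite set $A[2]$ has zero $\mathcal H^3$ measure, but this does not by itself give \LAP{} for the open manifold $A\setminus A[2]$: the families must take values in the target. So I would instead work directly on $X$. I will show that $X$ satisfies \LAP{} at every point of $X\setminus\bigcup E_i$, and then Theorem~\ref{thm:thin-removability} applies, since the exceptional set is a finite union of curves and has zero $\mathcal H^3$-measure. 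The required families near $x=\pi(y)$ are $h_j(z,t)=\widetilde\pi\bigl(\lambda(z)+te_j\bigr)$. Here $\widetilde\pi:\C^2\to A/\{\pm1\}$ is the quotient of the universal covering, and $\lambda$ is a local inverse near $x$. The composition must be lifted to $X$.

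The main obstacle is that lifting: the holomorphic map $\C^2\to A/\{\pm1\}$ lifts to the resolution $X$ only if it avoids the singular points. I would first try to handle this by working with a general complex line rather than all of $\C^2$. The families in Definition~\ref{family:condition} only involve discs in the $t$-variable, and the preimage in $\C^2$ of the $2$-torsion points is the discrete lattice $\tfrac12\Lambda$. A generic direction $e_j$ through a generic base point avoids this countable set, but entire lines need not avoid it for all nearby $z$. So the fallback is to use the local extension form of the argument. The families $h_{j,\nu}$ only need to exist on $V\times\Delta_{R_\nu}$, and I would choose the directions $e_j$ and the neighbourhood $V$ for each $\nu$ so that the compact set $\lambda(\overline V)+\overline\Delta_{R_\nu}e_j$ misses $\tfrac12\Lambda$. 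This is possible when the real direction $e_j$ is irrational with respect to $\Lambda$: a thin tube around a segment meets only finitely many lattice points, and these can be avoided by perturbing $e_j$. Those finitely many constraints must be kept compatible with a fixed limiting family on $V\times\Delta_\tau$; if they cannot be, I would use Theorem~\ref{thm:thin-removability} only at the generic points where such choices exist and let the thin-set removal absorb the rest.
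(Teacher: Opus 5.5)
Your reduction matches the paper's. There, the complement $X\setminus\bigcup_iE_i$ is Oka by Forstneri\v{c}--L\'arusson \cite[Lemma~7]{FL14}, and Theorem~\ref{thm:analytic-removability} then applies. The gap is that you never supply this input. The Oka property, or \LAP, of $(A\setminus A[2])/\{\pm1\}$ is the only nontrivial step, and your construction of families for it fails as written.

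First, Definition~\ref{family:condition} requires one neighbourhood $V$ for all $\nu$, whereas you choose $V$ afresh for each $\nu$. Second, with $V$ fixed, $\lambda(V)+\Delta_{R_\nu}e_j$ contains a fixed-size neighbourhood of complex discs whose radii tend to infinity. Suppose $\C e_j$ has dense image in $A$. This is what irrationality gives, and it holds for all but countably many directions on every torus. Then $\lambda(V)+\C e_j$ projects onto all of $A$, because an open set plus a dense subgroup is the whole group. So these sets meet $\tfrac12\Lambda$ once $R_\nu$ is large. Perturbing $e_j$ only disposes of the finitely many lattice points near a segment of bounded length, so irrational directions are exactly the wrong choice. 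Third, your fallback of letting thin-set removal ``absorb the rest'' gives no argument that the failure set has zero $\mathcal H^3$-measure, so it does not close the gap.

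The missing idea is to use rational directions. Take $0\neq\phi\in\operatorname{Hom}(\Lambda,\mathbb Z)$, extended $\R$-linearly to $\C^2$, and set $\ell_\phi=\ker\phi\cap i\ker\phi$, a complex line. Then $\phi(y+\ell_\phi)=\phi(y)$ and $\phi(\tfrac12\Lambda)\subset\tfrac12\mathbb Z$. Hence $y+\ell_\phi$ misses $\tfrac12\Lambda$ whenever $\phi(y)\notin\tfrac12\mathbb Z$, which is an open condition on $y$.

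Fix $\tilde x\notin\tfrac12\Lambda$ and call $\phi$ admissible if $\phi(2\tilde x)\notin\mathbb Z$. The inadmissible $\phi$ form a proper subgroup. The complement of a proper subgroup generates $\operatorname{Hom}(\Lambda,\mathbb Z)$, so the admissible $\phi$ lie in no real $2$-plane of $\operatorname{Hom}_\R(\C^2,\R)$. Since $\ell_\psi=\ell_\phi$ forces $\psi\in\operatorname{span}_\R(\phi,\phi\circ i)$, there are admissible $\phi_1,\phi_2$ with $\ell_{\phi_1}\neq\ell_{\phi_2}$.

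Now let $\varpi:\C^2\setminus\tfrac12\Lambda\to(A\setminus A[2])/\{\pm1\}$ be the covering map, $\lambda$ a local inverse with $\lambda(x)=\tilde x$, and $V$ small enough that $\phi_j\circ\lambda$ avoids $\tfrac12\mathbb Z$. For $e_j\in\ell_{\phi_j}\setminus\{0\}$, the maps $h_j(z,t)=\varpi(\lambda(z)+te_j)$ are entire centered families on $V$ whose velocities span. Lemma~\ref{family:entire}, Theorem~\ref{thm:families-oka} and Theorem~\ref{thm:analytic-removability} then finish the proof. Alternatively, simply cite \cite[Lemma~7]{FL14} as the paper does.
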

\begin{proof}
For a Kummer surface $X$, the complement of its sixteen exceptional
curves is Oka by Forstneri\v{c} and L\'arusson
\cite[Lemma~7]{FL14}, so Theorem~\ref{thm:analytic-removability}
applies.
\end{proof}

For every closed subset $K$ of an affine complex hyperplane
$H\subset\C^n$, $n\geq1$, the complement $\C^n\setminus K$ is Oka.
Indeed, apply Theorem~\ref{thm:thin-removability} with
$Y=\C^n\setminus K$ and $E=H\setminus K$. The set $E$ has zero
$(2n-1)$-dimensional Hausdorff measure, and
$Y\setminus E=\C^n\setminus H\cong\C^{n-1}\times\C^*$ is Oka. Set
$T=\{0\}\cup\{1/j:j\geq1\}$. For $n\geq3$, this includes
\[
 \C^n\setminus\bigl(T^2\times\{0\}^{n-2}\bigr).
\]
A complex manifold $Y$ is \emph{weakly subelliptic} if, for every
compact set $K\subset Y$, there are finitely many holomorphic sprays
on $Y$ whose vertical differentials jointly span $T_yY$ at every
$y\in K$. Kusakabe proved that this example is Oka but not weakly
subelliptic~\cite{KusCount20}.

\begin{remark}
A complex manifold has the \emph{density property} if every holomorphic
vector field can be approximated uniformly on compact sets by Lie
combinations of holomorphic vector fields whose flows exist for every
complex time. Another direct recovery is Kusakabe's theorem on compact
holomorphically convex complements \cite{KusConvex24}. If $X$ is
a Stein manifold of dimension $n>1$ with the density property and
$K\subset X$ is compact and $\OO(X)$-convex, the construction of
Forstneri\v{c} and Wold places every point of $X\setminus K$ in
an open subset biholomorphic to $\C^n$ \cite{FWFB20}.
Thus $X\setminus K$ is strongly dominable, and
Theorem~\ref{thm:strong-domination} makes it Oka.
\end{remark}

\subsection{Surjective holomorphic images, complements, and group orbits}
\label{sec:permanence}

The proof for a surjective holomorphic map into $Y$ uses the fact
that its critical values have zero $(2\dim Y-1)$-dimensional Hausdorff measure, even
when the map is not proper.

\subsubsection{Surjective holomorphic images}

\begin{theorem}\label{thm:surjective-descent}
Let $p:X\to Y$ be a surjective holomorphic map of connected complex
manifolds. If $X$ is Oka, then $Y$ is Oka.
\end{theorem}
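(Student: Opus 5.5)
The plan is to apply Theorem~\ref{thm:thin-removability} to $Y$ with the exceptional set $E$ equal to the set of critical values of $p$. Let $n=\dim Y$ and $m=\dim X$. Since $p$ is surjective, $m\ge n$; the case $n=0$ is trivial. Every point $y\in Y\setminus E$ is the image of some $x\in X$, and since $y$ is not a critical value, $dp_x$ is surjective.

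We first check that $Y$ is dominable at such a $y$. By Theorem~\ref{thm:strong-domination}, or Forstneri\v{c}'s classical result, $X$ is strongly dominable. Hence there is a holomorphic $H:\C^m\to X$ with $H(0)=x$ and $dH_0$ invertible. Then $p\circ H:\C^m\to Y$ has surjective differential at $0$. Restricting to an affine $n$-plane on which this differential is an isomorphism gives dominability of $Y$ at $y$, as in the proof of Proposition~\ref{family:strong-domination}. That proposition then gives $y\in G_Y$. Consequently $Y\setminus E\subset G_Y$, and once we know $\mathcal H^{2n-1}(E)=0$, Theorem~\ref{thm:thin-removability} makes $Y$ Oka. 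Note that $E$ need not be closed, which is why the nonclosed version of that theorem is essential.

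The main step, and the expected obstacle, is showing $\mathcal H^{2n-1}(E)=0$ when $p$ is not proper. Let $Z\subset X$ be the critical locus, $Z=\{x:\operatorname{rank}dp_x<n\}$. This is a closed analytic subset, and $E=p(Z)$. Since $X$ is second countable, $Z$ is a countable union of relatively compact pieces. Countable subadditivity therefore reduces the claim to showing that $p(Z\cap K)$ has zero $(2n-1)$-measure for each compact $K$.

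To prove this, stratify $Z$ into countably many connected complex submanifolds $S$ on which $p|_S$ has constant rank $k$. On $S$ the rank satisfies $k\le n-1$: for $x\in S$ we have $dp_x(T_xS)\subset dp_x(T_xX)$, whose rank is less than $n$. By the constant rank theorem, each point of $S$ has a neighborhood $U$ in $S$ such that $p(U)$ is contained in a $k$-dimensional complex submanifold of $Y$, after shrinking. Covering by countably many such neighborhoods, $E$ lies in a countable union of locally closed complex submanifolds of $Y$ of complex dimension at most $n-1$. Each of these is a countable union of compact pieces of real dimension at most $2n-2$, so each has zero $(2n-1)$-dimensional Hausdorff measure.

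This is the complex-analytic Sard argument. If the stratification into constant-rank strata is considered delicate, an alternative is to use the stratification of the analytic set $Z$ by smooth strata together with the semicontinuity of rank, refining each stratum by its own analytic degeneracy loci. The countable cover then gives $\mathcal H^{2n-1}(E)=0$ and finishes the proof.
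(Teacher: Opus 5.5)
Your proposal is correct and follows the paper's proof essentially step for step. The paper also sets $E=p(C_p)$ and proves $\mathcal H^{2n-1}(E)=0$ by stratifying $C_p$ into constant-rank submanifolds, so that images lie locally in complex submanifolds of dimension at most $n-1$; it obtains dominability at each $y\notin E$ by composing a dominating entire map of $X$ with $p$ and restricting to an $n$-dimensional linear subspace, and it concludes with Theorem~\ref{thm:thin-removability}, whose allowance of a nonclosed exceptional set is exactly what handles non-proper $p$ (your alternative construction of the strata via degeneracy loci is just the standard proof of the stratification the paper cites).
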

\begin{proof}
Set $m=\dim X$ and $n=\dim Y$. The case $n=0$ is immediate.
For $n\geq1$, let
\[
 C_p=\{x\in X:\operatorname{rank}dp_x<n\},
 \qquad E=p(C_p).
\]
The closed analytic set $C_p$ admits a locally finite stratification
by complex submanifolds on which the restrictions of $p$ have constant
rank \cite[Chapter~2, Proposition~3]{Schwartz66}. There are countably many strata, and each has
a countable constant-rank coordinate cover. On every such coordinate
neighborhood the image is contained in a complex submanifold of $Y$
of dimension at most $n-1$. Hence
$\mathcal H^{2n-1}(E)=0$ in the outer-measure sense.

For $y\in Y\setminus E$, choose $x\in p^{-1}(y)$.
Strong dominability of $X$ gives an entire map $F:\C^m\to X$ with
$F(0)=x$ and $dF_0$ invertible. Since $dp_x$ is surjective, there is
a linear map $\iota:\C^n\to\C^m$ for which
$d(p\circ F)_0\circ\iota$ is an isomorphism.
Thus $p\circ F\circ\iota$ dominates $Y$ at $y$.
Theorem~\ref{thm:thin-removability} completes the proof.
\end{proof}

\begin{corollary}\label{cor:blowdown}
Let $\pi:\widetilde X\to X$ be a proper bimeromorphic holomorphic
map of connected complex manifolds. If $\widetilde X$ is Oka,
then $X$ is Oka. In particular, this holds when
$\widetilde X=\operatorname{Bl}_Z X$ for a smooth closed complex
submanifold $Z\subsetneq X$.
\end{corollary}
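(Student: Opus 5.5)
The corollary should follow directly from Theorem~\ref{thm:surjective-descent}: it suffices to check that $\pi$ is surjective. After that, the general statement follows, and the blow-up case is a special instance.

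For surjectivity, I would use properness and bimeromorphicity together. Since $\pi$ is bimeromorphic, there is a proper closed analytic subset $A\subsetneq X$ such that $\pi$ restricts to a biholomorphism $\widetilde X\setminus\pi^{-1}(A)\to X\setminus A$. The image $\pi(\widetilde X)$ therefore contains the dense set $X\setminus A$; density holds because $X$ is connected and $A$ is a proper analytic subset, so $A$ has empty interior. A proper continuous map between locally compact Hausdorff spaces is closed, so $\pi(\widetilde X)$ is closed. Being closed and dense, it equals $X$. Theorem~\ref{thm:surjective-descent}, applied with $p=\pi$, then shows that $X$ is Oka.

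For the blow-up case, the blow-up $\operatorname{Bl}_ZX\to X$ along a smooth closed submanifold $Z\subsetneq X$ is a proper holomorphic map. It is biholomorphic over $X\setminus Z$, and $\operatorname{Bl}_ZX$ is a connected complex manifold when $X$ is connected. There is one degenerate point: when $Z$ has codimension one, the blow-up is the identity, and the claim is trivial. Otherwise the hypotheses of the first part hold, and that part applies.

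I expect no real obstacle; the only thing to verify is surjectivity. One detail there is the convention for ``bimeromorphic'': if it only means an isomorphism off nowhere dense analytic sets on both sides, density of the image still holds, so the closed-and-dense argument is unaffected.
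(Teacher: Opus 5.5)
Your proposal is correct and matches the paper's proof, which likewise deduces the corollary from Theorem~\ref{thm:surjective-descent} using only that a proper bimeromorphic map is surjective. Your closed-and-dense argument (properness gives a closed image, bimeromorphicity gives a dense image) simply spells out that surjectivity, and your remarks on the codimension-one and blow-up cases are fine.
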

\begin{proof}
This follows directly from Theorem~\ref{thm:surjective-descent},
since a proper bimeromorphic map is surjective.
\end{proof}

This answers the blow-down question of Forstneri\v{c} and
L\'arusson \cite[Section~6]{FL14}.

Following Forstneri\v{c} \cite[Definition~1.2]{FSurj17}, a connected complex
manifold $Y$ has the \emph{basic Oka property with surjectivity}
(BOPS) if every continuous map $S\to Y$ from a Stein manifold
with $\dim S\geq\dim Y$ is homotopic to a surjective holomorphic
map.

\begin{corollary}\label{cor:surjective-entire}
For a connected complex manifold $Y$ of dimension $n\geq1$, the
following conditions are equivalent:
\begin{enumerate}[label=\textup{(\roman*)}]
\item $Y$ is Oka;
\item there is a surjective holomorphic map $\C^n\to Y$;
\item there is a surjective holomorphic map $\C^N\to Y$ for some
finite integer $N\geq1$;
\item $Y$ has BOPS.
\end{enumerate}
\end{corollary}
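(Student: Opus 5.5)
We will prove the cycle (i)$\Rightarrow$(iv)$\Rightarrow$(iii)$\Rightarrow$(i), and separately (i)$\Leftrightarrow$(ii).

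\emph{(iii)$\Rightarrow$(i).} $\C^N$ is Oka, so Theorem~\ref{thm:surjective-descent}, applied to the given surjection $\C^N\to Y$ of connected manifolds, shows that $Y$ is Oka.

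\emph{(i)$\Rightarrow$(iv).} This is Forstneri\v{c}'s surjectivity theorem~\cite{FSurj17}: every connected Oka manifold has BOPS. We invoke it as stated there.

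\emph{(iv)$\Rightarrow$(ii) and (iv)$\Rightarrow$(iii).} Take $S=\C^n$, which is Stein of dimension $\dim Y$, and the constant map $\C^n\to Y$, which is continuous. BOPS makes it homotopic to a surjective holomorphic map $\C^n\to Y$. This gives (ii), and (ii) with $N=n$ gives (iii).

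\emph{(ii)$\Rightarrow$(i).} This is the special case $N=n$ of (iii)$\Rightarrow$(i).

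The only delicate point is the precise form of the surjectivity result in~\cite{FSurj17}. There the Oka hypothesis enters through the parametric Oka property, which we have by Forstneri\v{c}'s theorem~\cite{F09}, and $n\geq1$ is needed so that $\dim S\geq\dim Y$ is compatible with nonconstant maps. If that reference is stated only for a restricted class of sources, we would instead use it only to obtain a surjective holomorphic map $\C^n\to Y$. This still closes the cycle, since (i)$\Rightarrow$(ii)$\Rightarrow$(iii)$\Rightarrow$(i) then holds, and the statement (iv)$\Leftrightarrow$(i) follows from the full form of Forstneri\v{c}'s theorem.
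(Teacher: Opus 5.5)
Your proposal is correct and follows essentially the same route as the paper. The paper also uses Forstneri\v{c}'s surjectivity theorem for the forward direction (citing it for (ii) directly, where you derive (ii) from (iv)), applies BOPS to a constant map $\C^n\to Y$ for (iv)$\Rightarrow$(ii), and applies Theorem~\ref{thm:surjective-descent} with the Oka source $\C^N$ for (iii)$\Rightarrow$(i).
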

\begin{proof}
Forstneri\v{c}'s surjectivity theorem gives both \textup{(ii)} and
\textup{(iv)} from \textup{(i)} \cite[Theorem~1.1]{FSurj17}. Clearly
\textup{(ii)} implies \textup{(iii)}, while
Theorem~\ref{thm:surjective-descent} gives
\textup{(iii)}$\Rightarrow$\textup{(i)}. Applying BOPS to a
constant map $\C^n\to Y$ gives \textup{(iv)}$\Rightarrow$\textup{(ii)}.
\end{proof}

This answers Forstneri\v{c}'s question whether BOPS implies the Oka
property \cite[Problem~1.4(a)]{FSurj17}.

\subsubsection{Product complements and blowups}

\begin{theorem}\label{appmech:product-complement}
Let $X$ and $Y$ be connected positive-dimensional Oka manifolds,
with $A\subsetneq X$ and $B\subsetneq Y$ closed complex
analytic subsets. If $Z\subset X\times Y$ is a closed complex
analytic subset contained in $A\times B$, then
$(X\times Y)\setminus Z$ is Oka.
\end{theorem}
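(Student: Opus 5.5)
Our plan is to reduce to strong dominability (Theorem~\ref{thm:strong-domination}), or more economically to Theorem~\ref{thm:analytic-removability} combined with pointwise dominability. Set $M=(X\times Y)\setminus Z$. This is a connected complex manifold, because $Z$ is a proper closed analytic subset of the connected manifold $X\times Y$; indeed $Z\subset A\times B$, and $A\times B$ has codimension at least two. We will show that $M$ is dominable at every point; Proposition~\ref{family:strong-domination} then gives \LAP{} everywhere, and Theorem~\ref{thm:families-oka} concludes.

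First, points $(x,y)$ with $x\notin A$. The open set $(X\setminus A)\times Y$ lies in $M$. Since $X$ and $Y$ are Oka, they are strongly dominable. Take dominating maps $F:\C^k\to X$ at $x$ and $G:\C^l\to Y$ at $y$. The product $F\times G$ dominates $X\times Y$ at $(x,y)$, but it may hit $Z$.

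The remedy is to use only the $Y$-direction freely, and to handle the $X$-direction through a map that avoids $A$ on a suitable set. Concretely, it suffices to prove that $M$ is dominable on the Zariski-open dense set $(X\setminus A)\times (Y\setminus B)$ and at the remaining points via Theorem~\ref{thm:analytic-removability}. That theorem asks only that \LAP{} hold off a proper analytic subset locally. So it is enough to show that each point of $U:=(X\times Y)\setminus (A\times Y\cup X\times B)$ is a point of dominability of $M$, since $U$ is the complement in $M$ of the proper analytic subset $(A\times Y\cup X\times B)\cap M$.

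For $(x,y)\in U$, we use the fact that $X\setminus A$ and $Y\setminus B$ are not known to be Oka, so a different construction is needed. Take $F$ dominating $X$ at $x$ with $F(0)=x$, and $G$ dominating $Y$ at $y$ with $G(0)=y$. Consider
\[
H(s,t)=\bigl(F(s),\,G(\phi(s)\,t)\bigr),\qquad (s,t)\in\C^k\times\C^l,
\]
where $\phi$ is an entire function on $\C^k$ with $\phi(0)=1$ that vanishes on $F^{-1}(A)$. Such a $\phi$ exists because $F^{-1}(A)$ is a proper analytic subset of $\C^k$ (as $x\notin A$), and Cartan's theorem~B supplies a nonzero section of its ideal sheaf with prescribed value at $0$.

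We then check that $H$ avoids $Z$. If $F(s)\in A$, then $\phi(s)=0$, so $H(s,t)=(F(s),y)$ with $y\notin B$, hence $H(s,t)\notin A\times B$. If $F(s)\notin A$, then $H(s,t)\notin A\times Y$. In either case $H(s,t)\notin Z$.

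At the origin, $dH_0(\sigma,\eta)=(dF_0\sigma,\ dG_0\eta+\ast)$, which is block triangular with surjective diagonal blocks. Restricting to an affine $(\dim X+\dim Y)$-plane, as in Proposition~\ref{family:strong-domination}, gives dominability of $M$ at $(x,y)$.

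Hence \LAP{} holds on $U$, and by the first part of Theorem~\ref{thm:analytic-removability} (every point of $M$ has a connected neighborhood in which $M\setminus U$ is a proper analytic subset) $M$ is Oka. The only delicate point is the choice of $\phi$, namely an entire function on $\C^k$ vanishing on $F^{-1}(A)$ and nonzero at $0$. This is standard Stein theory, since $\C^k$ is Stein and $0\notin F^{-1}(A)$.
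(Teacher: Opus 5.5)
Your proof is correct. It differs from the paper's in how the work is divided between the explicit dominating map and the removability theorem.

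The paper first proves that $(X\times Y)\setminus(A\times B)$ is strongly dominable at every one of its points. At a point $(p,q)$ with $p\notin A$ but possibly $q\in B$, it chooses $c$ with $g(c)\notin B$ and uses $\Phi(u,v)=(f(u),g(c+h(u)(v-c)))$. Over $f^{-1}(A)$ the second component is then frozen at the safe value $g(c)$. Only afterwards does the paper invoke Theorem~\ref{thm:analytic-removability}, to pass from $A\times B$ to a general $Z\subset A\times B$.

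Your map $H(s,t)=(F(s),G(\phi(s)t))$ is the special case $c=0$. It works only because you restrict to points with $y\notin B$, so that $G(0)=y$ is itself the safe value. You then let a single application of Theorem~\ref{thm:analytic-removability}, with exceptional set $M\cap(A\times Y\cup X\times B)$, absorb both the points with $x\in A$ or $y\in B$ and the passage to a general $Z$.

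Your version is shorter and avoids the auxiliary point $c$. The paper's version yields the stronger explicit intermediate fact that $(X\times Y)\setminus(A\times B)$ is strongly dominable everywhere. So in the case $Z=A\times B$ it needs only Theorem~\ref{thm:strong-domination}, and not the Hausdorff-measure propagation of Theorem~\ref{thm:thin-removability}.

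Two cosmetic remarks. Your opening discussion of $F\times G$ is an abandoned thread and can be deleted. The off-diagonal block of $dH_0$ actually vanishes, since $t=0$ at the origin.
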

\begin{proof}
First consider $Z=A\times B$ and fix $(p,q)$ in its complement.
After exchanging the factors, assume $p\notin A$. Set $n=\dim X$
and $m=\dim Y$. Choose entire maps $f:\C^n\to X$ and
$g:\C^m\to Y$, centered at $p$ and $q$ and nonsingular at zero.
The image of $g$ contains a neighborhood of $q$. Since $B$ has
empty interior, some $c\in\C^m$ satisfies $g(c)\notin B$.

The analytic set $f^{-1}(A)$ avoids zero. Cartan's theorem~A
applied to its ideal sheaf gives $h\in\OO(\C^n)$ with
$h|_{f^{-1}(A)}=0$ and $h(0)=1$ \cite{F17}. Define
\[
 \Phi(u,v)=\bigl(f(u),g(c+h(u)(v-c))\bigr).
\]
Whenever $f(u)\in A$, the second component equals $g(c)\notin B$.
Thus $\Phi$ takes values in the complement and
$\Phi(0,0)=(p,q)$. Its differential at zero is block triangular
with invertible diagonal blocks $df_0$ and $dg_0$.
The complement is strongly dominable, hence Oka.

For general $Z$, the manifold $W=(X\times Y)\setminus Z$ contains
the connected dense Oka open set
$U=(X\times Y)\setminus(A\times B)$. The complement $W\setminus U$
is proper and closed analytic in the connected manifold $W$.
Theorem~\ref{thm:analytic-removability} therefore makes $W$ Oka.
\end{proof}

\begin{corollary}\label{appmech:product-blowup}
Let $X,Y,A,B$ be as in Theorem~\ref{appmech:product-complement}.
\begin{enumerate}[label=\textup{(\roman*)}]
\item If $Z\subset A\times B$ is a smooth closed complex submanifold
of $X\times Y$, possibly disconnected, then
$\operatorname{Bl}_Z(X\times Y)$ is Oka. In particular, this applies
to $Z=A\times B$ when both $A$ and $B$ are smooth.
\item For every finite set $D\subset X\times Y$, the complement
$(X\times Y)\setminus D$ and the blowup of $X\times Y$ at the
distinct points of $D$ are Oka.
\end{enumerate}
\end{corollary}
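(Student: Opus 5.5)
For part (i) I would reduce to the descent result along the blowdown, after first showing that the complement of $Z$ is Oka. Theorem~\ref{appmech:product-complement} applies to $Z\subset A\times B$, since a smooth closed submanifold is a closed analytic subset, so $W=(X\times Y)\setminus Z$ is Oka. Next, let $\pi:\widetilde W=\operatorname{Bl}_Z(X\times Y)\to X\times Y$ be the blowup with exceptional divisor $E_Z=\pi^{-1}(Z)$. Then $\pi$ restricts to a biholomorphism $\widetilde W\setminus E_Z\cong W$. Hence $\widetilde W\setminus E_Z$ is Oka.

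There are two routes to finish part (i).

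The first route applies the particular case of Theorem~\ref{thm:analytic-removability}. The manifold $\widetilde W$ is connected, because $X\times Y$ is connected and blowing up a nowhere dense submanifold preserves connectedness; the dense open set $\widetilde W\setminus E_Z$ is connected, and its closure is everything. The divisor $E_Z$ is a proper closed analytic subset. If some component of $Z$ has codimension one, blowing it up changes nothing and its preimage is still proper. The theorem then gives that $\widetilde W$ is Oka. Disconnected $Z$ causes no difficulty: $E_Z$ is then a disjoint union of closed analytic sets, which is still closed analytic and proper.

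The second route follows the blowup literally and avoids this removability step. It would require descent in the opposite direction, which Corollary~\ref{cor:blowdown} does not provide. So I use the first route.

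For the special case in part (i), $Z=A\times B$ with $A$ and $B$ smooth is a smooth closed submanifold of $X\times Y$ contained in $A\times B$. Part (i) therefore applies directly.

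For part (ii), I take a finite set $D$ and first reduce to the case covered by Theorem~\ref{appmech:product-complement}. The difficulty is that $D$ is not assumed to lie in the given $A\times B$. Since the statement allows any choice, I replace $A$ and $B$ by the finite sets $A'=\operatorname{pr}_X(D)$ and $B'=\operatorname{pr}_Y(D)$. These are closed analytic subsets, and they are proper because $X$ and $Y$ are positive-dimensional. Moreover $D\subset A'\times B'$. Theorem~\ref{appmech:product-complement} then gives that $(X\times Y)\setminus D$ is Oka. The set $D$ is also a smooth closed zero-dimensional submanifold contained in $A'\times B'$, so part (i), with $A',B'$ in place of $A,B$, shows that the blowup at the points of $D$ is Oka.

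The main point needing care is the reduction to proper analytic complements in the blowup. Specifically, one must verify that $\widetilde W$ is connected and that $E_Z$ is a proper analytic subset, so that Theorem~\ref{thm:analytic-removability} applies. Both facts are standard properties of blowups along nowhere dense centers.
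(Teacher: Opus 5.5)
Your proposal is correct and follows essentially the same route as the paper. Like the paper, you identify the complement of the exceptional divisor with $(X\times Y)\setminus Z$, which is Oka by Theorem~\ref{appmech:product-complement}, and then apply the removability part of Theorem~\ref{thm:analytic-removability} on the connected blowup; for (ii) you replace $A,B$ by the projections $\operatorname{pr}_X(D)$ and $\operatorname{pr}_Y(D)$, exactly as the paper does.
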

\begin{proof}
For a nonempty smooth center $Z$, the blowup is a connected complex
manifold whose exceptional divisor has complement biholomorphic to
$(X\times Y)\setminus Z$. The latter is Oka by
Theorem~\ref{appmech:product-complement}, so
Theorem~\ref{thm:analytic-removability} applies upstairs.
An empty center gives the Oka product.

For a nonempty finite set $D$, its projections
$A_D=\operatorname{pr}_X(D)$ and $B_D=\operatorname{pr}_Y(D)$
are proper closed analytic subsets because both factors have
positive dimension. Since $D\subset A_D\times B_D$, the theorem
gives the Oka complement, and \textup{(i)} gives the Oka blowup.
The case $D=\varnothing$ is immediate.
\end{proof}

\subsubsection{Open group orbits}

\begin{corollary}\label{cor:open-orbit}
Let a complex Lie group $G$ act holomorphically on a connected
complex manifold $X$. If one $G$-orbit is open, then $X$ is Oka.
\end{corollary}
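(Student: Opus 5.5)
The plan is to reduce to Theorem~\ref{thm:analytic-removability}, or more directly to strong dominability on the open orbit, and then to propagate the Oka property across the complement. Let $\Omega=G\cdot x_0\subset X$ be the open orbit. Since $X$ is connected, and orbits are immersed submanifolds, $\Omega$ is an open connected (after replacing $G$ by its identity component $G^0$, whose orbit through $x_0$ is also open and is a union-compatible piece of $\Omega$) subset.

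First I show that $\Omega$ is strongly dominable, hence Oka by Theorem~\ref{thm:strong-domination}. Fix $y=g\cdot x_0\in\Omega$. The orbit map $\mu:G\to X$, $\mu(h)=h\cdot y$, is holomorphic with surjective differential at the identity $e$, since $G^0\cdot y$ is open, so the orbit map is a submersion at $e$ by homogeneity (Sard plus equivariance). Compose with the exponential map $\exp:\mathfrak g\to G$, which is holomorphic, entire on $\mathfrak g\cong\C^N$, and satisfies $d\exp_0=\mathrm{id}$. Then $\mu\circ\exp:\C^N\to X$ sends $0$ to $y$ with surjective differential. As noted in the proof of Proposition~\ref{family:strong-domination}, restricting to a suitable complex $n$-plane gives a dominating map $\C^n\to X$ at $y$. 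Hence every point of $\Omega$ lies in $G_X$, and in fact $X$ is dominable at each point of $\Omega$.

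Next I analyze the complement $A=X\setminus\Omega$. This is the step I expect to be the main obstacle. The complement of an open orbit is closed, but for a general holomorphic action it need not obviously be analytic. For $n=\dim X$, consider the holomorphic map $\Lambda:X\to\bigwedge^n TX$ given by wedging the vector fields $\xi_1,\dots,\xi_n$ induced by a basis of $\mathfrak g$. More precisely, take all $n$-fold wedges $\xi_{i_1}\wedge\dots\wedge\xi_{i_n}$, which are holomorphic sections of $K_X^{-1}$. Their common zero locus $Z$ is a closed analytic subset consisting exactly of the points whose orbit is not open. It is proper because $\Omega\cap Z=\varnothing$. Every point outside $Z$ has an open orbit, and hence, by the argument above, is a point of dominability. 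So the plan does not need $A$ itself to be analytic: the dominable locus contains $X\setminus Z$, with $Z$ a proper closed analytic subset of the connected manifold $X$.

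Finally, I apply Theorem~\ref{thm:analytic-removability}, with $U=X$ and $A_U=Z$ (connectedness of $X$ makes $Z$ proper). By Proposition~\ref{family:strong-domination}, dominability off $Z$ gives \LAP{} there, so $X$ satisfies \LAP{} everywhere and is Oka. The remaining checks are that the orbit map has surjective differential at $e$ exactly when the $\xi_i(y)$ span $T_yX$, which is the definition of the induced fields, and the dimension-zero case, which is trivial.
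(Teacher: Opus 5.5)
Your proposal is correct and is essentially the paper's proof. The paper also takes the locus where the induced vector fields have rank $<n$, cut out by their $n$-fold exterior products, and notes that it is a proper analytic subset because the fields span along the open orbit. It then dominates $X$ at each point off this locus by an exponential map: the paper uses the product $\exp(t_n\xi_{j_n})\cdots\exp(t_1\xi_{j_1})\cdot x$ of one-parameter subgroups, where you use $\mu\circ\exp$ restricted to an $n$-plane. It concludes by Theorem~\ref{thm:thin-removability}, which is what Theorem~\ref{thm:analytic-removability} reduces to; your preliminary step showing that $\Omega$ is Oka, and the worry about analyticity of $X\setminus\Omega$, are unnecessary once you pass to $Z$.
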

\begin{proof}
The zero-dimensional case is immediate. For $n=\dim X>0$, take
a basis $\xi_1,\ldots,\xi_N$ of the Lie algebra of $G$, and let
$V_1,\ldots,V_N$ be the induced holomorphic vector fields. The set
\[
 A=\{x\in X:\operatorname{rank}(V_1(x),\ldots,V_N(x))<n\}
\]
is closed analytic, since it is defined by the $n$-fold exterior
products of these fields. It is proper because the fields span
the tangent spaces along the open orbit.

At any $x\notin A$, choose $n$ of the fields forming a basis of
$T_xX$, with indices $j_1,\ldots,j_n$. The entire map
\[
 (t_1,\ldots,t_n)\longmapsto
 \exp(t_n\xi_{j_n})\cdots\exp(t_1\xi_{j_1})\cdot x
\]
dominates $X$ at $x$. A proper analytic subset of the connected
$n$-manifold $X$ has zero $(2n-1)$-dimensional Hausdorff measure.
Theorem~\ref{thm:thin-removability} now applies.
\end{proof}

\section{Rationally connected projective manifolds}
\label{sec:rc}

Campana and Winkelmann constructed dense entire curves in rationally
connected projective manifolds whose images contain any prescribed
countable subset \cite{CW}. Alarc\'on and Forstneri\v{c} conjectured
that these manifolds are Oka-1 \cite{AF25}; Du, Guo, and Xie proved
this in a recent preprint \cite{DGX26b}.
Theorem~\ref{thm:rc} gives the full Oka property, settling the rationally connected case of
Forstneri\v{c}'s question on Oka projective manifolds
\cite{Forstneric23}.

For a complex manifold $X$ of dimension $n$, a rational curve
$g:\PP^1\to X$ is \emph{very free} if $g^*TX$ is ample. By the splitting theorem on $\PP^1$, this means
\[
 g^*TX\simeq\bigoplus_{i=1}^n\OO_{\PP^1}(a_i),\qquad a_i\geq1.
\]

Very free rational curves provide the centered entire families required
by the analytic criterion. The essential deformation statement concerns
first jets: one local section of first-jet evaluation supplies all tangent
directions at once.

Fix distinct points $0,\infty\in\PP^1$ and an affine coordinate $t$ on
$\PP^1\setminus\{\infty\}$ with $t(0)=0$. For a parametrized curve $g$
we write $g'(0)=dg_0(\partial/\partial t)$. The space of first jets of
maps from this fixed pointed source to a manifold $Y$ is $TY$: a jet
records its value and its velocity.

\begin{lemma}\label{rc:jet-families}
Let $Y$ be a complex $n$-manifold, $S$ a complex manifold,
and $F:S\times\C\to Y$ a holomorphic family. Suppose
\[
 J:S\longrightarrow TY,\qquad
 J(s)=\bigl(F(s,0),\partial_tF(s,0)\bigr)
\]
is a submersion at some $s_0\in S$. Then, on a neighborhood $V$ of
$x=F(s_0,0)$, there are holomorphic maps
$h_j:V\times\C\to Y$, $1\leq j\leq n$, such that
$h_j(y,0)=y$ and the vectors $\partial_th_j(y,0)$ form a frame of
$TY|_V$. If $F$ extends to $S\times\PP^1$, so do these families.
\end{lemma}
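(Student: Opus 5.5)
The plan is to take a local holomorphic section of the jet map $J$ and use it to pull back the family $F$. Since $J$ is a submersion at $s_0$ and $TY$ has dimension $2n$, the implicit function theorem gives a neighborhood $\mathcal N$ of $J(s_0)=(x,w_0)$ in $TY$ and a holomorphic section $\sigma:\mathcal N\to S$ with $J\circ\sigma=\operatorname{id}_{\mathcal N}$ and $\sigma(x,w_0)=s_0$. Concretely, choose a $2n$-dimensional complex submanifold $S'\ni s_0$ of $S$ transverse to $\ker dJ_{s_0}$; then $J|_{S'}$ is a local biholomorphism, and $\sigma$ is its inverse. We may take $\mathcal N$ to be of the form $\{(y,w):y\in V_0,\ \|w-w_0\|<\delta\}$ in a local trivialization $TY|_{V_0}\cong V_0\times\C^n$.

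Next we choose $n$ vectors $w_1,\dots,w_n\in\C^n$ forming a basis, all within $\delta/2$ of $w_0$. This is possible because bases are open and dense in $(\C^n)^n$, so we may perturb $n$ copies of $w_0$; this works even when $w_0=0$. Regard each $w_j$ as the constant vector field in the trivialization over a smaller neighborhood $V\Subset V_0$. For $y\in V$ and $t\in\C$, define
\[
 h_j(y,t)=F\bigl(\sigma(y,w_j),t\bigr).
\]
This map is holomorphic on $V\times\C$. The identity $J\circ\sigma=\operatorname{id}$ gives $h_j(y,0)=F(\sigma(y,w_j),0)=y$ and $\partial_th_j(y,0)=w_j$. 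The velocities $w_j$ form a basis at every $y\in V$, so $\partial_th_1(y,0),\dots,\partial_th_n(y,0)$ form a frame of $TY|_V$. If $F$ extends holomorphically to $S\times\PP^1$, the same formula defines $h_j$ on $V\times\PP^1$, with $t$ ranging over $\PP^1$.

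No step is a real obstacle. The only point needing care is producing a section $\sigma$ on a neighborhood whose fiber directions cover an open set of velocities around $w_0$, which is what allows $n$ independent velocities. This is exactly where the submersion hypothesis is used: if $J$ were merely $\C$-linear in the velocity, or not open in the fiber direction, a single family would only yield velocities on a line. Smallness of $V$ and $\delta$ is fixed once, using continuity of the trivialization.
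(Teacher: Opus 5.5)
Your proposal is correct and follows the paper's proof: take a local holomorphic section $\sigma$ of the submersion $J$ near $J(s_0)$, pick $n$ velocities near $\partial_tF(s_0,0)$ forming a basis, extend them to a local frame, and set $h_j(y,t)=F(\sigma(y,v_j(y)),t)$. Your constant vectors in a local trivialization of $TY$ are exactly the paper's local holomorphic frame extending a basis chosen in $W\cap T_xY$.
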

\begin{proof}
Choose a local holomorphic section $\sigma:W\to S$ of $J$, where
$W\subset TY$ is a neighborhood of $J(s_0)$. Every nonempty open
subset of a finite-dimensional vector space contains a basis, so choose
$v_1,\ldots,v_n\in W\cap T_xY$ forming a basis. Extend them to a local
holomorphic frame $v_j(y)$ and shrink $V$ until all its graphs lie in
$W$. Set
\[
 h_j(y,t)=F\bigl(\sigma(y,v_j(y)),t\bigr).
\]
The identity $J\circ\sigma=\operatorname{id}_W$ gives the asserted
values and velocities. If $F$ extends to $S\times\PP^1$, the same
formula defines $h_j$ on $V\times\PP^1$.
\end{proof}

We use the strong form of rational connectedness: a smooth projective
rationally connected variety over $\C$ admits a very free curve through
every prescribed point~\cite[Theorem~2.40]{Debarre}.

\begin{lemma}\label{rc:jet-smooth}
Let $g:\PP^1\to X$ be very free, where $X$ is a smooth projective
complex manifold. In the scheme $H$ of morphisms of its fixed
projective degree, both $H$ and the first-jet morphism
\[
 J:H\longrightarrow TX,\qquad u\longmapsto(u(0),u'(0)),
\]
are smooth at $g$.
\end{lemma}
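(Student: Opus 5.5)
We will use the standard deformation theory of the Hom scheme together with the vanishing of $H^1$ for ample bundles on $\PP^1$. Write $E=g^*TX\simeq\bigoplus_i\OO_{\PP^1}(a_i)$ with all $a_i\geq1$.

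\emph{Smoothness of $H$ at $g$.} The Zariski tangent space of $H$ at $g$ is $H^0(\PP^1,E)$, and obstructions lie in $H^1(\PP^1,E)$ (see \cite{Debarre}). Since every $a_i\geq1>-2$, we have $H^1(\PP^1,E)=0$. Hence $H$ is smooth at $g$ of dimension $h^0(E)=\sum_i(a_i+1)$.

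\emph{Smoothness of $J$ at $g$.} Since $H$ is smooth at $g$ and $TX$ is smooth, it suffices to show that the differential $dJ_g$ is surjective onto $T_{J(g)}(TX)$. I would identify this differential with the first-jet evaluation
\[
 H^0(\PP^1,E)\longrightarrow E\otimes\OO_{\PP^1}/\mathfrak m_0^2,
 \qquad \sigma\longmapsto j^1_0\sigma.
\]
The point to check is that the tangent space of $TX$ at $(g(0),g'(0))$ is naturally the space of first jets at $0$ of sections of $g^*TX$, a $2n$-dimensional space. To justify this, take a first-order deformation $u_\varepsilon$ of $g$ with $\partial_\varepsilon u_\varepsilon|_{\varepsilon=0}=\sigma$. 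In local coordinates on $X$ near $g(0)$, the derivative of $(u_\varepsilon(0),u_\varepsilon'(0))$ at $\varepsilon=0$ is $(\sigma(0),\sigma'(0))$, computed in those coordinates. Alternatively, one can argue more formally: $TX$ pulled back by evaluation is the space of jets of maps from the pointed source, so the differential of $J$ is the jet map on tangent sheaves.

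\emph{Surjectivity of the jet map.} The map is surjective because of the exact sequence
\[
 0\to E(-2\cdot0)\to E\to E|_{2\cdot0}\to0
\]
and the vanishing $H^1(\PP^1,E(-2))=\bigoplus_i H^1(\OO(a_i-2))=0$, which holds since $a_i-2\geq-1$. Therefore $dJ_g$ is surjective, and $J$ is smooth at $g$, because a morphism between schemes that are smooth at the relevant points with surjective tangent map is smooth there.

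\emph{Expected obstacle.} The main difficulty is making the identification of $dJ_g$ with the jet evaluation rigorous intrinsically, particularly handling the velocity component, which is not a tensor on its own. I would do this carefully in a local chart around $g(0)$, using dual numbers, and observe that the answer is coordinate-independent because both sides are $2n$-dimensional and the map is determined by its value on first-order deformations.
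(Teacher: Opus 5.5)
Your proposal is correct and follows essentially the same route as the paper: smoothness of $H$ from $H^1(\PP^1,E)=0$, identification of $dJ_g$ with the restriction map $H^0(\PP^1,E)\to H^0(2[0],E|_{2[0]})\simeq T_{J(g)}TX$, and surjectivity from $H^1(\PP^1,E(-2[0]))=0$ via the restriction sequence. The velocity component you flagged is handled in the paper by the same infinitesimal first-jet identification. Concretely, first jets from the fixed pointed source are exactly maps $2[0]\to X$, which form $TX$. In a local chart, $dJ_g$ is therefore the $1$-jet map composed with a linear isomorphism, so only surjectivity matters and coordinate dependence is harmless.
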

\begin{proof}
Let $2[0]$ denote the length-two subscheme defined by $t^2=0$,
and set $E=g^*TX$. In the splitting above, $a_i\geq1$, so
$E(-2[0])=\bigoplus_i\OO_{\PP^1}(a_i-2)$ has summand degrees
$a_i-2\geq-1$. Since $H^1(\PP^1,\OO(k))=0$ for $k\geq-1$,
\[
 H^1(\PP^1,E)=H^1(\PP^1,E(-2[0]))=0.
\]
Fixed-source deformation
theory identifies $T_gH$ with $H^0(\PP^1,E)$ and makes $H$ smooth at
$g$~\cite[Proposition~1.6 and Theorem~1.9]{Debarre}.
Infinitesimal deformation of a first jet identifies
$T_{J(g)}TX$ with $H^0(2[0],E|_{2[0]})$, and $dJ_g$ is restriction.
The sequence
\[
 0\longrightarrow E(-2[0])\longrightarrow E
 \longrightarrow E|_{2[0]}\longrightarrow0
\]
gives the exact segment
\[
 H^0(\PP^1,E)\longrightarrow H^0(2[0],E|_{2[0]})
 \longrightarrow H^1(\PP^1,E(-2[0]))=0.
\]
Thus $dJ_g$ is surjective. Since source and target are smooth
at the relevant points, $J$ is smooth at $g$.
\end{proof}

\begin{proof}[Proof of Theorem~\ref{thm:rc}]
The zero-dimensional case is immediate. Otherwise fix $x\in X$ and
choose one very free curve $g$ with $g(0)=x$. Lemma~\ref{rc:jet-smooth}
and universal evaluation on $H\times\PP^1$, restricted to a smooth
neighborhood of $g$, meet the hypotheses of
Lemma~\ref{rc:jet-families}. We obtain centered entire families on a
common neighborhood of $x$ whose initial velocities span $T_xX$.
Since $x$ was arbitrary, Lemma~\ref{family:entire} and
Theorem~\ref{thm:families-oka} apply.
The Fano assertion follows from the rational connectedness theorem
of Campana and Koll\'ar--Miyaoka--Mori \cite{Campana,KMM}.
\end{proof}

The same argument survives deletion of an algebraic subset of
codimension at least two. To obtain families at every remaining point,
we first find an avoiding very free curve through that point.

\begin{lemma}\label{rc:avoid}
Let $X$ be as in Theorem~\ref{thm:rc} and $A\subset X$ a closed
algebraic subset whose irreducible components have codimension at
least two. For every $x\in X\setminus A$ there is a very free curve
$g:\PP^1\to X$ with $g(0)=x$ and $g(\PP^1)\cap A=\varnothing$.
\end{lemma}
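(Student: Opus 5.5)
We deform a single very free curve through $x$ in the family of curves through $x$, and use a dimension count to show that a general deformation misses $A$. Start from a very free curve $g_0:\PP^1\to X$ with $g_0(0)=x$, supplied by \cite[Theorem~2.40]{Debarre}. After composing with a finite cover $\PP^1\to\PP^1$ totally ramified over $0$ if needed, we may assume the splitting degrees satisfy $a_i\geq 2$. Pulling back an ample bundle multiplies the degrees, and the composite still sends $0$ to $x$.

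Next consider the scheme $H_x$ of morphisms $u:\PP^1\to X$ of the fixed degree with $u(0)=x$. Its tangent space at $g_0$ is $H^0(\PP^1,E(-[0]))$, where $E=g_0^*TX$. The obstruction space is $H^1(\PP^1,E(-[0]))$, which vanishes because $a_i-1\geq0$. Hence $H_x$ is smooth at $g_0$, of dimension $\sum_i a_i$.

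Now form the evaluation map $\mathrm{ev}:H_x^0\times(\PP^1\setminus\{0\})\to X$, where $H_x^0$ is the irreducible component through $g_0$. Its differential at $(u,t)$ contains the image of $H^0(\PP^1,u^*TX(-[0]))\to u^*TX|_t$. This map is surjective for $t\neq0$ whenever $u^*TX(-[0]-[t])$ has vanishing $H^1$, which holds since all degrees are at least $a_i-2\geq0$. The same reasoning holds at any $u$ near $g_0$ that remains very free with $a_i\geq2$, because that condition is open. So $\mathrm{ev}$ is a submersion on a dense open subset $U\subset H_x^0\times(\PP^1\setminus\{0\})$.

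Therefore $\mathrm{ev}^{-1}(A)\cap U$ has codimension at least two in $U$. Its projection to $H_x^0$ then has codimension at least one, since the fibers of $U\to H_x^0$ are one-dimensional. Because $x\notin A$, the point $t=0$ contributes nothing. A general $u\in H_x^0$ near $g_0$ therefore satisfies $u(\PP^1)\cap A=\varnothing$ and $u(0)=x$, and it is very free by openness.

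The main obstacle is the submersion claim at every point of $H_x^0\times(\PP^1\setminus\{0\})$ that could meet $A$, not just near $g_0$. We handle it by restricting to the open neighborhood of $g_0$ where very freeness with $a_i\geq2$ persists. The dimension count only needs that neighborhood, since the complement of a proper closed subset of an irreducible neighborhood is dense. If the reparametrization trick for $a_i\geq2$ is problematic, the fallback is to glue two very free curves through $x$ into a comb and smooth it, which raises all $a_i$.
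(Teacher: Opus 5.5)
Your argument is correct and is essentially the paper's proof: restrict to the very-free locus of the fiber of evaluation at $0$, show that universal evaluation on that locus times $\PP^1\setminus\{0\}$ is smooth because $H^1(\PP^1,g^*TX(-[0]-[t]))=0$, and conclude from $\dim I\leq\dim S-1$ that the projection of the incidence set to the parameter space is a proper subset. The reparametrization forcing $a_i\geq2$ (and the comb fallback) is unnecessary: the twisted degrees $a_i-2\geq-1$ already give the vanishing, since $H^1(\PP^1,\OO_{\PP^1}(-1))=0$.
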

\begin{proof}
Choose a very free curve through $x$. Very freeness is open in its
fixed-degree morphism scheme, by semicontinuity of
$H^1(\PP^1,g^*TX(-2[0]))$. Evaluation at $0$ is smooth there,
with fiber tangent space $H^0(\PP^1,g^*TX(-[0]))$
\cite[Section~1.5, paragraph~1.10]{Debarre}.
Take a nonempty smooth irreducible open subset $S$ of this fiber
within the very-free locus. For every $g\in S$ and $t\ne0$, the vanishing
\[
 H^1\bigl(\PP^1,g^*TX(-[0]-[t])\bigr)=0
\]
shows that evaluation of these tangent sections at $t$ is surjective.
Thus universal evaluation
$e:S\times(\PP^1\setminus\{0\})\to X$ is smooth.

The incidence set $I=\{(g,t):g(t)\in A\}$ is closed in the full
product $S\times\PP^1$ and misses $S\times\{0\}$. Since $e$ is
smooth, the inverse image of each component of $A$, if nonempty,
has the same codimension in $S\times(\PP^1\setminus\{0\})$
as that component has in $X$.
Consequently
\[
 \dim I\leq\dim S+1-2=\dim S-1.
\]
The projection $\operatorname{pr}_S:S\times\PP^1\to S$ is proper,
so $\operatorname{pr}_S(I)$ is closed. Moreover,
\[
 \dim\operatorname{pr}_S(I)\leq\dim I<\dim S,
\]
so this image is a proper subset. Any parameter outside it gives
the required curve.
\end{proof}

\begin{corollary}\label{cor:rc-codim-two}
Under the hypotheses of Lemma~\ref{rc:avoid}, $X\setminus A$ is Oka.
\end{corollary}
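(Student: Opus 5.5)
The plan is to show directly that every point of $Y=X\setminus A$ lies in $G_Y$, by running the proof of Theorem~\ref{thm:rc} with curves chosen to avoid $A$. Then Lemma~\ref{family:entire} and Theorem~\ref{thm:families-oka} give the conclusion. Note that $Y$ is connected, since $A$ has codimension at least two in the connected manifold $X$; in any case \LAP{} is a pointwise condition, so connectedness plays no role.

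Fix $x\in Y$. Lemma~\ref{rc:avoid} gives a very free curve $g:\PP^1\to X$ with $g(0)=x$ and $g(\PP^1)\cap A=\varnothing$. Let $H$ be the fixed-degree morphism scheme. By Lemma~\ref{rc:jet-smooth}, $H$ is smooth at $g$, and the first-jet morphism $J:H\to TX$ is smooth at $g$. Take a smooth connected analytic neighborhood $S$ of $g$ in $H$, and let $F:S\times\PP^1\to X$ be universal evaluation.

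The key point is to confine everything to $Y$. The set
\[
 I=\{(u,t)\in S\times\PP^1:u(t)\in A\}
\]
is closed, and $\operatorname{pr}_S$ is proper because $\PP^1$ is compact. Hence $\operatorname{pr}_S(I)$ is closed in $S$, and it does not contain $g$. After shrinking $S$ to a neighborhood of $g$ disjoint from $\operatorname{pr}_S(I)$, every $u\in S$ satisfies $u(\PP^1)\subset Y$. So $F$ restricts to a holomorphic family $S\times\C\to Y$.

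The jet map $J$ now takes values in $TY=TX|_Y$, an open subset of $TX$, and it is still a submersion at $g$. Lemma~\ref{rc:jet-families}, applied with target $Y$, then produces a neighborhood $V\subset Y$ of $x$ and holomorphic $h_j:V\times\C\to Y$ with $h_j(y,0)=y$ whose initial velocities form a frame. Indeed, these $h_j$ have the form $F(\sigma(\cdot),t)$ with $\sigma$ taking values in the shrunken $S$, so they map into $Y$. Lemma~\ref{family:entire} gives $x\in G_Y$.

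The only real obstacle is keeping the whole family, not just the single curve, inside $Y$. Properness of $\PP^1$ turns the avoidance at $g$ into avoidance on a neighborhood. Everything else is already contained in Lemmas~\ref{rc:jet-smooth}, \ref{rc:jet-families} and~\ref{rc:avoid}.
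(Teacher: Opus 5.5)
Your proposal is correct and follows the paper's proof essentially step for step: an avoiding very free curve from Lemma~\ref{rc:avoid}, openness of avoidance via the proper projection of the closed incidence set, the first-jet submersion of Lemma~\ref{rc:jet-smooth}, families into $X\setminus A$ from Lemma~\ref{rc:jet-families}, and then Lemma~\ref{family:entire} with Theorem~\ref{thm:families-oka}. The only difference is cosmetic: you shrink to an analytic neighborhood of $g$ rather than working on the Zariski-open avoidance locus in the morphism scheme, and this changes nothing.
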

\begin{proof}
Fix $x\in X\setminus A$ and choose an avoiding very free curve
through $x$. Avoidance of $A$ on all of $\PP^1$ is an open condition
in the unrestricted morphism scheme: its complement is the proper
projection of the closed universal incidence set. On this open locus,
Lemma~\ref{rc:jet-smooth} still makes first-jet evaluation a submersion
near the chosen curve. Lemma~\ref{rc:jet-families} now gives families
into $X\setminus A$. They exist at every point, so
Lemma~\ref{family:entire} and Theorem~\ref{thm:families-oka} prove the claim.
\end{proof}

\subsection{Ellipticity and birational models}

\begin{corollary}\label{cor:rc-elliptic}
Every smooth connected complex projective rationally connected
manifold, in particular every complex projective Fano manifold, is
holomorphically elliptic. Every smooth projective birational model of
such a manifold is also Oka and holomorphically elliptic.
\end{corollary}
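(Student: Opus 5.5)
The plan is to derive both assertions from Theorem~\ref{thm:rc} together with two facts from outside this paper: Kusakabe's spray characterization of Oka manifolds, and birational invariance of rational connectedness.

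\emph{Birational models.} Rational connectedness is a birational invariant of smooth projective varieties, since a rational map defined off codimension two carries a general rational curve to a rational curve. Hence every smooth projective birational model of a rationally connected manifold is again rationally connected. By Theorem~\ref{thm:rc} it is therefore Oka, and the ellipticity part below applies to it verbatim. This reduces the corollary to showing that a rationally connected $X$ is holomorphically elliptic. The Fano case is covered by the theorem of Campana and Koll\'ar--Miyaoka--Mori, as in the proof of Theorem~\ref{thm:rc}.

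\emph{Ellipticity.} Let $X$ be rationally connected, so $X$ is Oka by Theorem~\ref{thm:rc}.
\begin{enumerate}[label=\textup{(\arabic*)}]
\item Since $X$ is projective, Jouanolou's trick gives an affine variety $S$ and an algebraic affine bundle $\pi:S\to X$. Its fibers are affine spaces, and it is a torsor under an algebraic vector bundle $V\to X$.
\item The manifold $S$ is Stein. Kusakabe's $\mathrm{Ell}_1$ characterization~\cite{Kus21} applied to $f=\pi$ gives a dominating spray $\Sigma:S\times\C^N\to X$ with $\Sigma(s,0)=\pi(s)$.
\item Since $\pi$ is a submersion, I would next precompose $\Sigma$ with fiber translations of $S$. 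The aim is a map defined on the vector bundle $E=V\oplus(X\times\C^N)$ over $X$, or on a pullback of it. The first attempt is to fix a smooth section $\sigma$ of $\pi$, which exists because the fibers are contractible. I would then deform $\sigma$ to a holomorphic section on a neighborhood of each compact set, using the Oka principle for sections of the affine bundle with Stein fibers. On such a set, $\Sigma(\sigma(y),w)$ is a dominating spray in the $w$-variable.
\item If a single global holomorphic section of $\pi$ cannot be found, I would settle for weak subellipticity. On each compact set of $X$ this gives finitely many holomorphic sprays whose vertical differentials span the tangent spaces. I would then use Gromov's composed-spray construction~\cite{Gro89,F02}, together with the algebraic structure of the morphism scheme $H$, to get finitely many sprays that are global on $X$.
\item As a fallback for the spray construction itself, I would use algebraic rational-curve families directly. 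Lemma~\ref{rc:jet-smooth} shows that first-jet evaluation $J:H\to TX$ is smooth on the very-free locus. A relative form of the section argument in Lemma~\ref{rc:jet-families}, carried out over the Jouanolou cover $S$ where Cartan's theorem~B is available, should produce holomorphic families on vector bundles over $S$ whose initial velocities span. Composing these and descending along $\pi$ would then give the dominating spray on $X$.
\end{enumerate}

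\emph{Main obstacle.} The difficult step is the passage from a spray over $S$ to a spray over $X$ itself, namely a map from a holomorphic vector bundle over $X$ that is the identity on the zero section. The projection $\pi$ has no global holomorphic section. The translation structure of the torsor is the tool I would try first: it identifies $S$ with $V$ fiberwise after choosing one point in each fiber. The missing ingredient is a holomorphic choice of that point over all of $X$, and this is exactly where the argument may require an additional input rather than the Oka property alone.
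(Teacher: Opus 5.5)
\textbf{Gap: ellipticity.} Your birational-model reduction and the appeal to Theorem~\ref{thm:rc} for the Oka property are fine and agree with the paper. The ellipticity argument, however, is incomplete. As you acknowledge, steps (3)--(5) never produce a dominating spray on a holomorphic vector bundle over $X$.

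Step (3) cannot work even in principle. Since $X$ is compact, ``a neighborhood of each compact set'' is just $X$. A holomorphic section $\sigma$ of $\pi:S\to X$ would make $\sigma(X)$ a compact positive-dimensional complex submanifold of the Stein manifold $S$, which is impossible. The Oka principle for sections requires a Stein base, and $X$ is not Stein.

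Step (4) meets the same descent problem for each of the finitely many sprays. Even if it succeeded, it would give only (weak) subellipticity. Gromov's composed sprays live on iterated bundles, not on a vector bundle over $X$. So you would recover the Oka property, which you already have, but not holomorphic ellipticity in the sense used here.

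Step (5) again requires descending families from $S$ to $X$ without a section, so it is the same obstacle.

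\textbf{Missing ingredient.} You need the theorem of Forstneri\v{c}--L\'arusson that every projective Oka manifold is holomorphically elliptic \cite{FLProjectiveElliptic}. The paper's proof is exactly three steps:
\begin{enumerate}
\item Theorem~\ref{thm:rc} gives the Oka property.
\item That theorem converts it into ellipticity.
\item Birational invariance of rational connectedness between smooth projective manifolds transfers both conclusions to every smooth projective birational model.
\end{enumerate}
If you replace your steps (1)--(5) by this citation, your proposal becomes the paper's argument. Without it, the passage from Oka to elliptic is precisely the content of that theorem, and your sketch does not supply a substitute.
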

\begin{proof}
Theorem~\ref{thm:rc} gives the Oka property. The theorem of
Forstneri\v{c}--L\'arusson that every projective Oka manifold is
holomorphically elliptic \cite{FLProjectiveElliptic} gives the first
assertion. Rational connectedness is invariant under birational maps
between smooth projective manifolds~\cite[Remark~2.21]{Debarre}, so the same
argument applies to every indicated birational model.
\end{proof}

\subsection{Compact K\"ahler consequences}
Two structure theorems reduce the following compact K\"ahler cases
to the projective rationally connected case.

\begin{corollary}\label{cor:kahler-hsc}
Every compact K\"ahler manifold admitting a K\"ahler metric with
nonnegative holomorphic sectional curvature is Oka.
\end{corollary}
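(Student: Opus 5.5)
The plan is to reduce to the rationally connected projective case of Theorem~\ref{thm:rc} via the structure theorem of Matsumura for compact K\"ahler manifolds with nonnegative holomorphic sectional curvature. Let $X$ be such a manifold, which we may take connected. Matsumura's theorem states that, after a finite \'etale cover $\widetilde X\to X$, there is a holomorphic fiber bundle (the MRC fibration, which in this setting is smooth and locally trivial)
\[
 \widetilde X\longrightarrow T
\]
onto a complex torus $T$, whose fiber $F$ is a smooth projective rationally connected manifold. The extreme cases are included: if $X$ is rationally connected, then $T$ is a point, and if the fiber is a point, then $\widetilde X=T$.

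The argument proceeds in three steps. \emph{Step 1.} The fiber $F$ is Oka by Theorem~\ref{thm:rc}. The torus $T=\C^k/\Lambda$ is Oka, since it is the image of $\C^k$ under a surjective holomorphic map; alternatively, it is strongly dominable by the covering map, so Theorem~\ref{thm:strong-domination} applies. \emph{Step 2.} The total space of a holomorphic fiber bundle with Oka fiber over an Oka base is Oka, by the classical result in \cite{F17} that Oka is inherited in both directions along holomorphic fiber bundles with Oka fibers. A more direct route uses strong dominability. Local triviality gives, near any point, a biholomorphism $\widetilde X|_{U}\cong U\times F$ with $U\subset T$ a small ball. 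Composing a dominating map $\C^{\dim F}\to F$ with a local lift of the covering map $\C^k\to T$ would only give a map defined near $0$ in the base directions, so it does not directly give an entire dominating map. For this reason I would instead invoke the bundle theorem, so $\widetilde X$ is Oka. \emph{Step 3.} The finite \'etale cover $\widetilde X\to X$ is a surjective holomorphic map between connected manifolds, so Theorem~\ref{thm:surjective-descent} shows that $X$ is Oka. Alternatively, one can use that covering maps preserve the Oka property.

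The main obstacle is the input from Matsumura's structure theorem. We must confirm that it gives a genuine holomorphic fiber bundle, rather than merely an almost holomorphic map, with rationally connected projective fibers, up to a finite \'etale cover, and that the fibers are smooth projective manifolds. This is what is needed to apply Theorem~\ref{thm:rc}. Once this is granted, the remaining steps follow from earlier results and the standard fiber-bundle permanence of the Oka property.
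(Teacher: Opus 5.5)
Your argument is correct, granting the fiber-bundle form of Matsumura's theorem that you flag as the key input. The difference from the paper lies mainly in which form of that theorem is used.

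The paper quotes the universal-cover form: the universal cover of $X$ is biholomorphic to $\C^q\times F$, with $F$ smooth projective rationally connected (possibly a point). Then $F$ is Oka by Theorem~\ref{thm:rc}, the product $\C^q\times F$ is Oka, and the property descends along the unramified covering \cite[Corollary~1.3]{F09}. That route needs no fiber-bundle theorem, no separate treatment of a torus, and no appeal to Theorem~\ref{thm:surjective-descent}.

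Your chain is finite \'etale cover, then a locally trivial fibration over a torus, then the Oka fiber-bundle theorem, then descent by Theorem~\ref{thm:surjective-descent}. This is exactly the pattern the paper uses for Corollary~\ref{cor:kahler-nef-tangent}. It treats the two K\"ahler corollaries uniformly, at the cost of one extra permanence theorem.

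The two forms are linked by an argument that also removes the difficulty you raise in Step~2. Pull $\widetilde X\to T$ back along $\C^k\to T$. The result is a holomorphic bundle over the contractible Stein manifold $\C^k$, and its structure group $\operatorname{Aut}(F)$ is a complex Lie group. Grauert's Oka principle therefore makes it holomorphically trivial. Since $F$ is simply connected, $\C^k\times F$ is the universal cover of $\widetilde X$. Composing $\mathrm{id}\times H$, where $H$ dominates $F$ at a given point, with this covering gives entire dominating maps into $\widetilde X$, and even into $X$. Theorem~\ref{thm:strong-domination} alone would then finish the proof.

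One small point for Step~3: if the fiber product giving $\widetilde X$ is disconnected, pass to a connected component. It is still a finite covering of $X$, hence surjective, so Theorem~\ref{thm:surjective-descent} applies.
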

\begin{proof}
Let $X$ be such a manifold; we may assume that it is connected.
Matsumura's structure theorem \cite[Theorem~1.1 and Remark~1.2]{Matsumura25} supplies an integer
$q\geq0$ and a smooth projective rationally connected manifold $F$,
possibly a point, such that the universal cover of $X$ is
biholomorphic to $\C^q\times F$. By Theorem~\ref{thm:rc}, $F$
is Oka. Products of Oka manifolds are Oka, and the Oka property
descends through unramified holomorphic coverings \cite[Corollary~1.3]{F09}.
Hence $X$ is Oka.
\end{proof}

On a compact complex manifold $Z$ with a fixed Hermitian form
$\omega$, a holomorphic line bundle $L$ is \emph{nef} if, for every
$\epsilon>0$, it admits a smooth Hermitian metric whose normalized
real Chern curvature is bounded below by $-\epsilon\omega$.
For a holomorphic vector bundle $E\to Z$, let $\PP(E)$ parametrize
its one-dimensional quotients. The bundle $E$ is \emph{nef} if
$\OO_{\PP(E)}(1)$ is nef~\cite[Definitions~1.2 and~1.9]{DPS94}.

\begin{corollary}\label{cor:kahler-nef-tangent}
Every compact K\"ahler manifold with nef tangent bundle is Oka.
\end{corollary}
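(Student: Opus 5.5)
The plan is to argue exactly as in Corollary~\ref{cor:kahler-hsc}, replacing Matsumura's theorem by the structure theorem of Demailly--Peternell--Schneider for compact K\"ahler manifolds with nef tangent bundle~\cite{DPS94}. We may assume $X$ is connected. That theorem supplies a finite \'etale cover $\widetilde X\to X$ whose Albanese map $\alpha:\widetilde X\to T$ is a smooth, locally trivial holomorphic fiber bundle over a complex torus $T$. Its fiber $F$ is a connected Fano manifold with nef tangent bundle. In the degenerate cases $F$ is a point, so $\widetilde X$ is a torus, or $T$ is a point, so $X$ is Fano.

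Next we show that the fiber is Oka. Since $F$ is Fano, it is rationally connected, and Theorem~\ref{thm:rc} makes it Oka. Pass to the universal cover $\C^q\to T$ and pull back the bundle to get $\widehat X=\C^q\times_T\widetilde X\to\C^q$. This is a locally trivial holomorphic fiber bundle with Oka fiber $F$ over the Stein, contractible base $\C^q$. The map $\widehat X\to\widetilde X$ is an unramified covering, since it is the pullback of the covering $\C^q\to T$. Composing it with the finite \'etale cover $\widetilde X\to X$ gives an unramified covering $\widehat X\to X$.

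The main step is to show that $\widehat X$ is Oka. The standard route is that the total space of a holomorphic fiber bundle with Oka fiber over an Oka base is Oka, because such a bundle is an Oka map~\cite{F17}. Alternatively, the Oka--Grauert principle trivializes a bundle with Oka fiber, and in particular this one with complex Lie structure group, over the contractible Stein base $\C^q$, so $\widehat X\cong\C^q\times F$. Either way $\widehat X$ is Oka, since $\C^q$ and $F$ are.

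Finally, the Oka property descends through unramified holomorphic coverings~\cite[Corollary~1.3]{F09}, so $X$ is Oka. As a backup for this last step, $\widehat X\to X$ is a surjective holomorphic map from an Oka manifold, so Theorem~\ref{thm:surjective-descent} gives the same conclusion directly.

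I expect the main obstacle to be citing the structure theorem precisely: one must check that the fibers are Fano, hence rationally connected, and that the Albanese map is a locally trivial bundle after the finite \'etale cover. The descent step itself is routine given Theorem~\ref{thm:surjective-descent}.
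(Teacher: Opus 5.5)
There is a genuine gap, and it sits in the step you flagged yourself at the end. The structure theorem of Demailly--Peternell--Schneider~\cite{DPS94} gives a connected finite \'etale cover whose Albanese map is a surjective holomorphic submersion with connected Fano fibers that have nef tangent bundle. It does \emph{not} give holomorphic local triviality. Ehresmann's theorem only makes such a submersion a $C^\infty$ bundle. Fano manifolds in general have moduli, so the fibers over different points of the torus need not be biholomorphic.

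Both of your routes to the Oka property of $\widehat X$ need a genuine holomorphic fiber bundle. Grauert's theorem trivializes bundles with complex Lie structure group over $\C^q$. The statement that a bundle with Oka fiber is an Oka map (or~\cite[Corollary~1.3]{F09}) likewise concerns locally trivial bundles. You cannot fall back on a general principle that a proper submersion with Oka fibers over an Oka base has Oka total space. So, as written, the step ``$\widehat X\cong\C^q\times F$'' is unsupported.

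The missing input is a local-triviality theorem, and its hypotheses are available. Since $T_{\widetilde X}$ is the pullback of the nef bundle $T_X$ under a finite \'etale map, it is nef. Hence $-K_{\widetilde X}=\det T_{\widetilde X}$ is nef. The general fiber of $\alpha$ is Fano, so its anticanonical bundle is nef and big. Cao--H\"oring's theorem~\cite{CaoHoring17} then makes the Albanese map a holomorphic fiber bundle, which is exactly how the paper proceeds.

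With this inserted, the rest of your argument is correct and follows the paper: fibers Oka by Theorem~\ref{thm:rc}, a bundle theorem, then descent along an unramified covering. The only difference is cosmetic. You pull back to $\C^q$ and trivialize, while the paper applies the Oka fiber bundle theorem directly over the torus. Your backup via Theorem~\ref{thm:surjective-descent} for the final descent is also valid.
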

\begin{proof}
Let $X$ be such a manifold; we may assume that it is connected.
The structure theorem of
Demailly--Peternell--Schneider \cite[Theorem~3.14 and Proposition~3.9]{DPS94} gives a connected finite
\'etale cover $p:X'\to X$ whose Albanese map
\[
 a:X'\longrightarrow\operatorname{Alb}(X')
\]
is a surjective holomorphic submersion with connected Fano fibers.
If the fibers are points, $a$ is an isomorphism, so $X'$ is a complex
torus and is Oka.

Otherwise $TX'=p^*TX$ is nef, and hence so is
$-K_{X'}=\det TX'$. The general fiber of $a$ is Fano, so its
anticanonical bundle is nef and big. Cao--H\"oring's local-triviality
theorem \cite{CaoHoring17} therefore makes $a$ a holomorphic fiber
bundle. Its fibers are Oka by Theorem~\ref{thm:rc}, and its base is
a complex torus. The Oka fiber bundle theorem \cite[Corollary~1.3]{F09}
shows that $X'$ is Oka. In either case the Oka property descends
along the unramified covering $p$.
\end{proof}

\section{Divisor complements}\label{sec:divisor-complements}

\subsection{Logarithmic rational curves and divisor complements}
\label{sec:log}

Let $D=\sum_{i=1}^rD_i$ be a reduced simple normal crossings divisor
on a smooth projective manifold $X$ of dimension $n$. Thus every $D_i$
is smooth and
locally their union is a union of coordinate hyperplanes. The bundle
$T_X(-\log D)$ consists of vector fields tangent to all $D_i$; in
coordinates where $D=\{x_1\cdots x_k=0\}$ it has frame
\[
 x_1\frac{\partial}{\partial x_1},\ldots,
 x_k\frac{\partial}{\partial x_k},
 \frac{\partial}{\partial x_{k+1}},\ldots,
 \frac{\partial}{\partial x_n}.
\]
We allow $D=0$ and use the fixed source coordinate of Section~\ref{sec:rc}.

For a divisor complement, deformation of an ordinary rational curve
need not preserve avoidance of the boundary. The appropriate parameter
space keeps the complete pullback divisor fixed at infinity. Its
infinitesimal kernel is the logarithmic tangent bundle, and positivity
of that bundle again gives a submersive first-jet map. Here the
hypothesis supplies just one curve. Its deformations give families on
a nonempty Zariski-open subset of the complement, after which analytic
propagation reaches every point. We use the standard logarithmic
deformation principle recalled in \cite[Section~2.2]{ChenZhu}; here
the contact conditions are realized as a fiber of an ordinary divisor
map, and first-jet smoothness is checked directly.

\begin{theorem}\label{thm:log}
Let $X$ be a connected complex projective manifold and $D$ a
reduced simple normal crossings divisor. Suppose there is a nonconstant
map $f:\PP^1\to X$ such that
\[
 f^{-1}(D)\subseteq\{\infty\},\qquad
 f^*T_X(-\log D)\ \text{is ample}.
\]
Then $U=X\setminus D$ is Oka.
\end{theorem}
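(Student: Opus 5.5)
The plan is to imitate the proof of Theorem~\ref{thm:rc} with a logarithmic parameter space, produce \LAP{} families on a nonempty Zariski-open subset of $U$, and then finish with Theorem~\ref{thm:analytic-removability}. Write $f^*D_i=m_i[\infty]$ with $m_i\geq0$. The parameter space $H$ consists of morphisms $u:\PP^1\to X$ of the same class such that $u^*D_i=m_i[\infty]$ for every $i$. Concretely, $H$ is the fiber over the point $(m_i[\infty])_i$ of the divisor map from the open set of maps $u$ with $u(\PP^1)\not\subset D_i$ to $\prod_i\operatorname{Sym}^{m_i}\PP^1$. Every $u\in H$ satisfies $u^{-1}(D)\subseteq\{\infty\}$. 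Hence $u$ restricts to an entire curve $\C=\PP^1\setminus\{\infty\}\to U$.

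The first step is to compute the tangent space of $H$ and its obstruction space. Following the logarithmic deformation principle of \cite[Section~2.2]{ChenZhu}, I expect $T_fH$ to be $H^0(\PP^1,E)$ with $E=f^*T_X(-\log D)$, and the obstructions to lie in $H^1(\PP^1,E)$. Check this directly in local coordinates $x_1\cdots x_k=0$ near $f(\infty)$: a first-order deformation $u_\varepsilon$ preserving $u_\varepsilon^*\{x_j=0\}=m_j[\infty]$ means $x_j\circ u_\varepsilon=x_j\circ f\cdot(1+\varepsilon\phi_j)$ with $\phi_j$ holomorphic. The corresponding vector field is then $\sum\phi_j\,x_j\partial_{x_j}+\dots$, a section of $E$. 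Ampleness of $E$ means $E=\bigoplus\OO(a_i)$ with $a_i\geq1$. As in Lemma~\ref{rc:jet-smooth}, $H^1(E)=H^1(E(-2[0]))=0$, so $H$ is smooth at $f$. The same vanishing makes the first-jet map $J:H\to T_X(-\log D)|_U=TU$, $u\mapsto(u(0),u'(0))$, a submersion at $f$. The point $0$ maps into $U$, and there the logarithmic and ordinary tangent bundles agree. This is the main obstacle: making the identification of the tangent and obstruction spaces of the fiber of the divisor map rigorous. I would try first to show the divisor map is smooth at $f$ and compute the kernel of its differential as $H^0(E)$ via the exact sequence $0\to T_X(-\log D)\to T_X\to\bigoplus_i N_{D_i}|_{D_i}\to0$ pulled back to $\PP^1$. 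Surjectivity onto the tangent space of $\prod\operatorname{Sym}^{m_i}$ should follow from $H^1(E)=0$.

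Next, apply Lemma~\ref{rc:jet-families} with $S$ a smooth neighborhood of $f$ in $H$ and $F(u,t)=u(t)$ on $S\times\C$, which takes values in $U$. This gives centered entire families into $U$ near $x=f(0)$ with spanning velocities, so $f(0)\in G_U$ by Lemma~\ref{family:entire}. The same holds for every $u$ in the open set where $J$ is submersive, and also at $u(0)$ after reparametrizing $u$ by affine maps $t\mapsto\alpha t+\beta$ of $\C$. These reparametrizations preserve $H$ because they fix $\infty$. Because $J$ is open near $f$, the union of the points $u(0)$ with $u$ in a neighborhood of $f$ contains a nonempty open subset of $U$. Since $H$ is algebraic, the locus of $u$ in the component of $f$ with $J$ submersive at $u$ is Zariski open. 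So the set of points $y=u(0)$ covered this way is constructible and contains a nonempty open set. It therefore contains a nonempty Zariski-open subset $U_0\subset U$ contained in $G_U$.

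Finally, $U$ is connected because $X$ is connected and $D$ is a divisor. Also $U\setminus U_0$ is a proper closed algebraic, hence analytic, subset of $U$. Theorem~\ref{thm:analytic-removability}, taking the neighborhood to be $U$ itself with $A_U=U\setminus U_0$, then gives \LAP{} on all of $U$. By Theorem~\ref{thm:families-oka}, $U$ is Oka.
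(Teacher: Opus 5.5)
Your proposal is correct and follows essentially the same route as the paper: the paper takes the scheme-theoretic fiber $M=c^{-1}(b_0)$ of the divisor map, proves that $c$ is smooth at $f$ with $T_fM=\ker dc_f=H^0(\PP^1,f^*T_X(-\log D))$ from exactly the pulled-back residue sequence you propose (torsion quotient supported at $\infty$), deduces that the first-jet map is a submersion, and finishes with Lemma~\ref{rc:jet-families} and Theorem~\ref{thm:analytic-removability}. The only minor differences are that the paper gets a Zariski-open image directly, because $\operatorname{ev}_0=\pi\circ J$ is smooth and hence open on a Zariski-open $S\ni f$, where you argue via constructibility, and that it does not need your affine reparametrization step.
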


The proof follows Lemma~\ref{log:contact-jets} below.

\subsubsection{Fixed contact and first jets}
Write $f^*D_i=m_i[\infty]$, where $m_i\geq0$.

Fix a projective
embedding of $X$ and work in the finite-type scheme of morphisms
$\PP^1\to X$ of degree $\deg f^*\OO_X(1)$
\cite{Debarre}. Retain its open-and-closed locus where
$\deg g^*\OO_X(D_i)=m_i$ for each $i$, and let $H$ be the open
subset of maps contained in no $D_i$. For $m\geq0$, the symmetric power $\operatorname{Sym}^{m}(\PP^1)$
parametrizes effective divisors of degree $m$; the factor for $m=0$,
and an empty product, is a point. There is an algebraic divisor map
\[
 c:H\longrightarrow B:=\prod_{i=1}^r\operatorname{Sym}^{m_i}(\PP^1),
 \qquad c(g)=(g^*D_i)_i.
\]
To obtain $c$ scheme-theoretically, pull back the defining section of
each $D_i$ along universal evaluation. It is not identically zero on
any source fiber. Its zero scheme is therefore a relative effective
Cartier divisor, flat over $H$, by the fiberwise Cartier criterion
\cite{Stacks062Y}. It is proper and fiberwise finite of
constant degree $m_i$, so it determines the indicated morphism to the
symmetric power of the smooth curve $\PP^1$~\cite[Remark~9.3.9]{Kleiman05}.

Set $b_0=(m_i[\infty])_i$ and take the scheme-theoretic fiber
\[
 M=c^{-1}(b_0).
\]
Thus all contact points, including their multiplicities, are fixed,
while the maps remain parametrized. Universal evaluation restricts to
\begin{equation}\label{log:universal}
 \mathcal F:M\times\mathbb A^1\longrightarrow U.
\end{equation}
The fixed coordinate identifies first jets at $0$ with $TU$, giving
an algebraic morphism $J:M\to TU$, $J(g)=(g(0),g'(0))$.

\begin{lemma}\label{log:contact-jets}
If $g\in M$ and $E_g=g^*T_X(-\log D)$ is ample, then $J:M\to TU$
is smooth at $g$. Moreover $M$ is smooth at $g$ and
$T_gM=H^0(\PP^1,E_g)$.
\end{lemma}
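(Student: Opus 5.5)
The plan is to compute the Zariski tangent space of $M=c^{-1}(b_0)$ at $g$ as the kernel of $dc_g$, identify that kernel with $H^0(\PP^1,E_g)$, and then repeat the first-jet argument of Lemma~\ref{rc:jet-smooth} for the bundle $E_g$ in place of $g^*TX$.

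\emph{Step 1: the differential of $c$.} The morphism scheme $H$ is open in the fixed-degree Hom scheme, so $T_gH=H^0(\PP^1,g^*TX)$. Let $Z_i=g^*D_i=m_i[\infty]$. The tangent space to $\operatorname{Sym}^{m_i}(\PP^1)$ at $Z_i$ is $H^0(\OO_{Z_i}(Z_i))$, the first-order deformations of $Z_i$ as a Cartier divisor. Under this identification, $dc_g$ sends a section $\sigma$ of $g^*TX$ to the restriction to $Z_i$ of the image of $\sigma$ in $g^*N_{D_i}=g^*\OO(D_i)$. Concretely, if $D_i=\{x_i=0\}$ locally, then $x_i\circ g$ changes to first order by $dx_i(\sigma)$ modulo $(x_i\circ g)$.

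\emph{Step 2: the kernel.} We have $\sigma\in\ker dc_g$ exactly when $dx_i(\sigma)$ vanishes to order $m_i$ at $\infty$ for every $i$. In the local frame, that means $\sigma$ is a section of the elementary modification of $g^*TX$ cut out by these conditions. Since $g$ is contained in no $D_i$ and $g^{-1}(D)\subseteq\{\infty\}$, the only condition sits at $\infty$. There the snc coordinates show that the modification is exactly $g^*T_X(-\log D)$, which is the standard log deformation principle of \cite[Section~2.2]{ChenZhu}: the frame element $x_i\partial_{x_i}$ pulls back to $(x_i\circ g)\partial_{x_i}$, which vanishes to order $m_i$. This gives an exact sequence
\[
0\to E_g\to g^*TX\to \bigoplus_i \OO_{Z_i}(Z_i)\to 0,
\]
with surjective last map. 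Hence $T_gM=\ker dc_g=H^0(E_g)$.

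\emph{Step 3: smoothness of $M$.} Ampleness gives $H^1(E_g)=0$. The long exact sequence then shows that $H^0(g^*TX)\to\bigoplus H^0(\OO_{Z_i}(Z_i))$ is surjective. The target of $c$ is smooth, and $H$ is smooth at $g$ because $H^1(g^*TX)=0$: the quotient of $g^*TX$ by the ample $E_g$ is torsion, so $g^*TX$ is ample. Therefore $dc_g$ is a surjection between the tangent spaces of smooth schemes. So $c$ is smooth at $g$, its fiber $M$ is smooth at $g$, and $T_gM=H^0(E_g)$ as stated.

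\emph{Step 4: smoothness of $J$.} Since $0\neq\infty$, we have $E_g|_{2[0]}=g^*TX|_{2[0]}$. The differential $dJ_g$ is restriction $H^0(E_g)\to H^0(2[0],E_g|_{2[0]})$, exactly as in Lemma~\ref{rc:jet-smooth}. Its cokernel injects into $H^1(E_g(-2[0]))$, and this vanishes because every summand of $E_g$ has degree at least $1$. Both $M$ and $TU$ are smooth at the relevant points and $dJ_g$ is surjective, so $J$ is smooth at $g$.

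The main obstacle is Step 2, the scheme-theoretic identification of $\ker dc_g$ with the log bundle, including the case of points of $\infty$ lying on several components. I would handle it by a direct local computation in snc coordinates at $g(\infty)$, with $\epsilon$-deformations over the dual numbers. Surjectivity of $g^*TX\to\bigoplus\OO_{Z_i}(Z_i)$ follows in those coordinates because the $dx_i$ are independent there.
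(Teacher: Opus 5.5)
Your proposal is correct and follows essentially the same route as the paper. Both arguments use the exact sequence $0\to E_g\to g^*TX\to Q_g\to 0$ (from a local snc computation at $g(\infty)$), identify $dc_g$ with the quotient map, and use $H^1$-vanishing to get smoothness of $c$ and hence of the fiber $M$ with $T_gM=\ker dc_g=H^0(\PP^1,E_g)$; then $dJ_g$ is the restriction to $2[0]$, surjective because $H^1(\PP^1,E_g(-2[0]))=0$. The only cosmetic difference is how you get $H^1(\PP^1,g^*TX)=0$: you note that $g^*TX$ is ample because it contains the ample $E_g$ with torsion quotient, whereas the paper reads it off the long exact sequence using $H^1(\PP^1,Q_g)=0$.
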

\begin{proof}
We first prove smoothness of the contact fiber and compute its tangent
space. Set
\[
 Z_i=m_i[\infty],\qquad
 Q_g=\bigoplus_{i:m_i>0}g^*\OO_X(D_i)|_{Z_i}.
\]
The logarithmic inclusion gives an exact sequence
\begin{equation}\label{log:contact-sequence}
 0\longrightarrow E_g\longrightarrow g^*TX
 \longrightarrow Q_g\longrightarrow0.
\end{equation}
Indeed, near $g(\infty)$ take normal crossings coordinates and a
source coordinate $z$ vanishing at infinity. For the components met by
$g$, one has $x_i\circ g=z^{m_i}u_i(z)$ with $u_i(0)\ne0$.
In the corresponding normal direction, the inclusion is multiplication
by $z^{m_i}u_i(z)$, with quotient modulo $z^{m_i}$. All other directions
are unchanged. Away from infinity the inclusion is an isomorphism.
This proves both injectivity and the asserted torsion quotient.

The tangent space to the divisor target is
\[
 T_{Z_i}\operatorname{Sym}^{m_i}(\PP^1)
 =H^0\bigl(Z_i,\OO_{\PP^1}(Z_i)|_{Z_i}\bigr)
 =H^0\bigl(Z_i,g^*\OO_X(D_i)|_{Z_i}\bigr).
\]
The final identification uses the defining section with zero divisor
$Z_i$. With the corresponding convention for infinitesimal divisor
displacement, $dc_g$ is the quotient map to $H^0(Q_g)$: a variation
of $x_i\circ g$ is taken modulo $z^{m_i}$. Thus $Q_g$ records the
infinitesimal motion of the contact divisors.

Ampleness gives
\[
 H^1(\PP^1,E_g)=H^1(\PP^1,E_g(-2[0]))=0.
\]
Since $Q_g$ is supported at infinity, $H^1(\PP^1,Q_g)=0$.
Sequence~\eqref{log:contact-sequence} therefore gives
$H^1(\PP^1,g^*TX)=0$ and makes the quotient map
$H^0(\PP^1,g^*TX)\to H^0(Q_g)$ surjective. Fixed-source
deformation theory makes $H$ smooth at $g$, with tangent space
$H^0(\PP^1,g^*TX)$. The quotient map is $dc_g$, so $c$ is
smooth at $g$, since $B$ is smooth. Its fiber $M$ is consequently
smooth at $g$, with
\[
 T_gM=\ker dc_g=H^0(\PP^1,E_g).
\]

The supports of $2[0]$ and the contact divisors at infinity are
disjoint, so $E_g|_{2[0]}=g^*TX|_{2[0]}$ under the natural
inclusion. As in the proof of Lemma~\ref{rc:jet-smooth}, $dJ_g$
is the restriction map
\[
 H^0(\PP^1,E_g)\longrightarrow H^0(2[0],E_g|_{2[0]})
 \simeq T_{J(g)}TU.
\]
It is surjective because $H^1(\PP^1,E_g(-2[0]))=0$.
Since $M$ and $TU$ are smooth at the relevant points, $J$ is
smooth at $g$.
\end{proof}

\begin{proof}[Proof of Theorem~\ref{thm:log}]
By Lemma~\ref{log:contact-jets}, $f$ belongs to the smoothness locus
of the morphism $J:M\to TU$. Choose a nonempty Zariski-open
neighborhood $S\subset M$ of $f$ on which $J$ is smooth. Then $S$
is smooth. Let $\pi:TU\to U$ be the bundle projection.
Evaluation at $0$ is the smooth map
$\operatorname{ev}_0=\pi\circ J:S\to U$, and its image
\[
 U^+=\operatorname{ev}_0(S)
\]
is therefore nonempty and Zariski open.

For each $x\in U^+$ choose $g\in S$ over $x$. The analytification of
\eqref{log:universal} on $S\times\C$ and
Lemma~\ref{rc:jet-families} yield centered entire families near $x$
whose initial velocities form a frame. Hence $U^+$ lies in the
\LAP{} locus $G_U$ of
Definition~\ref{family:condition}. Since $X$ is smooth and connected it
is irreducible, and $U\setminus U^+$ is a proper closed algebraic,
hence analytic, subset of $U$.
Theorem~\ref{thm:analytic-removability} now shows that $U$ is Oka.
\end{proof}

\begin{remark}\label{log:degree-bound}
The positivity hypothesis in Theorem~\ref{thm:log} has a necessary
numerical consequence. Write
\[
 f^*T_X(-\log D)=\bigoplus_{\nu=1}^n\OO_{\PP^1}(a_\nu),
 \qquad a_\nu\geq1.
\]
The pushforward cycle $f_*[\PP^1]$ counts the image curve with the
degree of $f$ onto its image. Since
$\det T_X(-\log D)=\OO_X(-(K_X+D))$, we obtain
\[
 -(K_X+D)\cdot f_*[\PP^1]
 =\deg f^*T_X(-\log D)=\sum_{\nu=1}^n a_\nu\geq n.
\]
The inequality constrains the logarithmic-curve
criterion, not the Oka property of the complement; the critical-degree
case in Subsection~\ref{sec:sharp-complements} makes this distinction explicit.
\end{remark}

\subsubsection{Ample boundary and the log-Fano application}
An algebraic variety $U$ is \emph{$\mathbb A^1$-connected} if there is
an algebraic family $a:T\times\mathbb A^1\to U$ of nonconstant proper
maps whose two-point evaluation
\[
 T\times\mathbb A^1\times\mathbb A^1\longrightarrow U\times U,
 \qquad(s,t_1,t_2)\longmapsto(a(s,t_1),a(s,t_2))
\]
is dominant~\cite[Definitions~1.1--1.2]{ChenZhu}.

\begin{corollary}\label{cor:a1-connected}
Let $X$ be a connected complex projective manifold with a reduced
simple normal crossings divisor $D$. If $X\setminus D$ is
$\mathbb A^1$-connected, then it is Oka.
\end{corollary}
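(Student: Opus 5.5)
The plan is to reduce Corollary~\ref{cor:a1-connected} to Theorem~\ref{thm:log}. Concretely, I will produce one nonconstant map $f:\PP^1\to X$ with $f^{-1}(D)\subseteq\{\infty\}$ and $f^*T_X(-\log D)$ ample. The natural source for such a curve is Chen and Zhu's theory of $\mathbb A^1$-curves on log smooth pairs~\cite{ChenZhu}. For an $\mathbb A^1$-connected complement $U=X\setminus D$ of a simple normal crossings divisor in a smooth projective $X$, that theory produces a \emph{very free} $\mathbb A^1$-curve. By this I mean a proper map $\mathbb A^1\to U$ which extends to $f:\PP^1\to X$ with $f^{-1}(D)=\{\infty\}$ set-theoretically and with $f^*T_X(-\log D)$ ample. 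I expect this to follow from their characterization of $\mathbb A^1$-connectedness, or equivalently from their existence theorem for very free $\mathbb A^1$-curves. I would cite it in that form.

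The steps are as follows. First, take the algebraic family $a:T\times\mathbb A^1\to U$ of nonconstant proper maps with dominant two-point evaluation. Properness means that each map extends to $\PP^1\to X$ with the point at infinity as the only preimage of $D$. Second, invoke the Chen--Zhu argument, which is the log analogue of the classical proof that rationally connected implies very free curves exist. Generic smoothness of two-point evaluation makes the log deformations of a general member jointly generate $T_X(-\log D)$ at two general points. Gluing a chain, or taking a general member of a suitable multi-point family, then yields a curve with ample log pullback. Third, check the hypotheses of Theorem~\ref{thm:log}: the curve is nonconstant, $f^{-1}(D)\subseteq\{\infty\}$, and $f^*T_X(-\log D)$ is ample. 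Theorem~\ref{thm:log} then gives the Oka property of $U$ directly. Two edge cases need separate treatment. If $D=0$, properness of maps from $\mathbb A^1$ into the compact $X$ forces the curves to be complete, and the usual very free curves of rationally connected manifolds suffice. If $\dim X=0$, the statement is trivial.

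The main obstacle is the second step: ensuring the curve supplied by $\mathbb A^1$-connectedness is genuinely \emph{very free} in the logarithmic sense, not merely free. In particular, every summand of $f^*T_X(-\log D)$ must have degree at least one. If the cited statement only gives freeness, meaning degrees at least zero, I would upgrade it by the standard trick. Pull back along a degree-$k$ cover $\PP^1\to\PP^1$ totally ramified at $\infty$. This preserves the condition $f^{-1}(D)\subseteq\{\infty\}$ but only multiplies degrees, so it does not turn zeros into positive degrees. Therefore the trick must instead be to use dominance of two-point evaluation. That gives a general curve whose log deformations fixing one point still fill out $U$, which yields $H^0(E_f(-[0]))$ generating at a general point. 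Hence every splitting degree of $E_f(-[0])$ is nonnegative, that is, every $a_i\ge1$. This is exactly the very free condition, and it is the step I would verify carefully against~\cite{ChenZhu}.
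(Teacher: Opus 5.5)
Your proposal is correct and matches the paper's proof: the paper likewise cites Chen--Zhu's very-free-curve criterion (their Proposition~1.13(2), whose separability hypothesis is automatic in characteristic zero) to obtain $f:\PP^1\to X$ with $f^{-1}(D)=\{\infty\}$ and $f^*T_X(-\log D)$ ample, and then applies Theorem~\ref{thm:log}. One small correction: your $D=0$ case is vacuous rather than a case to be reduced to rational connectedness, since no morphism $\mathbb A^1\to X$ into a projective $X$ is proper, so the $\mathbb A^1$-connectedness hypothesis cannot hold there.
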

\begin{proof}
In characteristic zero the dominant two-point evaluation is separable.
Chen--Zhu's very-free-curve criterion~\cite[Proposition~1.13(2)]{ChenZhu} gives a
nonconstant map $f:\PP^1\to X$ with
$f^{-1}(D)=\{\infty\}$ and $f^*T_X(-\log D)$ ample.
Theorem~\ref{thm:log} applies.
\end{proof}

\begin{theorem}\label{thm:log-fano}
Let $X$ be a connected complex projective manifold of dimension at
least two and $D\subset X$ a smooth irreducible ample divisor.
If $D$ is rationally connected, then $X\setminus D$ is Oka and
holomorphically elliptic. The same conclusion holds if
$-(K_X+D)$ is ample.
\end{theorem}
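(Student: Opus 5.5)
The plan is to reduce both assertions to Corollary~\ref{cor:a1-connected}: we show that $U=X\setminus D$ is $\mathbb A^1$-connected in the sense of Chen--Zhu, and then deduce ellipticity from the Oka property. For the ellipticity, $U$ is affine because $D$ is ample. For affine algebraic manifolds, the Oka property is known to imply algebraic, hence holomorphic, ellipticity. Alternatively, one can invoke the Forstneri\v{c}--L\'arusson-type statement that an Oka manifold which is the complement of an ample divisor admits a dominating spray; a Stein Oka manifold is elliptic by the standard argument in \cite{F17}. So the real content is $\mathbb A^1$-connectedness, together with the simple normal crossings hypothesis, which holds automatically since $D$ is smooth and irreducible.

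\emph{First case: $D$ rationally connected and ample, $\dim X\geq2$.} Here I would invoke Chen--Zhu's theorem on $\mathbb A^1$-curves \cite{ChenZhu}: for a smooth projective $X$ and a smooth ample rationally connected divisor $D$, the complement is $\mathbb A^1$-connected. If I had to reprove it, the steps would be as follows. Take a very free curve $C\subset D$; this exists because $D$ is rationally connected and Theorem~\ref{thm:rc}'s source \cite{Debarre} applies. Since $D$ is ample, $N_{D/X}|_C$ has positive degree, so $T_X|_C$ is ample as well. Attach rational tails or smooth combs of curves meeting $D$ to produce a nodal curve. One component of this curve is a rational curve meeting $D$ at a single point with high multiplicity. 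Then smooth it while preserving the total contact at one point, using the logarithmic deformation theory of Lemma~\ref{log:contact-jets}. This yields $f:\PP^1\to X$ with $f^{-1}(D)=\{\infty\}$ and $f^*T_X(-\log D)$ ample. At that point one can even bypass Corollary~\ref{cor:a1-connected} and apply Theorem~\ref{thm:log} directly.

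\emph{Second case: $-(K_X+D)$ ample.} Then $(X,D)$ is a log smooth log Fano pair. I would cite Chen--Zhu's result that log Fano complements of smooth (snc) boundary are $\mathbb A^1$-connected. Alternatively, reduce to the first case. By adjunction, $-K_D=-(K_X+D)|_D$ is ample, so $D$ is a smooth Fano manifold, hence rationally connected by \cite{Campana,KMM}. This reduction needs $D$ ample, which the second assertion does not literally grant. Indeed, the statement says ``the same conclusion holds if $-(K_X+D)$ is ample'', presumably still with $D$ smooth irreducible ample. I would read it that way and reduce to the first case. If $D$ is not assumed ample there, I would instead use Chen--Zhu's log-Fano $\mathbb A^1$-connectedness theorem directly.

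The main obstacle is the first case's production of a single $\mathbb A^1$-curve with ample log tangent pullback. The comb-smoothing with fixed tangency at one point is delicate, and I would rely on the citation to \cite{ChenZhu} rather than reprove it. A secondary point needing care is the ellipticity step. There I would make sure the cited Oka$\Rightarrow$elliptic result applies to affine (Stein algebraic) manifolds, using Kusakabe/Forstneri\v{c}'s theorem that Oka Stein manifolds are elliptic.
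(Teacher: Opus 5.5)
Your proposal is correct and follows essentially the same route as the paper. Chen--Zhu's criterion gives $\mathbb A^1$-connectedness of $X\setminus D$; the paper also verifies their fully-free hypothesis via $\deg a^*\OO_D(D)>0$ for a very free curve $a$ in $D$, which is your positivity of $N_{D/X}|_C$. Corollary~\ref{cor:a1-connected} then gives the Oka property. Affineness of $X\setminus D$, together with the fact that Stein Oka manifolds are holomorphically elliptic, gives ellipticity. Adjunction reduces the case $-(K_X+D)$ ample to the first case, with $D$ still ample, which is how the paper reads that hypothesis too.

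One correction: your first claim, that the Oka property implies \emph{algebraic} ellipticity for affine manifolds, is not an established result. Drop it and rely only on the Stein Oka $\Rightarrow$ holomorphically elliptic argument, as the paper does.
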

\begin{proof}
We use the following specialization of Chen--Zhu's fully-free-center
criterion~\cite[Theorem~1.3 and Remark~1.4]{ChenZhu}: if a smooth
complex projective variety
of dimension at least two has a single smooth irreducible ample
boundary divisor that is rationally connected, then its complement is
$\mathbb A^1$-connected. To see how the hypotheses
enter, the unique boundary stratum is $D$ itself.
Choose a very free curve $a:\PP^1\to D$. If $\dim D=1$, use an
isomorphism $\PP^1\simeq D$. Ampleness gives
$\deg a^*\OO_D(D)>0$, so the contact intersection matrix consists
of one positive entry and has rank one. Thus $D$ is fully free,
as required by the criterion.
Corollary~\ref{cor:a1-connected} gives the Oka property.

Choose $k>0$ with $\OO_X(kD)$ very ample. In the associated
projective embedding, the section defining $kD$ cuts out a hyperplane.
Consequently $X\setminus D$ is closed in the affine complement of
that hyperplane. It is affine, hence Stein. A Stein Oka manifold is
holomorphically elliptic \cite{Gro89}, which proves
the first assertion.

If $-(K_X+D)$ is ample, adjunction gives
$-K_D=-(K_X+D)|_D$. Thus $D$ is Fano and is rationally connected
\cite{Campana,KMM}; the first assertion applies.
\end{proof}

\begin{corollary}\label{cor:log-complete-intersections}
Let $X\subset\PP^N$ be a smooth complete intersection of
multidegree $(d_1,\ldots,d_c)$ and dimension at least two, and $D$
be a smooth hypersurface section of degree $e\geq1$. If
\[
 d_1+\cdots+d_c+e\leq N,
\]
then $X\setminus D$ is Oka and holomorphically elliptic.
\end{corollary}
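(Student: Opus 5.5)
The plan is to reduce to Theorem~\ref{thm:log-fano} by checking its two hypotheses for the pair $(X,D)$. Set $n=\dim X=N-c\geq2$. The divisor $D=X\cap\{F=0\}$ is cut out by a degree-$e$ hypersurface section. Since $\OO_X(1)$ is ample and $e\geq1$, the divisor $D\in|\OO_X(e)|$ is ample. It is smooth by hypothesis, and I would take it to be irreducible. For the irreducibility, $D$ is connected by Lefschetz (an ample effective divisor on a connected projective manifold of dimension at least two is connected), and a smooth connected divisor is irreducible.

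Next I would use adjunction for complete intersections: $K_X=\OO_X(d_1+\cdots+d_c-N-1)$. Hence
\[
 -(K_X+D)=\OO_X\bigl(N+1-d_1-\cdots-d_c-e\bigr).
\]
Under the hypothesis $d_1+\cdots+d_c+e\leq N$, the exponent is at least $1$, so $-(K_X+D)$ is ample. The second clause of Theorem~\ref{thm:log-fano} then applies directly and gives that $X\setminus D$ is Oka and holomorphically elliptic. Equivalently, $D$ is itself a smooth complete intersection of multidegree $(d_1,\ldots,d_c,e)$ in $\PP^N$, hence Fano of index $N+1-\sum d_i-e\geq1$, hence rationally connected by \cite{Campana,KMM}. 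So the first clause applies as well.

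The only point needing care is the degenerate situations. If some $d_i=1$, one may drop that equation and lower $N$; the inequality is preserved, so this is harmless. The case $\dim D=1$ (i.e.\ $n=2$) is covered because a smooth connected Fano curve is $\PP^1$, which Theorem~\ref{thm:log-fano} explicitly handles. I expect no real obstacle. The mildest subtlety is justifying irreducibility of $D$ via connectedness of ample divisors, which is where $\dim X\geq2$ enters.
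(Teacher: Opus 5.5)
Your proof is correct and is essentially the paper's: adjunction gives $-(K_X+D)=\OO_X(N+1-d_1-\cdots-d_c-e)$, which is ample; $D\sim eH|_X$ is ample; connectedness makes the smooth divisor $D$ irreducible; and then Theorem~\ref{thm:log-fano} applies. The one point you leave implicit is that $X$ itself is connected, which that theorem and your Lefschetz step both need. The paper obtains connectedness of $X$ and $D$ at once from the fact that positive-dimensional projective complete intersections are connected.
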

\begin{proof}
Let $H$ denote the hyperplane class. Adjunction gives
\[
 -(K_X+D)=(N+1-d_1-\cdots-d_c-e)H|_X,
\]
which is ample under the stated inequality. The divisor $D$ is also
ample, since $D\sim eH|_X$.

Positive-dimensional projective complete intersections are connected.
Hence $X$ and $D$ are connected, and smoothness makes $D$ irreducible.
Theorem~\ref{thm:log-fano} applies.
\end{proof}

\subsection{Smooth hypersurface complements}
\label{sec:sharp-complements}

We now determine exactly when the complement of a smooth degree-$d$
hypersurface in $\PP^n$ is Oka. The logarithmic construction already
handles $d\leq n$ when $n\geq2$. At the remaining positive degree
$d=n+1$, the logarithmic tangent bundle has trivial determinant, so its
pullback to a rational curve cannot be ample. Instead, we use lines
with contact of order at least $d-1$ with the hypersurface. Their
complements are copies of $\C$ or $\C^*$, and their entire
parametrizations vary holomorphically even when these two cases meet.

Kusakabe proved that complements of projective quadrics are Oka
\cite{Kus21}, and Xie established the Oka property and holomorphic
ellipticity for smooth cubic complements in $\PP^n$, $n\geq2$
\cite{X26}. Theorem~\ref{thm:sharp-hypersurfaces} gives the exact
threshold in every degree. It answers, for smooth hypersurfaces,
Forstneri\v{c}'s question of which projective hypersurfaces have
Oka complements \cite{Forstneric23}.

The proof has three ingredients: generic projection makes successive
polar equations transverse; these equations supply enough directions
through a general point of the complement; and an exponential formula
turns the directions into centered entire families. Analytic
propagation then reaches the remaining points. Carlson--Griffiths'
rank obstruction gives the converse \cite{CarlsonGriffiths}.

\subsubsection{Successive polars and generic projection}

In the next two lemmas, $N$ is the vector-space dimension and
$X\subset\PP^{N-1}$ denotes the hypersurface. In the application to
$\PP^n\setminus D$, these symbols mean $N=n+1$ and $X=D$.

Successive polars encode high contact of lines with a hypersurface;
see \cite{BCFS23}. Use coordinates $v=(v_1,\ldots,v_N)$ on $\C^N$. For a homogeneous
polynomial $F$ of degree $d$ and a nonzero vector
$a=(a_1,\ldots,a_N)$, write
\[
 D_a=\sum_{i=1}^N a_i\frac{\partial}{\partial v_i}.
\]
The equations $D_a^jF(v)=0$ are bihomogeneous in $(a,v)$, so their
projective zero sets depend only on $[a]$ and $[v]$.

Suppose $d\geq2$, $F(a)\ne0$, and $v\ne0$ satisfies
$D_a^jF(v)=0$ for $0\leq j\leq d-2$. Write $dF_a(v)$ for the
differential of $F$ at $a$ applied to $v$; by contrast, $D_aF(v)$
differentiates at $v$ in the direction $a$. For $s\in\C$,
homogeneity gives
\begin{equation}\label{hyp:polar-line-identity}
 F(a+sv)=\sum_{\ell=0}^d
          \frac{s^{d-\ell}}{\ell!}D_a^\ell F(v)
        =F(a)+s\,dF_a(v).
\end{equation}
The first equality follows for $s\ne0$ by expanding $s^dF(v+a/s)$
and hence holds identically as a polynomial in $s$. The polar
equations kill precisely the coefficients of $s^2,\ldots,s^d$.
Thus the restriction of $F$ to the line through $[a]$ and $[v]$
has degree at most one in the affine parameter $s$, and the line
meets the hypersurface with multiplicity at least $d-1$ at $[v]$.

For a general point $[a]$ of the complement, we therefore seek
points $[v]$ satisfying these polar equations. Two properties are
needed: transversality, so that the points $[v]$ can be continued
holomorphically as $[a]$ moves, and spanning, so that the resulting
line directions generate the tangent space at $[a]$.

\begin{lemma}\label{hyp:polar-transversality}
Suppose $N\geq d\geq3$ and $X=\{F=0\}\subset\PP^{N-1}$ is
smooth. There is a nonempty Zariski-open subset
$\Omega\subset\PP^{N-1}\setminus X$ such that, for every
$[a]\in\Omega$ and $1\leq k\leq d-2$, the differentials of
$D_aF,\ldots,D_a^kF$ restricted to $X$ are linearly independent
at their common zeros. The differentials here are taken in local
trivializations of the corresponding line bundles.
\end{lemma}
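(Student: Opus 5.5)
The plan is an incidence-variety dimension count combined with generic smoothness. Set
\[
 \mathcal I_k=\{([a],[v])\in(\PP^{N-1}\setminus X)\times X:\ D_a^jF(v)=0,\ 1\le j\le k\}.
\]
Call a point of $\mathcal I_k$ \emph{bad} if the differentials along $X$ of $D_aF,\ldots,D_a^kF$ at $[v]$ are linearly dependent. The bad locus is closed. We want to show that its projection to the first factor is not dense. Then $\Omega$ can be taken as the complement of the closures of the finitely many projections, for $1\le k\le d-2$. Intersecting these finitely many open sets keeps $\Omega$ nonempty.

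The first step is to show that the incidence variety is smooth of the expected dimension. We work over a fixed point $[v]\in X$ and regard the conditions as equations in $a$. The map $a\mapsto (D_a^jF(v))_{j\le k}$ is a system of homogeneous forms in $a$ of degrees $1,\ldots,k$. The degree-one form is $D_aF(v)=dF_v(a)$, which is nonzero because $X$ is smooth.

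We then check that the equations cut out a complete intersection in $a$ for a general $[v]$. The cleanest route is to use the full incidence in $\PP^{N-1}\times X$ with $a$ unrestricted and view it as a family over $X$. Generic smoothness in characteristic zero then reduces the problem to showing that the total incidence $\widetilde{\mathcal I}_k\subset\PP^{N-1}\times X$ is smooth (away from $a\in X$) of dimension $(N-1)+(N-2)-k$. To do this, compute the differential of the map $([a],[v])\mapsto(D_a^jF(v))_j$ in the $a$-directions and the $v$-directions together.

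The key identity is that differentiating $D_a^jF(v)$ in $a$ in the direction $b$ gives $jD_bD_a^{j-1}F(v)$. So in the $a$-directions the derivatives are the polarizations evaluated at $(a,\ldots,a,b,v)$.

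Now suppose a linear dependence $\sum_j\lambda_j\,d(D_a^jF)=0$ holds on the total space. Taking $b$ arbitrary, this says that the linear form $b\mapsto\sum_j j\lambda_jD_bD_a^{j-1}F(v)$ vanishes identically. By the symmetry of polarization this equals $\sum_j j\lambda_j D_a^{j-1}(dF_v\text{-type})$. Choose the largest $j$ with $\lambda_j\neq0$. We then derive that $v$ is a singular point of a hypersurface section, or that $F$ is degenerate along the line $[a],[v]$. Using \eqref{hyp:polar-line-identity}-type expansions of $F(a+sv)$, such a dependence forces $F$ restricted to the line to have a repeated structure that contradicts $F(a)\neq0$ together with smoothness of $X$ at $[v]$.

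This gives smoothness of the total incidence of the right dimension. Bertini/generic smoothness (Sard for the algebraic projection $\widetilde{\mathcal I}_k\to\PP^{N-1}$) then gives that, for $[a]$ in a nonempty Zariski-open set, the fiber is smooth of dimension $N-2-k$. Equivalently, the differentials along $X$ (the fiber directions in $v$) are independent at every point of the fiber. This is exactly the claim; the hypothesis $N\ge d\ge3$ and $k\le d-2\le N-2$ ensures the expected fiber dimension $N-2-k\ge0$.

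The main obstacle is the smoothness of the total incidence: showing that the joint differential (in $a$ and $v$) of $D_aF,\ldots,D_a^kF$ restricted to $X$ has full rank $k$ at points where $F(a)\ne0$. My first attempt is to use only the $a$-derivatives, which are the linear forms $b\mapsto D_bD_a^{j-1}F(v)$. Their independence fails exactly when $v$ lies on a lower-rank polar locus. In that case I would add the $v$-derivatives and use Euler's relation together with $F(a)\ne0$, via the coefficient identity along the line $a+sv$, to exclude the dependence. If this direct argument stalls, the fallback is to restrict to the open set where $[v]$ is not on the "polar of $a$ of all orders", and to show that this excluded set projects to a proper closed subset of $\PP^{N-1}$ by a dimension count.
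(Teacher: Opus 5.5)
Your reduction is valid: if $\widetilde{\mathcal I}_k$ were smooth of dimension $2N-3-k$ at every point with $F(a)\neq0$, generic smoothness of the projection to $\PP^{N-1}$ would give the lemma. For $k=1$ this smoothness holds, because the $a$-derivative of $D_aF(v)$ is the nonzero form $dF_v$.

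The gap is that for $k\geq2$ the smoothness claim is false for some smooth $X$. So the step you call the main obstacle cannot be proved, and the contradiction with $F(a)\neq0$ that you sketch does not exist. Here is an example. Let $N\geq d\geq4$ and
\[
 F=x_0^{d-1}x_2+G(x_1,\ldots,x_{N-1}),
\]
with $G$ a general form of degree $d$. A singular point of $X$ would give a singular point of $\{G=0\}$ or of $\{G|_{\{x_2=0\}}=0\}$, since $\partial F/\partial x_2=1$ at $e_0$; hence $X$ is smooth. Also $F(e_1)=G(e_1)\neq0$.

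Take $a=e_1$ and $v=e_0$. Every monomial of $F$ except $x_0^{d-1}x_2$ has degree at least $4$ in $x_1,\ldots,x_{N-1}$, and that monomial is killed by $\partial/\partial x_1$. Hence every derivative of $F$ at $e_0$ of order at most three that involves $\partial/\partial x_1$ vanishes. Consequently:
\begin{itemize}
\item $F(v)=D_aF(v)=D_a^2F(v)=0$;
\item the $a$-derivative $b\mapsto 2D_bD_aF(v)$ of $D_a^2F(v)$ vanishes identically;
\item the $v$-derivative $w\mapsto D_wD_a^2F(v)$ of $D_a^2F(v)$ vanishes identically.
\end{itemize}
So the full differential of $D_a^2F(v)$ is zero at a point of $\widetilde{\mathcal I}_2$ with $F(a)\neq0$. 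The Zariski tangent space there has dimension at least $2N-4>2N-5$.

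Geometrically, the line through $[e_0]$ and $[e_1]$ meets $X$ only at $[e_0]$, with multiplicity $d$. High contact at $[v]$ is fully compatible with $F(a)\neq0$ and with smoothness of $X$. In fact the joint differentials are already dependent at $([a],[e_0])$ for every $[a]$ in the hyperplane $\{x_2=0\}$ off $X$. This is far more degeneracy than a parameter count for general $F$ would predict.

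What your argument actually needs is this: for every smooth $X$, the locus where the joint differentials of $D_aF(v),\ldots,D_a^kF(v)$ become dependent projects to a nondense subset of $\PP^{N-1}$. The dimension count in your fallback does not settle this, because the degeneracy locus can have excess dimension, as above. This uniformity in $X$ is the nontrivial content of the generic projection theorem (Mather).

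The paper uses that theorem in the form of Alzati--Ballico--Ottaviani: for $[a]$ in a nonempty Zariski-open set, $j^kp_a$ is transverse to the Thom--Boardman stratum $\Sigma^{1,\ldots,1}$, even though the family $([a],x)\mapsto j^kp_a(x)$ need not be. It then translates this into the polar statement in two steps. First, it writes $p_a(z,t)=(z,g(z,t))$ locally, so the Boardman loci become $\{h_1=\cdots=h_j=0\}$ with $h_j=\partial_t^jg$. Second, it uses the triangular relation $(q_1,\ldots,q_j)=(h_1,\ldots,h_j)$, where $q_j=(\partial_t^jP)(z,g,t)$ is the polar $D_a^jF$ restricted to $X$ in that chart.
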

\begin{proof}
The polar equations describe successive tangencies of projection
from $[a]$. We first use generic projection to make those tangency
conditions transverse, then identify their local equations with the
polars.
Fix a hyperplane $L\simeq\PP^{N-2}$. Projection from
$[a]\notin X\cup L$ gives a finite map $p_a:X\to L$: a
positive-dimensional fiber would contain its whole line through
$[a]$, contradicting $[a]\notin X$. Its differential has kernel of
dimension at most one. The concrete generic-projection consequence
we need is the following: for a general center, the critical locus
is smooth of codimension one in $X$ or empty; recursively, the locus
where the kernel line is tangent to the preceding locus is smooth
of codimension $k$ in $X$ or empty, for $2\leq k\leq d-2$.

More precisely, we use the jet-transversality statement in
Alzati--Ballico--Ottaviani~\cite[Section~2]{ABO} for the smooth
hypersurface $X$ and point centers $T=\{[a]\}$: in their notation,
$c=1$ and $t=0$. The center space is
$\mathcal F=\PP^{N-1}\setminus(X\cup L)$, and
$\dim L=N-2=\dim X$.
For each $k$, their statement gives a nonempty Zariski-open subset
$U_k\subset\mathcal F$ on which $j^kp_a$ is transverse to
$\Sigma^{I_k}\subset J^k(X,L)$, where
$I_k=(\underbrace{1,\ldots,1}_{k})$.
This finite symbol prescribes the first $k$ entries, with no terminal
zero, so its inverse image includes points with further tangencies.
For a corank-one map between manifolds of equal dimension,
$\Sigma^{I_k}$ has codimension $k$; the local equations below
identify its inverse image with the successive tangency locus.
The finite intersection
$\Omega=\bigcap_{k=1}^{d-2}U_k$ is nonempty and Zariski open,
since $\mathcal F$ is irreducible.

We identify the equations supplied by this transversality. Near a
common zero $[v]$ of the polars, choose a linear form $\ell$ with
$\ell(a)=0$ and $\ell(v)\ne0$. In the chart $\ell=1$, choose
coordinates $(z,u,t)$ with $z\in\C^{N-3}$ so that translation by
$a$ is translation in $t$ and the projection forgets $t$. If $P$ is
the equation of $X$ in this chart, then
\[
 \partial_t^jP=(D_a^jF)|_{\ell=1}.
\]
At the point under consideration $P_t=0$. Smoothness lets us choose
$u$ with $P_u\ne0$ and write $X$ locally as $u=g(z,t)$. Set
\[
 A=P_u(z,g(z,t),t),\qquad
 q_j=(\partial_t^jP)(z,g(z,t),t),\qquad
 h_j=\partial_t^jg.
\]
After shrinking, $A$ is a unit. Differentiating
$P(z,g(z,t),t)=0$ repeatedly gives
\begin{equation}\label{hyp:triangular-polars}
 q_j+Ah_j\in(h_1,\ldots,h_{j-1}),\qquad
 (q_1,\ldots,q_j)=(h_1,\ldots,h_j).
\end{equation}
Indeed, the first identity starts with $q_1+Ah_1=0$, and
$\partial_tq_j=q_{j+1}+P_{ut^j}(z,g(z,t),t)h_1$
proves the induction. The equality of ideals follows because $A$
is invertible.

In these coordinates $p_a(z,t)=(z,g(z,t))$. Its critical locus is
$S_1=\{h_1=0\}$, and transversality gives $dh_1\ne0$ there.
On this locus its kernel is spanned by $\partial_t$. Inductively,
if $S_j=\{h_1=\cdots=h_j=0\}$ is the $j$th projection locus,
then
\[
 \partial_t\in T_pS_j
 \quad\Longleftrightarrow\quad h_{j+1}(p)=0,
 \qquad p\in S_j,
\]
because $\partial_th_i=h_{i+1}$. Thus the next corank-one locus
is $S_{j+1}$. This identification also holds for the defining ideals: the Jacobian
of $(p_a,h_1,\ldots,h_j)$ has an identity block in the $z$
variables, so its maximal-minor ideal is
$(h_1,\ldots,h_{j+1})$. Modulo the preceding equations the
new equation is $h_{j+1}$. Thus the successive Thom--Boardman
equations pull back to these generators, and jet transversality
gives their differential rank $j+1$.
Induction proves this rank statement up to $j=d-2$; when $d=3$
only the first step is needed. At a common zero,
\eqref{hyp:triangular-polars} relates the differentials of the
$q_j$ and $h_j$ by an invertible triangular matrix, so the same
rank statement holds for the polars.
\end{proof}

\begin{lemma}\label{lem:polar-directions}
Under the hypotheses and with the open set $\Omega$ of
Lemma~\ref{hyp:polar-transversality}, set
\[
 Z_a=\{F=D_aF=\cdots=D_a^{d-2}F=0\}\subset\PP^{N-1}.
\]
Then $Z_a$ is a nonempty smooth complete intersection of multidegree
$(d,d-1,\ldots,2)$, dimension $N-d$, and degree $d!$. Its points
span $\PP^{N-1}$. Moreover, the incidence variety
\[
 \mathcal I=
 \{([a],[v])\in\Omega\times X:[v]\in Z_a\}
\]
is smooth, and its projection $\pi:\mathcal I\to\Omega$ is a
surjective holomorphic submersion with fibers $Z_a$.
\end{lemma}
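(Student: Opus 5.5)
The plan is to read everything off Lemma~\ref{hyp:polar-transversality} with $k=d-2$, applied at each $[a]\in\Omega$. At a point of $Z_a$, that lemma makes the differentials of $D_aF,\ldots,D_a^{d-2}F$ restricted to the smooth hypersurface $X$ linearly independent. Since $X$ is itself smooth, $Z_a$ is cut out transversally by the $d-1$ equations $F,D_aF,\ldots,D_a^{d-2}F$ of degrees $d,d-1,\ldots,2$. Hence it is smooth of dimension $N-1-(d-1)=N-d$ at each of its points, and it is a complete intersection of the stated multidegree.

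Next I will show that $Z_a$ is nonempty. Because $N\ge d$, we have $d-1$ homogeneous equations in $\PP^{N-1}$ with $d-1\le N-1$, so the zero set is nonempty by dimension counting. Transversality then gives pure dimension $N-d\ge0$ and degree $d\cdot(d-1)\cdots2=d!$ by B\'ezout.

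The spanning assertion I would prove by contradiction. Suppose $Z_a$ lies in a hyperplane $\Lambda$. If $N-d\ge1$, a smooth complete intersection of positive dimension is connected and linearly normal in a suitable sense. More concretely, its homogeneous ideal is generated by the $d-1$ defining forms, all of degree at least $2$ (when $d\ge3$), so it contains no linear form; hence $Z_a$ is nondegenerate. If $N=d$, then $Z_a$ is a reduced set of $d!$ points with ideal generated in degrees $\ge2$. The Koszul resolution shows $h^0(\mathcal I_{Z_a}(1))=0$, so no hyperplane contains it. I would run the same Koszul argument uniformly in both cases: since the forms form a regular sequence of degrees $\ge2$, $H^0(\mathcal I_{Z_a}(1))=0$.

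For the incidence variety, I will view $\mathcal I$ as the zero locus in $\Omega\times X$ of the sections $D_a^jF(v)$, $1\le j\le d-2$, which are bihomogeneous. At a point $([a],[v])$, their differentials restricted to the vertical directions $\{[a]\}\times T_{[v]}X$ are already independent by Lemma~\ref{hyp:polar-transversality}. So $\mathcal I$ is smooth of codimension $d-2$ and the projection to $\Omega$ has surjective differential, meaning $\pi$ is a submersion. Its fibers are the $Z_a$, which are nonempty, so $\pi$ is surjective.

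The main obstacle is the spanning statement, specifically confirming that the Koszul vanishing $H^0(\mathcal I_{Z_a}(1))=0$ holds, which uses that all degrees are at least $2$ (true since $d\ge3$). Everything else is direct from the transversality lemma.
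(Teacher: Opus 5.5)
Your proposal is correct and follows essentially the same route as the paper: Lemma~\ref{hyp:polar-transversality} together with smoothness of $X$ gives a smooth complete intersection, nonemptiness comes from having only $d-1<N$ equations, spanning comes from the twisted Koszul resolution yielding $H^0(\mathcal I_{Z_a}(1))=0$, and smoothness of $\mathcal I$ and submersivity of $\pi$ follow from independence of the vertical differentials. Two points deserve to be stated explicitly: the Koszul chase needs the resolution length $d-2\le N-2$ to stay below top cohomology, and reducedness of the smooth scheme $Z_a$ is what turns $H^0(\mathcal I_{Z_a}(1))=0$ into the statement that no hyperplane contains its points.
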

\begin{proof}
Let $S=\C[v_1,\ldots,v_N]$ and
$J=(F,D_aF,\ldots,D_a^{d-2}F)$. There are $d-1<N$ homogeneous
generators. If the projective zero set were empty, the radical of $J$
would be the irrelevant maximal ideal of height $N$, contrary to
the height theorem. Thus $Z_a$ is nonempty. The preceding lemma
shows that the equations, including $F$, have independent
differentials in projective coordinates along $Z_a$. Hence it is
smooth of dimension $N-d$, and $J$ has height $d-1$.
The generators therefore form a regular sequence in the polynomial
ring. This proves the complete-intersection assertion and gives
\[
 \deg Z_a=d(d-1)\cdots2=d!.
\]

To prove spanning, let $\mathcal I_{Z_a}$ be the ideal sheaf of
$Z_a$. Twist its Koszul resolution by $\OO(1)$. Every line-bundle
summand has negative degree, and the resolution has length
$d-2\leq N-2$, below the top cohomology degree of $\PP^{N-1}$.
The vanishings
\[
 H^i(\PP^{N-1},\OO(-k))=0
 \qquad(k>0,\ 0\leq i\leq N-2)
\]
give $H^0(\PP^{N-1},\mathcal I_{Z_a}(1))=0$ as follows.
Write the twisted resolution as
\[
 0\longrightarrow K_{d-2}\longrightarrow\cdots
 \longrightarrow K_0\longrightarrow\mathcal I_{Z_a}(1)
 \longrightarrow0.
\]
Splitting it into short exact sequences and using the displayed
vanishings successively injects $H^0(\mathcal I_{Z_a}(1))$ into
$H^{d-2}(K_{d-2})=0$, since $d-2\leq N-2$.
The complete-intersection scheme is smooth, hence reduced.
A linear form vanishing at all its points belongs to this space
and must be zero. Thus $Z_a$ spans $\PP^{N-1}$, including when
$d=N$ and $Z_a$ is zero-dimensional.

Finally, in local projective coordinates on $\Omega\times X$,
the incidence equations have independent differentials in the $X$
variables. The implicit function theorem makes $\mathcal I$ smooth
and $\pi$ a holomorphic submersion. Every fiber is nonempty, as
already proved, so $\pi$ is surjective.
\end{proof}

\subsubsection{Entire parametrizations of the line complements}

Buzzard--Lu~\cite[proof of Proposition~5.1, formulas~(5.1)--(5.2)]{BL00}
use the following entire function in their parametrization of
tangent-line complements of a smooth plane cubic.
The entire function $E:\C^2\to\C$ is given, for
$(\lambda,t)\in\C^2$, by
\begin{equation*}
 E(\lambda,t)=\sum_{k=0}^{\infty}
       \frac{\lambda^kt^{k+1}}{(k+1)!}
 =\begin{cases}
 (e^{\lambda t}-1)/\lambda,&\lambda\ne0,\\
 t,&\lambda=0.
 \end{cases}
\end{equation*}
The series converges locally uniformly on $\C^2$ and gives
\[
 1+\lambda E(\lambda,t)=e^{\lambda t},\qquad
 \partial_tE(\lambda,0)=1.
\]
For $\lambda\ne0$ its image is $\C\setminus\{-1/\lambda\}$;
for $\lambda=0$ its image is all of $\C$.

\begin{lemma}\label{hyp:entire-families}
Let $F$ be a homogeneous polynomial on $\C^{n+1}$,
$Y=\PP^n\setminus\{F=0\}$, and $V\subset Y$ an open set.
Suppose there are nonzero holomorphic lifts
$a,v_1,\ldots,v_n:V\to\C^{n+1}$ such that $[a(y)]=y$, the
classes of $v_1(y),\ldots,v_n(y)$ form a basis modulo $\C a(y)$,
and
\begin{equation}\label{hyp:linear-restriction}
 F(a(y)+sv_j(y))=F(a(y))+s\,dF_{a(y)}(v_j(y))
 \quad(s\in\C).
\end{equation}
Then every point of $V$ admits centered entire families with
initial velocities spanning its tangent space.
\end{lemma}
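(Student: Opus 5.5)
The plan is to write the families down explicitly with the Buzzard--Lu function $E$, and then apply the definition directly via Lemma~\ref{family:entire}. For $y\in V$ and $1\le j\le n$, set
\[
 c_j(y)=\frac{dF_{a(y)}(v_j(y))}{F(a(y))},
\]
which is holomorphic on $V$ because $F(a(y))\ne0$ for $y\in Y$. Then define
\[
 h_j(y,t)=\bigl[a(y)+E(c_j(y),t)\,v_j(y)\bigr],\qquad (y,t)\in V\times\C .
\]
There are four things to check, in order.

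First, the representative must be nonzero and its class must avoid $\{F=0\}$. Substituting $s=E(c_j,t)$ into \eqref{hyp:linear-restriction} gives
\[
 F\bigl(a+E(c_j,t)v_j\bigr)=F(a)\bigl(1+c_jE(c_j,t)\bigr)=F(a)\,e^{c_j t}\neq0 .
\]
This identity handles the case $c_j=0$, where the line lies in $Y$ except possibly at one point, together with the case $c_j\ne0$. Because the value of $F$ is nonzero, the vector itself is nonzero, so $h_j$ is a well-defined holomorphic map into $Y$. Holomorphic dependence on $(y,t)$ follows from the holomorphy of $a,v_j,c_j$ and of the entire function $E$, composed with the quotient map $\C^{n+1}\setminus\{0\}\to\PP^n$.

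Second, centering: $E(\lambda,0)=0$, so $h_j(y,0)=[a(y)]=y$.

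Third, velocities: $\partial_tE(\lambda,0)=1$, so $\partial_th_j(y,0)$ is the image of $v_j(y)$ under the differential of the projection at $a(y)$. That differential identifies $T_y\PP^n$ with $\C^{n+1}/\C a(y)$. By hypothesis the classes of the $v_j(y)$ form a basis there, so the velocities form a basis of $T_yY$ at every $y\in V$.

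Fourth, Lemma~\ref{family:entire} then gives $y\in G_Y$ for every $y\in V$, with $V$ itself as the common neighborhood.

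I expect no serious obstacle. The only delicate point is uniformity across the degenerate value $c_j=0$, where the line is tangent of full order and its complement is $\C$ rather than $\C^*$. The series definition of $E$ handles this automatically, so no case split is needed and the families remain holomorphic in $y$ through such points.
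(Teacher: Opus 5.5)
Your proof is correct and is essentially the paper's argument. The paper takes $h_j(y,t)=[a(y)+c(y)E(\lambda_j(y),t)v_j(y)]$ with $c=F(a)$ and $\lambda_j=dF_a(v_j)$. Since $E(\lambda/c,t)=c\,E(\lambda,t/c)$, your family is the same one reparametrized by $t\mapsto t/c(y)$. The rest matches: the $F$-value $F(a)e^{(\cdot)t}\neq0$ and the velocity argument via $\ker dq_{a(y)}=\C a(y)$ are the same.
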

\begin{proof}
Set $c(y)=F(a(y))$ and
$\lambda_j(y)=dF_{a(y)}(v_j(y))$. The required families are
\begin{equation*}
 h_j(y,t)=
 [a(y)+c(y)E(\lambda_j(y),t)v_j(y)].
\end{equation*}
The expression inside brackets has $F$-value
$c(y)e^{\lambda_j(y)t}\ne0$, so it is nonzero and defines a
holomorphic map $h_j:V\times\C\to Y$. It is centered at $y$.
For projectivization $q:\C^{n+1}\setminus\{0\}\to\PP^n$,
\[
 \partial_th_j(y,0)=dq_{a(y)}(c(y)v_j(y)).
\]
Since $\ker dq_{a(y)}=\C a(y)$ and $c(y)\ne0$, these velocities
form a basis. Lemma~\ref{family:entire} now applies.
\end{proof}

For the polar directions this construction has a direct geometric
meaning. Let $D=\{F=0\}$. If $F(a)\ne0$ and $[v]\in Z_a$,
write $c=F(a)$ and $\lambda=dF_a(v)$. With homogeneous line
coordinates $[r:s]\in\PP^1$, homogenizing
\eqref{hyp:polar-line-identity} gives
\[
 F(ra+sv)=r^{d-1}(rc+s\lambda).
\]
The line $L_{a,v}=\{[ra+sv]:[r:s]\in\PP^1\}$ is not contained
in the hypersurface, and its intersection divisor is
\[
 D|_{L_{a,v}}=
 \begin{cases}
 (d-1)[v]+[\lambda a-cv],&\lambda\ne0,\\
 d[v],&\lambda=0.
 \end{cases}
\]
Consequently its complement is $\C^*$ or $\C$, respectively.
The map $t\mapsto[a+cE(\lambda,t)v]$ parametrizes the entire
line complement in either case and depends holomorphically on
$\lambda$ through zero.

In the line coordinates $[r:s]$, Figure~\ref{fig:polar-line-contact}
shows how the two intersection points coalesce.

\begin{figure}[ht]
\centering
\begin{tikzpicture}[x=1cm,y=1cm,font=\small,
 line cap=round,line join=round,
 removed/.style={draw=figOverlap!90!black,fill=white,line width=1pt},
 kept/.style={draw=figOld!80!black,fill=figOld,line width=.5pt}]
 \node[font=\small\bfseries] at (2.5,2.4) {(a) $\lambda\ne0$};
 \node[font=\small\bfseries] at (9.7,2.4) {(b) $\lambda=0$};
 \path[draw=figInk,fill=figOld!8,line width=.65pt]
   (2.5,.25) ellipse[x radius=1.75,y radius=1.05];
 \path[draw=figInk,fill=figNew!8,line width=.65pt]
   (9.7,.25) ellipse[x radius=1.75,y radius=1.05];
 \draw[removed] (2.5,1.30) circle[radius=.085];
 \node[above=5pt,text=figInk] at (2.5,1.30) {$[0:1]$};
 \node[anchor=west,text=figOverlap!80!black,font=\footnotesize]
   at (3.00,1.55) {multiplicity $d-1$};
 \draw[removed] (4.1997,.50) circle[radius=.085];
 \node[anchor=west,text=figInk] at (4.40,.61) {$[\lambda:-c]$};
 \node[anchor=west,text=figOverlap!80!black,font=\footnotesize]
   at (4.40,.24) {multiplicity $1$};
 \draw[kept] (2.5,-.80) circle[radius=.055];
 \node[below=5pt,text=figOld!75!black] at (2.5,-.80) {$[1:0]$};
 \node[text=figInk] at (2.5,-1.67)
   {line complement $\simeq\C^*$};
 \draw[->,>=latex,figInk,line width=.7pt] (5.65,-1.45)--(6.95,-1.45);
 \node[above=5pt,text=figInk] at (6.30,-1.45) {$\lambda\to0$};
 \draw[removed] (9.7,1.30) circle[radius=.085];
 \node[above=5pt,text=figInk] at (9.7,1.30) {$[0:1]$};
 \node[anchor=west,text=figOverlap!80!black,font=\footnotesize]
   at (10.20,1.55) {multiplicity $d$};
 \draw[kept] (9.7,-.80) circle[radius=.055];
 \node[below=5pt,text=figOld!75!black] at (9.7,-.80) {$[1:0]$};
 \node[text=figInk] at (9.7,-1.67)
   {line complement $\simeq\C$};
\end{tikzpicture}
\caption{Intersection of a polar line with the hypersurface, in
homogeneous line coordinates. Each oval represents $\PP^1$
schematically; open marks are the deleted points, with their
intersection multiplicities. Since $c\ne0$, the simple point
$[\lambda:-c]$ is distinct from $[0:1]$ for $\lambda\ne0$ and
coalesces with it at $\lambda=0$. The base point $[1:0]$ remains
in the complement. The same formula $t\mapsto[1:cE(\lambda,t)]$
parametrizes both complements and stays holomorphic through the
coalescence.}
\label{fig:polar-line-contact}
\end{figure}

\subsubsection{The degree threshold}

\begin{proposition}\label{hyp:positive-degrees}
If $D\subset\PP^n$ is a smooth hypersurface of degree
$1\leq d\leq n+1$, where $n\geq1$, then $\PP^n\setminus D$
is Oka.
\end{proposition}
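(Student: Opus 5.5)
The plan splits into low degrees, handled by known structure or by Theorem~\ref{thm:log-fano}, and the critical range $3\leq d\leq n+1$, handled by the polar lines.

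\emph{Low degrees and $n=1$.} For $n=1$, $D$ consists of $d\leq2$ points, so the complement is $\C$ or $\C^*$, and both are Oka. For $d=1$ the complement is $\C^n$. For $d=2$ and $n\geq2$, the complement of a smooth quadric is Oka by Kusakabe~\cite{Kus21}. Alternatively, when $n\geq2$ and $d\leq n$, apply Theorem~\ref{thm:log-fano}: $D$ is smooth, irreducible and ample, and $-(K_{\PP^n}+D)=(n+1-d)H$ is ample. This leaves the case $d=n+1\geq3$. I would in fact treat the whole range $3\leq d\leq n+1$ uniformly with $N=n+1$, since then $N\geq d\geq3$ and the polar lemmas apply.

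\emph{Families on a Zariski-open set.} Let $\Omega\subset\PP^n\setminus D$ be the set from Lemma~\ref{hyp:polar-transversality}. Fix $[a_0]\in\Omega$. By Lemma~\ref{lem:polar-directions}, $Z_{a_0}$ spans $\PP^n$. Choose points $[v_1],\dots,[v_n]\in Z_{a_0}$ such that $a_0,v_1,\dots,v_n$ span $\C^{n+1}$. This is possible because spanning points of $Z_{a_0}$ can be chosen to include a basis completing $a_0$: take $n+1$ spanning points and discard one lying in the span of $a_0$ and the others. Since $\pi:\mathcal I\to\Omega$ is a submersion, each $[v_j]$ extends to a local holomorphic section $[v_j(y)]\in Z_{a(y)}$ over a neighborhood $V$ of $[a_0]$. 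Take holomorphic lifts $a(y)$ and $v_j(y)$. After shrinking $V$, the classes of the $v_j(y)$ still form a basis modulo $\C a(y)$. Identity~\eqref{hyp:polar-line-identity} gives the linear restriction \eqref{hyp:linear-restriction}, because $[v_j(y)]\in Z_{a(y)}$ means exactly that $D_{a(y)}^kF(v_j(y))=0$ for $0\leq k\leq d-2$. Lemma~\ref{hyp:entire-families} therefore places every point of $\Omega$ in the \LAP{} locus.

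\emph{Propagation.} The set $\Omega$ is nonempty and Zariski open in the irreducible complement $Y=\PP^n\setminus D$. Hence $Y\setminus\Omega$ is a proper closed analytic subset of the connected manifold $Y$. Theorem~\ref{thm:analytic-removability} then shows that $Y$ satisfies \LAP{} everywhere and is Oka.

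\emph{Main obstacle.} The delicate step is the spanning choice together with its holomorphic continuation near points where some $\lambda_j=0$. The function $E$ is designed to handle exactly that coalescence, so the remaining care is in choosing a basis from the spanning set $Z_{a_0}$. This works because a spanning set of points in $\PP^n$ contains $n+1$ points in general linear position relative to any prescribed point $[a_0]$: extend $\{a_0\}$ to a basis using vectors from $Z_{a_0}$, which is possible since $Z_{a_0}$ spans.
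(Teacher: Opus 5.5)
Your proposal is correct and follows the paper's proof: for $3\le d\le n+1$ you use exactly the same polar-line construction (a basis modulo $\C a_0$ taken from the spanning set $Z_{a_0}$, local sections of the submersion $\pi$, Lemma~\ref{hyp:entire-families} on $\Omega$, then Theorem~\ref{thm:analytic-removability}). The only difference is at $d=2$. The paper stays self-contained there: it chooses constant isotropic vectors of the nondegenerate quadratic form, so that $F(a+sv_j)=F(a)+s\,dF_a(v_j)$ and Lemma~\ref{hyp:entire-families} applies at every point, including $n=1$. You instead cite Kusakabe or Theorem~\ref{thm:log-fano}, which is equally valid.
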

\begin{proof}
Write $D=\{F=0\}$ and $Y=\PP^n\setminus D$.
The manifold $Y$ is connected: a line joining two of its points
is not contained in $D$ and loses only finitely many points on
removing $D$.

For $d=1$, $Y\simeq\C^n$. For $d=2$, the smooth quadric spans
$\PP^n$. Indeed, its quadratic form is nondegenerate and its
isotropic vectors span $\C^{n+1}$: after diagonalization, the
vectors $e_i\pm\mathrm i e_j$ are isotropic and span. This also
covers $n=1$, when the quadric is two distinct points. At any
$y_0=[a_0]\in Y$, choose constant isotropic vectors
$v_1,\ldots,v_n$ whose classes form a basis modulo $\C a_0$, together
with a local lift $a(y)$ near $y_0$, and shrink its domain $V$
so that this basis property persists. The quadratic identity
\[
 F(a+sv_j)=F(a)+s\,dF_a(v_j)+s^2F(v_j)
         =F(a)+s\,dF_a(v_j)
\]
allows Lemma~\ref{hyp:entire-families} to be applied at every
point. The \LAP{} criterion, Theorem~\ref{thm:families-oka}, makes
$Y$ Oka.

Now suppose $3\leq d\leq n+1$ and apply
Lemma~\ref{lem:polar-directions} with $N=n+1$ and $X=D$.
For any $y_0=[a_0]\in\Omega$, spanning by $Z_{a_0}$ gives
points $[v_1^0],\ldots,[v_n^0]\in Z_{a_0}$ whose representatives
form a basis modulo $\C a_0$. The submersion
$\pi:\mathcal I\to\Omega$ has local holomorphic sections through
$([a_0],[v_j^0])$. On a common neighborhood $V$ of $y_0$, choose
nonzero holomorphic lifts $a,v_1,\ldots,v_n$ of these sections
and of the inclusion $V\hookrightarrow\PP^n$. Then
\[
 [a(y)]=y,\qquad [v_j(y)]\in Z_{a(y)},
\]
and the basis property persists after shrinking $V$ once.
Identity~\eqref{hyp:polar-line-identity} supplies
\eqref{hyp:linear-restriction}. Lemma~\ref{hyp:entire-families}
therefore gives centered entire families at every point of
$\Omega$.

Let $G_Y$ be the \LAP{} locus of Definition~\ref{family:condition}.
We have shown $\Omega\subset G_Y$. Since $Y\setminus\Omega$ is a
proper closed algebraic subset of $Y$,
Theorem~\ref{thm:analytic-removability} shows that $Y$ is Oka
throughout $d\leq n+1$.
\end{proof}

\begin{proof}[Proof of Theorem~\ref{thm:sharp-hypersurfaces}]
Proposition~\ref{hyp:positive-degrees} proves the positive direction.
Write $Y=\PP^n\setminus D$. For $d\geq n+2$, the line bundle
\[
 \OO_{\PP^n}(K_{\PP^n}+D)=\OO_{\PP^n}(d-n-1)
\]
is ample. Carlson--Griffiths' degeneracy theorem~\cite[Corollary~4.17]{CarlsonGriffiths}
for the complement of a simple normal crossings divisor with positive
log canonical class applies to the smooth divisor $D$.
It says that every holomorphic map $\C^n\to Y$ has rank less than
$n$ everywhere. An Oka manifold is strongly dominable by
Theorem~\ref{thm:strong-domination}, so $Y$ cannot be Oka. This
also treats $n=1$ and $d\geq3$.

Finally, $D$ is ample, so $Y$ is affine and hence Stein, by the
affine-complement argument in Subsection~\ref{sec:log}. A Stein Oka
manifold is holomorphically elliptic, and a holomorphically elliptic
manifold is Oka \cite{Gro89}. Thus the two analytic properties
are equivalent here, completing all three implications.
\end{proof}

\begin{example}\label{hyp:singular-high-degree}
Smoothness is essential to the degree threshold. In homogeneous
coordinates $[X:Y:Z]$ on $\PP^2$, for every $m\geq3$ the reduced
plane divisor
\[
 D_m=\{Z(YZ^{m-1}-X^m)=0\}\subset\PP^2
\]
has degree $m+1>3$ and is singular at $[0:1:0]$.
Its complement lies in the chart $Z\ne0$. In the affine coordinates
$x=X/Z$ and $y=Y/Z$, the polynomial automorphism
$(x,y)\mapsto(x,y-x^m)$ identifies it with
$\C\times\C^*$. It is therefore Oka and holomorphically elliptic.
This gives singular hypersurfaces with Oka complements in arbitrarily
high degree.
\end{example}

\section{Conjectures and further directions}\label{sec:conjectures}

A compact K\"ahler manifold is \emph{special} in
Campana's sense if it admits no meromorphic fibration onto a
positive-dimensional base of general type with its induced orbifold
structure~\cite[Definition~2.1]{Campana04}.

\begin{conjecture}\label{conj:special-oka1}
Every connected smooth complex projective manifold that is special
in Campana's sense is Oka-1.
\end{conjecture}

\begin{conjecture}\label{conj:oka1-oka}
Every connected smooth complex projective Oka-1 manifold is Oka.
\end{conjecture}

\begin{conjecture}
\label{conj:oka-deformation}
Let $\pi:\mathcal X\to S$ be a smooth projective morphism of complex
manifolds, with connected base $S$ and connected fibers. If one fiber
of $\pi$ is Oka, then every fiber is Oka.
\end{conjecture}

The first two conjectures together would make every special smooth
projective manifold Oka, answering this case of Campana--Winkelmann's
question~\cite[Question~1.8]{CW15}; see also
Forstneri\v{c}~\cite[Problem~11.3]{Forstneric23}. Related Oka-1
and deformation questions appear in
\cite[Problem~2.11(c)]{AF25} and \cite[Problem~F]{FL11}.

Every compact K\"ahler Oka manifold is special~\cite[Corollary~8.11]{Campana04}.
Thus the first two conjectures would identify the special, Oka-1,
and Oka classes in the projective category. Together with Campana's
conjecture that specialness is invariant under smooth deformation
\cite[Conjecture~6.23]{Campana04}, they would imply
Conjecture~\ref{conj:oka-deformation}.

Evidence includes rationally connected manifolds
(Theorem~\ref{thm:rc}), K3 surfaces~\cite[Theorem~1.1]{OAI26}, and
the Schoen fiber products with disjoint singular-value sets and
sufficiently general Gross--Popescu Calabi--Yau threefolds proved Oka
in \cite[Theorems~8.7--8.8]{ADGWX26}.
Analogous questions can be asked for compact K\"ahler manifolds.

\appendix
\section{The local extension condition and its equivalence with \LAP}
\label{app:L}

We use the local planar approximation condition of
OpenAI~\cite[Definition~2.1]{OAI26} and prove that it is equivalent
to \LAP{} at each point.

\begin{definition}\label{L:definition}
Let $Y$ be a complex manifold of dimension $n\geq1$. One says that
$Y$ satisfies $\LL$ at $x\in Y$ if there is an open neighborhood
$V\ni x$ such that, for every special planar pair $D\subset D'$,
every integer $q\geq0$, every pair of closed polydiscs
$P_0\Subset\operatorname{Int}P\subset\mathbb C^q$, and every
holomorphic map $f$ from a neighborhood of $D\times P$ into $V$,
the restriction $f|_{D\times P_0}$ is uniformly approximable by
maps holomorphic near $D'\times P_0$ with values in $Y$.
The set $V$ is called a witness neighborhood. The manifold $Y$
satisfies $\LL$ if it does so at every point.
\end{definition}

Special planar pairs are as in Definition~\ref{def:special-planar-pair}.
Passive parameters are retained through the following notational
reformulation. If
$f$ is holomorphic near $D\times Q$ into $V$, compactness permits
a slightly larger closed polydisc $Q^+$ such that
$Q\Subset\operatorname{Int}Q^+$ and $f$ is still holomorphic near
$D\times Q^+$ into $V$. Applying $\LL$ with $P_0=Q$ and $P=Q^+$
therefore retains the whole prescribed closed polydisc $Q$.
Conversely, this retained-parameter formulation immediately implies
the definition by restriction.

The next lemma is the disc-enlargement step of OpenAI
\cite[Proposition~2.9]{OAI26}, in arbitrary target dimension.

\begin{lemma}\label{L:enlargement}
Let $V$ be a witness neighborhood for $\LL$ and $f$ holomorphic
near $\overline\Delta_r\times P$ into $V$, where $r>0$ and $P$ is
a closed polydisc. For every $S>r$, the map $f$ is uniformly
approximable on $\overline\Delta_r\times P$ by maps holomorphic
near $\overline\Delta_S\times P$ with values in $Y$.
\end{lemma}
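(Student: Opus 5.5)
The plan is to repeat the proof of Lemma~\ref{poly:boundary-disc}, replacing the local sector extension of Lemma~\ref{family:local-sector} by condition $\LL$. The key difference is that the whole closed disc already maps into the witness neighborhood $V$, so every boundary arc automatically lands in one fixed set where planar extension is available.

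\emph{Step 1: a chart spray and a margin.}
Choose a slightly larger polydisc $P^+$ with $P\Subset\operatorname{Int}P^+$ such that $f$ is holomorphic near $\overline\Delta_{r'}\times P^+$ into $V$ for some $r'>r$. Apply Lemma~\ref{lem:chart-spray} to obtain a chart spray $f^\sharp(s,p,v)$ centered at $f$. By compactness, fix $\beta>0$ so that $f^\sharp$ maps $\overline\Delta_{r'}\times P^+\times\overline\Delta_\beta^n$ into $V$. Then fix $0<\rho<\rho_+<\beta$.

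\emph{Step 2: collapse the old side.}
Take $p_N$ from Lemma~\ref{poly:collapse}, with $N$ large. Form the family $F_A(s,p,v)=f^\sharp(p_N(s),p,v)$ on the old side $A$, namely the inner disc together with thin radial corridors reaching past $S$. By the two estimates of Lemma~\ref{poly:collapse}, $p_N$ maps this side into $\Delta_{r'}$ once $N$ is large. So $F_A$ is defined there, takes values in $V$, and is within any prescribed distance of $f^\sharp$ on $\overline\Delta_r$.

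\emph{Step 3: fill the sectors.}
The new side $B$ consists of the disjoint intervening sectors. Their U-shaped collars $J_j$ and filled sectors $L_j$ form special planar pairs, exactly as in Figure~\ref{fig:scalar-corridors}. On each collar, $F_A$ maps into $V$. Apply $\LL$ in its retained-parameter form, with passive variables $(p,v)$ ranging over a polydisc containing $P^+\times\overline\Delta_\beta^n$ slightly enlarged. This gives maps $G_j$ near $L_j\times P^+\times\overline\Delta_{\rho_+}^n$ that are arbitrarily close to $F_A$ on the collars.

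\emph{Step 4: glue and estimate.}
Fix the planar pair $A,B$ and its splitting constant from the planar case of Lemma~\ref{lem:additive-splitting} before choosing the sector accuracy. Then apply the parameter version of Lemma~\ref{lem:center-gluing}. Cauchy estimates across $\rho_+-\rho$ provide the derivative bound, and the inverse charts of $f^\sharp$ (through $F_A$) express the transition. The glued center is holomorphic near $\overline\Delta_S\times P$. On $\overline\Delta_r\times P$ it is close to $f(p_N(s),p)$, hence close to $f$: first choose $N$ large, then the sector approximations accurate.

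The main obstacle is the inverse-chart step. $F_A$ must remain a chart spray on the old side, including the corridors, where $p_N(s)$ is no longer injective in $s$. I would handle this as the paper implicitly does: inverse charts are used only fiberwise in $v$. The fiber map $v\mapsto F_A(s,p,v)$ is the fiber of $f^\sharp$ at the point $(p_N(s),p)$, which is injective with a uniform inverse on compacta. This suffices for Lemma~\ref{lem:center-gluing}, since that lemma only uses $F_A(z,\cdot)^{-1}$ pointwise in $z$.
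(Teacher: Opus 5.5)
Your proposal is correct and is essentially the paper's own proof. The paper likewise reruns the corridor-collapse and sector-filling construction of Lemma~\ref{poly:boundary-disc}: $V$ serves for every boundary arc, the retained-parameter form of $\LL$ (with the spray variables among the passive ones) replaces local sector extension, and a single simultaneous center gluing finishes. The obstacle you flag is harmless: the graph map $(s,p,v)\mapsto(s,p,F_A(s,p,v))$ retains $s$, so $F_A(s,p,v)=f^\sharp(p_N(s),p,v)$ is a genuine chart spray on the old side even though $p_N$ is not injective, which is what your fiberwise argument shows.
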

\begin{proof}
Use the construction in the proof of Lemma~\ref{poly:boundary-disc},
with $V$ for every boundary arc and the retained-parameter formulation
of $\LL$ in place of local planar extension. Compactness allows the
old chart spray and slightly enlarged passive products to take values
in $V$, so the sector collars also have their spray images in $V$.
Including the original passive variables and the spray variables in
$Q$, the extension and simultaneous center gluing retain a neighborhood
of the prescribed $P$ and give the required approximation.
\end{proof}

\begin{proposition}\label{L:equivalence}
For every positive-dimensional complex manifold $Y$ and every
$x\in Y$, the conditions
$\LL$ and \LAP{} at $x$ are equivalent.
\end{proposition}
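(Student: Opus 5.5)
The proof has two directions, and both reuse results already established in the paper.

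\emph{\LAP{} at $x$ implies $\LL$ at $x$.} Take the witness neighborhood to be $W_x$ from Lemma~\ref{family:local-sector}. That lemma applies to maps holomorphic near $D\times P$ with values in $W_x$. It gives uniform approximation on the full product $D\times P$ by maps holomorphic near $D'\times P$. Restricting to $D\times P_0\Subset D\times P$ yields exactly the condition $\LL$. This direction needs nothing more.

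\emph{$\LL$ at $x$ implies \LAP{} at $x$.} Let $V$ be a witness neighborhood.
\begin{itemize}
\item Choose a holomorphic chart $\vartheta$ about $x$ whose image contains a closed polydisc. Pick $r>0$ and closed transverse polydiscs $P_j\subset\C^{n-1}$ with $0$ in their interiors.
\item Define the coordinate parametrizations
\[
 \psi_j(z,p)=\vartheta^{-1}\Bigl(\vartheta(x)+ze_j+\sum_{k\ne j}p_k e_k\Bigr),
\]
where we relabel the transverse coordinates. These are biholomorphic near $\overline\Delta_r\times P_j$, and $\psi_j(0,0)=x$.
\item Shrink $r$ and the $P_j$ until every image $\psi_j(\overline\Delta_{r'}\times P_j^+)$ lies in $V$, for some $r'>r$ and slightly larger polydiscs $P_j^+$.
\item Fix a sequence $S_\nu\to\infty$. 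Apply Lemma~\ref{L:enlargement} to $\psi_j$ on $\overline\Delta_r\times P_j$. This gives maps $\sigma_{j,\nu}$ holomorphic near $\overline\Delta_{S_\nu}\times P_j$ that converge uniformly to $\psi_j$ on $\overline\Delta_r\times P_j$, for instance within $1/\nu$.
\item The velocities $\partial_z\psi_j(0,0)=(d\vartheta_x)^{-1}e_j$ form a basis of $T_xY$.
\end{itemize}
Lemma~\ref{family:recentering} then yields $x\in G_Y$, which is \LAP{} at $x$.

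\emph{Where the care is needed.} The substantive step is Lemma~\ref{L:enlargement}. The whole disc-enlargement argument is taken from the proof of Lemma~\ref{poly:boundary-disc}: the scalar collapse $p_N$, the corridors, the sector collars $J_j\subset L_j$, and the single center gluing. The one change is that $\LL$, in its retained-parameter formulation, replaces the local planar extension of Lemma~\ref{family:local-sector}.

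\begin{itemize}
\item \emph{Values in $V$.} Every collar family must take values in $V$, including after composing with the chart spray and with the spray variables treated as passive. I would arrange this through compactness: the image $\psi_j(\overline\Delta_r\times P_j)$ is compactly contained in $V$, so a small spray radius and a large $N$ keep all collar values inside $V$.
\item \emph{Passive parameters and the domain of $\psi_j$.} $\LL$ only approximates on a smaller polydisc $P_0\Subset P$. So one has to check that the retained polydisc still contains $P_j$ together with the spray polydisc used in center gluing. Enlarging to $P_j^+$ and to a slightly larger spray polydisc beforehand handles this. The same precaution secures the domain of $\psi_j$: since $p_N$ maps the corridors near the boundary circle, we need $\psi_j$ holomorphic with values in $V$ on a neighborhood of $\overline\Delta_r\times P_j$. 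This was arranged by choosing $r'>r$.
\end{itemize}

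Once these are in place, Lemma~\ref{family:recentering} requires only uniform convergence on the fixed product, so the equivalence follows.
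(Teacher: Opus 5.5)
Your proposal is correct and follows the paper's proof essentially step for step. For \LAP{}$\Rightarrow\mathcal L$ you use Lemma~\ref{family:local-sector} and restrict to $P_0$. For the converse you take coordinate parametrizations $\psi_j$ with image in the witness neighborhood, apply Lemma~\ref{L:enlargement}, and conclude with Lemma~\ref{family:recentering}; your remarks on keeping collar values in $V$ and on enlarging the passive polydiscs match how the paper proves Lemma~\ref{L:enlargement} itself.
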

\begin{proof}
Suppose first that \LAP{} holds at $x$.
Lemma~\ref{family:local-sector} provides a neighborhood $W_x$ on
which local planar extension retains every closed passive polydisc.
Restricting to $P_0$ gives $\LL$ with witness $W_x$.

Conversely, let $V$ witness $\LL$ at $x$, and choose a coordinate
parametrization $\psi$ about $0\in\mathbb C^n$, with
$\psi(0)=x$ and image in $V$. For each $1\leq j\leq n$, regard
the $j$th coordinate as the scalar variable $z$ and the remaining
coordinates as passive variables $p$. After restriction, this gives
biholomorphic parametrizations $\psi_j$ near fixed products
$\overline\Delta_r\times P_j$, where
$0\in\operatorname{Int}P_j$, and the vectors
$\partial_z\psi_j(0,0)$ form a basis of $T_xY$.

Choose $S_\nu\to\infty$. Lemma~\ref{L:enlargement} gives maps
\[
 \sigma_{j,\nu}:\operatorname{Op}
 (\overline\Delta_{S_\nu}\times P_j)\longrightarrow Y
\]
whose errors relative to $\psi_j$ on
$\overline\Delta_r\times P_j$ tend to zero.
The products $P_j$ remain fixed as $\nu$ increases.
Lemma~\ref{family:recentering} now supplies the exactly centered
families required for \LAP{} at $x$.
\end{proof}

\section*{Acknowledgments}

The authors thank Franc Forstneri\v{c} for suggesting the name
\emph{\LAPName} (\LAP) and for his valuable comments
on an earlier version of this paper.

\section*{Grant support}

S.-Y. Xie acknowledges partial support from the National Key R\&D
Program of China under Grants No.~2023YFA1010500 and
No.~2021YFA1003100, and from the National Natural Science Foundation of
China under Grants No.~12288201 and No.~12471081, as well as support
from the Xiaomi Young Talents Program.

\section*{Statement on the use of artificial intelligence}

This manuscript grew out of earlier drafts, working notes, and discussions
within the research group. ChatGPT-6-Astra was used in exploring and
organizing the arguments and in revising the manuscript. The authors reviewed
the text and take full responsibility for its mathematical content and final
form.

\raggedbottom

\end{document}